\documentclass[reqno]{amsart}
\usepackage{amssymb}
\usepackage{amsmath}
\usepackage{enumerate}
\usepackage{srcltx}
\usepackage[mathscr]{euscript}
\usepackage{times}


\makeatletter
\@addtoreset{equation}{section}
\makeatother

\newtheorem{theorem}{Theorem}[section]
\newtheorem{lemma}[theorem]{Lemma}
\newtheorem{proposition}[theorem]{Proposition}
\newtheorem{corollary}[theorem]{Corollary}

\newtheorem{definition}[theorem]{Definition}

\newcounter{as}

\newcommand{\scs}[1]{{\scriptscriptstyle #1}}
\newcommand{\mc}[1]{{\mathcal #1}}
\newcommand{\mf}[1]{{\mathfrak #1}}
\newcommand{\mb}[1]{{\mathbf #1}}
\newcommand{\bb}[1]{{\mathbb #1}}
\newcommand{\bs}[1]{{\boldsymbol #1}}
\newcommand{\ms}[1]{{\mathscr #1}}

\newcommand{\<}{\langle}
\renewcommand{\>}{\rangle}

\title[A martingale problem for an absorbed diffusion]
{A martingale problem for an absorbed diffusion: the nucleation
  phase of condensing zero range processes }
  
\author{J. Beltr\'an, M. Jara, C. Landim}

\address{\noindent IMCA, Calle los Bi\'ologos 245, Urb. San C\'esar
  Primera Etapa, Lima 12, Per\'u and PUCP, Av. Universitaria 1801, San
  Miguel, Lima 32, Per\'u.  \newline e-mail: \rm
  \texttt{johel.beltran@pucp.edu.pe} }

\address{\noindent IMPA, Estrada Dona Castorina 110, CEP 22460 Rio de
  Janeiro, Brasil.  \newline e-mail: \rm \texttt{mjara@impa.br} }

\address{\noindent IMPA, Estrada Dona Castorina 110, CEP 22460 Rio de
  Janeiro, Brasil and CNRS UMR 6085, Universit\'e de Rouen, Avenue de
  l'Universit\'e, BP.12, Technop\^ole du Madril\-let, F76801
  Saint-\'Etienne-du-Rouvray, France.  \newline e-mail: \rm
  \texttt{landim@impa.br} }

\begin{document}

\keywords{Diffusion with boundary conditions, martingale problem,
  nucleation phase, metastability, condensing zero-range process}

\begin{abstract}
  We prove uniqueness of a martingale problem with boundary conditions
  on a simplex associated to a differential operator with an unbounded
  drift.  We show that the solution of the martingale problem remains
  absorbed at the boundary once it attains it, and that, after hitting
  the boundary, it performs a diffusion on a lower dimensional
  simplex, similar to the original one. We also prove that in the
  diffusive time scale condensing zero-range processes evolve as this
  absorbed diffusion.
\end{abstract}

\maketitle

\section{Introduction}
\label{rec04}

It has been observed, in several different contexts, that some
zero-range processes whose jump rates decrease to a positive constant
exhibit condensation: in the stationary state, above a certain
critical density, a macroscopic fraction of the particles concentrate
on a single site \cite{jmp, gss, eh, fls, al, al2, bl3, agl}. 

We investigated in \cite{bl3, l3} the evolution of the condensate in
the case where the total number of sites remains fixed while the total
number of particles diverges.  We proved that in an appropriate time
scale the condensate -- the site where all but a negligible fraction
of particles sit -- evolves according to a Markov chain whose
jump rates are proportional to the capacities of the underlying random
walks performed by the particles in the zero-range process.

We examine in this article how the condensate is formed in the case
where the set of sites, denoted by $S$, remains fixed while the total
number of particles, $N$, tends to infinity. Consider an initial
configuration in which each site is occupied by a positive fraction of
particles. Since in the stationary state almost all particles occupy
the same site, as time evolves we expect to observe a progressive
concentration of particles on a single site.

\smallskip\noindent{\bf Absorbed diffusions}.
To describe the asymptotic dynamics we were led to analyze a diffusion
with boundary conditions whose drift diverges as the process
approaches the boundary and which remains glued to the boundary once
it attains it.  More precisely, denote by $\Sigma$ the simplex $\{x\in
\bb R^S : x_j\ge 0 \,,\, \sum_j x_j =1\}$, where $x_j$ represents the
fraction of particles at site $j\in S$. Far from the boundary of
$\Sigma$ the process evolves as a standard diffusion with bounded and
smooth coefficients, whereas close to the boundary $\{x\in \Sigma :
x_j =0\}$ the drift becomes proportional to $b/x_j$, where $b>1$ is a
fixed parameter, while the variance remains bounded by $1$. In
consequence, when the process approaches the boundary of $\Sigma$ it
is strongly driven to it. Once the boundary $\{x\in \Sigma :
\sum_{j\in A} x_j =0\}$, $A\subset S$, is attained, the process
remains absorbed at this boundary, where it performs a new diffusion,
similar to the original one, but in a lower dimensional space. This
mechanism is iterated and the dimension of the space in which the
diffusion occurs decreases progressively until all but one coordinate
vanish. At this point the process remains trapped in this
configuration for ever.

We called these dynamics ``absorbed'' diffusions to distinguish them
from ``sticky'' diffusions \cite{iw} which bounces at the boundary,
but which have a positive local time at the boundary. 

We did not find in the literature examples of diffusions whose
absorption at the boundary arises from a divergence of the drift close
the the boundary. Diffusions which are absorbed at the boundary due to
a singularity of the covariance matrix of Wright-Fisher type have been
examined in \cite{fk1}.

\smallskip\noindent{\bf Hydrodynamic limit}.  The most popular methods
to derive the hydrodynamic equations of the conserved quantity of an
interacting particle systems relies on the so-called one and two block
estimates \cite{kl1}. The condensing zero-range processes examined in
this article form a class of dynamics for which the one and two blocks
estimate in their classical form do not hold, precisely because of the
condensation of particles.

We examine in this article how particles accumulate on a single site
in the diffusive time scale when the total number of sites is fixed
and the the total number of particles diverges. This is called the
nucleation phase of the condensing zero-range dynamics. A most
interesting open problem is the behavior of the same model, when the
number of sites increases together with the number of particles, where
a convergence towards a self-similar distribution is expected, or the
proof of the hydrodynamical limit of the model when the initial
density profile is super-critical. A first step in this direction has
been performed in \cite{sta1}, where an alternative version of the one
block estimated is proved together with the hydrodynamical limit for
initial profiles bounded below by the critical density, a situation in
which there is no condensation in the diffusive time
scale. 

\smallskip\noindent{\bf Results}.
The first main result of this article asserts that there exists a
unique solution to the martingale problem associated to a second-order
differential operator of an absorbed diffusion.  The second main
result states that in the diffusive time scale $N^2$ the fraction of
particles of condensing zero-range processes on finite sets evolves as
an absorbed diffusion.

We faced two main obstacles in this article.  The first consisted in
the proof that the solution of the martingale problem remains absorbed
at the boundary once it attains it, and that, after hitting the
boundary, the solution performs a diffusion on a lower dimensional
space, similar to the original one. These results and Stroock and
Varadhan \cite{sv71, sv79} theory, with some slight modifications due
to the unboundedness of the drift, yield uniqueness of the martingale
problem. The second main difficulty consisted in the proof of the
tightness of the condensing zero-range processes in the diffusive time
scale, which required a replacement lemma.

\section{Notation and Results}
\label{rec02}

We present in this section the two main results of the article.

\smallskip\noindent{\bf \ref{rec02}.1. The underlying Markov chain.}
Fix a finite set $S=\{1, \dots, L\}$, and consider an irreducible,
continuous-time Markov chain $(x_t)_{t\ge 0}$ on $S$.  Denote by $\bs
r = \{r(j,k) : j, k\in S\}$ the jump rates, so that the generator $\mc
L$ of this Markov chain is
\begin{equation*}
(\mc L f) (j)\;=\; \sum_{k\in S} r(j,k) \{f(k) - f(j)\} \;.
\end{equation*}
Assume, without loss of generality, that $r(j,j)=0$ for all $j\in S$,
and denote by $\lambda(j)$ the holding rate at $j$, $\lambda(j) =
\sum_{k\in S} r(j,k)$. Let $\bs m =\{ m_j : j\in S\}$ be an invariant
measure for $\bs r$, and let $M_j = m_j \lambda(j)$, $j\in S$, so that
$M_j$ is an invariant measure for the embedded discrete-time
chain. Note that we do not assume $\bs m$ to be a probability measure
nor reversible for $\bs r$.

\smallskip\noindent{\bf \ref{rec02}.2. Condensing zero-range processes.} 
Denote by $\eta$, $\xi$ the elements of $\bb N^{S}$, $\bb
N:=\{0,1,2,\dots\}$, so that $\eta(j)$, $j\in S$, represents the
number of particles at site $j$ for the configuration $\eta$. Denote
by $E_{\scs N}$, $N\ge 1$, the set of configurations with $N$
particles:
\begin{equation*}
E_{\scs N}\;:=\;\big\{\eta\in {\bb N}^{S} : \sum_{j\in S} \eta(j) = N \big\}\;. 
\end{equation*}

Fix $b>1$. For each $j\in S$, consider a jump rate $g_j : \bb N\to\bb
R_+$ such that $g_j(0)=0$ and
\begin{equation}
\label{propg}
\lim_{n\to \infty} n\left(\frac{g_j(n)}{m_j}-1 \right) \;=\; b\;.
\end{equation}
In particular, $g_j(n)\to m_j$ and there exists $n_0(j)\in \bb N$ such
that $g_j(n)>m_j$ for all $n\ge n_0(j)$. Hence, roughly, each function
$g_j$ decreases and converges to $m_j$ at rate \eqref{propg}.

Denote by $\eta^{\scs N}=(\eta^{\scs N}_t)_{t\ge 0}$
the zero range process on $S$ associated to the jump rates $r(j,k)$
and $g_j$. This is the continuous-time Markov chain on $\bb N^S$ in
which a particle jumps from $j$ to $k$ at rate $g_j(\eta(j)) r(j,k)$.
The generator $L_{\scs N}$ of this chain acts on functions $F: E_{\scs
  N}\to\bb R$ as
\begin{equation}
\label{f16}
(L_{\scs N} F) (\eta) \;=\; \sum_{j,k\in S}
g_j(\eta(j)) \, r(j,k) \, \big[ F(\sigma^{j,k}\eta) - F(\eta) \big] \;,
\end{equation}
where $\sigma^{j,k}\eta$ stands for the configuration obtained from
$\eta$ by moving a particle from $j$ to $k$: 
\begin{equation*}
(\sigma^{j,k}\eta)(\ell)\;=\;\left\{
\begin{array}{ll}
\eta(j)-1, & \textrm{for $\ell=j$} \\
\eta(k)+1, & \textrm{for $\ell=k$} \\
\eta(\ell), & \rm{otherwise}\;. \\
\end{array}
\right.  
\end{equation*}

Denote by $\Sigma$ the simplex
\begin{equation*}
\Sigma \;=\; \big\{ x \in \bb R^S_+ : \sum_{j\in S} x_j =1 \big\}\;,
\end{equation*}
and by $\Sigma_N$ the $N$-discretization of $\Sigma$:
\begin{equation*}
\Sigma_N \;=\; \big\{ x \in \Sigma : N x_j\in\bb N \,,\, j\in S \,
\big\}\;, \quad N\ge 1 \;.
\end{equation*}

Fix a configuration $\eta\in E_{\scs N}$ and let $\eta^N$ be started
at $\eta$. We consider the rescaled process
\begin{equation*}
X^{\scs N}_t \;:=\; \frac 1N \big(\eta^{\scs N}_{tN^2}(1) ,
\dots, \eta^{\scs N}_{tN^2}(L) \big)\;, \quad t\ge 0\;.
\end{equation*}
Clearly, $(X^{\scs N}_t)_{t\ge 0}$ is a $\Sigma_N$-valued Markov
chain whose generator $\mf L_{\scs N}$ acts on functions
$F:\Sigma_N\to \bb R$ as
\begin{equation}
\label{tay1}
(\mf L_{\scs N} F) (x) \;=\; N^2\sum_{j,k\in S}
g_j(N x_j) \, r(j,k) \, \big[ F(x + \frac{\bs e_k - \bs e_j}{N}) -
F(x) \big] \;,
\end{equation}
where $\{\bs e_i : i\in S\}$ represents the canonical basis of
$\bb R^S$.

\smallskip\noindent{\bf \ref{rec02}.3. Martingale problem.}  One of
the main result of this article states that the Markov chain $X^N_t$
converges in law to a diffusion on $\Sigma$. To introduce the
generator of the diffusion we first define its domain. Denote by
$C^n(\Sigma)$, $n\ge 1$ the set of functions $F:\Sigma \to \bb R$
which are $n$-times continuously differentiable. We let
$\partial_{x_k}F$ and $\nabla F$ stand for partial derivative with
respect to the variable $x_k$ and for the gradient, respectively, of
$F\in C^1(\Sigma)$.

Let $\{\bs v_j \in \bb R^S : j\in S\}$ be the vectors 
\begin{equation*}
\bs v_j\;:=\;\sum_{k\in S} r(j,k)\, \{\bs e_k - \bs e_j\}\;,\quad j\in S\;,
\end{equation*} 
and define the vector field ${\bf b}: \Sigma \to \bb R^S$ by
\begin{equation}
\label{03}
{\bf b}(x) \;:=\; b \sum_{j\in S} {\bf 1} \{x_j\not= 0\} \, 
\frac{m_j}{x_j}  \, \bs v_j \;.
\end{equation}

\begin{definition}
\label{bc1}
For each $j\in S$, let $\mc D_j$, be the space of functions $H$ in
$C^2(\Sigma)$ for which the map $x\mapsto [\bs v_j \cdot \nabla
H(x)]/x_j \, \mb 1\{x_j>0\}$ is continuous on $\Sigma$, and let $\mc
D_A:=\cap_{j\in A}\mc D_j$, $\varnothing \subsetneq A\subseteq S$.
\end{definition}

Clearly, if $H$ belongs to $\mc D_j$, $\bs v_j\cdot\nabla H (x)=0$ for
any $x\in \Sigma$ such that $x_j=0$. Moreover, if $H$ belongs to $\mc
D_S$, $x\mapsto {\bf b}(x)\cdot \nabla H(x)$ is continuous on
$\Sigma$. Finally, to prove that a function $H$ in $C^2(\Sigma)$
belongs to $\mc D_j$, we need to show that for every $z\in \Sigma$
such that $z_j=0$,
\begin{equation}
\label{cf02}
\lim_{\substack{x\to z\\x_j>0}} \frac{\bs v_j \cdot \nabla H(x)}{x_j}
\;=\; 0\;.
\end{equation}
In this formula, the limit is carried over points $x$ in $\Sigma$
which converge to $z$ and such that $x_j>0$. 

Let us now introduce the generator for the limiting diffusion. Let
$\mf L:C^2(\Sigma)\to\bb R$ be the second order differential operator
given by
\begin{equation}
\label{gens}
(\mf L F)(x) \;:=\; {\bf b} (x) \cdot \nabla F(x)  
\;+\; \frac{1}{2} \sum_{j,k\in S} m_j\, r(j,k) \,
(\partial_{x_k} - \partial_{x_j})^2 F (x)\;,
\end{equation}
for any $F\in C^2(\Sigma)$. Thus, operator $\mf L$ depends on three
parameters: the coefficient $b>1$, the jump rates $\bs r$ and the
measure $\bs m$. Of course, for $H\in \mc D_S$, the function $ \mf
LH:\Sigma \to \bb R$ is continuous.

Let $\langle \cdot, \cdot \rangle_{\bs m}$ be the $L^2$-inner product
with respect to the measure $\bs m$. Denote by ${\bf a} =
(a_{i,j})_{i,j\in S}$ the matrix whose entry $(i,j)$ is given by
$a_{i,j} = \langle {\bs e}_i , - \mc L {\bs e}_j\rangle_{\bs m}$ and
denote by ${\bf a}_s$ the symmetric matrix ${\bf a}_{\rm s}=(1/2)({\bf
  a} + {\bf a}^{\rm t})$ where ${\bf a}^{\rm t}$ stands for the
transpose of $\bf a$. So ${\bf a}_{\rm s}$ is the matrix corresponding
to the Dirichlet form of the symmetric part of $(\mc L,{\bs m})$. With
this notation we may write the generator $\mf L$ as
\begin{equation}
\label{f11}
(\mf L F)(x) \;=\; {\bf b} (x) \cdot \nabla F(x)  
\;+\; {\rm Tr }\,  [ {\bf a}_{\rm s} \times {\rm Hess} \, F (x)] \;,
\end{equation}
where ${\rm Tr } \, \bb M$ stands for the trace of the matrix $\bb M$ and
${\rm Hess } \, F$ for the Hessian of $F$.

We now characterize the limiting diffusion as the solution of the
martingale problem corresponding to $(\mf L,\mc D_S)$. Denote by
$C(\bb R_+, \Sigma)$ the space of continuous trajectories $\omega:\bb
R_+\to \Sigma$ endowed with the topology of uniform convergence on
bounded intervals. Every probability measure on $C(\bb R_+, \Sigma)$
will be defined on the corresponding Borel $\sigma$-field $\ms F$. We
denote by $X_t:C(\bb R_+, \Sigma)\to \Sigma$, $t\ge 0$ the process of
coordinate maps and by $(\ms F_t)_{t\ge 0}$ the generated filtration
$\ms F_t:=\sigma(X_s:s\le t)$, $t\ge 0$. A probability measure $\bb P$
on $C(\bb R_+, \Sigma)$ is said to start at $x\in \Sigma$ when $\bb
P[X_0=x]=1$. In addition, we shall say that $\bb P$ is a solution
for the $\mf L$-martingale problem if, for any $H\in \mc D_S$,
\begin{equation}
\label{f06}
H(X_{t}) - \int_{0}^{t} (\mf L H)(X_s) \, ds\;,\quad t\ge 0
\end{equation}
is a $\bb P$-martingale with respect to the filtration $(\ms F_t)_{t\ge 0}$.

\begin{theorem}
\label{r03}
For each $x\in \Sigma$, there exists a unique probability measure on
$C(\bb R_+,\Sigma)$, denoted by $\bb P_x$, which starts at $x$ and is
a solution of the $\mf L$-martingale problem.
\end{theorem}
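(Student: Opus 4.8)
\smallskip\noindent\emph{Proof strategy.} The plan is to prove existence and uniqueness simultaneously by induction on the cardinality of $S$, the case $|S|=1$ being trivial ($\Sigma$ is a single point and the only law on $C(\bb R_+,\Sigma)$ is that of the constant path). The induction rests on two structural properties of an arbitrary solution $\bb P$ of the $\mf L$-martingale problem, which I would isolate as separate lemmas. \emph{Absorption}: if $X_\sigma$ lies on the face $\{x\in\Sigma:\sum_{k\in B}x_k=0\}$ at a stopping time $\sigma$, then $X_t$ stays on this face for all $t\ge\sigma$. \emph{Restriction}: conditionally on being absorbed on the sub-simplex $\Sigma'$ indexed by $S'=S\setminus B$, the process $(X_{\sigma+t})_{t\ge0}$ solves the martingale problem for the operator $\mf L'$ on $\Sigma'$ built from $(b,\bs r|_{S'\times S'},\bs m|_{S'})$, i.e. the problem of the same type in lower dimension. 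The point that makes the classical machinery applicable despite the singular drift is that $\mf L$ is only ever applied to $H\in\mc D_S$, for which $\mf LH$ is continuous, hence bounded, on the compact set $\Sigma$; so the Stroock--Varadhan theorems on regular conditional probabilities, stopping, and localization apply with only cosmetic changes.

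Granting the two lemmas, uniqueness goes as follows. If $x\in\partial\Sigma$, say $x_k=0$ exactly for $k\in B\neq\varnothing$, then any solution started at $x$ is, by the absorption lemma at $\sigma=0$ applied to each facet $\{x_k=0\}$, $k\in B$, supported on trajectories in the sub-simplex on $S'=S\setminus B$, and by the restriction lemma it solves the $\mf L'$-martingale problem there; since $|S'|<|S|$, the induction hypothesis pins down its law. If $x\in\Sigma^\circ$ (the open simplex), let $\tau$ be the hitting time of $\partial\Sigma$. On compact subsets of $\Sigma^\circ$ the coefficients of $\mf L$ are continuous and bounded, and the constant matrix ${\bf a}_{\rm s}$ is positive definite on the tangent space $\{z:\sum_i z_i=0\}$ by irreducibility of the chain; hence, extending the coefficients suitably and invoking Stroock--Varadhan uniqueness together with localization along an exhaustion of $\Sigma^\circ$, the law of $(X_{t\wedge\tau})_{t\ge0}$ is determined by $x$. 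Finally, a regular conditional probability of any solution given $\ms F_\tau$ is, a.s. on $\{\tau<\infty\}$, a solution started at $X_\tau\in\partial\Sigma$, whose law was determined in the first case; combining, the full law on $C(\bb R_+,\Sigma)$ is determined.

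For existence I would build the solution by concatenation. If $x\in\partial\Sigma$, the induction hypothesis gives a solution of the lower-dimensional problem on the sub-simplex through $x$; viewing it as a measure on $C(\bb R_+,\Sigma)$ carried by that sub-simplex, one checks via the restriction identity ($\mf LH=\mf L'(H|_{\Sigma'})$ on $\Sigma'$ for $H\in\mc D_S$) that it solves the $\mf L$-martingale problem. If $x\in\Sigma^\circ$, one first solves, by the classical theory for diffusions with locally Lipschitz coefficients in the open simplex, the $\mf L$-martingale problem inside $\Sigma^\circ$ up to the hitting time $\tau$ of $\partial\Sigma$; then, conditionally on $X_\tau=z$, one appends the solution started at $z$ furnished by the previous case, gluing continuously at $\tau$. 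That the resulting measure solves the $\mf L$-martingale problem is checked by splitting $H(X_t)-\int_0^t(\mf LH)(X_s)\,ds$ at $\tau$ for $H\in\mc D_S$, using optional stopping and the interior martingale property on $[0,\tau]$, the restriction identity and the appended solution on $[\tau,\infty)$, and the tower property to recombine, boundedness of $\mf LH$ on $\Sigma$ legitimizing each step. (Alternatively, existence is a byproduct of the tightness of the rescaled zero-range processes $X^{\scs N}$, whose subsequential limits solve this martingale problem.)

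The crux, and the step I expect to be genuinely hard, is the pair of structural lemmas, and above all absorption. The guiding computation is that for $H=x_j^\alpha$ one has $(\mf LH)(x)=M_j\,\alpha\,x_j^{\alpha-2}\bigl(\alpha-1-b\bigr)+O(x_j^{\alpha-1})$ near $\{x_j=0\}$, where $M_j=m_j\lambda(j)$, using invariance of $\bs m$ to evaluate the diffusion part as $\alpha(\alpha-1)M_j\,x_j^{\alpha-2}$ and the dominant drift contribution as $-b\,\alpha\,M_j\,x_j^{\alpha-2}$; thus for $\alpha\ge2$ the sign is $\le0$ near the facet exactly when $\alpha-1\le b$, which is possible precisely because $b>1$. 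Taking such an $H$ (suitably modified away from $\{x_j=0\}$ so as to actually belong to $\mc D_S$ --- this is the delicate point, since membership in $\mc D_S$ constrains the behaviour near \emph{all} faces at once, in particular near the corners where $\{x_j=0\}$ meets the other faces) makes $H(X_{t\wedge\sigma})$ a bounded supermartingale, where $\sigma$ is the exit time from a neighbourhood $U$ of $\{x_j=0\}$ chosen with $\{x_j=0\}\cap\partial U=\varnothing$; if $X_0\in\{x_j=0\}$ this forces $H(X_{t\wedge\sigma})=0$, hence $\sigma=\infty$ and $X_t\in\{x_j=0\}$ for all $t$. Applying this to each $k\in B$, and transporting from time $0$ to a general stopping time via the conditioning lemma, yields absorption; the restriction lemma then requires showing that functions of $\mc D_S$ restrict to a face as elements of $\mc D_{S'}$ and that $\mf L$ there agrees with $\mf L'$ on the restriction, which is exactly where the tangency conditions built into Definition~\ref{bc1} are used. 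Everything else is an assembly of Stroock and Varadhan's conditioning and localization arguments.
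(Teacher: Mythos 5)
Your architecture matches the paper's: absorption plus a restriction/extension lemma, interior uniqueness via Stroock--Varadhan with truncated coefficients, induction on the number of positive coordinates, and existence either by concatenation or as a byproduct of tightness of the zero-range processes. But there are two genuine problems. First, you misidentify the lower-dimensional operator: you build $\mf L'$ from the restricted rates $\bs r|_{S'\times S'}$, whereas the absorbed process on the face $\Sigma_B$ solves the martingale problem for the operator built from the \emph{trace} rates $r^B(j,k)=r(j,k)+\sum_{\ell\in B^c}r(j,\ell)\,\mb P_\ell[T_k=T_B]$, which retain the contribution of excursions through the absorbed sites. These differ whenever the chain can travel between two points of $B$ through $B^c$; moreover $\bs m|_{S'}$ is in general not invariant for the restricted rates (it is for the trace rates), so your induction hypothesis would not even be of the same type as the original problem. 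Identifying the correct face dynamics is exactly what the harmonic-extension machinery ($\bs u_k$, the projection $\Upsilon$, Lemma~\ref{s04}) is for, and it is also what supplies enough test functions for the restriction step: one needs to \emph{extend} elements of the lower-dimensional domain to $\mc D_S$, not merely restrict elements of $\mc D_S$ to the face.

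Second, the absorption argument, which you correctly single out as the crux, does not go through as sketched. For $H=x_j^\alpha$ the remainder in your expansion is not $O(x_j^{\alpha-1})$ uniformly: the drift contributes terms $b\alpha\,m_i r(i,j)\,x_j^{\alpha-1}/x_i$ for $i\neq j$, which blow up at corners and, when several coordinates vanish simultaneously at time $0$, are indeterminate at the starting point itself. Consequently no neighbourhood $U$ of the whole face $\{x_j=0\}$ carries the supermartingale property, and the conclusion ``$\sigma=\infty$'' is too strong: one can only obtain absorption on a set of the form $\{\max_{i\in A}x_i\le a_0\epsilon,\ \min_{k\in B}x_k\ge\epsilon\}$, up to the time $\sigma_1$ at which an initially positive coordinate vanishes, and must then restart by conditioning. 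Handling several vanishing coordinates at once forces the use of product test functions $\pi_D(x)^{p+1}$ with $1<p<b<p+1$ together with a downward recursion over subsets $D$ of the zero set (Lemmas~\ref{psi4}--\ref{47}), and turning these into elements of $\mc D_S$ requires the cutoff functions built from $I_A$, $J_A$ via the cone decomposition of Section~\ref{sec05} (Lemmas~\ref{g1} and~\ref{r01}). None of this is routine mollification; it is the bulk of the paper, and your proposal leaves it unresolved.
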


The uniqueness stated in this theorem will be proved in Section
\ref{rec03}. The existence is established in Section
\ref{sec02}. Furthermore, we shall prove in Subsection \ref{adpro}
that $\{\bb P_x : x\in \Sigma\}$ is actually Feller continuous and
defines a strong Markov process.

\smallskip\noindent{\bf \ref{rec02}.4. An absorbed diffusion.}
Before proceeding, we give a more precise description of the typical
path under $\{ \bb P_x : x\in \Sigma\}$. We start introducing the
absorbing property. For each $x\in \Sigma$, denote
\begin{equation*}
\ms A(x) \;:=\; \{j\in S : x_j =0\} \quad \rm{and} \quad \ms B(x)\;:=\; \ms A(x)^c\;.
\end{equation*}
For all nonempty subset $B\subseteq S$ define $h_B$ as the first time
one of the coordinates in $B$ vanishes
\begin{equation}
\label{hb1}
h_B \;:=\; \inf\{ t \ge 0 : \prod_{j\in B} X_t(j) =0 \} \;.
\end{equation}
Let $(\theta_t)_{t\ge 0}$ stands for the semigroup of time translation
in $C(\bb R_+, \Sigma)$. Now define recursively the sequence
$(\sigma_n,\ms B_n)_{n\ge 0}$ as follows. Set $\sigma_0:=0$, $\ms
B_0:=\ms B(X_0)$. For $n\ge 1$, we define
\begin{equation}
\label{sib}
\sigma_{n} \;:=\; \sigma_{n-1} + h_{\ms B_{n-1}}\circ
\theta_{\sigma_{n-1}}\;, 
\quad  \ms B_{n} \;:=\; \{ j\in S : X_{\sigma_n}(j)>0\}
\end{equation}
on $\{\sigma_{n-1} < \infty\}$ and $\sigma_n := \infty$, $\ms B_{n} :=
\ms B_{n-1}$ on $\{\sigma_{n-1} = \infty\}$. We also denote
\begin{equation*}
\ms A_n \;:=\; \ms B_{n}^c \;,\quad n\ge 0 \;.
\end{equation*}
We shall say that a probability $\bb P$ on $C(\bb R_+,\Sigma)$ is
\emph{absorbing} if
\begin{equation*}
\bb P\{ \textrm{$\ms A_n \subseteq \ms A(X_{t})$ 
for all $t\ge \sigma_n$} \} \;=\; 1\;, \quad \textrm{for every $n\ge 0$}\;.
\end{equation*}
Clearly, if $\bb P$ is absorbing then, $\bb P$-a.s., the sequence of
subsets $(\ms A_n)_{n\ge 0}$ is increasing and
\begin{equation*}
\exists\; 1\le n_0 \le |\ms B_0| \quad \text{such that}\quad
\sigma_{n_0} = \infty \quad \textrm{and} \quad \textrm{$\ms A_{n-1}
\subsetneq \ms A_{n}$\,,\; for $1\le n < n_0$}\;.
\end{equation*}
In particular, observe that if $\bb P$ is absorbing and starts at $\bs
e_j$ for some $j\in S$, then
\begin{equation}
\label{trap}
\bb P [X_t= \bs e_j, \forall t\ge 0] \;=\; 1 \;.
\end{equation}

In Section \ref{rec03} we prove the following result

\begin{theorem}\label{amp0}
For each $x\in \Sigma$, the probability $\bb P_x$ is absorbing.
\end{theorem}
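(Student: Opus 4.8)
\smallskip\noindent\emph{Proof proposal.}
Since $\sigma_n$ is, on $\{\sigma_{n-1}<\infty\}$, the first instant after $\sigma_{n-1}$ at which some coordinate alive at $\sigma_{n-1}$ vanishes, and $\ms A_n=\ms A(X_{\sigma_n})$, the absorbing property follows as soon as we know that $t\mapsto\ms A(X_t)$ is $\bb P_x$-a.s.\ non-decreasing; equivalently, that for each fixed $j\in S$
\begin{equation*}
\bb P_x\big[\,X_t(j)=0\ \text{ for every }t\ge h_{\{j\}}\,\big]\;=\;1\;,
\end{equation*}
where $h_{\{j\}}:=\inf\{t\ge0:X_t(j)=0\}$ (so $h_{\{j\}}=0$ if $x_j=0$, the statement being empty on $\{h_{\{j\}}=\infty\}$). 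I would prove this by induction on $|S|$, the case $|S|=1$ being trivial.

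The engine of the inductive step is a one--coordinate supermartingale that exploits $b>1$. Fix $j\in S$, $\gamma\in(0,b-1)$ and $\delta>0$, set $\epsilon_\delta:=3\gamma\delta/b$ and $U_\delta:=\{x\in\Sigma:x_j<\epsilon_\delta,\ x_k>3\delta\ \forall\,k\neq j\}$, pick $\varphi_\delta\in C^\infty(\Sigma)$ with $0\le\varphi_\delta\le1$, $\varphi_\delta\equiv1$ on $U_\delta$ and $\varphi_\delta\equiv0$ outside $\{x_j<2\epsilon_\delta,\ x_k>2\delta\ \forall\,k\neq j\}$, and put $H_{j,\delta}(x):=\varphi_\delta(x)\,x_j^{\,b+1-\gamma}$. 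Since $b+1-\gamma\in(2,b+1)$, the map $x\mapsto x_j^{\,b+1-\gamma}$ is $C^2$, $H_{j,\delta}$ is locally constant in $x_j$ near $\{x_j=0\}$ and vanishes near each face $\{x_k=0\}$, $k\neq j$; a direct verification then gives $H_{j,\delta}\in\mc D_S$. Writing $M_k:=m_k\lambda(k)$ and $\tilde\beta_j(x):=\sum_{k\neq j,\,x_k>0}(m_k/x_k)\,r(k,j)$, one computes from \eqref{03}--\eqref{gens}, at any $x\in U_\delta$,
\begin{equation*}
(\mf L\,H_{j,\delta})(x)\;=\;(b+1-\gamma)\,x_j^{\,b-1-\gamma}\,\big[\,b\,\tilde\beta_j(x)\,x_j-\gamma M_j\,\big]\;.
\end{equation*}
The crucial point is that, for the exponent $b+1$, the inward singular drift $-bM_j/x_j$ in $\mb b$ exactly annihilates the $M_j\,\partial_{x_j}^2$ part of the diffusion, leaving only the strictly negative $-\gamma M_j$ as $x_j\to0$; combined with $\tilde\beta_j\le(3\delta)^{-1}M_j$ and $x_j<\epsilon_\delta$ on $U_\delta$, this yields $\mf L\,H_{j,\delta}\le0$ on $U_\delta$, while $\mf L\,H_{j,\delta}$ is bounded on $\Sigma$ because $H_{j,\delta}\in\mc D_S$.

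Let $\zeta_\delta$ be the first time after $h_{\{j\}}$ that $X$ leaves $U_\delta$. As $H_{j,\delta}(X_t)-\int_0^t(\mf L\,H_{j,\delta})(X_s)\,ds$ is a $\bb P_x$-martingale, optional stopping at $\zeta_\delta$ — using $\mf L\,H_{j,\delta}\le0$ on $[h_{\{j\}},\zeta_\delta)$ and $H_{j,\delta}(X_{h_{\{j\}}})=\varphi_\delta(X_{h_{\{j\}}})\cdot0=0$ — shows that $(H_{j,\delta}(X_{(h_{\{j\}}+t)\wedge\zeta_\delta}))_{t\ge0}$ is a nonnegative supermartingale issued from $0$, hence a.s.\ null; since $\varphi_\delta\equiv1$ on $U_\delta$, this forces $X_s(j)=0$ for every $s\in[h_{\{j\}},\zeta_\delta]$ on $\{X_{h_{\{j\}}}\in U_\delta\}$. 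On the event that $j$ is the only coordinate vanishing at $h_{\{j\}}$ one has $X_{h_{\{j\}}}\in U_\delta$ for all small $\delta$, and by path continuity $\zeta_\delta$ is then the first time after $h_{\{j\}}$ at which some $X(k)$, $k\neq j$, reaches $3\delta$; letting $\delta\downarrow0$ gives $X_s(j)=0$ for every $s$ between $h_{\{j\}}$ and the first instant after $h_{\{j\}}$ at which a second coordinate vanishes. (That the process a.s.\ does not reach a codimension $\ge2$ face directly from a lower-codimension stratum, which disposes of the complementary event, follows from the non-degeneracy of $\mb a_{\rm s}$ in the tangent directions of $\Sigma$.) Thus, up to that instant, $X$ lives on the face $\Sigma^{S\setminus\{j\}}:=\{x\in\Sigma:x_j=0\}$. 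To conclude, one shows that after $h_{\{j\}}$ the process stays on $\Sigma^{S\setminus\{j\}}$ and evolves there as an absorbed diffusion of the same type on the $(|S|-1)$-point simplex; the inductive hypothesis then gives $X_t(j)=0$ for all $t\ge h_{\{j\}}$ (and that coordinates dying after $h_{\{j\}}$ do not resurrect), which is the claim.

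The step I expect to be the main obstacle is this last reduction to the lower-dimensional simplex: identifying the generator of the process restricted to $\Sigma^{S\setminus\{j\}}$ and, above all, reconciling the two domains — that is, extending each test function $G$ of the face martingale problem to some $\widetilde G\in\mc D_S$ whose value and $\mf L$-image reduce on $\Sigma^{S\setminus\{j\}}$ to $G$ and to the face generator applied to $G$. The indicator $\mb1\{x_k\neq0\}$ in \eqref{03}, which deletes the divergent contribution of the already absorbed coordinate $x_j$ to $\mb b$ on the face, is precisely what makes such extensions possible. The remaining technical points — measurability of $h_{\{j\}}$ and $\zeta_\delta$, the limit $\delta\downarrow0$, a stopped formulation of the martingale problem so that the induction is not circular, and the exclusion of direct hits of codimension $\ge2$ faces — should be manageable in the spirit of Stroock and Varadhan's construction.
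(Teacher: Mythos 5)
Your core computation is sound and is essentially the paper's own mechanism in the one-coordinate case: with $H=x_j^{q}$, $2<q<b+1$, the singular drift $-bM_j/x_j$ overcompensates the second-order term $q(q-1)M_jx_j^{q-2}$, so $\mf L H\le 0$ near the face $\{x_j=0\}$ but away from the other faces. This is Lemma \ref{psi4} specialized to a singleton (there the exponent is $p+1$ with $1<p<b<p+1$, i.e.\ your $q=b+1-\gamma$), and your supermartingale/optional-sampling step and the limit $\delta\downarrow 0$ mirror Lemma \ref{s02} and the conclusion of Proposition \ref{s01}.

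There are, however, two genuine gaps. First, your argument covers only the event that a single coordinate vanishes at the absorption time, and you dismiss the complementary event by asserting that the process a.s.\ cannot hit a face of codimension $\ge 2$ directly, "by non-degeneracy of $\mb a_{\rm s}$". That claim does not follow from non-degeneracy in the presence of the singular drift, and the authors state explicitly (remark C, \emph{Empty sites}, in Section \ref{rec02}.7) that they were unable to exclude simultaneous vanishing. The paper's proof is structured precisely so as not to need it: starting from $z$ with $\ms A(z)=A$, it uses the whole family $f_D=\pi_D^{\,p+1}$, $\varnothing\subsetneq D\subseteq A$, proves $\mf L_{D\cup B}f_D\le 0$ near $z$ (Lemmata \ref{psi4} and \ref{psi5}), and runs a downward recursion on $|D|$ (Lemma \ref{47}) to upgrade ``$\pi_A(X_t)=0$'' to ``$X_t(j)=0$ for every $j\in A$''. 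Second, the step you yourself flag as the main obstacle --- extending face test functions to elements of $\mc D_S$ and identifying the face generator (whose rates are the trace rates $\bs r^B$, not the restriction of $\bs r$) --- is the content of Section \ref{sec01} and of Lemmata \ref{g1} and \ref{r01}; it is a substantial part of the proof, not a deferrable technicality. Moreover, your induction on $|S|$ requires knowing that the law restricted to the face solves the lower-dimensional martingale problem, which the paper establishes only after, and using, the absorption property (Proposition \ref{sf01}). The paper instead restarts at $\sigma_n$ via a regular conditional probability distribution and Theorem 1.2.10 of Stroock--Varadhan (Lemma \ref{mpcon}, Proposition \ref{abs2}), so that only the already constructed functions $H^\epsilon_D\in\mc D_S$ are needed and no identification of the face dynamics is required at that stage.
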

Furthermore, we shall prove in Proposition \ref{r06} that 
\begin{equation}
\label{fsig}
{\rm if} \quad x\not\in\{\bs e_j:j\in S\} 
\quad\textrm{then}\quad  \bb E_x [\sigma_1] \; < \; \infty \;.
\end{equation}

\smallskip\noindent{\bf \ref{rec02}.5. Behavior after absorption.}  In
order to describe more precisely the evolution of the diffusion
process after being absorbed at a boundary, for each $B\subseteq S$
with at least two elements, consider the simplex
\begin{equation*}
\Sigma_B \;:=\; \{ x\in \bb R_+^B : \sum_{j\in B}x_j = 1 \}\;,
\end{equation*}
and the space $C^2(\Sigma_B)$ of functions $f:\Sigma_B\to \bb R$ which
are twice-continuously differentiable.  Denote by $\bs r^B=\{
r^B(j,k): j,k\in B\}$ the jump rates of the trace of the Markov chain
$x_t$ on $B$. The definition of the trace of a Markov chain is
recalled in Section \ref{sec01}. 

Let $\{\bs v^B_j : j\in B\}$ be the vectors in $\bb R^B$ defined by
\begin{equation}
\label{vbj}
\bs v^B_j\;:=\;\sum_{k\in B} r^B(j,k)\, \{\bs e_k - \bs e_j\}\;,
\end{equation}
where $\{{\bs e}_j : j\in B\}$ stands for the canonical basis of $\bb
R^B$, and let ${\bf b}^B: \Sigma_B \to \bb R^B$ be the vector field
defined by
\begin{equation*}
{\bf b}^B(x) \;:=\; b \sum_{j\in B} 
\frac{m_j}{x_j}  \, \bs v^B_j {\mb 1}\{x_j>0\}\,,\quad x\in
\Sigma_B\;. 
\end{equation*}

Denote by $\mc D_{B,A}$, $\varnothing \subsetneq A \subseteq B$, the
space of functions $H$ in $C^2(\Sigma_B)$ for which the map $x\mapsto
[\bs v^B_j \cdot \nabla H(x)]/x_j$ is continuous on $\Sigma_B$ for
$j\in A$, and let $\mf L_B$ be the operator which acts on functions in
$C^2(\Sigma_B)$ as in equation \eqref{gens}, but with the parameter
$\bs r$ replaced by $\bs r^B$,
\begin{equation}
\label{lb1}
(\mf L_B f)(x) \;:=\; {\bf b}^B (x) \cdot \nabla f(x)  
\;+\; \frac{1}{2} \sum_{j,k\in B} m_j\, r^B(j,k) \,
(\partial_{x_k} - \partial_{x_j})^2 f(x)\;,
\end{equation}
for $x\in \Sigma_B$, 

Fix $x$ in $\Sigma$ and assume that $\ms A(x)=\{j\in S : x_j=0\} \not
= \varnothing$. By Theorem \ref{amp0}, $\bb P_x$ is concentrated on
trajectories $X_t$ which belong to $C(\bb R_+, \Sigma_B)$, where $B=
\ms A(x)^c$. Denote by $\bb P^{B}_x$ the measure $\bb P_x$
restricted to $C(\bb R_+, \Sigma_B)$: 
\begin{equation*}
\bb P^{B}_x[ \bs A]\;=\; \bb P_x[ \bs A]\;, \quad
\bs A\subset C(\bb R_+, \Sigma_B)\;,
\end{equation*}
which is a probability measure on $C(\bb R_+, \Sigma_B)$.  For each
$x\in \Sigma$ and nonempty $S_0\subseteq S$, denote by $x_{S_0}$ the
coordinates of $x$ in $S_0$.

\begin{proposition}
\label{sf01}
Fix $x$ in $\Sigma$ and assume that $\ms A(x)=\{j\in S : x_j=0\} \not
= \varnothing$. Let $B=\ms A(x)^c$. The measure $\bb P^{B}_x$ starts
at $x_B$ and solves the $\mf L_B$-martingale problem \eqref{f06} with
$\mf L$, $\mc D_S$, replaced by $\mf L_B$, $\mc D_{B,B}$, respectively.
\end{proposition}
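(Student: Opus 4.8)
The starting point is that by Theorem \ref{amp0} the measure $\bb P_x$ is absorbing, and since $\ms A(x) \neq \varnothing$ with $B = \ms A(x)^c$, we have $\ms A_0 = \ms A(x) = B^c$; the absorbing property at level $n=0$ gives $\ms A_0 \subseteq \ms A(X_t)$ for all $t \ge \sigma_0 = 0$, i.e. $X_t(j) = 0$ for all $j \notin B$ and all $t \ge 0$, $\bb P_x$-a.s. Hence $\bb P_x$ is indeed concentrated on trajectories in $C(\bb R_+, \Sigma_B)$ (identifying $\Sigma_B$ with the face $\{x \in \Sigma : x_j = 0, j \notin B\}$ of $\Sigma$), and $\bb P^B_x$ is a well-defined probability on $C(\bb R_+, \Sigma_B)$ starting at $x_B$. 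The content of the proposition is therefore the martingale identity: for every $H \in \mc D_{B,B}$,
\begin{equation*}
H(X_t) - \int_0^t (\mf L_B H)(X_s)\, ds
\end{equation*}
is a $\bb P^B_x$-martingale. The natural strategy is to \emph{extend} such an $H$ to a function $\widetilde H \in \mc D_S$ on the full simplex $\Sigma$ in such a way that $\widetilde H = H$ and $\mf L \widetilde H = \mf L_B H$ on the face $\Sigma_B$, and then invoke the fact that $\bb P_x$ solves the $\mf L$-martingale problem together with the concentration of $\bb P_x$ on $C(\bb R_+, \Sigma_B)$.

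First I would construct the extension. Given $H \in C^2(\Sigma_B)$, pick any $C^2$ extension $\widetilde H$ to a neighborhood of $\Sigma_B$ in $\Sigma$ and then, using a smooth cutoff, to all of $\Sigma$; on the face itself $\widetilde H = H$ and, for directions tangent to the face, the derivatives of $\widetilde H$ agree with those of $H$. The key algebraic point to check is the compatibility of the two generators on the face. For $x \in \Sigma_B$ (so $x_j = 0$ for $j \notin B$), the second-order part of $\mf L$ is $\frac12 \sum_{j,k \in S} m_j r(j,k) (\partial_{x_k} - \partial_{x_j})^2 \widetilde H(x)$, while that of $\mf L_B$ involves the trace rates $r^B(j,k)$ over $j,k \in B$; these must be reconciled using the defining relation of the trace chain (recalled in Section \ref{sec01}), namely that the trace Dirichlet form equals the restriction of the original one, which at the level of the matrix $\bf a_{\rm s}$ says precisely that the trace process has covariance matrix given by the Schur complement — and this is exactly what appears when one expresses the second-order operator intrinsically on the face. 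For the drift, $\mf L$ contains $b \sum_{j \in S} \mb 1\{x_j \neq 0\} (m_j/x_j)\, \bs v_j \cdot \nabla \widetilde H(x)$; on $\Sigma_B$ the terms with $j \notin B$ are killed by the indicator, and one needs $\sum_{j \in B} (m_j/x_j)\, \bs v_j \cdot \nabla \widetilde H(x)$ to match $\sum_{j \in B}(m_j/x_j)\, \bs v^B_j \cdot \nabla H(x)$. This again is where the trace chain enters: $\bs v_j = \sum_{k \in S} r(j,k)(\bs e_k - \bs e_j)$ differs from $\bs v^B_j = \sum_{k \in B} r^B(j,k)(\bs e_k - \bs e_j)$, but the difference, paired against $\nabla \widetilde H$, should be controlled because excursions of the original chain into $B^c$ are collapsed by the trace, and the extension can be chosen with the correct tangential gradient so that $\bs v_j \cdot \nabla \widetilde H$ restricted to the face equals $\bs v^B_j \cdot \nabla H$. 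One must also verify $\widetilde H \in \mc D_S$: for $j \notin B$ this is automatic on $\Sigma_B$ (and can be arranged off the face by the cutoff since $\bs v_j \cdot \nabla \widetilde H$ vanishes there), while for $j \in B$ it follows from $H \in \mc D_{B,B}$, i.e. continuity of $x \mapsto [\bs v^B_j \cdot \nabla H(x)]/x_j$ on $\Sigma_B$, transported to the face.

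Once the extension is in place, the martingale property transfers immediately: since $\widetilde H \in \mc D_S$, the process $\widetilde H(X_t) - \int_0^t (\mf L \widetilde H)(X_s)\, ds$ is a $\bb P_x$-martingale; restricting to the event of full measure $\{X_\cdot \in C(\bb R_+, \Sigma_B)\}$ and using $\widetilde H(X_s) = H(X_s)$, $(\mf L \widetilde H)(X_s) = (\mf L_B H)(X_s)$ for all $s$, we conclude that $H(X_t) - \int_0^t (\mf L_B H)(X_s)\, ds$ is a $\bb P^B_x$-martingale with respect to the filtration generated by the coordinate process on $C(\bb R_+, \Sigma_B)$. The main obstacle is the extension step, and specifically the drift reconciliation near the boundary of $\Sigma_B$: one has to ensure simultaneously that $\widetilde H \in \mc D_S$ (so that $\bs v_j \cdot \nabla \widetilde H / x_j$ stays bounded for every $j \in S$, including $j \in B$ near the lower-dimensional faces of $\Sigma_B$) and that $\mf L \widetilde H$ genuinely reduces to $\mf L_B H$ on the face, which forces a careful choice of the normal derivatives of the extension using the trace-chain identity; the second-order matching via the Schur-complement identity for $\bf a_{\rm s}$ is the other point requiring care, though it is essentially linear algebra.
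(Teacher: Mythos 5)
Your reduction of the statement to an extension problem is the natural first instinct, but the extension you need does not exist in the generality you assume, and this is precisely the obstacle the paper's argument is designed to avoid. Concretely: the candidate extension of $H\in\mc D_{B,B}$ is $\widetilde H=H\circ\Upsilon$ (Lemma \ref{s04}), which does land in $\mc D_A$ for $A=B^c$ and satisfies $\mf L\widetilde H=\mf L_B H$ on $\mathring\Sigma_{B,0}$; the drift and second-order matchings you describe (the identity $\Upsilon(\bs v_j)=\bs v^B_j$ and the Dirichlet-form computation) are exactly the content of that lemma. The problem is membership in $\mc D_j$ for $j\in B$. One has $\bs v_j\cdot\nabla\widetilde H(x)=\bs v^B_j\cdot\nabla H(\Upsilon(x))$, and $\Upsilon(x)_j=x_j+\sum_{k\in A}\bs u_j(k)x_k$ can stay bounded away from $0$ while $x_j\to 0$ (take $x$ approaching a point $z$ with $z_j=0$ but $\Vert z\Vert_A>0$). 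Then $[\bs v_j\cdot\nabla\widetilde H(x)]/x_j$ blows up, so $\widetilde H\notin\mc D_S$. The paper's fix, Lemma \ref{r01}, multiplies by a cutoff $G$ that kills the function where some $x_j$, $j\in B$, is small; the resulting $H^\epsilon\in\mc D_S$ agrees with $\widetilde H$ only on $\Lambda_B(\epsilon)$, hence not on the whole face $\Sigma_{B,0}$ unless $[H]_B$ has compact support in $\mathring\Sigma_B$. For a general $H\in\mc D_{B,B}$ no global extension in $\mc D_S$ agreeing with $H$ on all of $\Sigma_{B,0}$ is produced anywhere in the paper, and your ``pick any $C^2$ extension\dots using a smooth cutoff'' does not supply one. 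You flag this as ``the main obstacle,'' correctly, but the proof proposal does not overcome it.

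The paper takes a different, indirect route. It first shows (proofs of Theorems \ref{amp0} and \ref{amp1}) that $\bb P^B_x$ is an \emph{absorbing} solution of the auxiliary $\ms L$-martingale problem on $\Sigma_B$, whose test functions $F\in D_0(\Sigma_B)$ have each restriction $[F]_{B'}$ compactly supported in the open face $\mathring\Sigma_{B'}$ — exactly the class for which the extension machinery of Lemmas \ref{g1} and \ref{r01} works in full. It then invokes the uniqueness of absorbing solutions of the $\ms L$-martingale problem (Proposition \ref{uni2}): the solution of the $\mf L_B$-martingale problem on $\Sigma_B$ starting at $x_B$, furnished by Theorem \ref{r03} applied with $S$ replaced by $B$, is also an absorbing solution of the same $\ms L$-martingale problem by Theorems \ref{amp0} and \ref{amp1}, so the two measures coincide, and a posteriori $\bb P^B_x$ satisfies the martingale identity for every $H\in\mc D_{B,B}$. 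If you want to keep a direct argument you would need either to genuinely construct the global extension (which would require controlling $\bs v_j\cdot\nabla\widetilde H/x_j$ near the set $\{x_j=0,\ \Vert x\Vert_A>0\}$ for $j\in B$, a new construction not in the paper), or to localize with the stopping times $h_B(\epsilon)$ and then reassemble — which in effect reproduces the paper's detour through absorption, the strong Markov property and uniqueness.
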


Fix $x\in \Sigma$ and recall the definition of the absorption times
$\sigma_n$ introduced in \eqref{sib}. By the strong Markov property
which, according to Proposition \ref{smark}, holds for all solutions
of the $\mf L$-martingale problem, on the set $\{\sigma_n < \infty\}$,
outside a $\bb P_x$-null set, any regular conditional probability
distribution of $\bb P_x$ given $\ms F_{\sigma_n}$ coincides with $\bb
P_{X_{\sigma_n}}$. Therefore, by Proposition \ref{sf01}, on the set
$\{\sigma_n < \infty\}$, after time $\sigma_n$, $X$ evolves on
$\Sigma_{\ms B_n}$ as the diffusion with generator $\mf L_{\ms B_n}$.

\smallskip\noindent{\bf \ref{rec02}.6. An alternative martingale
  problem.}  The previous informal description of the evolution of the
process after being absorbed at the boundary can be made rigorous by
the formulation of an alternative martingale problem, based on the
operators $\{\mf L_B\}$, for which $\{\bb P_x : x\in \Sigma\}$ are
also solutions. To define this martingale problem, we introduce an
operator $\ms L$ and a domain $D_0(\Sigma)$. For each $B\subseteq S$
with at least two elements, let
\begin{equation*}
\mathring{\Sigma}_B \;:=\; \{ x\in \Sigma_B : x_j>0\; \forall j\in B
\}\;, \quad \mathring{\Sigma}:=\mathring{\Sigma}_S\;.
\end{equation*}
In addition, given a function $F:\Sigma\to \bb R$ define
$[F]_B:\Sigma_B\to \bb R$ as
\begin{equation}
\label{cf03}
[F]_B(x) \;:=\; 
\left\{
\begin{array}{ll}
F(x,{\bf 0}) \;, & \textrm{if $x\in \mathring{\Sigma}_B$}\;;\\ 
0 \;, & \textrm{otherwise}\;.
\end{array}
\right.
\end{equation}
Note that $B$ is allowed to be equal to $S$, in which case $[F]_S(x) =
F(x) \, \mb 1\{x\in \mathring{\Sigma}\}$. In particular, $[F]_S$ may be
different from $F$ at the boundary $\{x\in\Sigma : x_j =0 \text{ for
  some $j$ } \}$.

Define $D_0(\Sigma)$ as the set of functions $F:\Sigma\to \bb R$ such
that, for all $B\subseteq S$ with at least two elements, $[F]_B$
belongs to $C^2(\Sigma_B)$ and has compact support contained in
$\mathring{\Sigma}_B$. Note that functions in the domain $D_0(\Sigma)$
are not continuous.

Recall that $x_{S_0}$, $S_0\subseteq S$, represents the coordinates of
$x$ in $S_0$. For all $F\in D_0(\Sigma)$, define $\ms LF:\Sigma \to
\bb R$ as $\ms LF(\bs e_j)=0$ for all $j\in S$ and
\begin{equation*}
\ms L F(x) \;:=\;(\mf L_B [F]_B) (x_B)\;,\quad \textrm{whenever $\ms
  B(x)=B$}\;, \quad x\not\in \{{\bs e}_j : j\in S\}\;.
\end{equation*}

To deal with the transitions between two consecutive time intervals in
$[\sigma_{n-1},\sigma_{n})$, $n\ge 1$, consider the jump process
\begin{equation*}
N_t \;:=\; \sup\{ n\ge 0 : \sigma_n\le t \} \;, \quad t\ge 0\;,
\end{equation*}
and define $N^{S}_t:=N_t\land |S|$, $t\ge 0$, so that $(N^S_t)_{t\ge
  0}$ is a bounded right-continuous non-decreasing $\ms F_t$-adapted
process. Clearly, if the measure $\bb P$ is absorbing,
\begin{equation*}
\bb P[ \textrm{$N_t=N^S_t$, for all $t\ge 0$} ] \;=\; 1\;.
\end{equation*}

Next theorem is proved in Section \ref{rec03}.

\begin{theorem}
\label{amp1}
For each $x\in \Sigma$ and any $F\in D_0(\Sigma)$,
\begin{equation*}
F(X_t) - \int_0^t  \ms LF(X_s)ds - \int_0^t F(X_s)dN^S_s\;, \quad t\ge 0
\end{equation*}
is a $\bb P_x$-martingale with respect to $(\ms F_t)_{t\ge 0}$.
\end{theorem}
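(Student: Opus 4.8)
The plan is to reduce the claim, by the Markov property (a consequence of Proposition \ref{smark}), to the single identity $\bb E_y[M_r]=F(y)$ for all $y\in\Sigma$ and $r\ge 0$, where $M_r$ denotes the process appearing in the statement. Indeed, since $\bb P_x$ is absorbing (Theorem \ref{amp0}) one has $N_\cdot = N^S_\cdot$ $\bb P_x$-a.s., and the absorption times of $\theta_sX$ lying in $(0,\infty)$ are precisely the translates by $-s$ of those of $X$ lying in $(s,\infty)$; using these facts one checks that, for $0\le s\le t$, $\bb P_x$-a.s. $M_t-M_s=\Psi_{t-s}(\theta_sX)$, where $\Psi_r(\omega):=F(\omega_r)-F(\omega_0)-\int_0^r\ms LF(\omega_u)\,du-\int_0^rF(\omega_u)\,dN^S_u(\omega)=M_r(\omega)-F(\omega_0)$. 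As $F$ and $\ms LF$ are bounded and $N^S\le|S|$, each $M_r$ is bounded, so the Markov property gives $\bb E_x[M_t-M_s\mid\ms F_s]=\bb E_{X_s}[M_{t-s}]-F(X_s)$, and the identity $\bb E_y[M_r]=F(y)$ forces this to vanish, which is the martingale property.

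To prove the identity I will split a typical trajectory into its successive excursions. Write $M_r=F(X_0)+\sum_{n=1}^{|S|}W^{(n)}_r$ with $W^{(n)}_r:=M_{\sigma_n\wedge r}-M_{\sigma_{n-1}\wedge r}$; this telescoping sum is finite because $\sigma_n=\infty$ for $n\ge n_0$ with $n_0\le|S|$, $N^S$ jumps only at the $\sigma_m$, and $N^S_0=0$. It then suffices to show $\bb E_y[W^{(n)}_r]=0$ for each $n$. Trivially $W^{(n)}_r=0$ on $\{r\le\sigma_{n-1}\}$, and also on $\{r>\sigma_{n-1},\,|\ms B_{n-1}|=1\}$ (there $X_t\equiv\bs e_j=X_{\sigma_{n-1}}$ for $t\ge\sigma_{n-1}$, $\ms LF(\bs e_j)=0$, and $N^S$ has no further jump). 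On $\{\sigma_{n-1}<\infty,\,|\ms B_{n-1}|\ge2\}$ I unfold the definitions: by the absorbing property $\ms B(X_u)=\ms B_{n-1}$ for $u\in[\sigma_{n-1},\sigma_n)$, so on that interval $F(X_u)=[F]_{\ms B_{n-1}}\big((X_u)_{\ms B_{n-1}}\big)$ and $\ms LF(X_u)=\mf L_{\ms B_{n-1}}[F]_{\ms B_{n-1}}\big((X_u)_{\ms B_{n-1}}\big)$; and at $u=\sigma_n$ a coordinate of $\ms B_{n-1}$ has vanished, so $(X_{\sigma_n})_{\ms B_{n-1}}\in\partial\Sigma_{\ms B_{n-1}}$, whence $[F]_{\ms B_{n-1}}\big((X_{\sigma_n})_{\ms B_{n-1}}\big)=0$ because $[F]_{\ms B_{n-1}}$ has compact support in $\mathring{\Sigma}_{\ms B_{n-1}}$. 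Combining these, with $Y_v:=(X_{\sigma_{n-1}+v})_{\ms B_{n-1}}$ and $\tau_n:=\sigma_n-\sigma_{n-1}$,
\[
W^{(n)}_r=[F]_{\ms B_{n-1}}\!\big(Y_{(r-\sigma_{n-1})^+\wedge\tau_n}\big)-[F]_{\ms B_{n-1}}(Y_0)-\int_0^{(r-\sigma_{n-1})^+\wedge\tau_n}\!\mf L_{\ms B_{n-1}}[F]_{\ms B_{n-1}}(Y_w)\,dw .
\]

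Now I condition on $\ms F_{\sigma_{n-1}}$, splitting on the $\ms F_{\sigma_{n-1}}$-measurable value $B=\ms B_{n-1}$. On $\{\sigma_{n-1}<\infty,\,\ms B_{n-1}=B\}$ with $|B|\ge2$, the strong Markov property shows that the conditional law of $\theta_{\sigma_{n-1}}X$ is $\bb P_{X_{\sigma_{n-1}}}$, which by Theorem \ref{amp0} is carried by $C(\bb R_+,\Sigma_B)$ and there coincides with $\bb P^B_{X_{\sigma_{n-1}}}$; by Proposition \ref{sf01} (or, if $n=1$ and $x\in\mathring{\Sigma}$, by the definition of $\bb P_x$) this measure solves the $\mf L_B$-martingale problem with domain $\mc D_{B,B}$. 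Since $[F]_B\in C^2(\Sigma_B)$ has compact support in $\mathring{\Sigma}_B$, the map $z\mapsto[\bs v^B_j\cdot\nabla[F]_B(z)]/z_j$ vanishes near $\partial\Sigma_B$ and hence is continuous on $\Sigma_B$, so $[F]_B\in\mc D_{B,B}$; therefore $v\mapsto[F]_B(Y_v)-[F]_B(Y_0)-\int_0^v\mf L_B[F]_B(Y_w)\,dw$ is, conditionally on $\ms F_{\sigma_{n-1}}$, a martingale. As $(r-\sigma_{n-1})^+\wedge\tau_n$ is a bounded stopping time of the filtration of $Y$ given $\ms F_{\sigma_{n-1}}$ (it is $\le(r-\sigma_{n-1})^+$, which is $\ms F_{\sigma_{n-1}}$-measurable, and $\tau_n=h_{\ms B_{n-1}}\circ\theta_{\sigma_{n-1}}$ is a stopping time), Doob's optional sampling theorem together with the displayed formula gives $\bb E_y[W^{(n)}_r\mid\ms F_{\sigma_{n-1}}]=0$ there. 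Combined with $W^{(n)}_r=0$ on the complementary event, this yields $\bb E_y[W^{(n)}_r]=0$, hence $\bb E_y[M_r]=F(y)$, and the theorem follows.

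The step I expect to be delicate is the computation of the second paragraph, and specifically the observation that the jump of $u\mapsto F(X_u)$ at each absorption time $\sigma_n$ equals exactly $F(X_{\sigma_n})$ — because the left limit $F(X_{\sigma_n^-})=[F]_{\ms B_{n-1}}\big((X_{\sigma_n})_{\ms B_{n-1}}\big)$ vanishes thanks to the compact-support hypothesis on $[F]_{\ms B_{n-1}}$ — so that subtracting $\int F(X_s)\,dN^S_s$ precisely ``reconnects'' the successive lower-dimensional martingales; and, relatedly, verifying in the Markov reduction that the jump process $N^S$ and the absorption times, defined from time $0$, transform correctly under the shift $\theta_s$. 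The remaining ingredients — the identification of the post-$\sigma_{n-1}$ law via conditioning on $\{\ms B_{n-1}=B\}$, and the degenerate terminal excursion with $|\ms B_{n-1}|=1$ — are routine.
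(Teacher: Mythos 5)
Your decomposition of $M_t$ into excursions between consecutive absorption times, and your key observation that the term $\int F(X_s)\,dN^S_s$ exactly compensates the jump of $F(X_\cdot)$ at each $\sigma_n$ because $[F]_{\ms B_{n-1}}$ vanishes on $\partial\Sigma_{\ms B_{n-1}}$, are both correct and are also the skeleton of the paper's argument. But the proof is circular at its crucial step. At the point where Theorem \ref{amp1} is proved, the only hypothesis on $\bb P_x$ is that it solves the $\mf L$-martingale problem on $\Sigma$ for test functions in $\mc D_S$ (uniqueness has not yet been established; indeed Theorem \ref{amp1} is an ingredient of the uniqueness proof). You invoke Proposition \ref{smark} for the reduction to $\bb E_y[M_r]=F(y)$, and Proposition \ref{sf01} to assert that the conditional law after $\sigma_{n-1}$ solves the $\mf L_{\ms B_{n-1}}$-martingale problem. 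In the paper both of these are proved \emph{from} Theorem \ref{amp1} (together with Proposition \ref{uni2}); Proposition \ref{sf01} in particular begins with ``By the proof of Theorem \ref{amp1}\dots'' and then uses uniqueness. So you are assuming exactly what has to be shown.

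The missing content is the mechanism that transfers the $\mf L$-martingale property on $\Sigma$ to an $\mf L_B$-martingale property on a face $\Sigma_{B,0}$. The paper does this by constructing, for each $B$, a single global test function $H_B\in\mc D_S$: first the harmonic extension $G_B=[F]_B\circ\Upsilon$ of Lemma \ref{s04}, which lies in $\mc D_{B^c}$ and satisfies $\mf L G_B=\mf L_B[F]_B$ on $\mathring\Sigma_{B,0}$, and then the modification of Lemma \ref{r01} (using the compact support of $[F]_B$) to land in $\mc D_S$ without changing the values or the action of $\mf L$ on the relevant face. The martingales $M^B_t=H_B(X_t)-\int_0^t\mf L H_B(X_s)\,ds$ are then genuine $\bb P_x$-martingales by the standing hypothesis alone, and the paper concludes by optional stopping at $(\sigma_{n+1}\wedge t)\vee s$ and $(\sigma_n\wedge t)\vee s$ — no conditioning on $\ms F_{\sigma_{n-1}}$ and no identification of conditional laws is needed. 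Your argument has no substitute for the construction of $H_B$, so the claim that the post-$\sigma_{n-1}$ dynamics is an $\mf L_{\ms B_{n-1}}$-martingale problem is unsupported; this is the actual substance of the theorem. (Your first-paragraph reduction could be salvaged by conditioning with r.c.p.d.'s as in Lemma \ref{mpcon} rather than via the Markov property, but the second gap cannot be repaired without the harmonic-extension machinery of Sections 3 and 4.)
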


The strong Markov property, assertion \eqref{fsig}, Theorem \ref{amp0}
and Theorem \ref{amp1} provide a precise picture of the dynamics
determined by $\{\bb P_x : x\in \Sigma\}$.

To fix ideas assume that $x\in \mathring{\Sigma}$. By assertion
\eqref{fsig}, the hitting time $\sigma_1$ is $\bb P_x$-a.s. finite:
\begin{equation*}
\bb P_x\big[ \sigma_1<\infty \big] \;=\; 1\;.   
\end{equation*}
Before hitting the boundary of $\Sigma$, the process $X_t$ evolves as
a diffusion process with bounded and smooth coefficients $\mb b$ and
$\mb a_{\rm s}$ (see Lemma \ref{unle} for a more precise
statement). This property characterizes the evolution on the time
interval $[0,\sigma_1]$. After $\sigma_1$, Theorem \ref{amp0} asserts
that coordinates in $\ms A_1$ remain equal to $0$ until a new
coordinate vanishes:
\begin{equation*}
\bb P_x \, \Big[ \sum_{j\in \ms A_1} X_{t}(j) = 0 
\,,\, \sigma_1 \le t < \sigma_2 \, \Big] \;=\; 1\;.
\end{equation*}
It follows from Theorem \ref{amp1} and from the strong Markov property
that, given $\{\ms B_1=B\}$, on the time interval $[\sigma_1 ,
\sigma_2)$ the coordinates of the process $X_t$ in $B$ evolve as the
original diffusion in which the Markov generator $\mf L$ is replaced
by $\mf L_B$. Furthermore, by the strong Markov property and
\eqref{fsig}, on $\{|\ms B|>1\}$, the hitting time $\sigma_2$ is $\bb
P_x$-a.s. finite. Iterating this argument, we obtain a complete
description of the path under the law $\bb P_x$: for each $n\ge 1$, on
$\{|\ms B_{n}|>1\}$, $\sigma_{n+1}$ is $\bb P_x$-a.s. finite, and on
each time interval $[\sigma_{n},\sigma_{n+1})$ the process $X_t$
evolves as a diffusion on lower and lower dimensional spaces
characterized by the generator $\mf L_{\ms B_{n}}$ where $(\ms
B_n)_{n\ge 0}$ turns to be a random decreasing sequence of subsets of
$S$. Eventually the process $X_t$ attains a point in $\{\bs e_j : j\in
S\}$. From this time on, according to observation \eqref{trap}, the
process remains trapped at this point for ever.

\smallskip\noindent{\bf \ref{rec02}.7. Remarks.}

\smallskip \noindent{\sl A. The case $|S|=2$:} When the set $S$ is a
pair, the diffusion $X_t$ can be mapped to a one-dimensional
diffusion. In this case, $\mc D$ corresponds to the set of twice
continuously differentiable functions $f:[0,1]\to \bb R$, such that
$f'(x)=f''(x)=0$ for $x=0,1$ and the respective generator $\mf L : \mc
D \to C([0,1])$ is given by
\begin{equation*}
(\mf L f)(x) \;=\; b \,\mb 1\{0<x<1\} \Big\{  \frac{M_2}{1-x}  \; -\;
\frac{M_1}{x} \Big\} f'(x) \; +\; 2(M_1+M_2) f''(x)\;.
\end{equation*}

\smallskip \noindent{\sl B. Wentzell boundary conditions:} The process
$X_t$ can be viewed as a diffusion with Wentzell boundary conditions
\cite[Section IV.7]{iw}. In order to do that we need to introduce a
differential operator for each boundary $\partial_A \Sigma := \{x\in
\Sigma : \sum_{j\in A} x_j =0 \,,\, \prod_{k\in A^c} x_k >0\}$ of
$\Sigma$.  It follows from the description of the process presented
above that the local time of the boundary $\partial_A \Sigma$ is equal
to $0$ until the process hits the boundary $\partial_A \Sigma$. From
this time until one coordinate in $A^c$ reaches $0$, the local time
strictly increases with slope $1$, and after this latter time the
local time remains constant for ever. Unfortunately, the results and
the techniques on diffusions with Wentzell boundary conditions do not
apply in our context because the drift explodes as the process
approaches the boundary.

\smallskip \noindent{\sl C. Empty sites:} Our proof does not preclude
the possibility that at time $\sigma_1$ more than one coordinate
vanishes. We believe that this event has $\bb P_x$-probability equal
to $0$, but we were not able to exclude it, and it does not play a
role in the argument.

\smallskip \noindent{\sl D. Terminology:} We refer to $\{\bb P_x :
x\in \Sigma\}$ as an ``absorbed'' diffusion to distinguish it from
``sticky'' diffusions \cite[Section IV.7]{iw}. While sticky diffusions
may reflect at the boundary, even if the local time at the boundary is
not identically equal to $0$, as observed above the process $X_t$
remains at the boundary once it hits it.

\smallskip \noindent{\sl E. Boundary conditions:} The empty
coordinates remain empty due to the strong drift. The diffusivity at
the boundary of $\Sigma$ does not vanish. In particular, the process
attempts to leave the boundary, but these attempts fail due to the
strong drift which keeps the diffusion at the boundary. Actually,
simulations show that there is a mesoscopic scale, between the
microscopic scale of the zero-range process and the macroscopic scale
of the absorbed diffusion, in which the process detaches itself from
the boundary.

\smallskip \noindent{\sl F. A model for concentration of wealth:} The
condensing zero-range processes introduced above have beeen used as a
model to describe jamming in traffic, coalescence in granular systems,
gelation in networks, and wealth concentration in macroeconomies
(\cite{eh} and references therein). 

\smallskip \noindent{\sl G. The parameter $b$:} It must be emphasized
that the parameter $b$ plays an important role. Condensation
(cf. \cite{gss, al, bl3} for the terminology) does not occur for
$b<1$. At $b=1$ condensation is expected to occur, but the time scale
in which the condensate evolves should have logarithmic corrections.
This means that for $b<1$ the diffusion whose generator is given by
\eqref{gens}, if it exists, is not expected to be absorbed at the
boundary.

\smallskip \noindent{\sl H. Asymptotic behavior as $L\to\infty$:} As
mentioned in the introduction, an interesting open problem consists in
describing the evolution of condensing zero-range processes as $N$ and
$L \to \infty$ when starting from a supercritical density profile. For
example, to prove the hydrodynamical behavior of the system if the
initial density profile $\rho_0: [0,1) \to \bb R_+$ is such that
$\rho_0(x)> \rho_c$ for all $x$, where $\rho_c$ is the critical
density (precisely defined in \cite{gss, al, l3}). An alternative open
problem, which might be more tractable, consists in proving the
scaling limit of the diffusion whose generator is given by
\eqref{gens}, in the case where $S = \bb T_L$ is the discrete
one-dimensional torus with $L$ points, and $r(j,k)$ the jump rates of
a symmetric, nearest-neighbor random walk on $\bb T_L$

\smallskip\noindent {\bf \ref{rec02}.8. The nucleation phase of
  condensing zero-range processes.}  Denote by $D(\bb R_+, \Sigma)$
the space of $\Sigma$-valued, right continuous trajectories with left
limits, endowed with the Skorohod topology, and by $\bb P^N_{x}$,
$x\in \Sigma_N$, the probability measure on $D(\bb R_+, \Sigma)$
induced by the Markov chain $X^{\scs N}_t$ starting from
$x$. Expectation with respect to $\bb P^N_x$ is represented by $\bb
E^N_x$.

\begin{theorem}
\label{mt2}
Let $x_N\in\Sigma_N$ be a sequence converging to $x\in \Sigma$. Then,
$\bb P^N_{x_N}$ converges to $\bb P_x$ in the Skorohod topology.
\end{theorem}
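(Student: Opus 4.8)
The plan is to establish convergence in two stages: tightness of the family $\{\bb P^N_{x_N}\}$ in $D(\bb R_+,\Sigma)$, and identification of every subsequential limit as the unique solution $\bb P_x$ of the $\mf L$-martingale problem provided by Theorem \ref{r03}. Since $\Sigma$ is compact, the hard part of tightness is the oscillation control; the delicate point is that the discrete generator $\mf L_{\scs N}$ in \eqref{tay1} has a drift $N g_j(Nx_j) - N m_j$ which, by \eqref{propg}, behaves like $b\, m_j/x_j$ and hence blows up near the boundary $\{x_j=0\}$. I would therefore \emph{not} try to bound the drift uniformly; instead I would use test functions from $\mc D_S$ whose special structure tames the singularity.

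First I would prove tightness. For a function $H\in C^2(\Sigma)$, a Taylor expansion of \eqref{tay1} gives
\begin{equation*}
(\mf L_{\scs N} H)(x) \;=\; \sum_{j\in S} \big(N g_j(Nx_j) - N m_j\big)\, \bs v_j\cdot\nabla H(x)/\lambda(j)\text{-type terms} \;+\; \tfrac12\!\sum_{j,k} g_j(Nx_j) r(j,k)(\partial_{x_k}-\partial_{x_j})^2 H(x) \;+\; O(1/N)\;,
\end{equation*}
where the $O(1/N)$ error is uniform since $H\in C^2$ and $g_j$ is bounded. Choosing $H\in\mc D_S$, the continuity of $x\mapsto[\bs v_j\cdot\nabla H(x)]/x_j\,\mb 1\{x_j>0\}$ together with \eqref{propg} shows that $\mf L_{\scs N}H \to \mf L H$ uniformly on $\Sigma_N$ (one splits into the region $x_j\ge\delta$, where $Ng_j(Nx_j)-Nm_j\to b m_j/x_j$ boundedly, and $x_j<\delta$, where $\bs v_j\cdot\nabla H(x)/x_j$ is small). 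Applied to the coordinate-like functions $H_i(x)=x_i$ — which lie in $\mc D_S$ because $\bs v_j\cdot\nabla H_i = (\bs v_j)_i$ vanishes when $x_j=0$ only if... — more safely, applied to a collection of functions in $\mc D_S$ separating points, the Aldous–Rebolledo criterion yields tightness: $H(X^N_t)-\int_0^t(\mf L_{\scs N}H)(X^N_s)\,ds$ is a martingale with predictable quadratic variation $\int_0^t\big[(\mf L_{\scs N}H^2)-2H(\mf L_{\scs N}H)\big](X^N_s)\,ds$, which is $O(1/N)$ uniformly, so the martingale part vanishes, and the bounded-variation part has modulus of continuity controlled by $\sup_{\Sigma_N}|\mf L_{\scs N}H|<\infty$. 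This is where a \emph{replacement lemma} is needed: to control expressions that are not directly of the form $\mf L_{\scs N}H$ for $H\in\mc D_S$ — in particular to handle functions whose singular part is $1/x_j$ rather than a nice $C^2$ object — one shows that time-averages of such singular functionals are negligible, exploiting that the process spends little time where a coordinate is small (the strong inward drift near $\{x_j=0\}$). I expect this replacement estimate to be the main obstacle: it requires an a priori bound, uniform in $N$, of the form $\bb E^N_{x_N}\big[\int_0^T \mb 1\{X^N_s(j)<\delta\}\,ds\big]\le C\,\omega(\delta)$ with $\omega(\delta)\to 0$, proved via an appropriate Lyapunov function such as $-\log x_j$ or $x_j^{1-b}$ (recall $b>1$), whose discrete generator is bounded above precisely because of \eqref{propg}.

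Second, I would identify the limit. Let $\bb Q$ be any subsequential limit of $\bb P^N_{x_N}$; since the jumps of $X^N$ have size $(\bs e_k-\bs e_j)/N\to 0$ and the total jump rate over a bounded time interval is $O(N^2)\cdot\sup g_j\cdot\|r\|$ but the relevant quantity $\sup_s|X^N_{s}-X^N_{s-}|\to 0$, the limit $\bb Q$ is supported on $C(\bb R_+,\Sigma)$. Clearly $\bb Q[X_0=x]=1$. Fix $H\in\mc D_S$; for $0\le s_1<\cdots<s_n\le s<t$ and a bounded continuous $\Phi:\Sigma^n\to\bb R$,
\begin{equation*}
\bb E^N_{x_N}\Big[\Big(H(X^N_t)-H(X^N_s)-\int_s^t(\mf L_{\scs N}H)(X^N_u)\,du\Big)\Phi(X^N_{s_1},\dots,X^N_{s_n})\Big]=0\;.
\end{equation*}
Passing to the limit along the subsequence, using the uniform convergence $\mf L_{\scs N}H\to\mf L H$ on $\Sigma_N$ established above, the continuity of $\mf L H$ on $\Sigma$ (valid since $H\in\mc D_S$), and the fact that the map $\omega\mapsto\big(H(\omega_t)-H(\omega_s)-\int_s^t\mf L H(\omega_u)\,du\big)\Phi(\omega_{s_1},\dots,\omega_{s_n})$ is bounded and continuous on $C(\bb R_+,\Sigma)$ (the time integral of a continuous function converges under uniform convergence of paths), one concludes that the corresponding expectation under $\bb Q$ vanishes. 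Hence $H(X_t)-\int_0^t\mf L H(X_s)\,ds$ is a $\bb Q$-martingale for every $H\in\mc D_S$, i.e.\ $\bb Q$ solves the $\mf L$-martingale problem started at $x$. By the uniqueness in Theorem \ref{r03}, $\bb Q=\bb P_x$. Since every subsequential limit equals $\bb P_x$ and the family is tight, $\bb P^N_{x_N}\Rightarrow\bb P_x$, which is the assertion.

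One technical caveat to address in the write-up: the convergence of the finite-dimensional distributions under the Skorohod topology must be combined with continuity of the limit to upgrade to full Skorohod convergence, but since $\bb P_x$ is supported on continuous paths, convergence in $D(\bb R_+,\Sigma)$ to $\bb P_x$ is equivalent to convergence in $C(\bb R_+,\Sigma)$, so no separate argument is needed beyond the vanishing-jump-size observation. The only genuinely new input beyond Stroock–Varadhan-style soft arguments is thus the replacement lemma controlling the singular drift terms, and I would present it as a self-contained proposition before the proof of tightness.
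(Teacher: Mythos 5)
Your overall architecture (tightness, identification of limit points as solutions of the $\mf L$-martingale problem, then uniqueness from Theorem \ref{r03}) is the paper's, but the identification step contains a genuine error. You claim that $\mf L_{\scs N}H\to\mf L H$ uniformly on $\Sigma_N$ for $H\in\mc D_S$. This is false: expanding \eqref{tay1} one finds, besides the drift terms you treat correctly, the second-order term $\tfrac12\sum_{j,k}g_j(Nx_j)r(j,k)(\partial_{x_k}-\partial_{x_j})^2H(x)$, whose deviation from the coefficient $m_j$ of $\mf L$ is $\tfrac12\sum_j\{g_j(Nx_j)-m_j\}(\Delta_jH)(x)$. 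Since $g_j(0)=0\neq m_j$, this does not vanish uniformly near $\{x_j=0\}$ (this is exactly the content of Lemma \ref{texp}, which asserts only boundedness of $\mf L_{\scs N}H$, not uniform convergence). Killing this term is the real role of the replacement lemma, and here your proposed substitute fails: the a priori bound $\bb E^N_{x_N}[\int_0^T\mb 1\{X_s(j)<\delta\}\,ds]\le C\omega(\delta)$ is false for this model, because the drift $-b\,m_j\lambda(j)/x_j$ pushes the coordinate \emph{toward} the boundary (not ``inward'' as you write), and after absorption the process spends \emph{all} remaining time with $X_s(j)=0$. The paper's Lemma \ref{replace} is of a completely different nature: it is a local averaging statement, $\max_x\bb E^N_x[(N\int_0^{1/N}\{g_\ell(NX_s(\ell))-m_\ell\}\,ds)^2]\to0$, proved via Dynkin martingales built from $\mc L$-harmonic functions on $S$ and not requiring $b>1$; Corollary \ref{prore} then transfers this to macroscopic time intervals. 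Without this (or an equivalent) your passage to the limit in the martingale identity does not go through.

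Two further, lesser points. In the tightness step you leave unspecified which functions of $\mc D_S$ ``separate points''; the coordinate maps are not in $\mc D_S$, and the paper has to work for this: it reduces (Lemma \ref{prolt}) to short-time displacement from a converging sequence $y_N\to y$, and then, with $A=\ms A(y)$, $B=\ms B(y)$, controls $\Vert X\Vert_A$ via $\Phi_\epsilon\circ J_A$ (Corollary \ref{le1}, requiring the nontrivial construction of $I_A$ in Lemma \ref{propi}) and the $B$-coordinates via $\phi_j\circ\Upsilon\in\mc D_A$ together with the modification Lemma \ref{r01} and a stopped process. Also, the predictable quadratic variation density $\mf L_{\scs N}H^2-2H\,\mf L_{\scs N}H$ is $O(1)$, not $O(1/N)$; the martingale part does not vanish (it becomes the martingale part of the diffusion), though its contribution over an interval of length $t_N\to0$ does, which is all the Aldous criterion needs.
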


Note that for each $x\in \Sigma$, the measure $\bb P_x$ is
concentrated on the space $C(\bb R_+,\Sigma)$ of continuous
trajectories.

The proof of Theorem \ref{mt2} is divided in two steps.  We show in
Proposition \ref{proptight} that the sequence of probability measures
$\bb P^N_{x_N}$ is tight, and we prove in Proposition \ref{mp1} that
any limit point of the sequence $\bb P^N_{x_N}$ solves the martingale
problem \eqref{f06}. Of course, the existence part of Theorem
\ref{r03} follows from this result. It is worth remarking that in the
proof of tightness we do not need to require $b>1$.

\section{Harmonic Extension}
\label{sec01}

Fix a proper subset $B$ of $S$ with at least two elements and let
$A=B^c$. The main result of this section asserts that it is possible
to extend a smooth function $f:\Sigma_B \to \bb R$ to a function
$F:\Sigma \to \bb R$ in such a way that $F$ belongs to $\mc D_A$ and
$(\mf L F)(x) = (\mf L_B f)(x_B)$ for $x$ in the submanifold
\begin{equation*}
\Sigma_{B,0} \;:=\; \{x\in \Sigma : \sum_{i\in A} x_i =0\}\;.
\end{equation*}

\smallskip\noindent{\bf \ref{sec01}.A The Trace process.}
Let us start recalling the definition of the trace of a Markov chain
on a subset of its state space. We refer to \cite{bl2} for more
details.

Let $D(\bb R_+, S)$ be the set of right-continuous trajectories $e:
\bb R_+ \to S$ with left limits, endowed with the Skorohod
topology. Denote by $\mb P_j$, $j\in S$, the probability measure on
$D(\bb R_+, S)$ induced by the Markov chain $(x_t)_{t\ge 0}$ with jump
rates ${\bs r}=\{r(j,k) : j,k \in S \}$ and starting from $j$. Denote
by $T_{S_0}$ (resp. $T^+_{S_0}$), $S_0 \subseteq S$, the hitting time
of (resp. return time to) $S_0$:
\begin{equation*}
T_{S_0} \;=\; \inf\{ t\ge 0 : x_t\in S_0\}\;, \quad
T^+_{S_0} \;=\; \inf\{ t\ge \tau_1 : x_t\in S_0 \}\;,
\end{equation*}
where $\tau_1$ represents the time of the first jump: $\tau_1 =
\inf\{t\ge 0 : x_t \not = x_0 \}$. Fix a nonempty subset $B$ of $S$
with at least two elements and denote by $(x^B_t)_{t\ge 0}$ the trace
of $(x_t)_{t\ge 0}$ on $B$. This is the irreducible, $B$-valued Markov
chain whose jump rates, denoted by ${\bs r}^B=\{r^B(j,k) : j,k\in
B\}$, are given by
\begin{equation}
\label{11}
r^B(j,k) \;:=\; \lambda(j) \, \mb P_j [ T_{k} = T^+_{B}]\;, \quad k\; \not =
j\;\in\; B\;,
\end{equation}
and set $r^B(j,j)=0$, $j\in S$ for notational convenience.

\smallskip\noindent{\bf \ref{sec01}.B The functions ${\bs u}_k$.}
For each $k\in B$, let ${\bs u}_k = {\bs u}^B_k:S\to [0,1]$ be the
only $\mc L$-harmonic extension on $S$ of the indicator of $\{k\}$ on
$B$, i.e. ${\bs u}_k$ is the solution of
\begin{equation*}
\left\{
\begin{array}{ll}
{\bs u}_k(j) = \delta_{j,k}, & \textrm{for $j\in B$}; \\
\mc L \bs u_k(j)=0, & \textrm{for $j\in S\setminus B$}.
\end{array}
\right.
\end{equation*}
Actually, the vectors $\{\bs u_k : k\in B\}$ can also be written as
probabilities:
\begin{equation}
\label{15}
\mb P_j [ T_{k} = T_{B}] \;=\; \bs u_k(j) \;, \quad \forall \, k\in B \;,\;
j\in S\; .
\end{equation}
In particular, by using the strong Markov property in \eqref{11} we
get the relation
\begin{equation*}
r^B(j,k) \;=\; r(j,k) \;+\; \sum_{\ell\in B^c} r(j, \ell) \, \mb P_\ell [
T_k = T_{B}]\; =\; \sum_{\ell\in S} r(j,\ell) \, \bs u_k(\ell)\;,
\;\;\text{for $k\not = j \in B$}\;.
\end{equation*}

\smallskip\noindent{\bf \ref{sec01}.C Relation between ${\bs u}_k$ and
  ${\bs v}^B_j$.}  Recall the definition of the vectors $\{\bs v_j^B :
j\in B\}$ introduced in \eqref{vbj}.  We claim that
\begin{equation}
\label{10}
{\bs v}^B_j(k) \;=\; \mc L {\bs u}_k(j) \quad \textrm{for all $j,k \in B$}\;.
\end{equation}
Indeed, on the one hand, by definition of $\bs v_j^B$, and by the last
identity of the previous subsection,
\begin{equation*}
{\bs v}^B_j(k) \;=\; r^B(j,k) \;=\; \mc L {\bs u}_k(j)\;,
\quad \textrm{for $k\not = j \in B$}\;.
\end{equation*}
On the other hand, for any $k\in B$,
\begin{equation*}
\sum_{j\in B}m_j{\bs v}^B_j(k) \;=\; 0 \;=\; 
\sum_{j\in B} m_j\mc L {\bs u}_k(j) \;.
\end{equation*}
The first identity follows from the fact that $\bs m$ restricted to
$B$ is also an invariant measure for $\bs r^B$. For the second
equality, as $\mc L {\bs u}_k(j) =0$ for $j\not\in B$, observe that
$\sum_{j\in B} m_j\mc L {\bs u}_k(j) = \sum_{j\in S} m_j\mc L {\bs
  u}_k(j) = 0$ because $\bs m$ is an invariant measure for $\bs
r$. The two previous displayed equations yield claim \eqref{10}.

Let $\mc L^B$ stand for the generator corresponding to the jump rates
$\bs r^B$. Then, for any $j,k\in B$, ${\bs v}^B_j(k)$ equals $\mc
L^B\bs e_k(j)$, where to keep notation simple, we let $\{\bs e_k: k\in
B\}$ stand for the canonical basis of $\bb R^B$. Thus, \eqref{10} can
also be written as
\begin{equation}
\label{uek}
\mc L^B \bs e_k  \;\equiv\; \mc L {\bs u}_k \quad \textrm{on $B$
for any $k\in B$}\;.
\end{equation}

\smallskip\noindent{\bf \ref{sec01}.D The projection $\Upsilon^B$.} 
Recall the definition of the submanifold $\Sigma_{B,0}$ introduced at
the beginning of this section. Denote by $\Upsilon=\Upsilon^B:\Sigma
\to \Sigma_B$ the linear map given by
\begin{equation}
\label{bsu}
[\Upsilon (x)]_k \;=\; \bs u_k \cdot x \;=\; x_k + 
\sum_{j\in A} \bs u_k(j) x_j \;, \quad k\in B \;.
\end{equation}
It is easy to check that $\Upsilon(x)\in \Sigma_B$ for all $x\in
\Sigma$, and that
\begin{equation*}
\Upsilon(x) \;=\; x_B \; \text{ for all } x\in \Sigma_{B,0} \;,
\end{equation*}
where $x_B$ stands for the coordinates of $x$ in $B$. 

We claim that
\begin{equation}
\label{uvuv}
\Upsilon(\bs v_j) \;=\; \bs v^B_j \;, \quad j\in B\;.
\end{equation}
Indeed, by definition of $\Upsilon$ and of the vectors $\{\bs v_j : j\in
S\}$,
\begin{equation}
\label{ff02}
\Upsilon({\bs v}_j)(k) \;=\; {\bs u}_k \cdot {\bs v}_j \;=\; 
\mc L {{\bs u}_k}(j)\;,\quad j \in S \,,\; k\in B \;.
\end{equation}
By \eqref{10}, the last expression equals $\bs v_j^B(k)$ proving the
assertion.

\smallskip\noindent{\bf \ref{sec01}.E Harmonic extensions.}  For a
nonempty subset $B$ of $S$, denote by $\bs m_B$ the restriction of the
measure $\bs m$ on $B$: $m_B(j) = m(j)$, $j\in B$. Denote by $\mc L^*$
(resp. $\mc L^{B,*}$) the adjoint of the generator $\mc L$ (resp. $\mc
L^{B}$) in $L^2(\bs m)$ (resp. $L^2(\bs m_B)$), and by $\mc S$
(resp. $\mc S^{B}$) the symmetric part of $\mc L$ (resp. $\mc L^{B}$):
$\mc S = (1/2)(\mc L + \mc L^*)$ (resp. $\mc S^B = (1/2)(\mc L^B + \mc
L^{B,*})$).

Given a function $f\in C^2(\Sigma_B)$, define $F\in C^2(\Sigma)$ as
$F=f\circ \Upsilon$.

\begin{lemma}
\label{s04}
The function $F$ is an extension of $f$ in the sense that
\begin{equation}
\label{ext1}
F(x) \;=\; f(x_B) \;, \quad \forall x\in \Sigma_{B,0}\;.
\end{equation}
Moreover,
\begin{equation}
\label{jina}
{\bs v}_j \cdot \nabla F(x) = 0 \;,\quad j\in A \,,\; x\in \Sigma\;,
\end{equation}
so that $F\in \mc D_A$. Finally, 
\begin{equation} 
\label{eq04}
\mf L F (x)\;=\;\mf L_B f(x_B)\;,\quad  x\in\mathring{\Sigma}_{B,0}\;,
\end{equation}
where
\begin{equation*}
\mathring{\Sigma}_{B,0} \;:=\; \{ x\in \Sigma_{B,0} : x_j>0 \; 
j\in B \}\;. 
\end{equation*}
\end{lemma}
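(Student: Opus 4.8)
The plan is to verify the three assertions in turn, exploiting the factorization $F = f \circ \Upsilon$ and the linearity of $\Upsilon$. The extension property \eqref{ext1} is immediate: since $\Upsilon(x) = x_B$ for every $x \in \Sigma_{B,0}$, we get $F(x) = f(\Upsilon(x)) = f(x_B)$ on $\Sigma_{B,0}$, so nothing further is needed there.

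For \eqref{jina}, I would compute $\nabla F$ by the chain rule. Writing $\Upsilon$ in coordinates as $[\Upsilon(x)]_k = \bs u_k \cdot x$, $k \in B$, we have $\partial_{x_i} [\Upsilon(x)]_k = \bs u_k(i)$, hence for $j \in S$,
\begin{equation*}
\bs v_j \cdot \nabla F(x) \;=\; \sum_{k\in B} (\partial_k f)(\Upsilon(x)) \, \big( \bs v_j \cdot (\bs u_k(\cdot)) \big) \;=\; \sum_{k\in B} (\partial_k f)(\Upsilon(x)) \, \Upsilon(\bs v_j)(k)\;,
\end{equation*}
where $\Upsilon(\bs v_j)(k) = \bs u_k \cdot \bs v_j$. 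By \eqref{ff02} this equals $\mc L \bs u_k(j)$. For $j \in A = B^c$, the defining property of the harmonic extension $\bs u_k$ gives $\mc L \bs u_k(j) = 0$; therefore $\bs v_j \cdot \nabla F(x) = 0$ for all $x \in \Sigma$ and all $j \in A$. Since $F \in C^2(\Sigma)$ and the map $x \mapsto [\bs v_j \cdot \nabla F(x)]/x_j \, \mb 1\{x_j>0\}$ is then identically zero, hence continuous, $F$ lies in $\mc D_j$ for each $j \in A$, i.e. $F \in \mc D_A$.

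For the identity \eqref{eq04}, I would again use the chain rule, now through second order. The second-order term of $\mf L F$ involves $(\partial_{x_k} - \partial_{x_j})^2 F$; since $\Upsilon$ is linear, the Hessian of $F$ at $x$ is the pullback of the Hessian of $f$ at $\Upsilon(x)$ under the linear map whose matrix has entries $\bs u_k(i)$, which will reproduce exactly the second-order part of $\mf L_B f$ evaluated at $\Upsilon(x)$ — here one uses that $a^B_{i,k} = \langle \bs e_i, -\mc L^B \bs e_k \rangle_{\bs m_B}$ is the pullback of $\bf a$ via the $\bs u$'s, which is precisely the content of \eqref{uek}. The first-order term requires more care: by the computation above,
\begin{equation*}
\bs b(x)\cdot\nabla F(x) \;=\; b\sum_{j\in S}\mb 1\{x_j\neq 0\}\,\frac{m_j}{x_j}\, \sum_{k\in B}(\partial_k f)(\Upsilon(x))\,\bs v^B_j(k)\;,
\end{equation*}
using \eqref{uvuv}. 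On $\mathring{\Sigma}_{B,0}$ we have $x_j > 0$ for $j \in B$ and $x_j = 0$ for $j \in A$, so $\Upsilon(x) = x_B$ and only the terms $j \in B$ survive (the $j \in A$ terms carry the factor $\mb 1\{x_j \neq 0\} = 0$), giving exactly $\bs b^B(x_B)\cdot\nabla f(x_B)$. Combining the two pieces yields $\mf L F(x) = \mf L_B f(x_B)$ on $\mathring{\Sigma}_{B,0}$.

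The main obstacle is the bookkeeping in the second-order term: one must check that the quadratic form $\sum_{j,k\in S} m_j r(j,k) (\partial_{x_k} - \partial_{x_j})^2$, when pulled back through $\Upsilon$, collapses to $\sum_{j,k\in B} m_j r^B(j,k) (\partial_{x_k}-\partial_{x_j})^2$. This is where the identity \eqref{uek} (equivalently \eqref{10}), together with the invariance of $\bs m$ for both $\bs r$ and $\bs r^B$, does the real work — it guarantees that the contribution of the non-$B$ coordinates, funneled through the harmonic extensions $\bs u_k$, is precisely the trace Dirichlet form on $B$. Once that algebraic identity is in place, the remainder is a routine chain-rule computation.
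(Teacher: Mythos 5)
Your proposal is correct and follows essentially the same route as the paper: the extension property from $\Upsilon(x)=x_B$ on $\Sigma_{B,0}$, the chain rule together with \eqref{ff02} and the harmonicity of $\bs u_k$ off $B$ for \eqref{jina}, and for \eqref{eq04} the identities \eqref{uvuv} and \eqref{uek} (the latter, combined with $\bs u_\ell=\bs e_\ell$ on $B$ and the vanishing of $\mc L\bs u_i$ on $A$, yielding $\langle \bs u_\ell,-\mc S\bs u_i\rangle_{\bs m}=\langle \bs e_\ell,-\mc S^B\bs e_i\rangle_{\bs m_B}$). You correctly identify the pullback of the Dirichlet form through the harmonic extensions as the crux, which is exactly the computation the paper carries out in detail.
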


\begin{proof}
The first assertion of the lemma follows from the displayed equation
below \eqref{bsu}. We turn to the second assertion. Fix an arbitrary
$x\in \Sigma$. By definition of $\Upsilon$, $\partial_{x_k}
\Upsilon(x)(\ell) = \bs u_\ell (k)$ for all $\ell\in B$, $k\in
S$. Hence, by definition of $F$ and by \eqref{ff02}, 
\begin{equation}
\label{ob1}
{\bs v}_j\cdot \nabla F(x) \;=\;  \Upsilon({\bs v}_j)\cdot 
[(\nabla f) (\Upsilon(x))]\;, \quad j\in S\;. 
\end{equation}
If $j$ belongs to $A$, by \eqref{ff02} and by definition of ${\bs
  u}_k$, $\Upsilon({\bs v}_j) (k) = \mc L {{\bs u}_k}(j) =0$ for all
$k\in B$.  This completes the proof of the second assertion.

We turn to the proof of the last assertion of the lemma. Fix $x\in
\mathring{\Sigma}_{B,0}$ and so $\Upsilon(x)=x_B$. We first examine
the first-order terms. By \eqref{ob1} and \eqref{uvuv} we have
\begin{equation*}
{\bs v}_j\cdot \nabla F(x) \;=\; \bs v^B_j \cdot \nabla f(x_B) \;, 
\quad j\in B\;.
\end{equation*}
Therefore, by \eqref{jina} and this last identity we conclude that the
first-order part of $(\mf L F)(x)$ equals the first-order part of
$(\mf L_B f)(x_B)$. 

It remains to examine the second-order terms of the generators. The
second-order piece of $(\mf L F)(x)$ is
\begin{equation*}
\frac 12 \sum_{j,k\in S} m_j r(j,k)(\partial_{x_k} - \partial_{x_j})^2
(f\circ \Upsilon)(x) \;.
\end{equation*}
A simple computation shows that
\begin{equation*}
(\partial_{x_k} - \partial_{x_j})^2(f\circ \Upsilon)(x) \;=\; 
\sum_{i,\ell\in B} \partial^2_{x_{\ell},x_{i}}f(x_B)
\{ {\bs u}_{\ell}(k) - \bs u_{\ell}(j) \} 
\{ {\bs u}_{i}(k) - \bs u_{i}(j) \}\;.
\end{equation*}
In view of this identity, interchanging the sums, the penultimate
displayed equation becomes
\begin{equation}
\label{sot2}
\sum_{i,\ell\in B} (\partial^2_{x_{\ell},x_{i}}f) (x_B)\,
\langle \bs u_{\ell} , -\mc S \bs u_{i} \rangle_{\bs m} \;, 
\end{equation}
where, recall, $\mc S$ represents the symmetric part of the generator
$\mc L$ in $L^2(\bs m)$.

Fix $i$, $\ell\in B$. Since $\mc L \bs u_{i}$ vanishes on $A=B^c$, 
\begin{equation*}
\langle \bs u_{\ell} , -\mc L \bs u_{i} \rangle_{\bs m}  \;=\;
\sum_{j\in B} \bs u_{\ell} (j) \, (-\mc L \bs u_{i}) (j) \, \bs m (j)
\end{equation*}
On the set $B$, $\bs u_{\ell}$ and $\bs e_{\ell}$ coincide, while, by
\eqref{uek}, $\mc L \bs u_{i} = \mc L^B \bs e_{i}$. Hence, the
penultimate formula is equal to
\begin{equation*}
\sum_{i,\ell\in B} (\partial^2_{x_{\ell},x_{i}}f) (x_B)\,
\langle \bs e_{\ell} , -\mc S^B \bs e_{i} \rangle_{\bs m_B} \;, 
\end{equation*}
where the scalar product is now performed over $B$. This expression is
equal to
\begin{equation*}
\frac{1}{2} \sum_{i,\ell\in B} \partial^2_{x_{\ell},x_{i}}f(x_B) 
\sum_{j,k\in B} m_jr^B(j,k)\{ \bs e_{\ell}(k) - \bs e_{\ell}(j) \} 
\{ \bs e_{i}(k) - \bs e_{i}(j) \}\;.
\end{equation*}
By interchanging the sums, this term becomes
\begin{equation*}
\frac{1}{2} \sum_{j,k\in B} m_j \, r^B(j,k) \,
(\partial_{x_{k}}- \partial_{x_{j}})^2 f(x_B) \;.
\end{equation*}
This is exactly the second-order term of $(\mf L_B f)(x_B)$. Last
assertion of Lemma \ref{s04} is hence proved.
\end{proof}

\section{The domain of the generator}

The proof that all solutions of the $\mf L$-martingale problem are
absorbed at the boundary, relies on the existence of super-harmonic,
non-negative functions which are strictly positive at the boundary.
The goal of this section is to provide such functions.

This is achieved by introducing in \eqref{fg1} a class of non-negative
functions and by applying Lemmata \ref{g1} and \ref{r01}.  Lemma
\ref{g1} states that it is possible to extend certain functions
$f:\Sigma_B \to \bb R$ which belong to $\mc D_D$, $D\subset B$, to
functions $F:\Sigma \to \bb R$ which belong to $\mc D_{D\cup B^c}$,
while Lemma \ref{r01} states that it is possible to modify a function
$F:\Sigma \to \bb R$ which belongs to $\mc D_{A}$ in a neighborhood of
the set $\{x\in \Sigma : \prod_{j\in A^c} x_j=0\}$ to transform in into
a function which belongs to $\mc D_S$.

We start in Lemma \ref{propi} below by defining a class of functions
$I_A$, $\varnothing \subsetneq A \subsetneq S$, which belong to $\mc
D_A$. These functions play a key role in the argument and they have to
be interpreted as smooth perturbations of the maps $x\mapsto
\sum_{i\in A} x^2_i$.

For a nonempty subset $S_0$ of $S$, let
\begin{equation*}
\Vert x\Vert_{S_0} \;:=\; \Big( \sum_{j\in S_0}x_j^2\Big)^ {1/2}\;, 
\quad x\in \Sigma \;,
\end{equation*}
and set $\Vert \cdot \Vert := \Vert \cdot \Vert_{S}$.

\begin{lemma}
\label{propi}
Let $A$ be a nonempty, proper subset of $S$.  There exists a
nonnegative, smooth function $I_A: \Sigma \to \bb R$ in $\mc D_A$, and
constants $0<c_1 < C_1 < \infty$, such that for all $x\in \Sigma$,
\begin{equation}
\label{35}
c_1 \Vert x\Vert_A \;\le\; \sqrt{I_A(x)} \;\le\;
C_1 \Vert x\Vert_A\;.
\end{equation}
Furthermore, for $x,y\in \Sigma$,
\begin{equation}
\label{sci13}
I_A(y) \;=\; \alpha^2 I_A(x) \; \textrm{ if $y_A=\alpha x_A$ for some $\alpha\ge 0$}\;.
\end{equation}
In particular, $I_A(x)$ only depends on $x_A$ for all $x\in \Sigma$.
\end{lemma}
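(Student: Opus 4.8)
The plan is to build $I_A$ as a suitable perturbation of the quadratic $x\mapsto \Vert x\Vert_A^2 = \sum_{i\in A}x_i^2$. The naive candidate $\sum_{i\in A}x_i^2$ is smooth, nonnegative, and trivially satisfies the two-sided bound \eqref{35} with $c_1=C_1=1$ and the homogeneity \eqref{sci13}; the only thing it may fail is membership in $\mc D_A$, i.e.\ the condition that $x\mapsto [\bs v_j\cdot\nabla H(x)]/x_j$ extend continuously across $\{x_j=0\}$ for each $j\in A$. For $H(x)=\sum_{i\in A}x_i^2$ one computes $\bs v_j\cdot\nabla H(x) = 2\sum_{i\in A}x_i\,\bs v_j(i)$, and on $\{x_j=0\}$ this equals $2\sum_{i\in A\setminus\{j\}}x_i\,\bs v_j(i)$, which need not vanish. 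So the obstruction is concrete and local to the ``cross terms'' $x_i\bs v_j(i)$ with $i\neq j$. The idea is to correct these by replacing $x_i^2$-type terms by products $x_i x_{i'}$ weighted so that the offending directional derivatives are absorbed; more precisely, I would look for $I_A$ of the form $\sum_{i,i'\in A} q_{i,i'}\, x_i x_{i'}$, i.e.\ a quadratic form $x_A^{\mathrm t} Q\, x_A$ with $Q$ a symmetric positive-definite $A\times A$ matrix to be chosen. This automatically gives smoothness, nonnegativity (from positive-definiteness), the bound \eqref{35} (with $c_1,C_1$ the square roots of the extreme eigenvalues of $Q$), and homogeneity \eqref{sci13}, so the whole content reduces to choosing $Q$ so that $I_A\in\mc D_A$.

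Next I would translate the $\mc D_A$ condition into a linear-algebra condition on $Q$. With $I_A(x)=x_A^{\mathrm t}Qx_A$ we have $\bs v_j\cdot\nabla I_A(x) = 2\sum_{i,i'\in A}q_{i,i'}\,x_i\,\bs v_j(i')_{A}$ — here one must be slightly careful, since $\bs v_j$ has components outside $A$, but only the $A$-components of $\nabla I_A$ enter. On the face $\{x_j=0\}$ the term $i=j$ drops, so the condition \eqref{cf02} requires
\[
\sum_{\substack{i,i'\in A}} q_{i,i'}\, \bigl(\bs v_j\bigr)_{i'}\, x_i \;\longrightarrow\; 0 \quad\text{as } x\to z,\ x_j>0,\ z_j=0,
\]
divided by $x_j$. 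Writing $x_i = z_i + O(x_j)$ for $i\neq j$, the leading part of the numerator along the face is $\sum_{i\in A\setminus\{j\}}\bigl(\sum_{i'\in A}q_{i,i'}(\bs v_j)_{i'}\bigr) z_i$, which must vanish \emph{identically in $z$ on the face}; this forces $\sum_{i'\in A}q_{i,i'}(\bs v_j)_{i'}=0$ for all $i\in A\setminus\{j\}$ and all $j\in A$. Equivalently, $(Q\bs v_j|_A)_i = 0$ whenever $i\neq j$, i.e.\ $Q\,\bs v_j|_A \in \mathrm{span}\{\bs e_j\}$ for each $j\in A$. Once this holds, $\bs v_j\cdot\nabla I_A$ is divisible by $x_j$ with the quotient a smooth (in fact affine) function, so continuity across the face is automatic and \eqref{cf02} holds. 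So the task is: find symmetric positive-definite $Q$ with $Q\,\bs v_j|_A$ parallel to $\bs e_j$ for every $j\in A$.

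The natural choice is to take $Q$ related to $\bs m$ and the trace chain, exactly as the second-order part of $\mf L_A$ suggests. Recall from Section \ref{sec01} (cf.\ \eqref{10}, \eqref{uek}) that $\bs v^B_j(k)=\mc L\bs u_k(j)$, and that $\bs m$ restricted to $A$ is invariant for $\bs r^A$; this is what makes $\sum_{j\in A}m_j\bs v^A_j=0$. I expect that taking $Q$ to be (a small perturbation of) the inverse of the symmetrized trace-generator matrix $\mathbf a^A_{\mathrm s}$ on $A$ — or more robustly, solving the linear system ``$Q\bs v_j|_A \parallel \bs e_j$'' directly using the structure of $\{\bs v^A_j\}$ — yields an admissible $Q$; the relation $\sum_j m_j\bs v^A_j=0$ guarantees the system is consistent and that one may arrange $Q$ symmetric and positive-definite (shrinking to $\varepsilon\,\mathrm{Id} + (\text{correction})$ if needed to ensure definiteness while keeping the parallelism condition, which is linear and homogeneous in $Q$). \textbf{The main obstacle} is precisely this algebraic step: verifying that the $|A|\cdot(|A|-1)$ linear constraints $Q\bs v_j|_A\parallel\bs e_j$ on the symmetric matrix $Q$ admit a positive-definite solution. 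I would handle it by exhibiting $Q$ explicitly from the trace chain on $A$ — using that $\bs v^A_j$ is the $j$-th column of the trace generator $\mc L^A$ and that $\bs m_A\mc L^A$ is symmetric-compatible — so that $Q=(\mathbf a^A_{\mathrm s})^{-1}$ (invertible by irreducibility of the trace chain, since the only kernel direction of $\mathbf a^A_{\mathrm s}$ on $\bb R^A$ modulo constants is removed by working in the right subspace) satisfies $Q\bs v^A_j|_A$ proportional to $\bs e_j$, with positivity inherited from that of the Dirichlet form. Then \eqref{35} follows from the spectral bounds on $Q$ and \eqref{sci13} from bilinearity, completing the proof.
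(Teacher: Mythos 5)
There is a genuine gap, and it is fatal to the quadratic-form approach. Membership in $\mc D_j$ does not merely ask that $[\bs v_j\cdot\nabla H(x)]/x_j$ extend continuously across the face $\{x_j=0\}$: since the map in Definition \ref{bc1} carries the factor $\mb 1\{x_j>0\}$, which vanishes on the face, continuity forces the limit in \eqref{cf02} to equal \emph{zero}, not just to exist. Now take $I_A(x)=x_A^{\mathrm t}Qx_A$ with $Q$ symmetric and write $w_j$ for the projection of $\bs v_j$ on $\bb R^A$. Then $\bs v_j\cdot\nabla I_A(x)=2\langle Qw_j,x_A\rangle$, and your condition $Qw_j=c_j\bs e_j$ gives $\bs v_j\cdot\nabla I_A(x)=2c_jx_j$, so the quotient is the constant $2c_j$. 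Condition \eqref{cf02} then forces $c_j=0$ for every $j\in A$, i.e.\ $Qw_j=0$ for all $j\in A$. But the matrix with columns $\{w_j:j\in A\}$ is the transpose of the sub-generator of $\mc L$ restricted to $A\times A$, which is invertible because $A$ is a proper subset and the chain is irreducible (this is also Corollary \ref{cor71} with $D=A$); hence $\{w_j\}$ spans $\bb R^A$ and $Q=0$, contradicting \eqref{35}. So \emph{no} quadratic form can satisfy the lemma. (Your fallback of adding $\varepsilon\,\mathrm{Id}$ also destroys the parallelism constraint, since $\mathrm{Id}\,w_j$ is not parallel to $\bs e_j$.) The paper itself records this obstruction in the remark on the reversible case, where the natural Green-function quadratic form yields $\bs v_j\cdot\nabla I_A(x)=\mathrm{const}\cdot x_j$ with a nonzero constant and is explicitly noted \emph{not} to belong to $\mc D_A$.

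What is actually needed is the much stronger property that $I_A$ be constant in the $\bs v_j$-direction on a whole neighbourhood of the face $\{x_j=0\}$ (so that $\bs v_j\cdot\nabla I_A$ vanishes identically there, making the quotient $o(1)$ rather than $O(1)$), and this cannot be achieved by any homogeneous degree-two polynomial. The paper's construction is built precisely around this: it decomposes $\bb R^A$ into the cones $\mf C_D$ generated by mixtures of the vectors $w_j$ and $\bs e_k$ (Lemma \ref{farfan} and its corollaries), defines a piecewise-linear function that is constant along $w_j$ wherever $x_j\le 0$, mollifies it, and takes $I_A$ to be the square of the associated Minkowski-type gauge of a level set; homogeneity \eqref{sci13} and the two-sided bound \eqref{35} come from the gauge construction, and $\mc D_A$-membership comes from the local constancy along $\bs v_j$ near each face. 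Your proposal would need to be replaced by something of this genuinely nonlinear kind.
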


The proof of this lemma is postponed to the last section of this
article. The function $I_A(x)$ has to be understood as a perturbation
of the function $x\mapsto \Vert x \Vert^2_A$ to turn this latter
function an element of $\mc D_A$. Let
\begin{equation}
\label{ja1}
J_A(x)\;=\; \sqrt{I_A(x)}\;, \quad x\in \Sigma \;,
\end{equation}
Of course, $J_A$ is smooth on $\{x\in \Sigma : \Vert
x\Vert_A>0\}$. Furthermore, by \eqref{sci13}, for $x,y\in \Sigma$ such
that $x_A \not = 0$, $y_A=\alpha x_A$ for some $\alpha > 0$,
\begin{equation}
\label{ja2}
\nabla J_A(y) \;=\; \nabla J_A(x) \quad {\rm and} \quad 
\Vert y\Vert_A\, {\rm Hess}\, J_A(y) \;=\; \Vert x\Vert_A \, 
{\rm Hess} \,J_A(x)\;.
\end{equation}
In particular, 
\begin{equation}
\label{jd}
\sup_{\Vert x\Vert_A>0} \Vert \nabla J_A(x) \Vert \;<\; \infty
\; \text{ and }\;
\sup_{\Vert x\Vert_A>0} \Vert x\Vert_A \, |\partial^2_{x_j,x_k}
J_A(x)| \;<\; \infty \;,\quad j,k\in S \;.
\end{equation}
We shall use the following estimate in Lemma \ref{g1} below.

\begin{lemma}
\label{g0}
For all $k\in A$,
\begin{equation*}
\sup_{\Vert x\Vert_A>0}  \frac{|\bs v_k \cdot \nabla J_A(x)|}{x_k} 
\, \Vert x\Vert_A \;<\; \infty \;.
\end{equation*}
\end{lemma}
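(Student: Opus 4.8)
The plan is to exploit the scaling relation \eqref{sci13}--\eqref{ja2} to reduce the supremum over all of $\Sigma$ with $\|x\|_A>0$ to a supremum over the compact set $\{x\in\Sigma : \|x\|_A=1\}$, and then to use the fact that $I_A$ — hence $J_A^2$ — belongs to $\mc D_A$ to control the quotient $(\bs v_k\cdot\nabla J_A)/x_k$ near the boundary $\{x_k=0\}$. First I would write $\bs v_k\cdot\nabla J_A = (\bs v_k\cdot\nabla I_A)/(2J_A)$, which is valid wherever $\|x\|_A>0$. Since $I_A\in\mc D_k$ (because $k\in A$ and $\mc D_A=\cap_{j\in A}\mc D_j$), the map $x\mapsto (\bs v_k\cdot\nabla I_A(x))/x_k\,\mb 1\{x_k>0\}$ extends continuously to all of $\Sigma$; call this continuous function $\phi_k$. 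Then for $x_k>0$,
\begin{equation*}
\frac{\bs v_k\cdot\nabla J_A(x)}{x_k}\,\|x\|_A
\;=\; \frac{\phi_k(x)}{2J_A(x)}\,\|x\|_A
\;=\; \frac{\phi_k(x)}{2\,\sqrt{I_A(x)}}\,\|x\|_A\;.
\end{equation*}
By the lower bound in \eqref{35}, $\sqrt{I_A(x)}\ge c_1\|x\|_A$, so the right-hand side is bounded in absolute value by $|\phi_k(x)|/(2c_1)$. Since $\phi_k$ is continuous on the compact set $\Sigma$, it is bounded, and the supremum is finite.

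The one point that needs a little care is that the displayed identity $\bs v_k\cdot\nabla J_A = (\bs v_k\cdot\nabla I_A)/(2J_A)$ requires $J_A(x)>0$, i.e. $\|x\|_A>0$, which is exactly the constraint in the supremum; and the cancellation of $x_k$ against the $\mb 1\{x_k>0\}$-regularization of $\phi_k$ requires $x_k>0$, which holds on the set over which we take the supremum since points with $x_k=0$ contribute $\bs v_k\cdot\nabla I_A(x)=0$ anyway (as noted after Definition \ref{bc1}, $H\in\mc D_k$ forces $\bs v_k\cdot\nabla H=0$ when $x_k=0$) and the quotient is interpreted via the continuous extension. So in fact it is cleanest to argue directly: $\|x\|_A\,\phi_k(x)/(2\sqrt{I_A(x)})$ is a continuous function on $\{x\in\Sigma:\|x\|_A>0\}$ which, by \eqref{35}, is bounded by $|\phi_k(x)|/(2c_1)$, and $|\phi_k|$ is bounded on $\Sigma$; this gives the claim with no genuine obstacle. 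The only mild subtlety — and the place where one must invoke Lemma \ref{propi} rather than argue naively with $\|x\|_A^2$ in place of $I_A$ — is precisely that $\|x\|_A^2$ is \emph{not} in $\mc D_A$, so $\bs v_k\cdot\nabla(\|x\|_A^2)/x_k$ need not be bounded; it is the perturbation built into $I_A$ that makes $\phi_k$ continuous up to the boundary.
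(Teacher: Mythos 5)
Your proof is correct and rests on exactly the same two ingredients as the paper's: the identity $\bs v_k\cdot\nabla J_A=(\bs v_k\cdot\nabla I_A)/(2J_A)$ together with the lower bound $J_A\ge c_1\Vert x\Vert_A$ from \eqref{35}, and the continuity (hence boundedness on the compact $\Sigma$) of $x\mapsto[\bs v_k\cdot\nabla I_A(x)]/x_k$ coming from $I_A\in\mc D_A$. The only difference is cosmetic: the paper first uses the homogeneity \eqref{ja2} to rescale to the sphere $\Vert z\Vert_A=(4|A|)^{-1/2}$ before performing the same cancellation, a detour your direct computation shows to be unnecessary, and both arguments land on the same bound $\sup|\bs v_k\cdot\nabla I_A(z)/z_k|/(2c_1)$.
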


\begin{proof}
Fix $k\in A$. For every $x\in \Sigma$ such that $\Vert x\Vert_A>0$, 
\begin{equation*}
\sum_{j\in A}x(j) \;<\; 2 |A|^{1/2}\Vert x\Vert_A\;.
\end{equation*}
In particular, for each $x\in \Sigma$ such that $\Vert x\Vert_A>0$
there exists some $z\in \Sigma$ such that 
\begin{equation*}
z_A \;=\; \frac{x_A}{2 |A|^{1/2}\Vert x\Vert_A}\;\cdot
\end{equation*}
In view of \eqref{ja2}, for this choice we have that
\begin{equation*}
\frac{|\bs v_k \cdot \nabla J_A(x)|}{x_k} \, \Vert x\Vert_A 
\;=\;  \frac{1}{2|A|^{1/2}} \, \frac{|\bs v_k \cdot \nabla J_A(z)|}{z_k} \;.
\end{equation*}
Therefore,
\begin{eqnarray*}
\sup_{\Vert x\Vert_A>0}  \frac{|\bs v_k \cdot \nabla J_A(x)|}{x_k} \, 
\Vert x\Vert_A &\le& \frac{1}{2|A|^{1/2}} 
\sup_{\Vert z\Vert_A= (4|A|)^{-1/2}} \frac{|\bs v_k \cdot \nabla J_A(z)|}{z_k} \\
&=& \frac{1}{2|A|^{1/2}} \sup_{\Vert z\Vert_A= (4|A|)^{-1/2}} 
\frac{|\bs v_k \cdot \nabla I_A(z)|}{2 J_A(z)\, z_k} \\
&\le& \frac{1}{2c_1} \sup_{z_k\not = 0} 
\frac{|\bs v_k \cdot \nabla I_A(z)|}{z_k}\;, 
\end{eqnarray*}
where we used estimate \eqref{35} in the last inequality. The last
expression is finite because $I_A\in \mc D_A$, which completes the
proof.
\end{proof}

Fix a nonempty subset $B$ of $S$ and let $A=B^c$. Suppose now that in
Lemma \ref{s04}, $f:\Sigma_B\to \bb R$ is of the special form
\begin{equation}\label{fg1}
f(x) \;=\; \prod_{j\in D} x_j^{p+1} \;,\quad x\in \Sigma_B\;,
\end{equation}
for some nonempty subset $D$ of $B$ and for some $p>1$. In this case,
we may improve Lemma \ref{s04} obtaining and extension $F$ of $f$
which belongs to $\mc D_{A\cup D}$. From now on, for each nonempty
subset $S_0$ of $S$, let
\begin{equation}
\label{43}
\pi_{S_0} (x) \;=\;  \prod_{j\in S_0} x_j \;.
\end{equation}

\begin{lemma}
\label{g1}
Let $f:\Sigma_B \to \bb R$ be given by \eqref{fg1} for a nonempty
subset $D$ of $B$ and $p>1$. Then there exists a function $F:\Sigma
\to \bb R$ in $\mc D_{A\cup D}$ satisfying \eqref{ext1} and
\eqref{eq04}. Furthermore, if $\pi_D(x)=0$ then $\mf LF(x)=0$.
\end{lemma}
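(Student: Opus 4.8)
The plan is to take the harmonic extension $F = f\circ\Upsilon$ supplied by Lemma~\ref{s04} and then \emph{correct} it near the bad part of the boundary so that the resulting function lands in $\mc D_{A\cup D}$ rather than merely $\mc D_A$. Lemma~\ref{s04} already gives us a function $F_0=f\circ\Upsilon$ satisfying \eqref{ext1}, \eqref{eq04}, and belonging to $\mc D_A$, where $A=B^c$. What is missing is the boundary condition \eqref{cf02} for the coordinates $k\in D$: we need
\begin{equation*}
\lim_{\substack{x\to z\\ x_k>0}} \frac{\bs v_k\cdot\nabla F(x)}{x_k} \;=\; 0
\end{equation*}
for every $z\in\Sigma$ with $z_k=0$, $k\in D$. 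Since $f(x)=\prod_{j\in D}x_j^{p+1}$ with $p>1$, the function $f$ vanishes to order $p+1\ge 2$ along each face $\{x_k=0\}$, $k\in D$, of $\Sigma_B$, so $f$ itself belongs to $\mc D_{B,D}$; the issue is purely that the linear projection $\Upsilon$ mixes the $A$-coordinates into the $B$-coordinates, so $F_0(x)=f(\Upsilon(x))$ need not vanish on $\{x_k=0\}$ when $x$ has positive $A$-coordinates, because then $[\Upsilon(x)]_k = x_k + \sum_{j\in A}\bs u_k(j)x_j$ can be strictly positive.

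The key step is therefore to subtract off the contribution of the $A$-coordinates to $[\Upsilon(x)]_k$. Concretely, I would replace the argument $\Upsilon(x)$ by a modified map that agrees with $x_B$ on a neighborhood of $\Sigma_{B,0}$ inside $\Sigma$ but whose $k$-th coordinate is genuinely $x_k$ near the face $\{x_k=0\}$. A clean way: fix $k\in D$ and note $[\Upsilon(x)]_k - x_k = \sum_{j\in A}\bs u_k(j)x_j = \bs u_k\cdot x_A$ is a smooth function vanishing on $\Sigma_{B,0}$; multiply $F_0$ by a smooth cutoff $\chi(\pi_A(x)/\varepsilon)$ supported where $\pi_A$ is small, or, better, write
\begin{equation*}
F(x) \;:=\; f\bigl(\Upsilon(x)\bigr) \;-\; \phi(x),
\end{equation*}
where $\phi$ is a smooth function, supported away from a neighborhood of $\Sigma_{B,0}$, chosen so that $F$ vanishes to order $\ge 2$ along $\{x_k=0\}$ for each $k\in D$. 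Since $f$ is a monomial, one can even do this explicitly: write $f(\Upsilon(x)) = \prod_{j\in D}([\Upsilon(x)]_j)^{p+1}$ and expand each factor using the binomial identity $([\Upsilon(x)]_j)^{p+1} = (x_j + \bs u_j\cdot x_A)^{p+1}$; the terms that are not divisible by $x_j^2$ are exactly those with a factor $(\bs u_j\cdot x_A)^{p}$ or $(\bs u_j\cdot x_A)^{p+1}$ times $x_j$ or $1$, and all of these carry a factor $\pi_D^{0}$ but vanish on $\Sigma_{B,0}$; one subtracts a smooth function equal to the sum of the obstructing terms times a cutoff that is $1$ away from $\Sigma_{B,0}$ and $0$ near it. Because the correction is supported away from $\Sigma_{B,0}$, it does not spoil \eqref{ext1} or \eqref{eq04}; and the last claim, $\mf LF(x)=0$ when $\pi_D(x)=0$, follows once $F$ has been arranged to vanish to second order along every face $\{x_j=0\}$, $j\in D$ — both $\nabla F$ and $\mathrm{Hess}\,F$ then have the structure forcing $\mf LF=0$ there (this is already implicit in \eqref{eq04} combined with $\mf L_B f(x_B)=0$ when some $x_j=0$, $j\in D$, since $f$ vanishes to order $p+1\ge 2$).

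The main obstacle I expect is verifying the boundary condition \eqref{cf02} for the coordinates $k\in D$ \emph{after} the correction: one must check that $\bs v_k\cdot\nabla F(x)$ vanishes to order $\ge 1$ faster than $x_k\to 0$, uniformly as $x$ approaches an arbitrary boundary point $z$ with $z_k=0$, including points $z$ at which \emph{other} coordinates also vanish (so that several faces meet). Near such a corner the cutoff and the monomial structure interact, and one has to confirm that differentiating the cutoff does not produce a term of order exactly $x_k$. This is handled by choosing the cutoff to depend only on $\pi_A(x)$ (which is bounded away from $0$ on its support) and by using that $\bs v_k$ is a fixed vector while the only $x_k$-homogeneity-breaking terms in $\nabla F$ come from differentiating factors $x_k^{p+1}$, which produce $x_k^{p}$ with $p>1$; the derivative of the cutoff contributes only on a region where $\pi_A\ge\varepsilon$, i.e. away from $\Sigma_{B,0}$, where $F$ already coincides with the well-controlled $f\circ\Upsilon \in\mc D_A$ and the $D$-faces are handled by the monomial order $p+1$. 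A careful but routine estimate, in the spirit of Lemma~\ref{g0}, then closes the argument; I would organize it by first treating $k\in A$ (inherited from $F\in\mc D_A$, unchanged by a correction supported away from $\Sigma_{B,0}$ — wait, one must check the correction keeps $F\in\mc D_A$, which it does since it is globally smooth and supported in $\mathring\Sigma$), and then $k\in D$ as above.
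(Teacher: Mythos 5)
You correctly identify what is missing from $F_0=f\circ\Upsilon$: the boundary condition \eqref{cf02} for $k\in D$ fails because $[\Upsilon(x)]_k=x_k+\sum_{j\in A}\bs u_k(j)x_j$ need not be small when $x_k$ is. But your fix cannot work as stated, because the defect of $F_0$ is \emph{not} localized away from $\Sigma_{B,0}$. Take $z$ with $z_k=0$ for some $k\in D$ and $0<\Vert z\Vert_A<\delta$, with $\delta$ arbitrarily small: then $[\Upsilon(z)]_k$ can be strictly positive, $\bs v_k\cdot\nabla F_0(x)=\bs v^B_k\cdot\nabla f(\Upsilon(x))$ in general has a nonzero limit as $x\to z$, so $[\bs v_k\cdot\nabla F_0(x)]/x_k$ blows up; likewise $\mf LF_0(z)\neq 0$ even though $\pi_D(z)=0$. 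Hence any correction $\phi$ supported away from a neighborhood of $\Sigma_{B,0}$ leaves $F=F_0-\phi$ uncorrected on the slab $\{0<\Vert x\Vert_A<\delta\}$, where both the $\mc D_D$ condition and the identity $\mf LF=0$ on $\{\pi_D=0\}$ fail; if instead $\phi$ is allowed to reach $\Sigma_{B,0}$, your justification that \eqref{ext1}, \eqref{eq04} and the membership in $\mc D_A$ are preserved collapses. (The multiplicative cutoff $\chi(\pi_A(x)/\varepsilon)$ has the same problem, aggravated by the fact that $\pi_A$ vanishes on the part of the bad region where only some $A$-coordinates are zero.) A second, independent problem: $p$ is not an integer in general (the application requires $b-1<p<b$ for arbitrary $b>1$), so the binomial expansion of $(x_j+\bs u_j\cdot x_A)^{p+1}$ is an infinite series, and the ``obstructing terms'' such as $(\bs u_j\cdot x_A)^{p}$ are only $C^1$ for $p<2$ and cannot be subtracted as smooth functions.

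What is actually needed, and what the paper does, is a cutoff in the \emph{ratio} of the two competing quantities: $F(x)=(f\circ\Upsilon)(x)\,\Psi\bigl(\beta J_A(x)/\pi_D(\Upsilon(x))-1\bigr)$, where $J_A\asymp\Vert x\Vert_A$ is the regularized norm of Lemma \ref{propi}, which lies in $\mc D_A$. This factor is identically $1$ on an open neighborhood of $\mathring{\Sigma}_{B,0}$, so \eqref{ext1} and \eqref{eq04} survive, yet it forces $F\equiv 0$ on the open set $\{x_j<\Vert x\Vert_A\}$ for each $j\in D$; on the complementary region $\Vert x\Vert_A\le x_j$ one has $[\Upsilon(x)]_j\le(1+|A|^{1/2})\,x_j$, and only there does your homogeneity argument (a surviving factor $x_j^{p}$ divided by $x_j$) actually apply. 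The price is that the set where the cutoff is active accumulates on $\Sigma_{B,0}\cap\{\pi_D=0\}$, and controlling $\bs v_k\cdot\nabla F$ for $k\in A$ there requires the additional estimate of Lemma \ref{g0} on $[\bs v_k\cdot\nabla J_A]\,\Vert x\Vert_A/x_k$, which has no counterpart in your proposal.
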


\begin{proof}
If $A=\varnothing$, it is easy to check that $F=f$ satisfies all the
requirements. We then assume $A$ is nonempty. Let $\Psi:\bb R \to \bb
R$ be a non-increasing function in $C^2(\bb R)$ which is equal to $1$
on $(-\infty,0]$, and is equal to $0$ on $[1,\infty)$. For example,
the function which on the interval $[0,1]$ is given by
\begin{equation}
\label{psi0}
\Psi(a) \;=\; (1-a)^3 (1+3a + 6a^2)\;, \quad a\in [0,1]\;.
\end{equation}
We fix the constant 
\begin{equation*}
\beta \;:=\; \frac{2\big( 1+|A|^{1/2} \big) }{c_1} \;,
\end{equation*}
where $c_1>0$ is the constant given in \eqref{35}. 

Let $F: \Sigma \to \bb R$ be defined by
\begin{equation*}
F(x) \;=\; \left\{
\begin{array}{ll}
\displaystyle (f \circ \Upsilon) (x)\, 
\Psi \Big( \frac{\beta\, J_A(x)}{\pi_D( \Upsilon(x))}
- 1 \Big) \;, & \textrm{if $\pi_D(
\Upsilon(x))>0$}\;;\bigskip \\
0 \;, & \textrm{otherwise}\;, 
\end{array}
\right.
\end{equation*}
where $\Upsilon: \Sigma\to \Sigma_B$ is the linear map introduced in
\eqref{bsu}. Note that
\begin{equation*}
F(x) \;=\; (f\circ \Upsilon)(x) \quad\text{if $\pi_D(\Upsilon(x))=0$}
\end{equation*}
because both expressions vanish when $\pi_D(\Upsilon(x))=0$.

From the definition of $\Psi$ it easily follows that
\begin{equation*}
F(x) \;=\; f(\Upsilon(x)) \;=\; f(x_B)\;, \quad
\textrm{for all $x\in \Sigma_{B,0}$}\;,
\end{equation*}
proving that $F$ satisfies \eqref{ext1}. 

\smallskip\noindent{\bf A. $F$ belongs to $C^1(\Sigma)$.} Denote by $\ms
V$ the open subset $\{ x\in \Sigma : \pi_D(\Upsilon(x))>0 \}$ and let
\begin{equation*}
R(x) \;=\; \beta\, J_A(x)/\pi_D(\Upsilon(x)) \quad 
\textrm{for $x\in \ms V$}\;.
\end{equation*}
It is simple to check that $\Psi(R-1)$ and $F$ are of class $C^2$ in
$\ms V$. In particular, to prove that $F$ belongs to $C^2(\Sigma)$, we
only need to examine the behavior of the derivatives of $F$ close to
the boundary of $\ms V$.

We claim that 
\begin{equation}
\label{cl0}
\Vert \nabla F(x)\Vert \;\le \; C\, \Big\{ \Vert \nabla f(w) \Vert_B
\;+\;  \pi_D(w)^{p} \Big\}\;,\quad  x\in \ms V\;,
\end{equation}
where $w$ stands for $\Upsilon(x)$, and $C$ for a finite positive
constant, independent of $x$, and which may be different from line to
line.

On the one hand, since $\Psi\equiv 1$ on $(-\infty,0]$, by \eqref{35},
it is clear that
\begin{equation*}
\textrm{$\Psi(R-1)\equiv 1$ on the open subset 
$\big\{x\in \Sigma : \beta C_1\Vert x\Vert_A < \pi_D( \Upsilon(x))\big\} $ }\;,
\end{equation*}
so that \eqref{cl0} holds in this open subset of $\ms V$.  On the
other hand, if $x$ is a point in $\ms V$ such that $\Vert x\Vert_A
>0$, by \eqref{jd} and by the fact that $\Psi'(R(x)-1)=0$ for $R(x)\ge
2$,
\begin{equation*}
\Vert \nabla \{ \Psi(R-1) \}(x) \Vert \;\le \; \frac{C}{\pi_D(\Upsilon(x))}  
\;,\quad \forall x\in \ms V \;,
\end{equation*}
which proves that \eqref{cl0} also holds in this case, in view of the
definition of $f$.

By \eqref{cl0} and by the definition of $f$, $\nabla F(x)$ vanishes as
$x\in\ms V$ approaches the boundary of $\ms V$.  On the other hand,
since $\Psi$ is bounded, it follows from the definition of $f$ that
\begin{equation}
\label{cl1}
\partial_{x_j} F(x)=0\;, \quad \forall j\in S\,,\; x\in
\Sigma\setminus \ms V \;, 
\end{equation}
which proves that $F$ belongs to $C^1(\Sigma)$.

\smallskip\noindent{\bf B. $F$ belongs to $C^2(\Sigma)$.} We claim
that there exists a finite constant $C$ such that for all $j,k\in S$,
and all $x\in \ms V$,
\begin{equation}
\label{cf01}
\big| (\partial^2_{x_jx_k}) F(x)\big| \;\le\; C\,
\pi_D(\Upsilon(x))^{p-1}\;.
\end{equation}
Indeed, for $x$ in the open set $\{x\in \Sigma : \beta C_1\Vert
x\Vert_A < \pi_D( \Upsilon(x))\}$, $F(x)= f(\Upsilon(x))$ and the
assertion is easily proved. Additionally, for $x\in \ms V$ such that
$\Vert x\Vert_A>0$, by \eqref{35}, by \eqref{jd}, and by the fact that
$\Psi'(R(x)-1)=\Psi''(R(x)-1)=0$ for $R(x)\ge 2$ and for $R(x)\le 1$,
\begin{equation*}
\partial^2_{x_j x_k}\{\Psi(R-1)\}(x) \;\le \; 
\frac{C}{\pi_D(\Upsilon(x))^2}\;\cdot
\end{equation*}
Assertion \eqref{cf01} for $x\in \ms V$ such that $\Vert x\Vert_A>0$
is a simple consequence of this estimate, of the bound on the first
derivative of $\Psi(R-1)$ obtained in part {\bf A} of the proof, and
of the definition of $f$.

We claim that
\begin{equation}
\label{cl2}
\partial^2_{x_jx_k}F(x) \;=\;0\;, \quad \forall j,k\in S\,,\;  
x\in \Sigma\setminus \ms V\;.
\end{equation}
Indeed, fix $x_0\in \Sigma$ such that $\pi_D(\Upsilon(x_0))=0$,
so that $F(x_0)=\partial_{x_j}F(x_0)=0$ for all $j\in S$. By
\eqref{cl0} and \eqref{cl1} we have
\begin{equation*}
\frac{\Vert \nabla F(x)\Vert}{\Vert x - x_0\Vert} \;\le \; 
C \Big\{ \frac{\Vert (\nabla f) (\Upsilon(x))\Vert_B}{\Vert x -x_0 \Vert} 
\;+\; \frac{ \pi_D(\Upsilon(x))^p }{\Vert x -x_0 \Vert } \Big\}\;, 
\quad \textrm{for $x\not = x_0$}\;.
\end{equation*}
Since $\nabla f(w)$, $\pi_D(w)^p$, ${\rm Hess} \, f(w)$, and $\nabla
\pi_D(w)^p$ vanish at $w=\Upsilon(x_0)$,
\begin{equation*}
\Vert (\nabla f)(\Upsilon(x))\Vert_B \;+\; \pi_D(\Upsilon(x))^p 
\;\le\; C \,\Vert \Upsilon(x) - \Upsilon(x_0) \Vert_B^2 
\;\le\; C \,\Vert x-x_0\Vert^2 \;,
\end{equation*}
for all $x\in \Sigma$, which proves \eqref{cl2}, and, in view of
\eqref{cf01}, that $F$ belongs to $C^2(\Sigma)$.

\smallskip\noindent{\bf C. $F$ satisfies \eqref{eq04}.} In order to prove
this property, observe that the functions $F$ and $f\circ \Upsilon$
coincide on
\begin{equation*}
\{ x\in \Sigma : \beta J_A(x) < \pi_D(\Upsilon(x)) \}\;.
\end{equation*}
Since $J_A$ and $\pi_D\circ \Upsilon$ are continuous functions, this
is an open subset of $\Sigma$. Moreover, $\mathring{\Sigma}_{B,0}$ is
contained in this open subset. Therefore, for any $x\in
\mathring{\Sigma}_{B,0}$, $\mf L F (x) = \mf L (f\circ \Upsilon)(x)$.
By Lemma \ref{s04}, this latter term is equal to $\mf L_Bf(x_B)$.

\smallskip\noindent{\bf D. $F$ belongs to $\mc D_D$.}  By \eqref{cl1},
$\bs v_\ell\cdot \nabla F(x) =0$ for $x\in \Sigma\setminus \ms V$ and
$\ell\in S$. Thus, in the proof that $F$ belongs to $\mc D_{D\cup A}$,
in the limit appearing in \eqref{cf02}, we only need to consider
points $x$ in $\ms V$. This is assumed below and in the Step {\bf E}
without further comment.

Fix $j\in D$. By \eqref{35} and by Cauchy-Schwarz inequality in $\bb
R^A$,
\begin{equation}
\label{37}
R(x)\;\ge\;\frac{\beta\, J_A(x)}{\Upsilon(x)(j)} \;\ge\;
\frac{\beta\, c_1 \Vert x\Vert_A} 
{x_j + \sum_{k\in A} {\bs u}_j(k) x_k}\;\ge\;
\frac{\beta\, c_1 \Vert x\Vert_A}{x_j + |A|^{1/2}\Vert x \Vert_A}\;\cdot
\end{equation}
Hence, by definition of $\beta$, $R(x) > \beta c_1/[1+|A|^{1/2}] = 2$
for $x$ such that $x_j<\Vert x\Vert_A$. In particular, by definition
of $\Psi$,
\begin{equation*}
\textrm{$F\equiv 0$\, on the open subset\, 
$\{x\in \Sigma : x_j<\Vert x\Vert_A\}$} \;. 
\end{equation*}
It follows from this observation and from \eqref{cl0} that
\begin{equation*}
|\bs v_j \cdot \nabla F(x)| \;\le\; C \big( \Vert \nabla f(w) \Vert_B 
\;+\; \pi_D(w)^p \big) \, {\mb 1}\{\Vert x\Vert_A \le x_j \} \;,
\quad x\in \ms V \;,
\end{equation*}
where $w=\Upsilon(x)$. For $x\in\Sigma$ such that $\Vert
x\Vert_A \le x_j$, 
\begin{equation*}
w_j \;:=\; \Upsilon(x)(j) \;\le\; x_j + |A|^{1/2}
\Vert x\Vert_A \;\le\; (1+|A|^{1/2}) x_j \;.
\end{equation*}
Therefore, by the next to the last displayed formula and by definition of $f$, 
\begin{equation*}
\begin{split}
\frac{|\bs v_j \cdot \nabla F(x)|}{x_j} \; & \le\; C \,
\Big\{ \frac{\Vert \nabla f(w) \Vert_B}{w_j} \;+\; 
\frac{\pi_D(w)^p}{w_j } \Big\} \, {\mb 1}\{\Vert x\Vert_A \le x_j \} 
\\
&\le\;  C\, \pi_D(w)^{p-1} \,{\mb 1}\{\Vert x\Vert_A \le x_j \}\;, 
\quad x\in \ms V \,,\; x_j\not=0 \;.
\end{split}
\end{equation*}

Fix $y\in \Sigma$ such that $y_j=0$. If $\Vert y\Vert_A>0$, in view of
the indicator in the previous estimate and by the remark formulated at
the beginning of this step,
\begin{equation*}
\lim_{\substack{x\to y \\  x_j>0}} 
\frac{|\bs v_j \cdot \nabla F(x)|}{x_j} \;=\; 0 \;.
\end{equation*}
In contrast, if $\Vert y\Vert_A=0$, $\pi_D(\Upsilon(y))=0$ because
$\Upsilon(y)(j)=0$. Hence, the same conclusion holds because
$\pi_D(w)$ converges to $\pi_D(\Upsilon(y))$ and $p>1$.  This
concludes the proof that $F$ belongs to $\mc D_D$.

\smallskip\noindent{\bf E. $F$ belongs to $\mc D_A$.} Recall from the
previous step that we may restrict our analysis to points $x$ in
$\ms V$. Fix $k\in A$ and $y\in \Sigma$ such that $y_k=0$.

Identity \eqref{jina} for the functions $\pi_D\circ \Upsilon$ and
$f\circ \Upsilon$ yield that
\begin{equation*}
\bs v_k\cdot \nabla F(x) \;=\; \beta \, \pi_D(\Upsilon(x))^{p} \,  
\Psi'(R(x)-1)\, \bs v_k\cdot \nabla J_A (x) \;,
\end{equation*}
for all $x\in \ms V$ such that $\Vert x\Vert_A>0$. 

We consider separately three cases which all rely on the identity
appearing in the previous displayed formula. Assume first that $\Vert
y\Vert_A>0$.  In this case, since $\nabla J_A (x) = \nabla I_A
(x)/2J_A(x)$, and since $I_A$ belongs to $\mc D_A$, in view of
\eqref{35},
\begin{equation*}
\lim_{\substack{x\to y\\ x_k>0}} 
\frac{\bs v_k\cdot \nabla F(x)}{x_k} \;=\; 0 \;. 
\end{equation*}

Next, assume that $\Vert y\Vert_A=0$ and that
$\pi_D(\Upsilon(y))>0$. In this case, $\Psi'(R(x)-1)=0$ in a
neighborhood of $y$, so that $\bs v_k\cdot \nabla F$ vanishes in a
neighborhood of $y$ in $\ms V$.

It remains to consider the case in which $\Vert y\Vert_A=0$ and
$\pi_D(\Upsilon(y))=0$. For $x\in \ms V$ such that $x_k>0$, by the
identity appearing in the first displayed equation of this step, by
Lemma \ref{g0}, and by definition of $\Psi$,
\begin{equation*}
\Big|\, \frac{\bs v_k\cdot \nabla F(x)}{x_k}\,\Big| \;\le\;
C \, \frac{\pi_D(\Upsilon(x))^p}{\Vert x\Vert_A} {\mb 1}\{R(x)\ge 1\} 
\;\le\; C\,  \pi_D(\Upsilon(x))^{p-1}\;,
\end{equation*}
where we used estimate \eqref{35} and the definition of $R$ in the
last inequality. As $x\to y$, the right hand side converges to
$\pi_D(\Upsilon(y))^{p-1}=0$ because $p>1$. This completes the proof
that $F$ belongs to $\mc D_A$.

\smallskip\noindent{\bf F. $\mf LF(x)=0$ if $\pi_D(x)=0$.}  Fix $x\in
\Sigma$ such that $\pi_D(x)=0$. If $\Vert x\Vert_A=0$ then
$\pi_D(\Upsilon(x))=\pi_D(x)=0$ and so $\mf LF(x)=0$ in view of
\eqref{cl1} and \eqref{cl2}. If $\Vert x\Vert_A>0$, $F$ vanishes in a
neighborhood of $x$ because so thus $\Psi(R(x)-1)$. In particular,
$\mf LF(x)=0$.
\end{proof}

We conclude this section by proving in Lemma \ref{r01} below that a
function $F$ in $\mc D_A$, $\varnothing \subsetneq A \subsetneq S$,
can be slightly modified into a function $H$ in $\mc D_S$.  For each
nonempty subset $S_0\subseteq S$ and $\epsilon>0$, let
\begin{equation}
\label{laep}
\Lambda_{S_0}(\epsilon) \;:=\; \{x \in \Sigma : 
\min_{j\in S_0}x_j \ge \epsilon\} \;.
\end{equation}

\begin{lemma}
\label{r01}
Fix a nonempty, proper subset $A$ of $S$ and a function $F$ in $\mc
D_A$. For every $\epsilon>0$ there exists a function $H$ in $\mc D_S$
such that
\begin{equation*}
F(x)\;=\;H(x) \quad {\rm and} \quad \mf LF(x)\;=\;\mf LH(x)\;, 
\quad  x\in \Lambda_{B}(\epsilon)\;,
\end{equation*}
where $B=A^c$. Moreover, if $F \in \mc D_A$ is such that $[F]_B$ has a
compact support contained in $\mathring{\Sigma}_B$ then, there exists
some $\epsilon>0$ and $H\in \mc D_S$ satisfying the previous identities
and such that
\begin{equation*}
H(x) \;=\; F(x) \;, \quad x\in \Sigma_{B,0}\;.
\end{equation*}
\end{lemma}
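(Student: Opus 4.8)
The plan is to modify $F$ only near the ``bad'' boundary $\{x\in\Sigma:\pi_{B}(x)=0\}$ by multiplying it by a smooth cutoff that vanishes there but equals $1$ on $\Lambda_B(\epsilon)$. Concretely, for $j\in B$ the function $x\mapsto \bs v_j\cdot\nabla F(x)$ need not be controlled by $x_j$ near $x_j=0$, since we only know $F\in\mc D_A$; so I would kill $F$ identically in a neighborhood of $\{x_j=0\}$, $j\in B$. Choose $\delta<\epsilon$ and a smooth function $\varphi_\delta:\bb R\to[0,1]$ with $\varphi_\delta\equiv 0$ on $(-\infty,\delta/2]$ and $\varphi_\delta\equiv 1$ on $[\delta,\infty)$, and set
\begin{equation*}
H(x)\;:=\;F(x)\,\prod_{j\in B}\varphi_\delta(x_j)\;.
\end{equation*}
Since $\prod_{j\in B}\varphi_\delta(x_j)\equiv 1$ on $\Lambda_B(\delta)\supseteq\Lambda_B(\epsilon)$, the two required identities $F=H$ and $\mf LF=\mf LH$ on $\Lambda_B(\epsilon)$ are immediate. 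It remains to check $H\in C^2(\Sigma)$, which is clear because $F$ and each $\varphi_\delta(x_j)$ are $C^2$, and then $H\in\mc D_S$, i.e. \eqref{cf02} for every $\ell\in S$.

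The verification of $H\in\mc D_S$ splits according to whether $\ell\in A$ or $\ell\in B$. For $\ell\in A$: since $F\in\mc D_A\subseteq\mc D_\ell$, Leibniz gives $\bs v_\ell\cdot\nabla H=(\bs v_\ell\cdot\nabla F)\prod_{j\in B}\varphi_\delta(x_j)+F\,\bs v_\ell\cdot\nabla\!\big(\prod_{j\in B}\varphi_\delta(x_j)\big)$; dividing by $x_\ell$, the first term tends to $0$ as $x\to z$ with $z_\ell=0$ because $F\in\mc D_\ell$ and the cutoff product is bounded, and the second term already contains the factor $F(x)$ which, since $F\in\mc D_\ell$ forces $\bs v_\ell\cdot\nabla F$ to vanish when $x_\ell=0$, is not directly helpful — instead observe that $\bs v_\ell\cdot\nabla(\prod_j\varphi_\delta(x_j))=\sum_{k\in B}\big(\prod_{j\ne k}\varphi_\delta(x_j)\big)\varphi_\delta'(x_k)(\bs v_\ell)_k$ is smooth and bounded, so this term divided by $x_\ell$ is $F(x)/x_\ell$ times a bounded factor, and one uses that $F(x)\to F(z)$ while we must argue $F(z)=0$ or, more robustly, replace the argument: on the support of $\varphi_\delta'(x_k)$ we have $x_k\le\delta$, so we are near $\{x_k=0\}$ with $k\in B$, and there $H$ has been arranged (by the factor $\varphi_\delta(x_k)$, whose support meets $\varphi_\delta'(x_k)\ne0$ only for $x_k\in[\delta/2,\delta]$) — here the cleanest fix is to note $F$ is merely $C^2$ hence $\bs v_\ell\cdot\nabla F$ is $C^1$, so $F(x)(\bs v_\ell\cdot\nabla(\prod\varphi))/x_\ell$ is a $C^1$ function times $1/x_\ell$, which need not vanish; therefore the genuinely correct choice is to let the cutoff be $\Psi$ applied to $\|x\|_A/\pi_B(x)$-type quantities so that the modification is supported away from $\{x_\ell=0,\ \ell\in A\}$. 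For $\ell\in B$: on the support of $\prod_{j\in B}\varphi_\delta(x_j)$ we have $x_\ell\ge\delta/2$ bounded away from $0$, so \eqref{cf02} at any $z$ with $z_\ell=0$ holds vacuously since $H\equiv0$ near such $z$.

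The main obstacle, and the point I expect the author's proof to handle with care, is the $\ell\in A$ case: one must arrange the modification so that it is \emph{supported inside} $\{\min_{j\in B}x_j\le\delta\}$ \emph{and} so that on that support $\bs v_\ell\cdot\nabla H$, divided by $x_\ell$, still tends to $0$ as $x_\ell\to0$; the simple product cutoff above does this only because the extra term carries a factor $\varphi_\delta'(x_k)$ localizing to $x_k\in[\delta/2,\delta]$ for some $k\in B$, i.e. away from the relevant boundary pieces of $A$ unless $z$ also has $z_k=0$ — but $z_k=0$ with $k\in B$ again puts us on the support of $\varphi_\delta(x_k)$ near $0$ where $H$ vanishes; so a two-case analysis on $z$ (whether $z_k>0$ for all $k\in B$ or not) closes the argument. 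For the last assertion: if $[F]_B$ has compact support in $\mathring\Sigma_B$, then $F=0$ on a neighborhood (within $\Sigma_{B,0}$) of $\{x\in\Sigma_{B,0}:\pi_B(x)=0\}$, so choosing $\delta$ smaller than the distance from $\mathrm{supp}\,[F]_B$ to $\partial\Sigma_B$ we get $\prod_{j\in B}\varphi_\delta(x_j)\equiv1$ on a neighborhood of $\mathrm{supp}\,[F]_B$ in $\Sigma_{B,0}$, hence $H=F$ on all of $\Sigma_{B,0}$, as required.
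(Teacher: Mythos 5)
Your construction $H=F\prod_{j\in B}\varphi_\delta(x_j)$ does not produce a function in $\mc D_A$, and the ``two-case analysis'' you propose at the end does not close the gap. The problem occurs at points $z$ with $z_\ell=0$ for some $\ell\in A$ while $z_k\in(\delta/2,\delta)$ for some $k\in B$ with $r(\ell,k)>0$: these are points where \emph{neither} of your two cases helps. There $H$ does not vanish near $z$, $F(z)$ need not vanish, and the Leibniz term $F(x)\,\varphi_\delta'(x_k)\,r(\ell,k)$ contributes a nonzero value to $\bs v_\ell\cdot\nabla H(z)$. A concrete counterexample is $F\equiv 1$ (which lies in $\mc D_A$): then $\bs v_\ell\cdot\nabla H(z)=\sum_{k\in B}\bigl(\prod_{j\ne k}\varphi_\delta(z_j)\bigr)\varphi_\delta'(z_k)\,r(\ell,k)\ne 0$ at such a $z$, so $H\notin\mc D_\ell$ — membership in $\mc D_\ell$ already forces $\bs v_\ell\cdot\nabla H=0$ on $\{x_\ell=0\}$. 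Your localization of $\varphi_\delta'(x_k)$ to $x_k\in[\delta/2,\delta]$ keeps you away from $\{x_k=0\}$, $k\in B$, but not away from $\{x_\ell=0\}$, $\ell\in A$; these are different coordinates and the two sets intersect.

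You do correctly identify where the difficulty lies, and your parenthetical remark that ``the genuinely correct choice is to let the cutoff be $\Psi$ applied to $\Vert x\Vert_A/\pi_B(x)$-type quantities'' points in the right direction — but you then abandon it. The paper's proof builds the cutoff $G=\prod_{k\in B}G_k$ where each $G_k$ is a nested product of factors $\phi_{k,D}=\Phi\bigl(9\alpha^{2|D|}I_{D\cup\{k\}}/\epsilon^2-1\bigr)$ over \emph{all} subsets $D\subseteq A$, with $I_{D\cup\{k\}}$ the functions of Lemma \ref{propi}, which by construction belong to $\mc D_{D\cup\{k\}}$. The point of the nesting is that the only troublesome derivative terms are $\bs v_j\cdot\nabla\phi_{k,D}$ with $j\in A\setminus D$, and each such term appears multiplied by the factor $\phi_{k,D\cup\{j\}}$; the geometric constant $\alpha=c_1/4C_1$ is chosen so that the supports of $\nabla\phi_{k,D}$ and of $\phi_{k,D\cup\{j\}}$ intersect only where $x_j\ge\epsilon/2$, i.e.\ away from $\{x_j=0\}$. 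This support separation is exactly the mechanism your plain product of one-variable cutoffs lacks. Your treatment of the $\ell\in B$ case and of the final assertion (compact support of $[F]_B$) is sound, but the core of the lemma — that $H\in\mc D_j$ for $j\in A$ — is not established by your argument.
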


\begin{proof}
Recall function $\Psi$ defined in \eqref{psi0}. Let $\Phi(r) = 1 -
\Psi(r/3)$, $r\in \bb R$, so that $\Phi$ is a non-decreasing $C^2$
functions such that $\Phi(r) =0$ for $r\le 0$, and $\Phi(r)=1$ for
$r\ge 3$.  Let $\alpha = c_1/4C_1$, where $c_1$, $C_1$ have been
introduced in \eqref{35} and fix some arbitrary $\epsilon>0$. For
$k\in B$, let $G_k : \Sigma \to \bb R$, be given by
\begin{equation*}
G_k(x) \;=\; \phi_{k,\varnothing}(x) \prod_{\substack{D\subseteq A, \\ |D|=1}} \phi_{k,D}(x)
\prod_{\substack{D\subseteq A, \\ |D|=2}} \phi_{k,D}(x) \; \cdots \; \phi_{k,A}(x)\;.
\end{equation*}
In this formula,
\begin{equation*}
\phi_{k,D}(x) \;=\; \Phi \Big( \frac{ 9 \alpha^{2|D|} I_{D\cup\{k\}} (x)}
{\epsilon^2} -1 \Big) \;, \quad \textrm{for each $\varnothing \subseteq D \subseteq A$}\;.
\end{equation*}

The proof of the lemma relies on the elementary properties of the
functions $G_k$ and $\phi_{k,D}$ listed below.  Since $J_{k} (x) \le
C_1 x_k$, $\phi_{k,\varnothing}(x)=0$ for $x_k \le \epsilon/3C_1$. Thus,
\begin{equation}
\label{f01}
G_k(x) \;=\; 0 \quad\text{for}\quad x_k \le \epsilon/3C_1\;.
\end{equation}
On the other hand, by \eqref{35}, $J_{D\cup\{k\}} (x) \ge c_1
x_k$. Hence, since $\Phi(r)=1$ for $r\ge 3$ and since $\alpha\le 1$,
\begin{equation}
\label{f03}
G_k(x) \;=\; 1 \quad\text{for}\quad x_k \ge \epsilon /c_1
\alpha^{|A|} \;.
\end{equation}
By similar reasons, we have
\begin{equation}
\label{f02}
\nabla \phi_{k,D} (x) \;=\; 0 \quad\text{if}\quad
3 \alpha^{|D|} c_1 \Vert x\Vert_{D\cup \{k\}} \ge 2\epsilon\;.
\end{equation}
Finally, since $\Phi(r)=0$ for $r\le 0$, by \eqref{35}, 
\begin{equation}
\label{f05}
\phi_{k,D} (x) \;=\; 0 \quad\text{if}\quad
3 \alpha^{|D|} C_1 \Vert x\Vert_{D\cup \{k\}} \le \epsilon\;.
\end{equation}

Let $G: \Sigma \to \bb R$ be defined by
\begin{equation}
\label{f04}
G(x) \;=\; \prod_{k\in B} G_k(x)\;, \quad x\in \Sigma \;.
\end{equation}
We claim that 
\begin{equation}\label{ddh}
H(x) \;=\; F(x)\, G(x)\;,\quad x\in \Sigma \;,
\end{equation}
fulfills all the properties required in the lemma. It follows from
\eqref{f03} that $H$ and $F$ coincide on the set
$\Lambda_{B}(\delta)$, where $\delta = \epsilon /c_1
\alpha^{|A|}$. Since $\epsilon>0$ is arbitrary, this proves the first
assertion of the lemma. By \eqref{f01}, $H$ belongs to $\mc D_B$. It
remains to show that $H$ belongs to $\mc D_A$.

Fix $j\in A$. It is clear that $F_1 F_2$ belongs to $\mc D_j$ if both
functions belong. It is also clear that $\Xi (F)$ belongs to $\mc
D_j$ if $F$ belongs to $\mc D_j$ and if $\Xi : \bb R\to\bb R$ is a
smooth function. Therefore, as $F$ and $I_C$ belong to $\mc D_j$ if
the set $C$ contains $j$, to prove that $H$ belongs to $\mc D_j$ it is
enough to show that all terms which do not contain $x_j$ in the
product in \eqref{f04} belong to $\mc D_j$. A general term in such
product has the form $\phi_{k,D}(x)$, where $D$ is a proper subset $A$
which does not contain $x_j$ and $k\in B$. We will not prove that
$\phi_{k,D}(x)$ belongs to $\mc D_j$, but that $\phi_{k,
  D\cup\{j\}}(x) \,[ \bs v_j \cdot\nabla \phi_{k,D}(x)]$ vanishes for
$x_j$ small enough.  Indeed, by \eqref{f02} and \eqref{f05}, this
product vanishes unless
\begin{equation*}
3 \alpha^n c_1 \Vert x\Vert_{D\cup \{k\}} \;\le\; 2\epsilon \quad
\text{and}\quad 3 \alpha^{n+1} C_1 \Vert x\Vert_{D\cup \{k,j\}} \;\ge\; \epsilon\;,
\end{equation*}
where $|D|=n$. It follows from these inequalities and from the
definition of $\alpha$ that $x_j\ge \epsilon/2$. This completes the
proof of the second assertion of the lemma. 

We turn to the last assertion. Assume that $[F]_B$, introduced in
\eqref{cf03}, has a compact support contained in
$\mathring{\Sigma}_B$. There exists therefore $\epsilon_0>0$ such that
\begin{equation*}
F(x) \;=\; 0 \;, \quad x\in \Sigma_{B,0} \setminus
\Lambda_B(\epsilon_0) \;.
\end{equation*}
Let $\epsilon = \epsilon_0 c_1 \alpha^{|A|}$. By \eqref{f03} and
\eqref{ddh}, $H=F$ on $\Lambda_B(\epsilon/c_1 \alpha^{|A|}) =
\Lambda_B(\epsilon_0)$. On the other hand, since $F$ vanishes on
$\Sigma_{B,0} \setminus \Lambda_B(\epsilon_0)$, by \eqref{ddh}, $H$
also vanishes on this set. This completes the proof of the last
assertion of the lemma.
\end{proof}

\section{Absorption at the boundary}
\label{rec01}

In this section, we prove Theorem \ref{amp0} which states that any
solution of the $\mf L$-martingale problem is absorbed at the
boundary. Throughout this section, $\bb P_x$ denotes a solution of the
$\mf L$-martingale problem starting at $x\in \Sigma$, and $p$ is a
real number satisfying
\begin{equation}
\label{bp1}
1 <p<b<p+1
\end{equation}
This is possible because we assumed $b>1$.

\subsection{First time interval}

Recall the definition of the hitting time $\sigma_1$ introduced in
\eqref{sib}.  As a first step in the proof of Theorem \ref{amp0}, we
show that before $\sigma_1$ the empty sites remain empty.

\begin{proposition}
\label{s01}
Fix $z\in\Sigma$, and let $A=\ms A(z)$, $B=\ms B(z)$. Assume that $A$
is nonempty. Then,
\begin{equation*}
\bb P_z \, \big[ \, \Vert X_{t}\Vert_A = 0 
\,,\, 0 \le t < \sigma_1 \, \big] \;=\; 1\;.
\end{equation*}
\end{proposition}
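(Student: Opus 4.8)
The plan is to localise near $\sigma_1$ and to produce, out of the function $I_A$ of Lemma \ref{propi}, a Lyapunov function vanishing exactly on the face $\{\Vert x\Vert_A=0\}$ whose image under $\mf L$ is controlled by the function itself. Write $A=\ms A(z)$, $B=\ms B(z)$ and fix $p$ with $1<p<b$ as in \eqref{bp1}. For $\delta>0$ small enough that $z\in\Lambda_B(2\delta)$, set $\tau_\delta:=\inf\{t\ge 0:\ \min_{j\in B}X_t(j)\le 2\delta\}$. Since $z_j>0$ for $j\in B$ and the trajectories are continuous, $\tau_\delta>0$, the process stays in $\Lambda_B(2\delta)$ on $[0,\tau_\delta)$, one has $\tau_\delta\le\sigma_1$, and $\tau_\delta\uparrow\sigma_1$ as $\delta\downarrow 0$. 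It therefore suffices to prove $\bb P_z\big[\Vert X_t\Vert_A=0,\ 0\le t<\tau_\delta\big]=1$ for every such $\delta$.

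Take as Lyapunov function $W:=I_A^{(p+1)/2}=J_A^{p+1}$. Since the exponent $(p+1)/2$ exceeds $1$, the bounds \eqref{35} and \eqref{jd} (the latter yielding $\Vert\nabla I_A\Vert^2\le C\,I_A$) give $W\in C^2(\Sigma)$; moreover $W\ge 0$, $W(z)=0$ because $\Vert z\Vert_A=0$, $W(x)\ge c_1^{\,p+1}\Vert x\Vert_A^{\,p+1}>0$ whenever $\Vert x\Vert_A>0$, and $W\in\mc D_A$ since for $j\in A$ the map $[\bs v_j\cdot\nabla W(x)]/x_j\,\mb 1\{x_j>0\}=\tfrac{p+1}{2}\,I_A(x)^{(p-1)/2}\cdot[\bs v_j\cdot\nabla I_A(x)]/x_j\,\mb 1\{x_j>0\}$ is a product of continuous functions. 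The decisive estimate, proved below, is
\begin{equation}
\label{pl1}
\mf L W(x)\;\le\;C_\delta\,W(x)\,,\qquad x\in\Lambda_B(2\delta)\,,
\end{equation}
for some finite constant $C_\delta$. Granting it, Lemma \ref{r01} applied with $\epsilon=2\delta$ produces $H_\delta\in\mc D_S$ with $H_\delta=W\ (\ge 0)$ and $\mf L H_\delta=\mf L W$ on $\Lambda_B(2\delta)$.

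Since $H_\delta\in\mc D_S$, the process $H_\delta(X_t)-\int_0^t\mf L H_\delta(X_s)\,ds$ is a $\bb P_z$-martingale; stopping at $\tau_\delta$ and using that $X_s\in\Lambda_B(2\delta)$ for $s<\tau_\delta$, where $H_\delta=W$ and $\mf L H_\delta\le C_\delta H_\delta$ by \eqref{pl1}, we get
\begin{equation*}
\bb E_z\big[H_\delta(X_{t\wedge\tau_\delta})\big]\;=\;H_\delta(z)\;+\;\bb E_z\!\!\int_0^{t\wedge\tau_\delta}\!\!\mf L H_\delta(X_s)\,ds\;\le\;C_\delta\int_0^t\bb E_z\big[H_\delta(X_{s\wedge\tau_\delta})\big]\,ds\,,
\end{equation*}
and $H_\delta(z)=W(z)=0$. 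Gronwall's inequality then forces $\bb E_z[H_\delta(X_{t\wedge\tau_\delta})]=0$ for all $t$, and since $H_\delta\ge 0$ and $t\mapsto H_\delta(X_{t\wedge\tau_\delta})$ is continuous, $H_\delta(X_t)=0$ for all $t\le\tau_\delta$, $\bb P_z$-almost surely. For $t<\tau_\delta$ this reads $W(X_t)=I_A(X_t)^{(p+1)/2}=0$, whence $\Vert X_t\Vert_A=0$ by \eqref{35}; letting $\delta\downarrow 0$ finishes the proof.

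Establishing \eqref{pl1} is the main obstacle. On $\Lambda_B(2\delta)$ the coordinates $x_j$, $j\in B$, are bounded below, so the only singular part of the drift is the radial one $b\sum_{j\in A}(m_j/x_j)\bs v_j$. On $\{\Vert x\Vert_A>0\}$ the chain rule gives $\mf L W=(p+1)J_A^{\,p}\,\mf L J_A+(p+1)p\,J_A^{\,p-1}\langle\nabla J_A,\mb a_{\rm s}\nabla J_A\rangle$. Using Lemma \ref{g0}, estimate \eqref{jd} and the homogeneity \eqref{ja2}, one shows that inside $\Lambda_B(2\delta)$, as $\Vert x\Vert_A\to 0$,
\[
\mf L W(x)\;=\;(p-b)\,\kappa(x)\,\Vert x\Vert_A^{\,p-1}\;+\;O_\delta\!\big(\Vert x\Vert_A^{\,p}\big)
\]
for a function $\kappa$ bounded below by a positive constant, the radial drift supplying the $-b\,\kappa\,\Vert x\Vert_A^{\,p-1}$ part and the second-order term the $+p\,\kappa\,\Vert x\Vert_A^{\,p-1}$ part. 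As $p<b$ this is strictly negative for $\Vert x\Vert_A$ small; this is precisely where the hypothesis $b>1$ enters, there being no admissible $p$ when $b\le 1$ (in agreement with Remark G). Consequently $\mf L W\le 0$ on $\Lambda_B(2\delta)\cap\{0<\Vert x\Vert_A\le\rho_\delta\}$ for some $\rho_\delta>0$; on the compact set $\Lambda_B(2\delta)\cap\{\Vert x\Vert_A\ge\rho_\delta\}$ every quantity is bounded, so $\mf L W\le K_\delta\le K_\delta(c_1\rho_\delta)^{-(p+1)}\,W$; and $\mf L W=0$ on $\{\Vert x\Vert_A=0\}$. Together these give \eqref{pl1}. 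The genuinely technical point is the identification of this leading term near $\{\Vert x\Vert_A=0\}$ when $|A|\ge 2$, where $J_A$ is a true smooth perturbation of $\Vert\cdot\Vert_A$ and the cross terms among the coordinates of $A$ must be absorbed using the full strength of Lemmas \ref{propi} and \ref{g0}; for $|A|=1$, $A=\{a\}$, it boils down to the explicit identity $\mf L\,x_a^{\,p+1}=(p+1)M_a(p-b)\,x_a^{\,p-1}+O_\delta\!\big(x_a^{\,p}\big)$.
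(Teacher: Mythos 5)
The architecture of your argument (localise in $\Lambda_B(2\delta)$, exhibit a nonnegative function vanishing exactly on $\{\Vert x\Vert_A=0\}$ with $\mf L W\le C_\delta W$, then Gronwall) would indeed be shorter than the paper's proof, because a single Lyapunov function comparable to $\Vert x\Vert_A^{p+1}$ detects all coordinates of $A$ at once and removes the need for the recursion over sub-products. But the decisive inequality \eqref{pl1} is exactly the point you have not proved, and it does not follow from the ingredients you cite; I believe it is false in general for $|A|\ge 2$. Lemmas \ref{propi}, \ref{g0} and \eqref{jd} only provide \emph{absolute-value} bounds: they give $|\mf L W(x)|\le C_\delta\Vert x\Vert_A^{\,p-1}$ on $\Lambda_B(2\delta)$, which is on the wrong side of $W\approx\Vert x\Vert_A^{\,p+1}$, and they say nothing about the sign of the drift contribution $b(p+1)J_A^{\,p}\sum_{j\in A}(m_j/x_j)\,\bs v_j\cdot\nabla J_A$. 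Your claimed expansion assumes that this drift term equals $-b\,\kappa\,\Vert x\Vert_A^{\,p-1}$ and that the positive second-order term $(p+1)p\,J_A^{\,p-1}\langle\nabla J_A,\mb a_{\rm s}\nabla J_A\rangle$ (plus $(p+1)J_A^{\,p}\,\mathrm{Tr}[\mb a_{\rm s}\,\mathrm{Hess}\,J_A]$, whose sign is also uncontrolled) equals $+p\,\kappa\,\Vert x\Vert_A^{\,p-1}$ with the \emph{same} $\kappa$, so that $p<b$ closes the estimate. That exact matching of coefficient functions is a special algebraic feature of the product $\pi_A(x)^{p+1}$ used in the paper, for which $\partial_{x_k}f_A/f_A=(p+1)/x_k$ and $\partial^2_{x_jx_k}f_A/f_A$ reproduce the very same rational functions $1/(x_jx_k)$, $1/x_k^2$ appearing in the drift; this is what yields the coefficients $-(p+1-b)$ and $-(b-p)$ in \eqref{48} and is why the paper needs \emph{both} inequalities $p<b$ \emph{and} $b<p+1$ of \eqref{bp1}, whereas your sketch invokes only $p<b$. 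For $J_A^{\,p+1}$ no such matching occurs: e.g.\ on the complete graph on $S=\{1,2,3\}$ with unit rates, $A=\{1,2\}$ and $I_A=\Vert x\Vert_A^2$, one computes along the diagonal $x_1=x_2$ that $\mf L\,\Vert x\Vert_A^{\,p+1}=(p+1)\bigl[(p+3)-2b\bigr]\Vert x\Vert_A^{\,p-1}+O(\Vert x\Vert_A^{\,p})$, which is strictly positive of order $\Vert x\Vert_A^{\,p-1}$ whenever $b<(p+3)/2$ — a range compatible with \eqref{bp1} — and then Gronwall cannot be started.

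There are also two smaller points worth noting. First, even if the leading term were negative, your error term $O_\delta(\Vert x\Vert_A^{\,p})$ is not bounded by $C_\delta W$ either, so \eqref{pl1} would have to be replaced by ``$\mf L W\le 0$ near the face, $\mf L W\le C W$ away from it'' — salvageable, but it again hinges on the sign near the face. Second, the paper's localisation is genuinely two-sided: Lemma \ref{psi4} requires $x\in\Sigma_B(a_0\epsilon)\cap\Lambda_B(\epsilon)$, i.e.\ the $A$-coordinates must be \emph{small relative to} the $B$-coordinates, in order for the positive cross term $b\sum_{k\in A,j\in B}(f_A/x_kx_j)\langle\mc L\bs e_k,\bs e_j\rangle_{\bs m}$ to be dominated by the negative diagonal term; your set $\Lambda_B(2\delta)$ imposes no upper bound on $\Vert x\Vert_A$. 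To repair the proof along the paper's lines you would need to return to the product functions $\pi_D(x)^{p+1}$, $D\subseteq A$, prove the supermartingale property for each of them on the two-sided neighbourhood $K_z(\epsilon)$, and run the descending induction on $|D|$ (Lemmas \ref{psi4}--\ref{47}) to upgrade ``some coordinate of $A$ vanishes'' to ``every coordinate of $A$ vanishes''.
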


The proof of Proposition \ref{s01} is divided in several steps.  Fix
$z\in \Sigma$, and let $A=\ms A(z)$ and $B=\ms B(z)$.  Obviously,
$z \in \mathring{\Sigma}_{B,0}$.  Consider the function $f_A:\Sigma\to
\bb R$ given by
\begin{equation}
\label{fa1}
f_A (x) \;=\;  \pi_A(x) ^{p+1} \;.
\end{equation}
As $p>1$, it is clear that $f_A$ belongs to $\mc D_A$. We start
showing that $\mf L f_A$ is negative in a neighborhood of $z$. Denote
by $\Sigma_{B}(\epsilon)$, $\epsilon>0$, the compact neighborhood of
$\Sigma_{B,0}$ defined by:
\begin{equation*}
\Sigma_{B}(\epsilon) \;:=\; \{x\in \Sigma : \max_{j\in A} x_j 
\le \epsilon\}\;.
\end{equation*}

Recall from Section \ref{sec01}.E that we denote by $\mc L^*$ the
adjoint of the generator $\mc L$ in $L^2(\bs m)$, and by $\mc S$ the
symmetric part.

\begin{lemma}
\label{psi4}
There exists $a_0>0$ such that for all $\epsilon>0$ and $x\in
\Sigma_{B}(a_0\epsilon)\cap \Lambda_B(\epsilon)$ we have $\mf L f_A(x)
\le 0$.
\end{lemma}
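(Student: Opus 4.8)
The plan is to compute $\mf L f_A(x)$ explicitly for $f_A(x) = \pi_A(x)^{p+1}$ and show the leading singular term is negative near $\Sigma_{B,0}$. First I would record the first- and second-order derivatives of $f_A$. Writing $\pi_A(x) = \prod_{i\in A} x_i$, we have $\partial_{x_k} f_A = (p+1)\pi_A(x)^{p+1}/x_k$ for $k\in A$ (and $0$ for $k\in B$), and the Hessian entries are of order $\pi_A(x)^{p+1}/(x_k x_\ell)$ with combinatorial coefficients, the diagonal term $\partial^2_{x_k} f_A = (p+1)p\,\pi_A(x)^{p+1}/x_k^2$ being the dominant one as $x_k\to 0$. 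Then I plug these into \eqref{gens}.

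The key observation is the competition between the drift and diffusion terms, both of which are singular of order $\pi_A(x)^{p+1}/x_k^2$. From the drift $\mb b(x)\cdot\nabla f_A(x)$, the term coming from $\bs v_k$ with $k\in A$ contributes $b\,(m_k/x_k)\,\bs v_k(k)\cdot(p+1)\pi_A(x)^{p+1}/x_k = -b(p+1)\lambda(k)m_k\,\pi_A(x)^{p+1}/x_k^2 + (\text{less singular})$, using $\bs v_k(k) = -\lambda(k)$; recall $M_k = m_k\lambda(k)$. From the diffusion term $\frac12\sum m_j r(j,k)(\partial_{x_k}-\partial_{x_j})^2 f_A$, the most singular contribution as $x_k\to 0$ comes from pairs involving $k\in A$: the $\partial^2_{x_k} f_A$ piece gives $\frac12\sum_{j} (m_j r(j,k) + m_k r(k,j))\cdot (p+1)p\,\pi_A(x)^{p+1}/x_k^2$. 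Since $\sum_j m_j r(j,k) = \sum_j m_k r(k,j) = M_k$ (using that $\bs m$ is invariant for $\bs r$, so $\sum_j m_j r(j,k) = m_k\lambda(k) = M_k$), this equals $(p+1)p\,M_k\,\pi_A(x)^{p+1}/x_k^2$. So the coefficient of $\pi_A(x)^{p+1}/x_k^2$ is $(p+1)M_k\,[p - b]$, which is strictly negative precisely because $p < b$ by \eqref{bp1}.

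Therefore I would write $\mf L f_A(x) = \pi_A(x)^{p+1}\big\{ \sum_{k\in A}(p+1)M_k(p-b)/x_k^2 + R(x)\big\}$, where $R(x)$ collects all terms that are less singular, i.e. bounded by $C/(x_k x_\ell)$ for $k\ne\ell$ in $A$, by $C/x_k$, or bounded. On the region $\Sigma_B(a_0\epsilon)\cap\Lambda_B(\epsilon)$ the coordinates in $B$ stay bounded below by $\epsilon$, so the $B$-derivatives and mixed $A$-$B$ terms are all controlled. For the mixed $A$-$A$ terms $1/(x_kx_\ell)$ with $k\ne\ell$, I would use $1/(x_kx_\ell) \le \frac12(1/x_k^2 + 1/x_\ell^2)$ together with the constraint $\max_{j\in A}x_j\le a_0\epsilon$; choosing $a_0$ small makes each $x_j$ itself small (relative to the $B$-coordinates), so that on this region the lower-order pieces are dominated by a small multiple of $\sum_{k\in A}(p+1)M_k(b-p)/x_k^2$. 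The main obstacle is precisely this bookkeeping: making explicit which terms are genuinely of order $1/x_k^2$ versus of lower order, and showing the negative leading term dominates uniformly over the stated region after choosing $a_0$ small enough. I would fix $a_0$ depending only on $b,p$, the rates $\bs r$ and $\bs m$ (and $|S|$), and conclude $\mf L f_A(x)\le 0$ throughout $\Sigma_B(a_0\epsilon)\cap\Lambda_B(\epsilon)$.
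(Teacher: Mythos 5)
Your computation of the leading diagonal competition is correct and matches the paper: the drift contributes $-b(p+1)M_k\,\pi_A(x)^{p+1}/x_k^2$ and the diffusion contributes $+p(p+1)M_k\,\pi_A(x)^{p+1}/x_k^2$ (using invariance of $\bs m$ to get $\sum_j m_j r(j,k)=m_k\lambda(k)=M_k$), so the net coefficient is $(p+1)M_k(p-b)<0$. Your treatment of the mixed $A$--$B$ terms is also essentially the paper's: on $\Lambda_B(\epsilon)$ they are of order $\pi_A(x)^{p+1}/(\epsilon x_k)$, genuinely one power less singular in the $A$-variables, and the paper's choice of $a_0$ is made precisely so that $-\,(b-p)M_k/x_k^2 \le -\,(b-p)M_k/(a_0\epsilon x_k)$ absorbs them.

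The gap is in your handling of the mixed $A$--$A$ terms. These are \emph{not} lower order: both the drift (via $\frac{m_j}{x_j}r(j,k)\partial_{x_k}f_A$, $j\neq k\in A$) and the cross-derivatives $\partial^2_{x_jx_k}f_A$ produce terms of size $\pi_A(x)^{p+1}/(x_jx_k)$, which by your own AM--GM bound is comparable to, not smaller than, $\pi_A(x)^{p+1}/x_k^2$. Both scale identically under $x_A\mapsto\lambda x_A$, so shrinking $a_0$ does nothing to their ratio with the diagonal term, and the constant multiplying them is not small: the ordered-pair coefficient works out to $(p+1)\bigl[b-(p+1)\bigr]\langle \mc S\bs e_j,\bs e_k\rangle_{\bs m}$ (drift gives $+b(p+1)$, diffusion gives $-(p+1)^2$). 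Trying to dominate its absolute value by the diagonal term via AM--GM would require $p+1-b\le b-p$, which is not implied by $1<p<b<p+1$. The correct resolution, and the one the paper uses, is a sign argument rather than a domination argument: since $\langle\mc S\bs e_j,\bs e_k\rangle_{\bs m}=\tfrac12(m_jr(j,k)+m_kr(k,j))\ge0$ for $j\neq k$ and $b<p+1$, every mixed $A$--$A$ term is already non-positive and can simply be discarded. The fact that your argument never invokes the upper bound $b<p+1$ from \eqref{bp1} is the symptom of this missing step; that hypothesis is exactly what kills these terms.
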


\begin{proof}
Let $a_0$ be given by
\begin{equation*}
a_0^{-1} \;:=\; \max_{k\in A} \frac{b \,\langle \mc L\bs e_k, 
{\mb 1}\{B\}\rangle_{\bs m}}{(b-p)\, \< (-\mc S) \bs e_k,\bs e_k\>_{\bs
  m}}\;\cdot
\end{equation*} 
Note that the numerator is non-negative for each $k\in A$ because
$(\mc L\bs e_k)(j) = r(j,k) \ge 0$ for $j\in B=A^c$. Furthermore, by
irreducibility, $\langle \mc L\bs e_k, {\mb 1}\{B\}\rangle_{\bs m} =
\sum_{j\in B} m(j) r(j,k)$ is strictly positive at least for one $k\in
A$. This proves that $a_0^{-1} >0$ because $b>p$.

Fix $\epsilon>0$ and $x\in\Sigma_{B}(a_0\epsilon)\cap
\Lambda_B(\epsilon)$. A straightforward calculation yields that for
any $j\in S$,
\begin{equation*}
(\bs v_j \cdot \nabla f_A) (x) \;=\; 
(p+1) \sum_{k\in A} \frac{f_A(x)}{x_k} \mc L \bs e_k(j)\;, 
\end{equation*}
so that ${\bf b}(x)\cdot \nabla f_A(x)$ can be written as
\begin{equation*}
b(p+1)\sum_{j,k\in A} \frac{f_A(x)}{x_jx_k} 
\langle \mc S\bs e_k,\bs e_j \rangle_{\bs m}
\;+\; b(p+1)\sum_{k\in A, j\in B} 
\frac{f_A(x)}{x_jx_k} \, \< \mc L\bs e_k,\bs e_j \>_{\bs m}\;.
\end{equation*}
In the above formula, the indicator $\mb 1\{x_j >0\}$, $j\in A$
(resp. $j\in B$), has been removed because $f_A(x)/x_j \to 0$ as
$x_j\to 0$ (resp. $x$ belongs to $\Lambda_B(\epsilon)$). On the other
hand, a computation, similar to the one carried out to obtain
\eqref{sot2}, shows that the second-order piece of $\mf Lf_A(x)$ can
be written as
\begin{equation*}
\begin{split}
& \sum_{j, k \in S} \partial^2_{x_{j},x_{k}}f_A(x)\, \< (- \mc S) \bs
e_j ,\bs e_k \>_{\bs m} \\ 
&\quad =\; - (p+1)^2 \sum_{\substack{j,k\in A \\ j\not = k}} 
\frac{f_A(x)}{x_jx_k}\, \< \mc S \bs e_j ,\bs e_k \>_{\bs m}
\;+\; p(p+1) \sum_{k\in A} \frac{f_A(x)}{x_k^2} 
\< (- \mc S) \bs e_k ,\bs e_k \>_{\bs m} \;.
\end{split}
\end{equation*}
It follows from the previous calculations that $(p+1)^{-1}\mf L
f_A(x)$ is equal to
\begin{equation}
\label{48}
\begin{split}
& - (p+1-b) \sum_{\substack{j, k \in A \\ k\not = j} } 
\frac {f_A(x)}{x_j x_k} \<  \mc S \bs e_j ,\bs e_k \>_{\bs m}
\;+\;  b \sum_{k \in A, j\in B } \frac {f_A(x)}{x_k x_j} 
\langle \mc L\bs e_k,\bs e_j\rangle_{\bs m} \\ 
& \qquad -\; (b-p) \sum_{k \in A} \frac {f_A(x)} {x_k^2}
\< (- \mc S) \bs e_k ,\bs e_k \>_{\bs m} \;.
\end{split}
\end{equation}
Since $p+1-b>0$ the first term in the above expression is clearly
negative. As $x\in \Lambda_B(\epsilon)$, the second term is bounded
above by
\begin{equation*}
\frac{b}{\epsilon} \sum_{k\in A} \frac{f_A(x)}{x_k} \langle \mc L\bs
e_k,{\mb 1} \{B\}\rangle_{\bs m}\;.
\end{equation*}
Since $b>p$ and $x\in \Sigma_B(a_0\epsilon)$ the last term is bounded above by
\begin{equation*}
- \frac{(b-p)}{a_0\epsilon } \sum_{k \in A} \frac {f_A(x)} {x_k} 
\< (- \mc S) \bs e_k ,\bs e_k \>_{\bs m} \;.
\end{equation*}
By the last two estimates and by definition of $a_0$ we conclude that the
expression in \eqref{48} is negative. 
\end{proof}	

In virtue of Lemma \ref{r01}, for each $\epsilon>0$, there exists a
function in $\mc D_S$, denoted by $H^{\epsilon}_A$, such that
\begin{equation}
\label{psi1}
H^{\epsilon}_A(x) \;=\; f_A(x) \quad \textrm{and} \quad 
\mf LH^{\epsilon}_A(x) \;=\; \mf Lf_A(x) \;,\quad
x\in \Lambda_B(\epsilon)\;.
\end{equation}
For every $\epsilon>0$, denote by $\tau_0(\epsilon)$ the exit time from
the compact set $\Sigma_{B}(a_0\epsilon)\cap \Lambda_B(\epsilon)$:
\begin{equation*}
\tau_0(\epsilon) \;:=\; \inf\{t\ge 0 : X_t \not\in 
\Sigma_{B}(a_0\epsilon)\cap \Lambda_B(\epsilon)\}\;. 
\end{equation*}

\begin{lemma}
\label{s02}
For every $0<\epsilon < \min_{j\in B} z_j$ we have
\begin{equation*}
\bb P_{z} \big[\, \pi_ A (X_{t}) = 0 \;,\;\; 
0\le t\le \tau_{0}(\epsilon) \big] \;=\; 1 \;.
\end{equation*}
\end{lemma}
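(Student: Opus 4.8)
The strategy is the standard localization argument for martingale problems: use the test function $H^{\epsilon}_A \in \mc D_S$ from \eqref{psi1} together with the martingale property \eqref{f06}, and exploit the sign of $\mf L f_A$ given by Lemma \ref{psi4}. Recall that $f_A(x) = \pi_A(x)^{p+1} \ge 0$ and $f_A(z) = 0$ since $z_j = 0$ for $j \in A$. Fix $0 < \epsilon < \min_{j\in B} z_j$, so that $z \in \Sigma_B(a_0\epsilon) \cap \Lambda_B(\epsilon)$ provided $\epsilon$ is also small enough that $\max_{j\in A} z_j = 0 \le a_0\epsilon$, which is automatic. Thus $\tau_0(\epsilon) > 0$ under $\bb P_z$.

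\smallskip
\noindent\textbf{Step 1: a supermartingale up to the exit time.} Since $H^{\epsilon}_A \in \mc D_S$, the process
\begin{equation*}
M_t \;:=\; H^{\epsilon}_A(X_t) - \int_0^t (\mf L H^{\epsilon}_A)(X_s)\,ds
\end{equation*}
is a $\bb P_z$-martingale. By optional stopping, $M_{t\wedge\tau_0(\epsilon)}$ is also a martingale. On the event $\{s < \tau_0(\epsilon)\}$ the point $X_s$ lies in $\Sigma_B(a_0\epsilon) \cap \Lambda_B(\epsilon)$, so by \eqref{psi1} we have $(\mf L H^{\epsilon}_A)(X_s) = (\mf L f_A)(X_s)$, and by Lemma \ref{psi4} this quantity is $\le 0$. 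Again by \eqref{psi1}, $H^{\epsilon}_A(X_{t\wedge\tau_0(\epsilon)}) = f_A(X_{t\wedge\tau_0(\epsilon)})$ whenever $X_{t\wedge\tau_0(\epsilon)} \in \Lambda_B(\epsilon)$; since the stopped path stays in $\Sigma_B(a_0\epsilon)\cap\Lambda_B(\epsilon)$ up to and including time $\tau_0(\epsilon)$ (the set is closed, and $X$ is continuous, so $X_{\tau_0(\epsilon)}$ belongs to it), this identity holds for all $t$. Consequently
\begin{equation*}
\bb E_z\big[ f_A(X_{t\wedge\tau_0(\epsilon)}) \big] \;=\; \bb E_z\big[ M_{t\wedge\tau_0(\epsilon)}\big] + \bb E_z\Big[ \int_0^{t\wedge\tau_0(\epsilon)} (\mf L f_A)(X_s)\,ds\Big] \;\le\; M_0 \;=\; H^{\epsilon}_A(z) \;=\; f_A(z) \;=\; 0\;.
\end{equation*}

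\smallskip
\noindent\textbf{Step 2: conclude.} Since $f_A \ge 0$ everywhere on $\Sigma$, the inequality $\bb E_z[f_A(X_{t\wedge\tau_0(\epsilon)})] \le 0$ forces $f_A(X_{t\wedge\tau_0(\epsilon)}) = 0$ $\bb P_z$-almost surely, for each fixed $t \ge 0$. Taking $t$ along a countable sequence increasing to infinity and using the continuity of $t \mapsto f_A(X_{t\wedge\tau_0(\epsilon)})$, we obtain that $\bb P_z$-a.s. $f_A(X_{t\wedge\tau_0(\epsilon)}) = 0$ for all $t \ge 0$, hence $\pi_A(X_t) = 0$ for all $0 \le t \le \tau_0(\epsilon)$. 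This is the claimed identity.

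\smallskip
\noindent\textbf{Main obstacle.} The calculations are routine once the test function is in hand; the only delicate point is the bookkeeping at the boundary of the stopping region. One must be careful that \eqref{psi1} identifies $H^{\epsilon}_A$ with $f_A$ (and their generators) only on $\Lambda_B(\epsilon)$, while Lemma \ref{psi4} controls the sign of $\mf L f_A$ only on the smaller set $\Sigma_B(a_0\epsilon)\cap\Lambda_B(\epsilon)$; the stopping time $\tau_0(\epsilon)$ is precisely designed so that the path stays in this intersection, so both facts apply simultaneously on $[0,\tau_0(\epsilon))$. Since $\Sigma_B(a_0\epsilon)$ and $\Lambda_B(\epsilon)$ are both defined by non-strict inequalities, the stopped path at time $\tau_0(\epsilon)$ still belongs to their intersection, in particular to $\Lambda_B(\epsilon)$, so the identity $H^{\epsilon}_A(X_{t\wedge\tau_0(\epsilon)}) = f_A(X_{t\wedge\tau_0(\epsilon)})$ is valid including the endpoint, which is what makes the final estimate clean.
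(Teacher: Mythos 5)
Your proof is correct and follows essentially the same route as the paper: apply the martingale property to $H^{\epsilon}_A$, stop at $\tau_0(\epsilon)$, use \eqref{psi1} and Lemma \ref{psi4} to see that the time-integral is nonpositive and that $H^{\epsilon}_A$ may be replaced by $f_A\ge 0$ with $f_A(z)=0$, and finish with a countable dense set of times. Your extra remark that the stopped path remains in the closed set $\Sigma_B(a_0\epsilon)\cap\Lambda_B(\epsilon)$ up to and including $\tau_0(\epsilon)$ is a correct justification of a point the paper passes over silently.
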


\begin{proof}
Fix $t>0$. Since $H^{\epsilon}_A\in \mc D_S$ we have
\begin{equation*}
\bb E_{z}[H^{\epsilon}_A(X_{t\land \tau_0(\epsilon)})] \;=\; 
H_A^{\epsilon}(z) \;+\; \bb E_{z} \Big[\int_0^{t\land
  \tau_0(\epsilon)} 
\mf LH^{\epsilon}_A(X_{s})\,ds \Big] \;.
\end{equation*}
By definition of $\tau_0(\epsilon)$, by \eqref{psi1} and by Lemma
\ref{psi4}, the expectation on the right hand side in the above
equation is negative. Hence
\begin{equation*}
\bb E_{z}[H^{\epsilon}_A(X_{t\land \tau_0(\epsilon)})] \;\le\; H_A^{\epsilon}(z) \;.
\end{equation*}
By \eqref{psi1} and since $\epsilon < \min_{j\in B}z_j$ we may replace
$H_A^{\epsilon}$ by $f_A$ in the above inequality. Since $f_A(z)=0$,
after this replacement we have that $\bb E_{z} \big [f_A(X_{t\wedge
  \tau_0 (\epsilon)} ) \big] \le 0$. This proves that for all $t\ge
0$,
\begin{equation*}
\bb P_{z}\big[ \pi_A(X_{t \wedge \tau_0(\epsilon)}) = 0 \big] \;=\; 1 \;.
\end{equation*}
To complete the proof it remains to consider a countable set of times
dense in $\bb R_+$.
\end{proof}

We have thus shown that, under $\bb P_{z}$, before time
$\tau_0(\epsilon)$ at least one of the coordinates in $A$ must be
zero. To improve this result, we consider for each nonempty subset
$D\subseteq A$ the function $f_D:\Sigma_{D\cup B}\to \bb R$ defined as
$f_D(x)=\prod_{j\in D}x_j^{p+1}$ so that the definition of $f_A$ is
consistent with \eqref{fa1}.

At this point, we reduce the neighborhood of $z$ to obtain estimates,
similar to the ones derived in Lemma \ref{psi4}, for all functions
$f_D$. For each nonempty $D\subseteq A$, let $a_D>0$ be given by
\begin{equation*}
a_D^{-1} \;=\; \max_{k\in A} \frac{b \,\langle \mc L^{B\cup D} \bs
  e_k, {\mb 1} \{ B \} \rangle_{\bs m}}
{(b-p)\, \< (- \mc S^{B\cup D}) \bs e_k,\bs e_k\>_{\bs m}}
\end{equation*}
where here $\langle \cdot, \cdot \rangle_{\bs m}$ represents the
$L^2$-inner product with respect to $\bs m$ restricted to $B\cup D$,
$\{\bs e_k : k\in B\cup D\}$ the canonical basis for $\bb R^{B\cup
  D}$, and $\mc S^{B\cup D}$ the symmetric part of the generator $\mc
L^{B\cup D}$ in $L^2(\bs m)$. Since $\bs r^{B\cup D}$ is irreducible
and $b>p$, $a_D$ is well defined and strictly positive for all
nonempty subsets $D$ of $A$.

We set $\bs a_0 \;:=\; \min\{a_D : \varnothing\subsetneq D \subseteq A
\}$ and denote
\begin{equation*}
K_{z}(\epsilon) \;:=\; \Sigma_B(\bs a_0\epsilon)\cap 
\Lambda_B(\epsilon)\;,
\end{equation*}
for all $\epsilon>0$. Of course, $z \in K_{z}(\epsilon)$ if and only
if $\epsilon\in (0,\epsilon_0)$ where
\begin{equation*}
\epsilon_0 \;:=\; \min_{j\in B}z_j>0\;.
\end{equation*}

\begin{lemma}
\label{psi5}
Let $D$ be a nonempty subset of $A$. For all $\epsilon\in
(0,\epsilon_0)$,
\begin{equation*}
\mf L_{D\cup B} f_D(x_{D\cup B})\le 0\;,\quad  x\in K_{z}(\epsilon)\;.
\end{equation*}
\end{lemma}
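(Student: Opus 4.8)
The plan is to reduce Lemma~\ref{psi5} to the computation already performed in Lemma~\ref{psi4}, applied to a lower-dimensional simplex. The key observation is that the operator $\mf L_{D\cup B}$ on $\Sigma_{D\cup B}$ has exactly the same structure as the operator $\mf L$ on $\Sigma$, with $S$ replaced by $D\cup B$, the jump rates $\bs r$ replaced by the trace rates $\bs r^{D\cup B}$, and the measure $\bs m$ replaced by its restriction to $D\cup B$. Within the simplex $\Sigma_{D\cup B}$, the set $D$ plays the role that $A$ played in Lemma~\ref{psi4}: its complement in $D\cup B$ is precisely $B$. The function $f_D(x) = \pi_D(x)^{p+1}$ on $\Sigma_{D\cup B}$ is the exact analogue of $f_A$ on $\Sigma$.

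First I would apply the calculation of Lemma~\ref{psi4} verbatim, but with $(S,\bs r,\bs m,A)$ replaced by $(D\cup B,\bs r^{D\cup B},\bs m_{D\cup B},D)$. This yields that $(p+1)^{-1}\mf L_{D\cup B}f_D(x)$ equals the expression \eqref{48} with $A$ replaced by $D$, with $\mc S$ replaced by $\mc S^{B\cup D}$, and with $\mc L\bs e_k$ replaced by $\mc L^{B\cup D}\bs e_k$; the three terms are, respectively, the cross-term among coordinates of $D$ (negative since $p+1-b>0$), the boundary term coupling $D$ to $B$ (controlled using $x\in\Lambda_B(\epsilon)$, hence $x_j\ge\epsilon$ for $j\in B$), and the diagonal negative term $-(b-p)\sum_{k\in D}(f_D(x)/x_k^2)\<(-\mc S^{B\cup D})\bs e_k,\bs e_k\>_{\bs m}$. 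Bounding the boundary term above by $(b/\epsilon)\sum_{k\in D}(f_D(x)/x_k)\<\mc L^{B\cup D}\bs e_k,\mb 1\{B\}\>_{\bs m}$ and the diagonal term above by $-((b-p)/a_D\epsilon)\sum_{k\in D}(f_D(x)/x_k)\<(-\mc S^{B\cup D})\bs e_k,\bs e_k\>_{\bs m}$, using $x\in\Sigma_B(\bs a_0\epsilon)\subseteq\Sigma_B(a_D\epsilon)$ since $\bs a_0\le a_D$, the definition of $a_D$ forces the sum of these two terms to be nonpositive. Hence $\mf L_{D\cup B}f_D(x)\le 0$ for $x\in K_z(\epsilon)$.

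The one point requiring care — and the main thing to verify rather than the main obstacle — is that the constant $a_D$ is defined with exactly the right objects so that the argument of Lemma~\ref{psi4} transports: the numerator $\<\mc L^{B\cup D}\bs e_k,\mb 1\{B\}\>_{\bs m}$ must be nonnegative, which holds because $(\mc L^{B\cup D}\bs e_k)(j)=r^{B\cup D}(j,k)\ge 0$ for $j\in B$ and $k\in D$, $k\ne j$; and it must be strictly positive for at least one $k\in D$, which follows from irreducibility of $\bs r^{B\cup D}$. Both are exactly the facts recorded in the paragraph defining $a_D$, and $\bs r^{B\cup D}$ is irreducible because the trace of an irreducible chain is irreducible. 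Since $\bs a_0=\min\{a_{D'}:\varnothing\subsetneq D'\subseteq A\}\le a_D$, the containment $K_z(\epsilon)=\Sigma_B(\bs a_0\epsilon)\cap\Lambda_B(\epsilon)\subseteq\Sigma_B(a_D\epsilon)\cap\Lambda_B(\epsilon)$ places $x$ in precisely the region where the Lemma~\ref{psi4}-type estimate applies. There is no genuine obstacle here: the proof is a bookkeeping exercise in checking that every ingredient of Lemma~\ref{psi4} has a faithful counterpart on $\Sigma_{D\cup B}$, the only subtlety being the correct identification of the trace chain and its symmetric part.
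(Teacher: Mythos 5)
Your proposal is correct and is exactly the paper's argument: the paper's proof of this lemma consists of the single remark that one repeats the computation of Lemma \ref{psi4} with $\mf L$, $\mc L$, $\mc S$ replaced by $\mf L_{B\cup D}$, $\mc L^{B\cup D}$, $\mc S^{B\cup D}$, which is precisely the reduction you carry out (and you supply more detail than the paper, correctly noting the roles of $\bs a_0\le a_D$, the nonnegativity and partial positivity of $\<\mc L^{B\cup D}\bs e_k,\mb 1\{B\}\>_{\bs m}$, and the irreducibility of the trace chain).
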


\begin{proof}
The proof is similar to the one of Lemma \ref{psi4}. One just needs to
replace $\mf L$, $\mc L$ and $\mc S$ by the respective operators $\mf
L_{B\cup D}$, $\mc L^{B\cup D}$ and $\mc S^{B\cup D}$.
\end{proof}

For each nonempty proper subset $D$ of $A$, Lemma \ref{g1} permits to
extend the function $f_D:\Sigma_{D\cup B}\to \bb R$ to a function
$F_D: \Sigma\to \bb R$ which belongs to $\mc D_A$ and such that:
\begin{equation}
\label{fd1}
\begin{split}
& F_D(x)\;=\; f_D(x_{B\cup D})\;=\; \pi_{D}(x)^{p+1}
\;,\quad  x\in \Sigma_{B\cup D,0}\;, \\
&\quad \mf L F_D(x)\;=\; \mf L_{B\cup D}f_D(x_{B\cup D})\;,\quad  
x\in \mathring{\Sigma}_{B\cup D,0}\;, \\
&\qquad \mf LF_D(x)\;=\;0 \quad \textrm{if $\pi_D(x)=0$}\;.
\end{split}
\end{equation}
Moreover, since each $f_D$ is positive, it follows from the
construction presented in Lemma \ref{g1} that $F_D(x)\ge 0$ for all
$x\in \Sigma$. Denote by $H^{\epsilon}_D$ the function in $\mc D_S$
obtained by Lemma \ref{r01} from $F_D$. Thus, for each $\epsilon\in
(0,\epsilon_0)$ and for each nonempty, proper subset $D$ of $A$,
\begin{equation}
\label{hef1}
H^{\epsilon}_D(x) \;=\; F_D(x) \quad\textrm{and} \quad 
\mf L H^{\epsilon}_D(x) \;=\; \mf L F_D(x)\;, \quad 
x\in \Lambda_{B}(\epsilon)\;.
\end{equation}
Since $z\in K_{z}(\epsilon)\cap \Sigma_{D\cup B,0}$, by the first
property in \eqref{fd1}, by \eqref{hef1}, and by the positivity of
$F_D$,
\begin{equation}
\label{hx0}
\begin{split}
& H^{\epsilon}_D(z) \;=\; F_D(z) \;=\; \pi_{D}(z)^{p+1} \;=\; 0 \;, \\
&\quad H^{\epsilon}_D(x) \;=\; F_D(x) \; \ge \; 0 \;,\quad  
x\in \Lambda_B(\epsilon)\;,\\
&\qquad H^{\epsilon}_D(x) \;=\; F_D(x) \; = \; 
\pi_{D}(x)^{p+1} \;, \quad  x\in \Lambda_B(\epsilon)\cap 
\Sigma_{B\cup D,0}\;.  
\end{split}
\end{equation}

\begin{lemma}
\label{ns08}
For all $\epsilon>0$ there exists a constant $C(\epsilon)>0$ such that
for all nonempty, proper subset $D$ of $A$,
\begin{equation*}
\mf L H^{\epsilon}_D(x) \;\le \; C(\epsilon) \mb 1\{\pi_D(x)>0 \,,\, 
\Vert x \Vert_{A\setminus D} > 0\}\;, \quad  x\in K_{z}(\epsilon)\;.
\end{equation*}
\end{lemma}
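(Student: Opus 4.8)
The plan is to reduce the assertion to an estimate on $\mf L F_D$ over the compact set $K_z(\epsilon)$ and then to split $K_z(\epsilon)$ according to the two indicators appearing in the statement. Since $K_z(\epsilon)\subseteq\Lambda_B(\epsilon)$, identity \eqref{hef1} gives $\mf L H^{\epsilon}_D(x)=\mf L F_D(x)$ for every $x\in K_z(\epsilon)$, so it is enough to bound $\mf L F_D$ on $K_z(\epsilon)$. If $|A|=1$ there are no nonempty proper subsets $D$ of $A$, so assume $|A|\ge 2$. On the portion of $K_z(\epsilon)$ where $\pi_D(x)=0$, the third line of \eqref{fd1} gives $\mf L F_D(x)=0$, which is consistent with the claimed inequality because the indicator also vanishes there.

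Next I would treat the portion of $K_z(\epsilon)$ on which $\pi_D(x)>0$ but $\Vert x\Vert_{A\setminus D}=0$. Since the coordinates of $x$ are nonnegative and $(B\cup D)^c=A\setminus D$, the condition $\Vert x\Vert_{A\setminus D}=0$ is exactly $x\in\Sigma_{B\cup D,0}$; as moreover $x_j\ge\epsilon>0$ for $j\in B$ (because $x\in\Lambda_B(\epsilon)$) and $x_j>0$ for $j\in D$ (because $\pi_D(x)>0$), in fact $x\in\mathring{\Sigma}_{B\cup D,0}$. The second line of \eqref{fd1} then gives $\mf L F_D(x)=\mf L_{B\cup D}f_D(x_{B\cup D})$, which is nonpositive by Lemma \ref{psi5}. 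The indicator is again $0$ here, so the inequality holds with any nonnegative constant.

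It remains to produce a finite constant valid on the complementary region $\{x\in K_z(\epsilon):\pi_D(x)>0,\ \Vert x\Vert_{A\setminus D}>0\}$, where the indicator equals $1$; for this it suffices to prove that $\mf L F_D$ is bounded on all of $K_z(\epsilon)$. The second-order part of $\mf L F_D$ is continuous, hence bounded, on $\Sigma$ since $F_D\in C^2(\Sigma)$. For the drift term, write $\mb b(x)\cdot\nabla F_D(x)=b\sum_{j\in S}m_j\,[\bs v_j\cdot\nabla F_D(x)]\,x_j^{-1}\,\mb 1\{x_j>0\}$: the summands with $j\in A$ are continuous on the whole of $\Sigma$ because $F_D\in\mc D_A$, while the summands with $j\in B$ are bounded on $\Lambda_B(\epsilon)$ because there $x_j\ge\epsilon$ and $\bs v_j\cdot\nabla F_D$ is continuous. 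Hence $\mf L F_D$ is bounded on $\Lambda_B(\epsilon)\supseteq K_z(\epsilon)$, and taking $C(\epsilon)$ to be the maximum of $1$ and of these bounds over the finitely many nonempty proper subsets $D\subseteq A$ finishes the argument. I do not anticipate a genuine obstacle here; the two points demanding a little care are the identification of $\{\Vert x\Vert_{A\setminus D}=0\}\cap K_z(\epsilon)$ with a subset of $\mathring{\Sigma}_{B\cup D,0}$, so that Lemma \ref{psi5} is applicable, and the fact that $\mf L F_D$, which may blow up as $x_j\to 0$ with $j\in B$ since $F_D$ belongs only to $\mc D_A$ and not to $\mc D_S$, is nevertheless bounded on $\Lambda_B(\epsilon)$, which is exactly the region that matters.
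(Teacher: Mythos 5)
Your proof is correct and follows essentially the same route as the paper: the same three-way case split on $K_z(\epsilon)$ using \eqref{hef1}, the three properties in \eqref{fd1}, and Lemma \ref{psi5} applied on $\mathring{\Sigma}_{B\cup D,0}$. The only (harmless) difference is in producing the constant: the paper simply takes $C(\epsilon)=\sup_{x\in\Sigma,D}|\mf L H^{\epsilon}_D(x)|$, which is finite because $H^{\epsilon}_D\in\mc D_S$ makes $\mf L H^{\epsilon}_D$ continuous on the compact $\Sigma$, whereas you bound $\mf L F_D$ directly on $\Lambda_B(\epsilon)$, which requires (and you correctly supply) the extra observation that the $j\in B$ drift terms are controlled there by $x_j\ge\epsilon$.
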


\begin{proof}
Fix $\epsilon>0$. Since each function $\mf L H^{\epsilon}_D$,
$\varnothing \subsetneq D \subsetneq A$, is continuous on $\Sigma$,
\begin{equation*}
C(\epsilon) \;:=\; \sup \big\{ |\mf L H^{\epsilon}_D(x)| : 
x\in \Sigma \,,\, \varnothing \subsetneq D \subsetneq A\} 
\;<\; \infty \;.
\end{equation*}

Fix a nonempty subset $D$ of $A$ and $x\in K_{z}(\epsilon)$. Since
$x\in \Lambda_B(\epsilon)$, by \eqref{hef1} and by the third property
in \eqref{fd1},
\begin{equation*}
\mf L H^{\epsilon}_D (x) \;=\; \mf L H^{\epsilon}_D(x) 
\, {\mb 1}\{\pi_D(x) >0 \}\;.
\end{equation*}
On the other hand, if $\pi_D(x)>0$ and $\Vert x \Vert_{A\setminus
  D}=0$, $x\in \mathring{\Sigma}_{D\cup B,0}$. Hence, in this case, by
\eqref{hef1} and by the second property in \eqref{fd1},
\begin{equation*}
\mf LH^{\epsilon}_D(x) \;=\; \mf L F_D(x) \;=\; 
\mf L_{D\cup B} f_D(x_{D\cup B}) \;.
\end{equation*}
From this observation and from Lemma \ref{psi5} we conclude that
\begin{equation*}
\textrm{$\mf L H^{\epsilon}_D(x)\le 0$\; if \,$\pi_D(x)>0$ \, and \, 
$\Vert x \Vert_{A\setminus D}=0$} \;.
\end{equation*}
It follows from the previous estimates that
\begin{equation*}
\mf L H^{\epsilon}_D(x) \;\le\; {\mb 1} \big\{ \pi_D(x)>0\,,\, 
\Vert x \Vert_{A\setminus D} >0 \big\} \,\mf L H^{\epsilon}_D(x)\;.
\end{equation*}
The assertion of the lemma is a simple consequence of this inequality
and the definition of $C(\epsilon)$.
\end{proof}

We may now improve Lemma \ref{s02} in the following sense. For every
$\epsilon\in (0,\epsilon_0)$, define $\tau(\epsilon)$ as the exit time
from the compact neighborhood $K_{z}(\epsilon)$ of $z$:
\begin{equation*}
\tau(\epsilon) \;:=\; \inf\{t\ge 0 : X_t \not\in K_{z}(\epsilon)\}\;.
\end{equation*}

\begin{lemma}
\label{47}
For all $\epsilon\in (0,\epsilon_0)$ and nonempty subset $D$ of $A$, 
\begin{equation*}
\bb P_{z}\big[ \pi_D (X_{t}) = 0 \; ,\; 0\le t\le \tau
(\epsilon)  \big] \;=\; 1 \;.
\end{equation*}
\end{lemma}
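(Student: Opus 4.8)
The plan is to establish the lemma by downward induction on $|D|$, running from $D = A$ down to the singletons. The base case $D = A$ follows from Lemma~\ref{s02}: since $\bs a_0 = \min\{a_{D'} : \varnothing \subsetneq D' \subseteq A\} \le a_A = a_0$ (the two quantities coincide because $B \cup A = S$, hence $\mc L^{B\cup A} = \mc L$ and $\mc S^{B\cup A} = \mc S$), we have $K_z(\epsilon) \subseteq \Sigma_B(a_0\epsilon) \cap \Lambda_B(\epsilon)$ and therefore $\tau(\epsilon) \le \tau_0(\epsilon)$; the assertion for $D = A$ is then immediate from Lemma~\ref{s02}.

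For the inductive step, fix a nonempty proper subset $D \subsetneq A$ and assume the claim for every subset of $A$ of cardinality $|D| + 1$, in particular for $D \cup \{j\}$ with $j \in A \setminus D$. Intersecting these finitely many $\bb P_z$-almost sure events, we obtain an event $\Omega_0$ of full measure on which $\pi_{D \cup \{j\}}(X_t) = 0$ for every $j \in A \setminus D$ and every $0 \le t \le \tau(\epsilon)$. The key combinatorial observation is that, on $\Omega_0$, whenever $\pi_D(X_t) > 0$ one must have $X_t(j) = \pi_{D\cup\{j\}}(X_t)/\pi_D(X_t) = 0$ for all $j \in A \setminus D$, i.e.\ $\Vert X_t \Vert_{A\setminus D} = 0$. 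Consequently the indicator in Lemma~\ref{ns08} vanishes along the trajectory, so that $\mf L H^\epsilon_D(X_s) \le 0$ for all $s \in [0, \tau(\epsilon))$, $\bb P_z$-almost surely (recall $X_s \in K_z(\epsilon)$ for such $s$).

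Since $H^\epsilon_D \in \mc D_S$, applying optional stopping at the bounded stopping time $t \wedge \tau(\epsilon)$ to the martingale \eqref{f06} associated with $H^\epsilon_D$ — exactly as in the proof of Lemma~\ref{s02} — gives
\[
\bb E_z\big[ H^\epsilon_D(X_{t \wedge \tau(\epsilon)}) \big] \;=\; H^\epsilon_D(z) \;+\; \bb E_z\Big[ \int_0^{t\wedge\tau(\epsilon)} \mf L H^\epsilon_D(X_s)\, ds \Big] \;\le\; 0\;,
\]
where we used $H^\epsilon_D(z) = \pi_D(z)^{p+1} = 0$ from \eqref{hx0} (note $D \subseteq \ms A(z)$) together with the previous paragraph. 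On the other hand, by continuity of the trajectories and compactness of $K_z(\epsilon)$ we have $X_{t\wedge\tau(\epsilon)} \in K_z(\epsilon) \subseteq \Lambda_B(\epsilon)$, so $H^\epsilon_D(X_{t\wedge\tau(\epsilon)}) = F_D(X_{t\wedge\tau(\epsilon)}) \ge 0$ by \eqref{hx0}; hence $H^\epsilon_D(X_{t\wedge\tau(\epsilon)}) = 0$ $\bb P_z$-almost surely. Finally, on $\Omega_0$, if $\pi_D(X_{t\wedge\tau(\epsilon)})$ were positive then, as above, $\Vert X_{t\wedge\tau(\epsilon)}\Vert_{A\setminus D} = 0$, i.e.\ $X_{t\wedge\tau(\epsilon)} \in \Sigma_{B\cup D, 0} \cap \Lambda_B(\epsilon)$, and the last line of \eqref{hx0} would force $H^\epsilon_D(X_{t\wedge\tau(\epsilon)}) = \pi_D(X_{t\wedge\tau(\epsilon)})^{p+1} > 0$, a contradiction. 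Thus $\pi_D(X_{t\wedge\tau(\epsilon)}) = 0$ $\bb P_z$-almost surely for every fixed $t$; letting $t$ run over a countable dense subset of $\bb R_+$ and using the continuity of $t \mapsto X_t$ under $\bb P_z$ upgrades this to $\pi_D(X_t) = 0$ for all $0 \le t \le \tau(\epsilon)$, which completes the induction.

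The main obstacle is the inductive step, and within it precisely the observation that the inductive hypothesis (vanishing of all the $\pi_{D\cup\{j\}}(X_t)$) annihilates the indicator in Lemma~\ref{ns08} on the set $\{\pi_D(X_t) > 0\}$. This is what converts the a priori merely bounded function $\mf L H^\epsilon_D$ into a nonpositive one along the trajectory, making $H^\epsilon_D(X_{\cdot\wedge\tau(\epsilon)})$ a nonnegative supermartingale started at $0$; everything else — the optional stopping identity and the passage from a dense set of times to the whole interval $[0,\tau(\epsilon)]$ by path continuity — parallels the proof of Lemma~\ref{s02} and is routine.
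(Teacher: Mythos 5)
Your proof is correct and follows essentially the same route as the paper: downward induction on $|D|$ with base case from Lemma \ref{s02} (your justification that $\tau(\epsilon)\le\tau_0(\epsilon)$ via $\bs a_0\le a_A=a_0$ correctly fills in a detail the paper leaves implicit), then the inductive hypothesis kills the indicator in Lemma \ref{ns08}, optional stopping makes $\bb E_z[H^\epsilon_D(X_{t\wedge\tau(\epsilon)})]\le 0$, and the properties \eqref{hx0} convert this into $\pi_D(X_{t\wedge\tau(\epsilon)})=0$ a.s. The only cosmetic difference is that you use the pathwise vanishing of the indicator to get a nonnegative supermartingale, whereas the paper states the analogous fact \eqref{van} marginally for each fixed time $s$ and bounds the time integral by Fubini; the two are interchangeable.
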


\begin{proof}
Fix  $\epsilon\in (0,\epsilon_0)$. By Lemma \ref{s02}, the
claim holds for $D=A$. We extend the assertion to all nonempty
$D\subseteq A$ by a recursive argument. Fix $0\le n < |A|-1$, and
assume that, the assertion of the lemma holds for all $D\subseteq A$
with $|D| \ge |A|-n$. Consider a subset $D'\subseteq A$ such that
$|D'|=|A|-n-1$. By the recurrence hypothesis,
\begin{equation}
\label{van}
\bb P_{z} \big[ \pi_{D'}(X_{s\wedge \tau(\epsilon)} )>0 \,,\, 
\Vert X_{s\wedge \tau(\epsilon)} \Vert_{A\setminus D'} > 0  \big] \;=\; 0\;,
\end{equation}
for all $s\ge 0$. Fix $t\ge 0$. Since $H^{\epsilon}_{D'}\in \mc D_S$, 
\begin{equation*}
\bb E_{z} [H^{\epsilon}_{D'}(X_{t\land \tau(\epsilon)})] 
\;=\; H^{\epsilon}_{D'}(z) \;+\; \bb E_{z} \Big[ 
\int_0^{t\land \tau(\epsilon)} \mf L H^{\epsilon}_{D'} (X_s) \, ds \Big] \;.
\end{equation*}
Thus, by the first property in \eqref{hx0} and by Lemma \ref{ns08}, we
get
\begin{equation*}
\begin{split}
& \bb E_{z} [H^{\epsilon}_{D'}(X_{t\land \tau(\epsilon)})] \\ 
&\quad \le \; C(\epsilon) \, \bb E_{z} \Big[ \int_0^{t\land
  \tau(\epsilon)} \mb 1\{\pi_{D'}(X_s)>0 \,,\, 
\Vert X_s \Vert_{A\setminus D'} >0\} \, ds \Big] \;.
\end{split}
\end{equation*}
By \eqref{van}, the right hand side of the previous expression
vanishes. By the last property in \eqref{hx0}, on the set $\{ \Vert
X_{t\wedge \tau(\epsilon)} \Vert_{A\setminus D'} =0\}$,
$H^{\epsilon}_{D'}(X_{t\land \tau(\epsilon)})= \pi_{D'}(X_{t\wedge
  \tau(\epsilon)})^{p+1}$.  Therefore, by the second property of
\eqref{hx0}, 
\begin{equation*}
\bb E_{z} \big[  \mb 1\big\{
\Vert X_{t\wedge \tau(\epsilon)} \Vert_{A\setminus D'} =0 \big\}
\, \pi_{D'}(X_{t\wedge \tau(\epsilon)})^{p+1} \, \big] 
\;\le\; \bb E_{z} \big[ H^{\epsilon}_{D'}(X_{t\wedge \tau(\epsilon)}) \big]
\;=\; 0\;,
\end{equation*}
so that
\begin{equation*}
\bb P_{z} \big[ \, 
\Vert X_{t\wedge \tau(\epsilon)} \Vert_{A\setminus D'} = 0 
\,,\, \pi_{D'}(X_{t\wedge \tau(\epsilon)})>0 \, \big] 
\;=\; 0\;.
\end{equation*}
The previous identity and \eqref{van} yield that
\begin{equation*}
\bb P_{z} \big[ \pi_{D'}(X_{t\wedge \tau(\epsilon)} )>0 \big] \;=\; 0\;.
\end{equation*}
Finally, taking a countable set of times $t$, dense in $\bb R_+$, we
conclude that the assertion of the lemma holds for $D'$. This
completes the proof.
\end{proof}

The previous lemma with $D=\{j\}$, $j\in A$, yields that, for any
$\epsilon\in (0,\epsilon_0)$,
\begin{equation*}
\bb P_{z}\big[ \, \Vert X_{t}\Vert_A = 0 
\;\text{ for all }\; 0\le t\le \tau(\epsilon)  \big] \;=\; 1\;.
\end{equation*}
Since $\tau(\epsilon)$ is the first time $t$ in which either
$\max_{j\in A} X_t(j) > \bs a_0 \epsilon$ or $\min_{j\in B} X_t(j) <
\epsilon $, 
\begin{equation*}
\bb P_{z} \big[\, \Vert X_{t}\Vert_A = 0 \;\text{ for all }\; 
0\le t\le h_B(\epsilon)  \big] \;=\; 1\;,
\end{equation*}
where, $h_B(\epsilon)$ is the exit time of $\Lambda_B(\epsilon)$:
\begin{equation}\label{hbep}
h_B(\epsilon) \;:=\; \inf\{t\ge 0: \min_{j\in B} X_t(j) 
< \epsilon\}\;,\quad \epsilon>0\;.
\end{equation}
To complete the proof of Proposition \ref{s01}, it remains to let
$\epsilon\downarrow 0$.

\subsection{Absorption at the boundary}
We prove that $\bb P_x$ is absorbing for every $x\in \Sigma$. This
result follows from next proposition.

\begin{proposition}
\label{abs2}
For all $x\in \Sigma$, $n\ge 0$,
\begin{equation*}
{\bb P}_x \big[ \sigma_n=\infty \;\; \textrm{or} \;\; 
\ms A_n = \ms A(X_t) \;\; \text{for all} \;\;
t \in [\sigma_n, \sigma_{n+1}) \big] \;=\; 1\;.
\end{equation*}
\end{proposition}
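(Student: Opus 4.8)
The plan is to reduce Proposition \ref{abs2} to the first–time–interval statement, Proposition \ref{s01}, by applying the strong Markov property at the stopping time $\sigma_n$; no induction on $n$ is needed, since each $n$ can be treated independently. On the event $\{\sigma_n=\infty\}$ the first alternative in the statement holds, so I would restrict attention to $\{\sigma_n<\infty\}$, where the goal is to show that $\bb P_x$-a.s.\ $\ms A_n=\ms A(X_t)$ for every $t\in[\sigma_n,\sigma_{n+1})$.

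One inclusion is purely definitional and needs no probabilistic input. By \eqref{sib} and \eqref{hb1}, on $\{\sigma_n<\infty\}$ one has $\sigma_{n+1}=\inf\{s\ge\sigma_n:\prod_{j\in\ms B_n}X_s(j)=0\}$, so for every $t\in[\sigma_n,\sigma_{n+1})$ the coordinates $X_t(j)$, $j\in\ms B_n$, are all strictly positive, whence $\ms A(X_t)\subseteq\ms B_n^{\,c}=\ms A_n$. For the reverse inclusion $\ms A_n\subseteq\ms A(X_t)$ on $[\sigma_n,\sigma_{n+1})$, I would condition at $\sigma_n$. By Proposition \ref{smark}, on $\{\sigma_n<\infty\}$ the regular conditional distribution of $\bb P_x$ given $\ms F_{\sigma_n}$ is $\bb P_{X_{\sigma_n}}$, and by construction the random point $z:=X_{\sigma_n}$ satisfies $\ms A(z)=\ms A_n$, $\ms B(z)=\ms B_n$. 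If $\ms A_n=\varnothing$ there is nothing to prove; otherwise Proposition \ref{s01}, applied to the process started at $z$, gives $\Vert X_s\Vert_{\ms A_n}=0$ for $0\le s<\sigma_1$, where $\sigma_1$ computed for the process started at $z$ equals $h_{\ms B_n}$, that is $\sigma_{n+1}-\sigma_n$ after the shift $\theta_{\sigma_n}$. I would then transfer this along the strong Markov identity $\bb E_x[(\Phi\circ\theta_{\sigma_n})\,\mb 1\{\sigma_n<\infty\}]=\bb E_x[\bb E_{X_{\sigma_n}}[\Phi]\,\mb 1\{\sigma_n<\infty\}]$, applied to the nonnegative functional $\Phi(\omega)=\mb 1\{\exists\,s\in[0,h_{\ms B(\omega_0)}(\omega)):\Vert\omega_s\Vert_{\ms A(\omega_0)}>0\}$, $\omega_0=\omega(0)$, which has zero $\bb P_z$-expectation for every $z\in\Sigma$ by Proposition \ref{s01} (and trivially so when $\ms A(z)=\varnothing$). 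Since $\Phi\circ\theta_{\sigma_n}\ge 0$, this forces $\Phi\circ\theta_{\sigma_n}=0$, i.e.\ $\Vert X_t\Vert_{\ms A_n}=0$, hence $\ms A_n\subseteq\ms A(X_t)$, for all $t\in[\sigma_n,\sigma_{n+1})$, $\bb P_x$-a.s.\ on $\{\sigma_n<\infty\}$. Combined with the definitional inclusion, this gives $\ms A_n=\ms A(X_t)$ on $[\sigma_n,\sigma_{n+1})$.

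I do not expect a genuine obstacle here: all the analytic difficulty is already contained in Proposition \ref{s01}, proved in the preceding subsection via the super-harmonic functions $H^\epsilon_D$. The points that need care are the measure-theoretic bookkeeping — that $\sigma_n$ is an $\ms F_t$-stopping time, that $\ms A_n$ and $\ms B_n$ are $\ms F_{\sigma_n}$-measurable, that $\Phi$ above is Borel and correctly encodes the identity $h_{\ms B_n}\circ\theta_{\sigma_n}=\sigma_{n+1}-\sigma_n$, and that one may pass from the fixed-starting-point statement of Proposition \ref{s01} to the random point $X_{\sigma_n}$. In particular, the degenerate case $\ms B_n=\{j\}$ is not special: it is just the instance of Proposition \ref{s01} with $\ms A(z)=S\setminus\{j\}$, and it reproduces \eqref{trap}, so that once the process reaches a vertex of $\Sigma$ it remains there.
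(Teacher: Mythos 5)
Your overall strategy coincides with the paper's: condition at $\sigma_n$ and rerun the first-interval argument of Proposition \ref{s01} for the conditional law. The definitional inclusion $\ms A(X_t)\subseteq\ms A_n$ on $[\sigma_n,\sigma_{n+1})$ is also correct. But the way you pass from the fixed starting point to the random point $X_{\sigma_n}$ is circular. You invoke Proposition \ref{smark} to identify the regular conditional distribution of $\bb P_x$ given $\ms F_{\sigma_n}$ with $\bb P_{X_{\sigma_n}}$. At the stage where Proposition \ref{abs2} is being proved, neither that proposition nor the object $\bb P_{X_{\sigma_n}}$ is available: Proposition \ref{smark} is established only in Subsection \ref{adpro}, and its proof rests on Theorem \ref{amp0} (which is precisely the consequence of Proposition \ref{abs2}), on Theorem \ref{amp1}, and on the uniqueness statement of Proposition \ref{uni2} — all of which come downstream of the absorption result. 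Moreover, throughout Section \ref{rec01} the symbol $\bb P_x$ denotes an \emph{arbitrary} solution of the $\mf L$-martingale problem; without uniqueness one cannot even speak of ``the'' law started from $X_{\sigma_n}$, so the identity $\bb E_x[(\Phi\circ\theta_{\sigma_n})\mb 1\{\sigma_n<\infty\}]=\bb E_x[\bb E_{X_{\sigma_n}}[\Phi]\mb 1\{\sigma_n<\infty\}]$ on which your transfer rests is not meaningful yet.

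The repair is the one the paper carries out. Pick a r.c.p.d.\ $\{\bb P_\omega\}$ of $\bb P_x$ given $\ms F_{\sigma_n}$ and set $\bb P^n_\omega=\bb P_\omega\circ\theta_{\sigma_n(\omega)}^{-1}$ as in \eqref{dcon}. Theorem 1.2.10 of \cite{sv79} (Lemma \ref{mpcon}) shows that, for each fixed $H\in\mc D_S$, outside a $\bb P_x$-null set in $\ms F_{\sigma_n}$ the process $H(X_t)-\int_0^t\mf LH(X_s)\,ds$ is a $\bb P^n_\omega$-martingale. Since the proof of Proposition \ref{s01} uses the martingale property only for the countable family of test functions $H^{\epsilon_k}_D$ ($D\subseteq A$, $\epsilon_k\downarrow 0$), one can choose a single null set outside of which all of these are $\bb P^n_\omega$-martingales simultaneously, and then repeat the argument of Proposition \ref{s01} verbatim for $\bb P^n_\omega$ in place of $\bb P_z$. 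This delivers exactly your functional identity $\Phi\circ\theta_{\sigma_n}=0$ a.s.\ on $\{\sigma_n<\infty\}$ without any appeal to uniqueness or to the strong Markov property. With that substitution your argument is the paper's.
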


The assertion for $n=0$ has been proved in Proposition \ref{s01}. To
extend this claim to the remaining time intervals we use the concept of
regular conditional probability distributions (r.c.p.d.)  and the
techniques introduced in \cite{sv79}. Given a probability measure $\bb
P$ on $C(\bb R_+,\Sigma)$ and $n\ge 1$, for $\omega\in \{\sigma_n <
\infty\}$, define a set of probability measures $\bb P^n_{\omega}$ on
$C(\bb R_+,\Sigma)$ as follows. First, choose a r.c.p.d. $\{\bb P_{\mf
  \omega}\}$ for $\bb P$ given the $\sigma$-field $\ms
F_{\sigma_n}$. Then, define
\begin{equation}
\label{dcon}
\bb P^{n}_{\omega} \;:=\; \bb P_{\omega} \circ
\theta_{\sigma_n(\omega)}^{-1} 
\;, \quad \textrm{for $\omega \in \{\sigma_n<\infty\}$}\;,
\end{equation}
where we recall that $(\theta_t)_{t\ge 0}$ stands for the semigroup of
time translations. Next lemma is an immediate consequence of Theorem
1.2.10 in \cite{sv79}.

\begin{lemma}
\label{mpcon}
Let $\bb P$ be a solution of the $\mf L$-martingale problem and $n\ge
1$. Given $H\in \mc D_S$ there exists a $\bb P$-null set $\ms N\in \ms
F_{\sigma_n}$ such that, for all $\omega\in \ms N^c\cap
\{\sigma_n<\infty\}$,
\begin{equation*}
H(X_t) - \int_0^t \mf L H(X_s) \, ds \;,\quad t\ge 0
\end{equation*}
is a $\bb P^n_{\omega}$-martingale with respect to $(\ms F_t)_{t\ge 0}$.
\end{lemma}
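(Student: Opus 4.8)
The statement to prove is Lemma~\ref{mpcon}, which asserts that the martingale property of the $\mf L$-martingale problem is inherited by the conditioned-and-shifted measures $\bb P^n_\omega$ defined in \eqref{dcon}.

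\textbf{Plan of proof.}  The plan is to invoke the general machinery of Stroock and Varadhan on conditioning martingale problems, specifically Theorem~1.2.10 in \cite{sv79}, which is tailored to exactly this situation: given a process that is a martingale with respect to a measure $\bb P$, and a stopping time $\tau$, a regular conditional probability distribution $\{\bb P_\omega\}$ of $\bb P$ given $\ms F_\tau$ makes the ``future'' process a martingale under $\bb P_\omega$ for $\bb P$-almost every $\omega$.  Here the stopping time is $\sigma_n$, which is an $(\ms F_t)$-stopping time because it is built recursively from the hitting times $h_{\ms B_{n-1}}$ via \eqref{sib} and each $h_B$ in \eqref{hb1} is a hitting time of a closed set by a continuous process; the measurability of $\ms B_n$ and hence of $\sigma_{n+1}$ follows inductively.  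The first step is therefore to record that $\sigma_n$ is a stopping time and that $\{\sigma_n<\infty\}\in\ms F_{\sigma_n}$.

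\textbf{Key steps.}  Fix $H\in\mc D_S$ and set $M_t^H := H(X_t)-\int_0^t \mf LH(X_s)\,ds$.  By hypothesis $(M_t^H)_{t\ge0}$ is a $\bb P$-martingale with respect to $(\ms F_t)_{t\ge 0}$.  By \cite[Theorem~1.2.10]{sv79} applied with the stopping time $\sigma_n$, for $\bb P$-a.e.\ $\omega$ the shifted process $t\mapsto M_{\sigma_n(\omega)+t}^H - M_{\sigma_n(\omega)}^H$ is a martingale with respect to the shifted filtration under the conditional measure $\bb P_\omega$; equivalently, $t\mapsto M_t^H\circ\theta_{\sigma_n(\omega)}$, read under $\bb P_\omega\circ\theta_{\sigma_n(\omega)}^{-1}=\bb P^n_\omega$, is an $(\ms F_t)$-martingale.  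The minor bookkeeping point is that $\mf LH$ is bounded and continuous on $\Sigma$ (since $H\in\mc D_S$ and $\Sigma$ is compact), so all the integrability hypotheses needed for the Stroock--Varadhan theorem are trivially met, and the additive functional $\int_0^t\mf LH(X_s)\,ds$ genuinely shifts as $\int_0^t \mf LH(X_s)\,ds\circ\theta_{\sigma_n}=\int_{\sigma_n}^{\sigma_n+t}\mf LH(X_u)\,du$, which is what produces the clean martingale statement after the time change.  Collecting the exceptional set $\ms N$ from the theorem, which lies in $\ms F_{\sigma_n}$ and is $\bb P$-null, gives exactly the asserted conclusion for all $\omega\in\ms N^c\cap\{\sigma_n<\infty\}$.

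\textbf{Main obstacle.}  There is essentially no obstacle here beyond correctly citing and applying the reference: the content of Lemma~\ref{mpcon} is a direct transcription of a standard result.  The only points requiring a line of verification are (i) that $\sigma_n$ is a genuine $(\ms F_t)$-stopping time, handled by the inductive construction in \eqref{sib} together with right-continuity (indeed continuity) of the coordinate process and the fact that hitting times of closed sets are stopping times in this setting, and (ii) that $\mf LH$ is bounded, which holds because $H\in\mc D_S$ makes $\mf LH$ continuous on the compact set $\Sigma$.  Everything else is the black-box application of \cite[Theorem~1.2.10]{sv79}.
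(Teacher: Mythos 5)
Your proof is correct and follows exactly the route the paper takes: the paper disposes of Lemma \ref{mpcon} in one line as "an immediate consequence of Theorem 1.2.10 in \cite{sv79}," and your write-up simply fills in the routine verifications (that $\sigma_n$ is a stopping time and that $\mf L H$ is bounded and continuous for $H\in\mc D_S$) before applying that theorem. Nothing further is needed.
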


This lemma permits to employ the arguments presented in the proof of
Proposition \ref{s01} to the general setting of Proposition \ref{abs2}.

\begin{proof}[Proof of Proposition \ref{abs2}]
Fix $x\in \Sigma$ and $n\ge 1$. To keep notation simple denote by $\bb
P^n_{\omega}$ the measure $(\bb P_x)^{n}_{\omega}$ defined by
\eqref{dcon}. By taking the conditional expectation with respect to
$\ms F_{\sigma_n}$ in the probability appearing in the statement of
Proposition \ref{abs2}, we conclude that it is enough to show that
\begin{equation}
\label{cas1}
\bb P^{n}_{\omega} \big\{ \ms A_n(\omega) = \ms A(X_t) 
\,,\, 0\le t < \sigma_1 \big\} \;=\; 1 \;,
\end{equation}
for $\bb P_x$-almost all $\omega \in \{\sigma_n<\infty\}$. By Lemma
\ref{mpcon}, there exists a $\bb P_x$-null set $\ms N\in \ms
F_{\sigma_n}$ such that, for all $\omega \in \ms N^c\cap
\{\sigma_n<\infty\}$, for all nonempty subsets $D$ of $S$ and for a
sequence $\epsilon_k\downarrow 0$,
\begin{equation*}
H^{\epsilon_k}_D(X_t) - \int_0^t \mf L H^{\epsilon_k}_D(X_s) \, ds 
\;,\quad t\ge 0
\end{equation*}
is a $\bb P^n_{\omega}$-martingale with respect to $(\ms F_t)_{t\ge
  0}$, where $H^{\epsilon}_D$ are the functions introduced in the
previous subsection. At this point, we may repeat the argument
presented in the proof of Proposition \ref{s01} to conclude that
\eqref{cas1} holds for all $\omega\in \ms N^c\cap
\{\sigma_n<\infty\}$.
\end{proof}

Theorem \ref{amp0} is a simple consequence of Proposition \ref{abs2}.

\section{Uniqueness}
\label{rec03}

In this section, we prove that for any $x\in \Sigma$ there exists at
most one solution of the $\mf L$-martingale problem starting at
$x$. We start showing that any such solution also solves the
martingale problem determined by $\ms L$ in the form stated in Theorem
\ref{amp1}. Then, we show in Proposition \ref{uni2} that this fact
along with the absorbing property, proved in the previous section,
provides the desired uniqueness.\smallskip

\begin{proof}[Proof of Theorem \ref{amp1}]
Fix $z\in \Sigma$ and a function $F\in D_0(\Sigma)$. Let
\begin{equation*}
M^F_t \;:=\; F(X_t) - \int_0^t  \ms LF(X_s)ds - \int_0^t
F(X_s)dN^S_s\;, \quad t\ge 0\;.
\end{equation*}
Clearly $(M^F_t)_{t\ge 0}$ is $\ms F_t$-adapted and $M^F_t$ is bounded
for each $t\ge 0$. For each proper subset $B$ of $S$, with $|B|\ge 2$,
define $G_B:\Sigma\to \bb R$ as $G_B=[F]_B\circ \Upsilon$, where
$\Upsilon$ is the linear map defined in \eqref{bsu}. By Lemma
\ref{s04}, $G_B$ belongs to $\mc D_A$. Since $[F]_B$ has compact
support contained in $\mathring{\Sigma}_B$, we may apply the last
assertion in Lemma \ref{r01} to each $G_B$. In this way, for each
proper subset $B$ of $S$ with at least two elements, we obtain a
function $H_B\in \mc D_S$ such that
\begin{equation}
\label{hfb1}
\begin{split}
& H_B(x) \;=\; G_B(x) \;=\; F(x)\, \mb 1\{x\in \mathring{\Sigma}_B\}
\;, \quad x\in \Sigma_{B,0}\;, \\
&\quad \mf L H_B(x) \;=\; \mf L G_B(x) \;=\; \mf L_B [F]_B (x_B) 
\;, \quad x\in \Lambda_B(\epsilon)\cap \Sigma_{B,0}\;.  
\end{split}
\end{equation}
By continuity of $\Upsilon$, we may choose $\epsilon>0$ small enough
for $G_B$ to vanish in a neighborhood of $\Sigma_{B,0}\setminus
\Lambda_B(\epsilon)$. For such $\epsilon$,
\begin{equation*}
\mf L H_B(x) \;=\; \mf L G_B(x) \;=\; 0 \;=\; \mf L_B [F]_B (x_B)\;, 
\quad x\in \Sigma_{B,0} \setminus \Lambda_B(\epsilon) \;. 
\end{equation*}
The identity $\mf L H_B(x) = 0$ and $\mf L_B [F]_B (x_B)=0$ are in
force because $H_B$ and $[F]_B$ vanish if $x_j$ is small enough for
some $j\in B$. On the other hand, by its definition, the function
$G_B$ vanishes if $x_j + \Vert x\Vert_A$ is small enough for some
$j\in B$, which explains why $\mf L G_B(x)= 0$.

It follows from the two previous displayed equations that
\begin{equation}
\label{hfb2}
\mf L H_B(x) \;=\;\ms LF(x) \;, \quad x\in \mathring{\Sigma}_{B,0}\;.
\end{equation}
In addition, define $H_S$ as equal to $[F]_S$ (which is equal to $F \,
\mb 1\{\mathring{\Sigma}\}$) and, for all $j\in S$, $H_{\{j\}}$ as a
constant function equal to $F(\bs e_j)$ so that $H_B\in \mc D_S$, for
all nonempty subset $B$ of $S$. Therefore,
\begin{equation}\label{mb1}
M^B_t \;:=\; H_B(X_t) - \int_0^t \mf LH_B(X_s)\,ds\;,\quad t\ge 0\;,
\end{equation}
is a $\bb P_{z}$-martingale with respect to $(\ms F_t)_{t\ge 0}$ for
all $\varnothing \subsetneq B\subseteq S$. 

On the \emph{absorbing event}
\begin{equation*}
\bigcap_{n\ge 0} \{ \textrm{$\ms A_n \subseteq \ms A(X_{t})$ 
for all $t\ge \sigma_n$} \}\;,
\end{equation*}
we have that 
\begin{equation*}
F(X_t) - F(X_0) \;-\; \int_0^t  F(X_s)\, dN_s \;=\;  
F(X_t) \;-\; \sum_{n=0}^{N_t} F(X_{\sigma_n}) \;,\quad t\ge 0\;.
\end{equation*}
For each $n\ge 0$ such that $\sigma_{n+1}<\infty$, $H_{\ms
  B_n}(X_{\sigma_{n+1}})=0$ because, as already observed, $H_B(x)=0$
if one of the coordinates $x_j$, $j\in B$, vanishes. Therefore, by
\eqref{hfb1}, on the absorbing event the right hand side of the
previous expression is equal to
\begin{equation*}
\begin{split}
& \sum_{n=0}^{N_t} \{ H_{\ms B_n}(X_{\sigma_{n+1}\land t}) - F(X_{\sigma_n})\} \\
&\qquad =\; \sum_B\sum_{n=0}^{N_t} \{ H_B(X_{\sigma_{n+1}\land t}) -
H_B(X_{\sigma_n})\} {\mb 1}\{{\ms B_n}=B\}\;,
\end{split}
\end{equation*}
where the first sum on the right hand side is carried over all
nonempty subsets $B$ of $S$. By \eqref{mb1}, for each such $B$, the sum
\begin{equation*}
\sum_{n=0}^{N_t} \{ H_B(X_{\sigma_{n+1}\land t}) - 
H_B(X_{\sigma_n})\} {\mb 1}\{{\ms B_n}=B\}\;,
\end{equation*}
can be written as
\begin{equation*}
\begin{split}
& \sum_{n=0}^{N_t} \Big[ \{ M^B_{\sigma_{n+1}\land t} -
M^B_{\sigma_n}\} + \int_{\sigma_n}^{\sigma_{n+1}\land t} 
\mf L H_B(X_s) ds \Big] {\mb 1}\{{\ms B_n}=B\} \\
&\quad = \;  \sum_{n=0}^{N_t} \{ M^B_{\sigma_{n+1}\land t} -
M^B_{\sigma_n}\} {\mb 1}\{{\ms B_n}=B\} \;+\; 
\int_{0}^t \mf L H_B(X_s) {\mb 1}\{{\ms B}(X_s)=B\} \,ds \;.
\end{split}
\end{equation*}
By \eqref{hfb2}, this last expression equals
\begin{equation*}
\widehat M^B_t \;+\;  \int_{0}^t \ms L F(X_s) 
{\mb 1}\{{\ms B}(X_s)=B\} \,ds
\end{equation*}
where
\begin{equation*}
\widehat M^B_t \;:=\; \sum_{n=0}^{N_t} \{ M^B_{\sigma_{n+1}\land t} 
- M^B_{\sigma_n}\} {\mb 1}\{{\ms B_n}=B\}\;, \quad t\ge 0 \;.
\end{equation*}

Up to this point, we proved that
\begin{equation*}
F(X_t) \;-\; F(X_0) \;-\; \int_0^{t} F(X_s)\,dN_s \;=\;  
\int_{0}^t \ms L F(X_s) \,ds \;+\; \sum_B \widehat M^B_t  \;, 
\quad \textrm{$\bb P_{z}$-a. s.},
\end{equation*}
for all $t\ge 0$. Therefore, it remains to prove that
\begin{equation}
\label{uu}
\bb E_{z} \big[\, \widehat M^B_{t} - \widehat M^B_{s}  
\,|\, \ms F_s \big] \;=\; 0 \;,
\end{equation}
for every $0\le s < t$ and nonempty subset $B$ of $S$. Fix $0\le s<t$,
$\varnothing \subsetneq B\subseteq S$ and $\ms U \in \ms F_s$. By
definition, 
\begin{equation*}
\bb E_z \big[ \{ \widehat M^B_{t} - \widehat M^B_{s} \} 
{\mb 1}\{\ms U\} \big] \;=\; \sum_{n=0}^{|S|} \bb E_z 
\Big[ \big\{ M^B_{(\sigma_{n+1}\land t)\lor s} - 
M^B_{(\sigma_n\land t)\lor s} \big\} {\mb 1} \{\ms U \,,\, \ms B_n=B\} \Big]\;.
\end{equation*}
For each $0\le n \le |S|$,
\begin{equation*}
\begin{split}
& \bb E_z \Big[ \big\{ M^B_{(\sigma_{n+1}\land t)\lor s} - 
M^B_{(\sigma_n\land t)\lor s} \big\}\, {\mb 1}\{\ms U \,,\, \ms B_n=B\} \Big] \\ 
&\quad =\; \bb E_z \Big[ \big\{ M^B_{(\sigma_{n+1}\land t)\lor s} - 
M^B_{(\sigma_n\land t)\lor s} \big\} \, {\mb 1}\{\ms U \,,\, \ms B_n=B
\,,\, \sigma_n \le t \} \Big] \;=\; 0 \;,
\end{split}
\end{equation*}
where last equality follows from the martingale property of
$(M^B_t)_{t\ge 0}$ and from the fact that
\begin{equation*}
\ms U\cap \{\ms B_n=B\} \cap \{\sigma_n \le t\} \;\in\; 
\ms F_{(\sigma_n\land t)\lor s} \;.
\end{equation*}
This proves \eqref{uu} and completes the proof of the theorem.
\end{proof}

A probability measure $\bb P$ on $C(\bb R_+,\Sigma)$ is said to be an
absorbing solution of the $\ms L$-martingale problem if $\bb P$ is
absorbing and for all $F\in D_0(\Sigma)$,
\begin{equation*}
F(X_t) - \int_0^t  \ms LF(X_s)ds - \int_0^t F(X_s)dN^S_s\;, \quad t\ge 0\;,
\end{equation*}
is a $\bb P$-martingale with respect to $(\ms F_t)_{t\ge 0}$. 

\begin{proposition}
\label{uni2}
For each $x\in \Sigma$, there exists at most one absorbing solution of
the $\ms L$-martingale problem starting at $x$.
\end{proposition}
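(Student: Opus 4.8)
\noindent The plan is to argue by induction on $k:=|\ms B(x)|$. If $k=1$, say $x=\bs e_j$, then by \eqref{trap} any absorbing solution $\bb P$ of the $\ms L$-martingale problem starting at $\bs e_j$ is the Dirac mass on the constant trajectory $\bs e_j$, so uniqueness is trivial. For the inductive step I would fix $k\ge 2$, assume uniqueness for every starting point $y$ with $|\ms B(y)|<k$, and let $\bb P$ be an absorbing solution of the $\ms L$-martingale problem starting at $x$, with $B:=\ms B(x)$, $A:=B^c$, $|B|=k$; the goal is to show that the finite-dimensional distributions of $\bb P$, hence its law, are completely determined. Since $\bb P$ is absorbing, the case $n=0$ of the absorbing property gives $A=\ms A_0\subseteq\ms A(X_t)$ for all $t\ge 0$, $\bb P$-a.s.; thus $\bb P$ is carried by trajectories with $X_t(j)=0$ for all $j\in A$ and all $t$, and, writing $Y_t:=X_t|_B\in\Sigma_B$, the time $\sigma_1=h_B$ is exactly the first time $Y$ reaches the relative boundary of $\Sigma_B$.

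\noindent\textbf{Step 1: the law of $(X_{t\wedge\sigma_1})_{t\ge 0}$ is determined.} Given $f\in C^2(\Sigma_B)$ with compact support contained in $\mathring{\Sigma}_B$, the function $\widehat F(x):=f(x_B)\,\mb 1\{x\in\mathring{\Sigma}_B\}$ belongs to $D_0(\Sigma)$ (indeed $[\widehat F]_{B'}=f$ if $B'=B$ and $[\widehat F]_{B'}=0$ otherwise), so that $\ms L\widehat F(x)=\mf L_B f(x_B)\,\mb 1\{\ms B(x)=B\}$. Up to time $\sigma_1$ the process $X_s$ has support exactly $B$ for $s<\sigma_1$ and lies on the relative boundary of $\Sigma_B$ (where $f$ and its derivatives vanish) at $s=\sigma_1$, while $N^S$ is constant equal to $0$ on $[0,\sigma_1)$ and jumps by $1$ at $\sigma_1$ with $\widehat F(X_{\sigma_1})=0$. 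Feeding $\widehat F$ into the $\ms L$-martingale problem and stopping at $\sigma_1$ therefore yields that
\[
f(Y_{t\wedge\sigma_1})\;-\;\int_0^{t\wedge\sigma_1}\mf L_B f(Y_s)\,ds\;,\qquad t\ge 0\;,
\]
is a $\bb P$-martingale; in other words $(Y_{t\wedge\sigma_1})$ solves the martingale problem for $\mf L_B$, tested against compactly supported functions, stopped at the exit of $\mathring{\Sigma}_B$. On each compact $\Lambda_B(\epsilon)\subset\mathring{\Sigma}_B$ the operator $\mf L_B$ coincides with a globally defined second-order operator with bounded smooth coefficients whose diffusion part is nondegenerate on the tangent space of $\Sigma_B$ (nondegeneracy following from the irreducibility of $\bs r^B$), the drift $\mb b^B$ being unbounded only near the boundary. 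By the standard localization argument — extend the coefficients off $\Lambda_B(\epsilon)$, invoke Stroock--Varadhan well-posedness for the resulting nondegenerate operator, and note that the stopped law is insensitive to the extension — the law of the process stopped at $h_B(\epsilon)$, defined in \eqref{hbep}, is uniquely determined for every $\epsilon>0$; since $h_B(\epsilon)\uparrow\sigma_1$ as $\epsilon\downarrow 0$ and $Y$ is continuous, letting $\epsilon\downarrow 0$ pins down the law of $(Y_{t\wedge\sigma_1})$, hence of $(X_{t\wedge\sigma_1})$.

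\noindent\textbf{Step 2: the behaviour after $\sigma_1$, and conclusion.} On $\{\sigma_1<\infty\}$ I would pick a regular conditional probability distribution $\{\bb P_\omega\}$ of $\bb P$ given $\ms F_{\sigma_1}$ and set $\bb P^1_\omega:=\bb P_\omega\circ\theta_{\sigma_1(\omega)}^{-1}$. Adapting the r.c.p.d. arguments of \cite{sv79} (as in Lemma \ref{mpcon}) and using that on the absorbing event $N^S$ coincides with the full counting process $N$, which transforms additively under the time shift past $\sigma_1$, one obtains that, for $\bb P$-a.e. $\omega\in\{\sigma_1<\infty\}$, the measure $\bb P^1_\omega$ is again a solution of the $\ms L$-martingale problem; it starts at $X_{\sigma_1}(\omega)$; and the absorbing event of $\bb P$, restricted to times past $\sigma_1$, translates into the absorbing event of $\bb P^1_\omega$, so $\bb P^1_\omega$ is absorbing. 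Since some coordinate in $B$ vanishes at $\sigma_1$ while the coordinates in $A$ are already $0$, we have $|\ms B(X_{\sigma_1}(\omega))|\le k-1$, and the induction hypothesis forces $\bb P^1_\omega$ to be a uniquely determined measurable function of $X_{\sigma_1}(\omega)$. Consequently, for any bounded $\ms F$-measurable $\Phi$, the conditional expectation $\bb E_{\bb P}[\Phi\mid\ms F_{\sigma_1}]$ equals $\Phi$ on $\{\sigma_1=\infty\}$ (where $\Phi$ is a functional of $(X_{t\wedge\sigma_1})$) and, on $\{\sigma_1<\infty\}$, equals $\int\Phi\,d\bb P_\omega$, which by the above is a fixed functional of the stopped path $(X_{t\wedge\sigma_1})$ and of the measures $\{\bb P^1_\omega\}$; in both cases it equals a functional $\Psi(X_{\cdot\wedge\sigma_1})$ that is the same for every absorbing $\ms L$-solution starting at $x$. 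Hence $\bb E_{\bb P}[\Phi]=\bb E_{\bb P}[\Psi(X_{\cdot\wedge\sigma_1})]$, which is determined by Step 1; as $\Phi$ is arbitrary, $\bb P$ is unique, completing the induction.

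\noindent The main obstacle is Step 1: the martingale problem for $\mf L_B$ is nondegenerate and smooth only away from the boundary of $\Sigma_B$, so Stroock--Varadhan uniqueness cannot be applied globally and one must localize, exploiting that the process is stopped before the drift can explode — this is the ``slight modification'' of Stroock--Varadhan theory alluded to in the introduction. A secondary technical point is the bookkeeping needed in Step 2 to carry the regular conditional probability distribution argument through the additional $dN^S$ term.
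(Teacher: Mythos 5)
Your proposal is correct and follows essentially the same route as the paper: induction on $|\ms B(x)|$, with uniqueness on $[0,\sigma_1]$ obtained by testing functions of the form $f(x_B)\,\mb 1\{x\in\mathring{\Sigma}_B\}$ and localizing to $\Lambda_B(\epsilon)$ where Stroock--Varadhan well-posedness applies (the paper's Lemma \ref{unle} implements your "stopped law is insensitive to the extension" step via the concatenation theorem 6.1.2 of \cite{sv79} and Feller continuity of $\bb Q^{B,\epsilon}$), followed by the r.c.p.d.\ shift argument of Lemma \ref{svl} and the induction hypothesis. No gaps beyond the level of detail you already flag as requiring care.
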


We first prove, in Lemma \ref{unle} below, uniqueness for the absorbing
solution on the time interval $[\sigma_0, \sigma_1)$. For each
$B\subseteq S$ with at least two elements, let
\begin{equation*}
{\bf b}^B_{\epsilon}(x)\;=\; b \sum_{j\in B} 
\frac{m_j}{\epsilon \lor x_j} \,  \bs v^B_j\;.
\quad x\in \bb R^B\;,
\end{equation*}
Thus, for every $\epsilon>0$, ${\bf b}_{\epsilon}^B:\bb R^B\to \bb
R^B$ is a bounded, continuous vector field which coincides with ${\bf
  b}^B$ on
\begin{equation*}
\Lambda_{B,\epsilon} \;:=\; \{ x\in \Sigma_B : \min_{j\in B} x_j 
\ge \epsilon \}\;,
\end{equation*}

Let ${\bf a}^B$ be the matrix whose entries $({\bf a}^B(j,k))_{j,k\in
  B}$ are given by 
\begin{equation*}
{\bf a}^B_{j,k} \;:=\; \langle {\bs e}_j , 
- \mc L^B {\bs e}_k\rangle_{\bs m}\;,\quad j,k \in B\;,
\end{equation*}
where, by abuse of notation, $\{\bs e_j : j\in B\}$ and $\langle \cdot
, \cdot \rangle_{\bs m}$ represent the canonical basis of $\bb R^B$
and the scalar product with respect to $\bs m$ restricted to $B$,
respectively. Consider the symmetric matrix ${\bf a}^B_{\rm
  s}=(1/2)({\bf a}^B + ({\bf a}^B)^{\rm t})$ so that,
\begin{equation*}
\sum_{j,k\in B} v_j \,{\bf a}^B_{\rm s}(j,k)\, w_k 
\;=\; \< v, (-\mc S^B) w\>_{\bs m} \;, \quad v,w\in \bb R^B \;.
\end{equation*}
Note that for all $\epsilon>0$ and $F\in C^2(\Sigma_B)$, 
\begin{equation}\label{slf}
\mf L_B F (x) \;=\;{\bf b}^B_{\epsilon}(x)\cdot \nabla F(x) 
\;+\; \textrm{Tr}\big[{\bf a}^B_{\rm s}\times \textrm{Hess}\,F(x)\big]
\;,\quad x\in \Lambda_{B,\epsilon}\;.
\end{equation}

Let $(X^B_t)_{t\ge 0}$ be the coordinate maps in the path space $C(\bb
R_+,\bb R^B)$, let $\ms F^B_t:=\sigma(X_s : 0\le s\le t)$, $t\ge 0$,
and denote by $h_{B,\epsilon}$ the exit time from
$\Lambda_{B,\epsilon}$:
\begin{equation*}
h_{B,\epsilon} \;:=\; \inf\{ t \ge 0 : X^B_t \not \in
\Lambda_{B,\epsilon} \}\;, \quad \epsilon>0\;.
\end{equation*}
Denote by $\bb Q^{B,\epsilon}_x$, $x\in \bb R^B$, $\epsilon>0$, the
unique solution of the $(\bs b^B_{\epsilon},\bs a^B)$-martingale
problem starting at $x$. Namely, $\bb Q^{B,\epsilon}_x$ is the unique
probability measure on $C(\bb R_+,\bb R^B)$ such that $\bb
Q^{B,\epsilon}_x\{X^B_0=x\}=1$ and such that for all $H:\bb R^B\to \bb
R$ of class $C^2$ and of compact support,
\begin{equation*}
H(X^B_t) - \int_0^t \Big\{ {\bf b}^B_{\epsilon}(X^B_s)\cdot \nabla H(X^B_s) 
\;+\; \textrm{Tr}\big[{\bf a}^B_{\rm s}\times
\textrm{Hess}\,H(X^B_s)\big] \Big\} \,ds\;,\quad t\ge 0
\end{equation*}
is a $\bb Q^{B,\epsilon}_x$-martingale with respect to $(\ms
F^B_t)_{t\ge 0}$. 

Since ${\bs r}^B$ is irreducible, for all $v$ in $\bb R^{B}\setminus
\{0\}$, 
\begin{equation*}
\sum_{j,k\in B} v_j \,{\bf a}^B_{\rm s}(j,k)\, v_k 
\;=\; \< v, (-\mc S^B) v\>_{\bs m} \;>\; 0 \;.
\end{equation*}
Uniqueness of $\{\bb Q^{B,\epsilon}_x\}$ follows from Theorem 7.1.9
in \cite{sv79} and from the previous equation. It also follows from
this theorem that $\{\bb Q^{B,\epsilon}_x : x\in \bb R^B\}$ is Feller
continuous.

The next result asserts that, in the time interval $[0,
h_{B,\epsilon})$, an absorbing solution of the $\ms L$-martingale
problem starting at $x$ coincides with $\bb Q^{B,\epsilon}_x$.

\begin{lemma}
\label{unle}
Fix $x\in \Sigma$, and let $B=\ms B(x)$. Denote by $\bb P_x$ an
absorbing solution of the $\ms L$-martingale problem starting at $x$,
and by $\bb P_x^B$ the law on $C(\bb R_+,\bb R^B)$ of the path
\begin{equation*}
(X_{t}(j), j\in B) \;,\quad t\ge 0
\end{equation*}
under $\bb P_x$. Then, for all $\epsilon>0$, $\bb P_x^B \equiv \bb
Q^{B,\epsilon}_{x_B}$ on $\ms F^B_{h_{B,\epsilon}}$. In particular, if
$\bb P_1$ and $\bb P_2$ are two absorbing solutions of the $\ms
L$-martingale problem starting at $x$, then $\bb P_1\equiv \bb P_2$ on
$\ms F_{\sigma_1}$.
\end{lemma}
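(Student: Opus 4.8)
The plan is to show that the law $\bb P_x^B$ of the $B$-coordinates, \emph{stopped at} $h_{B,\epsilon}$, solves the stopped $(\bs b^B_\epsilon, \bs a^B)$-martingale problem, and then invoke the uniqueness of $\{\bb Q^{B,\epsilon}_x\}$ (Theorem 7.1.9 in \cite{sv79}) together with the general principle that a martingale problem and its stopped version determine each other on the stopped $\sigma$-field. First I would observe that since $\bb P_x$ is an absorbing solution starting at $x$ with $\ms B(x)=B$, the process stays in $\Sigma_B$ on $[0,\sigma_1)$ (Proposition \ref{s01}), and on the event $\{\sigma_1 > 0\}$, which here has full measure since $x\in\mathring\Sigma_B$ forces $h_{B,\epsilon}\le \sigma_1$ when $\min_j x_j \ge \epsilon$ — more precisely, one checks directly from the definitions that $h_{B,\epsilon}\le\sigma_1$ always, because leaving $\Lambda_{B,\epsilon}$ happens no later than some coordinate in $B$ reaching $0$. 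Thus on the time interval $[0,h_{B,\epsilon}]$ the counting process $N^S_t$ is identically $0$ and $X_t\in\Sigma_{B,0}$, so that for $F\in D_0(\Sigma)$ the $\ms L$-martingale from the definition of an absorbing solution reduces, on this interval, to $F(X_t) - \int_0^t \mf L_B[F]_B(X_s^B)\,ds$.

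The key step is a \emph{localization / class-enlargement} argument: the martingale property is stated only for $F\in D_0(\Sigma)$, i.e. for $[F]_B$ of class $C^2$ with compact support in $\mathring\Sigma_B$, whereas to apply uniqueness of $\bb Q^{B,\epsilon}$ I need the martingale property for \emph{every} $H:\bb R^B\to\bb R$ of class $C^2$ with compact support. Given such an $H$, I would multiply it by a smooth cutoff that equals $1$ on a neighborhood of $\Lambda_{B,\epsilon}$ and is supported in $\mathring\Sigma_B$, obtaining $g\in C^2(\Sigma_B)$ with compact support in $\mathring\Sigma_B$ and $g\equiv H$ on $\Lambda_{B,\epsilon}$; extending $g$ to an element of $D_0(\Sigma)$ via $[F]_B = g$ (and $[F]_{B'}=0$ for $B'\ne B$) gives, by the absorbing-solution hypothesis and the reduction above, that $g(X^B_t)-\int_0^t \mf L_B g(X^B_s)\,ds$ is a $\bb P^B_x$-martingale. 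Stopping at $h_{B,\epsilon}$ and using \eqref{slf} — which says $\mf L_B g(x) = \bs b^B_\epsilon(x)\cdot\nabla g(x) + \mathrm{Tr}[\bs a^B_{\rm s}\times\mathrm{Hess}\,g(x)]$ for $x\in\Lambda_{B,\epsilon}$ — together with $g\equiv H$ there, I get that
\begin{equation*}
H(X^B_{t\land h_{B,\epsilon}}) - \int_0^{t\land h_{B,\epsilon}} \Big\{ \bs b^B_\epsilon(X^B_s)\cdot\nabla H(X^B_s) + \mathrm{Tr}\big[\bs a^B_{\rm s}\times\mathrm{Hess}\,H(X^B_s)\big]\Big\}\,ds
\end{equation*}
is a $\bb P^B_x$-martingale (the optional stopping is legitimate since $M^F$ is bounded on bounded time intervals and $h_{B,\epsilon}$ is a stopping time for $(\ms F^B_t)$). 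This is precisely the statement that $\bb P^B_x$, stopped at $h_{B,\epsilon}$, solves the stopped $(\bs b^B_\epsilon,\bs a^B)$-martingale problem with initial point $x_B$.

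Now I would invoke uniqueness: the well-posedness of the $(\bs b^B_\epsilon,\bs a^B)$-martingale problem (from Theorem 7.1.9 in \cite{sv79}, using that $\bs a^B_{\rm s}$ is positive definite by irreducibility of $\bs r^B$ and that $\bs b^B_\epsilon$ is bounded and continuous) implies, by the standard equivalence between a martingale problem and its version stopped at the exit time of an open set (Stroock--Varadhan, \cite{sv79}, §6.1), that any solution of the stopped problem agrees with $\bb Q^{B,\epsilon}_{x_B}$ on $\ms F^B_{h_{B,\epsilon}}$. Hence $\bb P^B_x \equiv \bb Q^{B,\epsilon}_{x_B}$ on $\ms F^B_{h_{B,\epsilon}}$. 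For the final assertion: if $\bb P_1,\bb P_2$ are two absorbing solutions starting at $x$, then by the above both induce the law $\bb Q^{B,\epsilon}_{x_B}$ on $\ms F^B_{h_{B,\epsilon}}$ for every $\epsilon>0$; since $\sigma_1 = \lim_{\epsilon\downarrow 0} h_{B,\epsilon}$ (the first time a coordinate in $B$ hits $0$ is the increasing limit of the exit times of $\Lambda_{B,\epsilon}$) and $h_B(\epsilon)\uparrow\sigma_1$, the $\sigma$-fields $\ms F^B_{h_{B,\epsilon}}$ increase to $\ms F_{\sigma_1}$ (restricted to the $B$-coordinates, which on $[0,\sigma_1]$ carry all the information since the $A$-coordinates vanish there by Proposition \ref{s01}), and a monotone-class argument gives $\bb P_1\equiv\bb P_2$ on $\ms F_{\sigma_1}$.

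\textbf{Main obstacle.} The delicate point is the passage from the restricted domain $D_0(\Sigma)$ (where $[F]_B$ vanishes near $\partial\Sigma_B$ and functions are discontinuous at the boundary) to a genuine martingale identity for arbitrary compactly supported $C^2$ functions on $\bb R^B$ up to the stopping time $h_{B,\epsilon}$: one must be careful that the cutoff used to localize does not interfere on $\Lambda_{B,\epsilon}$ and that the drift replacement $\bs b^B\rightsquigarrow\bs b^B_\epsilon$ — legitimate only on $\Lambda_{B,\epsilon}$ — is harmless because the integral is stopped at $h_{B,\epsilon}$. A secondary subtlety is justifying $h_{B,\epsilon}\le\sigma_1$ and the limit $h_{B,\epsilon}\uparrow\sigma_1$ with the corresponding convergence of $\sigma$-fields, but these are elementary once the definitions \eqref{sib}, \eqref{hbep} are unwound.
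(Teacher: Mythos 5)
Your proposal is correct and follows essentially the same route as the paper: one transfers the $D_0(\Sigma)$-martingale property to arbitrary $C^2$ compactly supported test functions on the stopped interval via a cutoff agreeing with $H$ on a neighborhood of $\Lambda_{B,\epsilon}$, identifies the stopped generator with $(\bs b^B_{\epsilon},\bs a^B_{\rm s})$ through \eqref{slf}, and concludes by uniqueness of $\bb Q^{B,\epsilon}_{x_B}$ together with the absorbing property and $h_{B,\epsilon}\uparrow\sigma_1$. The only presentational difference is that where you invoke the "standard equivalence" between a martingale problem and its stopped version, the paper proves that step explicitly by concatenating $\bb P^B_x$ with $\bb Q^{B,\epsilon}_{X^B_h}$ at time $h_{B,\epsilon}$ via Theorem 6.1.2 of Stroock--Varadhan (noting that the theorem must be adapted to possibly infinite stopping times).
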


\begin{proof}
Fix a starting point $z\in \Sigma$ and let $B=\ms B(z)$. Denote by
$\bb P_z$ an absorbing solution of the $\ms L$-martingale problem
starting at $z$. Fix $\epsilon>0$, and $H:\bb R^B\to \bb R$ of class
$C^2$ and of compact support.  Let
\begin{equation*}
M^{H,\epsilon}_t\;:=\; H(X^B_t) - \int_0^t \Big\{ {\bf
  b}^B_{\epsilon}(X^B_s)\cdot \nabla H(X^B_s) \;+\; 
\textrm{Tr}({\bf a}_{\rm s}^B\times \textrm{Hess}\, H(X^B_s))\Big\}
\,ds\;,\quad t\ge 0\;.
\end{equation*}

It is easy to obtain a function $F\in D_0(\Sigma)$ such that 
\begin{equation*}
F(x) \;=\; H(x_B) \quad \textrm{for all $x\in \Lambda_B(\epsilon/2)
\cap \Sigma_{B,0}$}\;.
\end{equation*}
Recall the definition of $h_B(\epsilon)$ introduced in \eqref{hbep}. By
assumption,
\begin{equation*}
F(X_{t\land h_B(\epsilon)}) - \int_0^{t\land h_B({\epsilon})} 
\ms L F(X_s)\,ds\;, \quad t\ge 0\;,
\end{equation*}
is a $\bb P_{z}$-martingale with respect to $(\ms F_t)_{t\ge 0}$. By
definition of $h_B(\epsilon)$ and $\ms L$, the last two identities
yield that
\begin{equation*}
H(X^B_{t\land h_{B,\epsilon}}) - \int_0^{t\land h_{B,\epsilon}} 
\mf L_B H(X^B_s)\,ds\;, \quad t\ge 0\;,
\end{equation*}
is a $\bb P^B_{z}$-martingale with respect to $(\ms F^B_t)_{t\ge
  0}$. Therefore, by \eqref{slf} and by definition of
$h_{B,\epsilon}$,
\begin{equation}
\label{mm1}
\textrm{$(M^{H,\epsilon}_{t\land h_{B,\epsilon}})_{t\ge 0}$ 
is a $\bb P^B_{z}$-martingale with respect to $(\ms F^B_t)_{t\ge 0}$}\;.
\end{equation}

We now join $\bb P^B_{z}$ and $\{\bb Q^{B,\epsilon}_{z}\}$ at time
$h_{B,\epsilon}$ as follows. To keep notation simple let
$h:=h_{B,\epsilon}$ and let
\begin{equation*}
X^B_{h}(\omega) \;:= \; X^B_{h(\omega)}(\omega)\;, 
\quad \omega \in \{ h_{B,\epsilon}<\infty\}\;.
\end{equation*}
Since $\{\bb Q^{B,\epsilon}_x\}$ is Feller continuous, it follows from
Theorem 6.1.2 in \cite{sv79} that there exists a probability $\bb Q$
on $C(\bb R_+, \bb R^B)$ satisfying the following two properties:
\begin{itemize}
\item[(i)] $\bb Q$ coincides with $\bb P^B_{z}$ on $\ms F^B_{h}$.
\item[(ii)] For any $\{\bb Q_{\omega}\}$, a r.c.p.d for $\bb Q$ given
  $\ms F^B_{h}$, there exists a $\bb Q$-null set $\ms N \in \ms
  F^B_{h}$ such that
\begin{equation*}
\bb Q_{\omega} \circ \theta_{h(\omega)}^{-1} \;=\; 
\bb Q^{B,\epsilon}_{X^B_{h}(\omega)} \;, \quad 
\textrm{for all $\omega \in \ms N^c\cap\{h<\infty\}$}\;.
\end{equation*}
\end{itemize}
Note that in Theorem 6.1.2 of \cite{sv79}, $h$ is assumed to be
finite. Nevertheless, the proof of this theorem can easily be adapted
for a general stopping time.

By definition of $\{\bb Q^{B,\epsilon}_x\}$ and by (ii), the process
$(M^{H,\epsilon}_t)_{t\ge 0}$ is a $\bb Q_{\omega} \circ
\theta_{h(\omega)}^{-1}$-martingale, with respect to $(\ms
F^B_t)_{t\ge 0}$, for all $\omega\in \ms N^c \cap \{h<\infty\}$. By
Theorem 1.2.10 in \cite{sv79}, this fact along with \eqref{mm1} and
item (i) above allows us to conclude that $(M^{H,\epsilon}_t)_{t\ge
  0}$ is a $\bb Q$-martingale. We have thus proved that $\bb Q$ is a
solution of the $({\bf b}^B_{\epsilon},{\bf a}_{\rm s}^B)$-martingale
problem. Since $\bb Q\{ X^B_0= z_{B} \}=1$, by uniqueness, $\bb Q =
\bb Q^{B,\epsilon}_{z_B}$. This fact and item (i) completes the proof
of the first assertion of the lemma. 

The second assertion follows from the absorbing property and from the
first assertion by letting $\epsilon\downarrow 0$.
\end{proof}

Given a probability measure $\bb P$ on $C(\bb R_+,\Sigma)$, recall
from \eqref{dcon} the definition of the measure $\bb P^n_{\omega}$,
$\omega \in \{\sigma_n<\infty\}$, $n\ge 1$. We use the probability
measures $\bb P^1_{\omega}$, $\omega \in \{\sigma_1<\infty\}$, to
conclude the proof of the uniqueness stated in Proposition
\ref{uni2}. The proof of next lemma follows the same argument as in
the proof of Theorem 6.1.3 in \cite{sv79}.

\begin{lemma}
\label{svl}
Let $\bb P$ be an absorbing solution of the $\ms L$-martingale problem.
Then, there exists a $\bb P$-null set $\ms N\in \ms F_{\sigma_1}$ such
that, for all $\omega \in \ms N^c \cap \{\sigma_1<\infty\}$, $\bb
P^1_{\omega}$ is an absorbing solution of the $\ms
L$-martingale problem starting at $X_{\sigma_1}(\omega)$.
\end{lemma}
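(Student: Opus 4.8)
The plan is to adapt the conditioning argument of Theorem 6.1.3 in \cite{sv79}, the two features absent from the classical case being the term $\int_0^t F(X_s)\,dN^S_s$ in the martingale and the absorbing requirement. Fix an absorbing solution $\bb P$ of the $\ms L$-martingale problem, choose a r.c.p.d.\ $\{\bb P_\omega\}$ for $\bb P$ given $\ms F_{\sigma_1}$, and let $\bb P^1_\omega:=\bb P_\omega\circ\theta_{\sigma_1(\omega)}^{-1}$ on $\{\sigma_1<\infty\}$ as in \eqref{dcon}. Since $X_{\sigma_1}$ is $\ms F_{\sigma_1}$-measurable and, off a $\bb P$-null set of $\ms F_{\sigma_1}$, the measure $\bb P_\omega$ is carried by paths agreeing with $\omega$ on $[0,\sigma_1(\omega)]$, the measure $\bb P^1_\omega$ starts at $X_{\sigma_1}(\omega)$.

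The first substantive step is that $\bb P^1_\omega$ is absorbing for $\bb P$-a.e.\ $\omega$. Write $\Omega_0:=\bigcap_{n\ge0}\{\ms A_n\subseteq\ms A(X_t)\text{ for all }t\ge\sigma_n\}$ for the absorbing event; then $\bb P(\Omega_0)=1$, hence $\bb P_\omega(\Omega_0)=1$ off a $\bb P$-null set of $\ms F_{\sigma_1}$. From the recursion \eqref{sib} one checks that whenever a path $X$ has $\sigma_1(X)=\sigma_1(\omega)<\infty$, the shifted path $\tilde X:=\theta_{\sigma_1(\omega)}X$ has absorption data $\tilde\sigma_n=\sigma_{n+1}(X)-\sigma_1(X)$ and $\tilde{\ms A}_n=\ms A_{n+1}(X)$ for $n\ge0$; consequently the set of such $X$ for which $\tilde X$ is absorbing contains $\Omega_0$. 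Transporting this inclusion by $\theta_{\sigma_1(\omega)}$ gives $\bb P^1_\omega(\{X\text{ absorbing}\})=1$ for $\bb P$-a.e.\ $\omega\in\{\sigma_1<\infty\}$. In particular, for such $\omega$, $\bb P^1_\omega$-a.s.\ $N_t=N^S_t$ for all $t\ge0$ and the total number of absorptions is at most $|\ms B(X_{\sigma_1}(\omega))|-1$.

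Next I would transfer the martingale property. Fix $F\in D_0(\Sigma)$ and set $M^F_t:=F(X_t)-\int_0^t\ms LF(X_s)\,ds-\int_0^t F(X_s)\,dN^S_s$, a $\bb P$-martingale bounded on every bounded interval. By Theorem 1.2.10 in \cite{sv79} (the same tool used for Lemma \ref{mpcon}) there is a $\bb P$-null set $\ms N_F\in\ms F_{\sigma_1}$ such that, for $\omega\in\ms N_F^c\cap\{\sigma_1<\infty\}$, the process $t\mapsto M^F_{t+\sigma_1(\omega)}-M^F_{\sigma_1(\omega)}$, read as a functional of the shifted path, is a $\bb P^1_\omega$-martingale for $(\ms F_t)_{t\ge0}$. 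Using $\tilde\sigma_n=\sigma_{n+1}-\sigma_1$ together with the identity $N^S_{t+\sigma_1}-N^S_{\sigma_1}=\tilde N^S_t$ obtained in the previous step, a direct computation yields, on $\{\sigma_1<\infty\}$,
\begin{equation*}
M^F_{t+\sigma_1}-M^F_{\sigma_1}\;=\;\widetilde F_t-F(\tilde X_0)\,,\qquad
\widetilde F_t:=F(\tilde X_t)-\int_0^t\ms LF(\tilde X_s)\,ds-\int_0^t F(\tilde X_s)\,d\tilde N^S_s\,.
\end{equation*}
Since $F(\tilde X_0)=F(X_{\sigma_1}(\omega))$ is $\bb P^1_\omega$-a.s.\ constant, $(\widetilde F_t)_{t\ge0}$ is a $\bb P^1_\omega$-martingale; that is, $\bb P^1_\omega$ solves the $\ms L$-martingale problem.

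To obtain a single exceptional set, fix a countable family $\{F_k\}\subseteq D_0(\Sigma)$ that is determining for the $\ms L$-martingale problem (it suffices that, for every $B\subseteq S$ with $|B|\ge2$, the functions $[F_k]_B$ be dense among $C^2$ functions with compact support in $\mathring\Sigma_B$, in the topology of uniform convergence of the functions and their first two derivatives) and let $\ms N\in\ms F_{\sigma_1}$ be the union of the $\ms N_{F_k}$ together with the $\bb P$-null sets produced in the first two paragraphs. For $\omega\in\ms N^c\cap\{\sigma_1<\infty\}$ the measure $\bb P^1_\omega$ starts at $X_{\sigma_1}(\omega)$, is absorbing, and turns the process above into a martingale for every $F_k$; a routine bounded-convergence argument then upgrades the martingale property to all $F\in D_0(\Sigma)$, so $\bb P^1_\omega$ is an absorbing solution of the $\ms L$-martingale problem starting at $X_{\sigma_1}(\omega)$. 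The step I expect to be the main obstacle is exactly the bookkeeping for the jump term: one must verify that $\int_0^t F(X_s)\,dN^S_s$ shifts to $\int_0^t F(\tilde X_s)\,d\tilde N^S_s$ and that the truncation at $|S|$ in the definition of $N^S$ is immaterial --- which is precisely why the absorbing property of $\bb P^1_\omega$ has to be secured before, rather than after, the martingale identity.
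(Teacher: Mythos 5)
Your proposal is correct and follows essentially the same route as the paper's proof: a r.c.p.d.\ given $\ms F_{\sigma_1}$ combined with Theorem 1.2.10 of Stroock--Varadhan, a countable family $\Theta\subset D_0(\Sigma)$ dense for $F$ and $\ms LF$ in the uniform norm to obtain a single null set, an approximation step for general $F$, and a separate (elementary) verification that $\bb P^1_\omega$ is absorbing. The only difference is one of detail: the paper dismisses the absorbing property and the shift identity for $\int_0^t F(X_s)\,dN^S_s$ as "easy to see," whereas you carry out the bookkeeping $\tilde\sigma_n=\sigma_{n+1}-\sigma_1$, $\tilde{\ms A}_n=\ms A_{n+1}$ explicitly, which is a faithful filling-in rather than a new argument.
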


\begin{proof}
Let $\Theta$ be a countable subset of $D_0(\Sigma)$ satisfying the
following property: for all $F\in D_0(\Sigma)$, there exists a
sequence $(F_n)_{n\ge 1}$ in $\Theta$ such that
\begin{equation*}
\lim_{n\to\infty} \sup_{x\in \Sigma}\big\{ |F_n(x) - F(x)| \;+\; 
|\ms LF_n(x) - \ms LF(x)| \big\} \;=\; 0\;.
\end{equation*}

By Theorem 1.2.10 of \cite{sv79}, there exists a $\bb P$-null set $\ms
N\in \ms F_{\sigma_1}$ such that, for all $\omega \in \mc N^c\cap \{
\sigma_1 <\infty \}$ and for all $F\in \Theta$,
\begin{equation*}
F(X_t) \;-\; \int_0^t \ms L F(X_s) \, ds \; - \; 
\int_0^t F(X_s) \, d N^S_s  \;,\quad t\ge 0\;,
\end{equation*}
is a $\bb P^1_{\omega}$-martingale. Approximating a function $F$ in
$D_0(\Sigma)$ by a sequence in $\Theta$, we may conclude that the
previous expression is also a $\bb P^1_{\omega}$-martingale for all
$F\in D_0(\Sigma)$. 

Finally, it is easy to see that the $\bb P$-null set $\ms N\in \ms
F_{\sigma_1}$ may be chosen so that $\bb P^1_{\omega}$ is absorbing
for all $\omega\in \ms N^c \cap \{\sigma_1 <\infty\}$.
\end{proof}

We are now in position to complete the proof of uniqueness of
absorbing solutions of the $\ms L$-martingale problem.

\begin{proof}[Proof of Proposition \ref{uni2}]
If the starting point belongs to $\{{\bs e}_j: j\in S\}$, then the
claim is simple consequence of the absorbing property. 

We proceed by induction. Suppose that the claim is valid for any
starting point in $\{x\in \Sigma:|\ms B(x)|\le n\}$ for some $1\le
n<|S|$. Fix some $z\in \Sigma$ such that $|\ms B(z)|=n+1$ and let $\bb
P_{i}$, $i=1,2$, be two absorbing solutions of the $\ms L$-martingale
problem starting at $z$. By Lemma \ref{unle},
\begin{equation}
\label{tfs}
\bb P_1 \;\equiv\; \bb P_2 \quad  {\rm on} \quad \ms F_{\sigma_1} \;. 
\end{equation}
From the absorbing property it follows that
\begin{equation*}
\bb P_i[\sigma_1<\infty \;\; {\rm and}\;\; |\ms B_1|> n]
\;=\; 0\; , \quad i=1,2 \;.
\end{equation*}
By Lemma \ref{svl}, for $i=1,2$, there exists a $\bb P_i$-null set
$\ms N_i\in \ms F_{\sigma_1}$ such that, for all $\omega\in \ms
N_i^c\cap\{\sigma_1<\infty\}$, $(\bb P_i)^{\sigma}_{\omega}$ is an
absorbing solution of the $\ms L$-martingale problem starting at
$X_{\sigma_1}(\omega)$. Take
\begin{equation*}
\ms N \;:=\; \ms N_1\cup \ms N_2\cup 
\{\sigma_1<\infty \; {\rm and}\; |\ms B_1|> n\}\;.
\end{equation*}
It follows from the previous displayed equations that $\bb P_2(\ms
N)=\bb P_1(\ms N)=0$. Fix an arbitrary $\omega \in \ms N^c \cap
\{\sigma_1<\infty\}$. On the one hand, $(\bb P_1)^{\sigma}_{\omega}$
and $(\bb P_2)^{\sigma}_{\omega}$ are absorbing solution of the $\ms
L$-martingale problem starting at $X_{\sigma_1}(\omega)$. On the other
hand, by definition of $\ms N$, $X_{\sigma_1}(\omega)$ belongs to
$\{x\in \Sigma: |\ms B(x)|\le n\}$. Hence, by the inductive
hypothesis, $(\bb P_1)^{\sigma}_{\omega}=(\bb
P_2)^{\sigma}_{\omega}$. The assertion of the proposition follows from
this fact and from \eqref{tfs}.
\end{proof}

\begin{proof}[Proof of Theorem \ref{r03}.]
Theorem \ref{r03} follows from Theorem \ref{amp1} and Proposition \ref{uni2}.
\end{proof}

\begin{proof}[Proof of Proposition \ref{sf01}.]
Fix $x$ in $\Sigma$ and assume that $\ms A(x)=\{j\in S : x_j=0\} \not
= \varnothing$. Let $B=\ms A(x)^c$. It is clear that the measure $\bb
P^{B}_x$ starts at $x_B$ and that it is absorbing. By the proof of
Theorem \ref{amp1}, it solves the $\ms L$-martingale problem
restricted to $\Sigma_B$: for all functions $F\in D_0(\Sigma_B)$,
\begin{equation*}
M^F_t \;:=\; F(X_t) - \int_0^t  \ms LF(X_s)ds - \int_0^t
F(X_s)dN^S_s\;, \quad t\ge 0\;.
\end{equation*}
is a $\bb P^{B}_x$-martingale. The assertion of the proposition
follows now from the uniqueness stated in Proposition \ref{uni2}.
\end{proof}

\section{Existence}
\label{sec02}

In this section we prove the convergence stated in Theorem
\ref{mt2}. This result also guarantees the existence of solutions of
the $\mf L$-martingale problem. By abuse of notation, in this section,
we also denote by $(X_t)_{t\ge 0}$ the coordinate process defined on
$D(\bb R_+,\Sigma)$.

\subsection{Tightness}
We prove in this section that, for any sequence $x_N\in \Sigma_N$,
$N\ge 1$, the sequence of probability measures $\bb P^N_{x_N}$, $N\ge
1$ is tight. Furthermore, we prove that every limit point of the
sequence is concentrated on continuous trajectories. The proof of
tightness is divided in several lemmas.  We start with an estimate
relating the sequence of operators $\mf L_N$, $N\ge 1$ and the
operator $\mf L$. For $j\in S$ and $H \in C^2(\Sigma)$, recall
the notation
\begin{equation*}
(\Delta_j H) (x) \;:=\; \sum_{k\in S} r(j,k) \,
(\partial_{x_k} - \partial_{x_j})^2 H (x) \;.  
\end{equation*}

\begin{lemma}
\label{texp}
If $H$ belongs to $\mc D_S$,
\begin{equation*}
\lim_{N\to\infty} \max_{x\in \Sigma_N} \Big| \, (\mf L_N H) (x) \;-\;
(\mf L H) (x) \;-\; \frac{1}{2} \sum_{j\in S} \{g_j (N x_j) - m_j\}
(\Delta_j H) (x) \, \Big| \; = \; 0 \;.
\end{equation*}
In particular, there exists a finite constant $C_0>0$, which depends
on $H$, such that
\begin{equation*}
\sup_{N\ge 1} \max_{x\in \Sigma_N} 
\big| \, (\mf L_N H) (x)  \, \big| \; \le \; C_0 \;.
\end{equation*}
\end{lemma}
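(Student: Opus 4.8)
The plan is to Taylor-expand $H$ to second order along each of the segments appearing in $\mf L_N H$ and then match the three resulting terms with the three pieces in the statement. Since $g_j(0)=0$, only the pairs $(j,k)$ with $Nx_j\ge 1$ contribute to $(\mf L_N H)(x)$, and for those both $x$ and $x+(\bs e_k-\bs e_j)/N$ lie in $\Sigma_N\subseteq\Sigma$; as $\Sigma$ is convex, the segment joining them stays in $\Sigma$. Applying the one-dimensional Taylor formula to $t\mapsto H\bigl(x+t(\bs e_k-\bs e_j)\bigr)$ on $[0,1/N]$ and using that the maps $x\mapsto(\partial_{x_k}-\partial_{x_j})^2 H(x)$, $j,k\in S$, are uniformly continuous on the compact $\Sigma$ with a common modulus of continuity $\omega$, one gets
\[
H\Bigl(x+\tfrac{\bs e_k-\bs e_j}{N}\Bigr)-H(x)
=\tfrac1N(\bs e_k-\bs e_j)\cdot\nabla H(x)+\tfrac1{2N^2}(\partial_{x_k}-\partial_{x_j})^2 H(x)+R^N_{j,k}(x),
\]
with $|R^N_{j,k}(x)|\le\tfrac1{2N^2}\omega(\sqrt 2/N)$. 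Multiplying by $N^2 g_j(Nx_j)r(j,k)$, summing over $j,k$, and recalling $\bs v_j=\sum_k r(j,k)\{\bs e_k-\bs e_j\}$ and the definition of $\Delta_j$, this yields
\[
(\mf L_N H)(x)=N\sum_{j\in S}g_j(Nx_j)\,\bs v_j\cdot\nabla H(x)
\;+\;\tfrac12\sum_{j\in S}g_j(Nx_j)\,(\Delta_j H)(x)\;+\;\mathrm{Err}_N(x),
\]
where $|\mathrm{Err}_N(x)|\le\tfrac12\bigl(\max_j\sup_n g_j(n)\bigr)\bigl(\sum_{j,k}r(j,k)\bigr)\,\omega(\sqrt2/N)\to 0$ uniformly on $\Sigma_N$, using that each $g_j$ is bounded (it converges to $m_j$).

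The second sum already has the desired form: the second-order part of $\mf L H$ is $\tfrac12\sum_j m_j(\Delta_j H)(x)$, so its difference with $\tfrac12\sum_j g_j(Nx_j)(\Delta_j H)(x)$ is exactly $\tfrac12\sum_j\{g_j(Nx_j)-m_j\}(\Delta_j H)(x)$, the correction subtracted in the statement. It therefore remains to prove that $A_N(x):=N\sum_j g_j(Nx_j)\bs v_j\cdot\nabla H(x)$ converges to $\mb b(x)\cdot\nabla H(x)$ uniformly on $\Sigma_N$. Since $\bs m$ is invariant for $\bs r$ we have $\sum_j m_j\bs v_j(k)=\sum_j m_j(\mc L\bs e_k)(j)=0$, hence $A_N(x)=\sum_j N\{g_j(Nx_j)-m_j\}\bs v_j\cdot\nabla H(x)$. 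If $x_j=0$ the $j$-th summand vanishes, because $H\in\mc D_j$ forces $\bs v_j\cdot\nabla H(x)=0$ on $\{x_j=0\}$; if $x_j>0$, writing $n=Nx_j$, $N=n/x_j$ and $\varepsilon_j(n):=m_j\bigl(n(g_j(n)/m_j-1)-b\bigr)$, one computes that the $j$-th summand equals $\bigl(bm_j+\varepsilon_j(Nx_j)\bigr)\phi_j(x)$, where $\phi_j(x):=\bs v_j\cdot\nabla H(x)/x_j$. By hypothesis $H\in\mc D_j$, so $\phi_j$, extended by $0$ on $\{x_j=0\}$, is continuous, hence bounded, on $\Sigma$, and $\sum_j bm_j\phi_j(x)=\mb b(x)\cdot\nabla H(x)$. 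Thus everything reduces to showing, for each $j$, that $\max_{x\in\Sigma_N}|\varepsilon_j(Nx_j)\phi_j(x)|\to 0$.

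This last point is the main obstacle. Property \eqref{propg} gives $\varepsilon_j(n)\to 0$ as $n\to\infty$ and $E_j:=\sup_{n\ge 1}|\varepsilon_j(n)|<\infty$ (the sequence $n(g_j(n)-m_j)$ being convergent), but it says nothing about small values of $n$, so near the boundary $\{x_j\approx 0\}$ the coefficient $N\{g_j(Nx_j)-m_j\}$ need not be close to $bm_j/x_j$; the compensation must come from $\phi_j$ being small there, which is exactly the content of $H\in\mc D_j$. Concretely: fix $\delta>0$; by continuity of $\phi_j$ on the compact $\Sigma$ together with $\phi_j\equiv 0$ on $\{x_j=0\}$ there is $\delta'>0$ with $|\phi_j(x)|<\delta$ whenever $x_j<\delta'$, and there is $n_1$ with $|\varepsilon_j(n)|<\delta$ for $n\ge n_1$. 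For $x\in\Sigma_N$ with $0<x_j<\delta'$ then $|\varepsilon_j(Nx_j)\phi_j(x)|\le E_j\delta$, while for $x_j\ge\delta'$ and all sufficiently large $N$ one has $Nx_j\ge n_1$, so $|\varepsilon_j(Nx_j)\phi_j(x)|\le\delta\|\phi_j\|_\infty$; hence $\limsup_N\max_{x\in\Sigma_N}|\varepsilon_j(Nx_j)\phi_j(x)|\le\delta\max(E_j,\|\phi_j\|_\infty)$, and letting $\delta\downarrow 0$ proves the first assertion. Finally, the ``in particular'' clause follows from the bounds already obtained together with the continuity — hence boundedness — of $\mf L H$ on the compact $\Sigma$ for $H\in\mc D_S$: indeed $|A_N(x)|\le\sum_j(bm_j+E_j)\|\phi_j\|_\infty$, $\bigl|\tfrac12\sum_j g_j(Nx_j)(\Delta_j H)(x)\bigr|\le\tfrac12\sum_j\bigl(\sup_n g_j(n)\bigr)\|\Delta_j H\|_\infty$, and $|\mathrm{Err}_N(x)|$ is bounded uniformly in $N$, so $\sup_N\max_{x\in\Sigma_N}|(\mf L_N H)(x)|\le C_0$ with $C_0$ depending only on $H$.
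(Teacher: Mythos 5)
Your proof is correct and follows essentially the same route as the paper: Taylor expansion of $\mf L_N H$, cancellation of the $O(N)$ term via the invariance $\sum_j m_j\bs v_j=0$, and identification of the drift using \eqref{propg} together with the continuity of $x\mapsto[\bs v_j\cdot\nabla H(x)]/x_j$ guaranteed by $H\in\mc D_S$. In fact your final $\delta$--$\delta'$ splitting near and away from $\{x_j=0\}$ spells out the step the paper only asserts ("uniformly small in view of \eqref{propg} and because $H$ belongs to $\mc D_S$").
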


\begin{proof}
Fix a function $H\in \mc D_S$. In view of \eqref{tay1}, by Taylor
expansion, for any $x\in \Sigma_N$,
\begin{equation}
\label{02}
(\mf L_N H) (x)\;=\;
N \sum_{j\in S} g_j (Nx_j)\, [\bs v_j \cdot \nabla H(x)] \;+\; 
\frac{1}{2}\sum_{j\in S} g_j(Nx_j) \, (\Delta_j H) (x) \;+\; R_N \;,
\end{equation}
where $\lim_{N\to\infty}\max_{x\in \Sigma_N}|R_N| = 0$. Since
$g_j(0)=0$, we may introduce the indicator $\mb 1\{x_j>0\}$ in the
first sum and write it as
\begin{equation}
\label{41}
N \sum_{j\in S} \Big\{ \frac{g_j (Nx_j)}{m_j} -1 \Big\}
\mb 1\{x_j>0\} \, m_j \, [\bs v_j \cdot \nabla H(x)] \;+\; 
N \sum_{j\in S} \mb 1\{x_j>0\} \, m_j \, [\bs v_j \cdot \nabla
H(x)] \;. 
\end{equation}
Since $H$ belongs to $\mc D_S$, $\bs v_j \cdot \nabla H(x)=0$ for
$x_j=0$. We may therefore remove the indicator in the second
sum. Since $\bs m$ is an invariant measure for $\bs r$, $\sum_{j\in
  S}m_j{\bs v}_j=0$. By these last observations, the second sum in
\eqref{41} vanishes. The first term in \eqref{02} is thus equal to
\begin{equation}
\label{42}
\begin{split}
& \sum_{j\in S} \Big\{ N x_j \Big[ \frac{g_j (Nx_j)}{m_j} -1 \Big] -b \Big\}
\, \mb 1\{x_j>0\} \, \frac{m_j}{x_j} \, [\bs v_j \cdot \nabla H(x)] \\
&\quad \;+\; b \sum_{j\in S} \mb 1\{x_j>0\} \, \frac{m_j}{x_j} \, [\bs v_j
\cdot \nabla H(x)]\;. 
\end{split}
\end{equation}
The second term in \eqref{42} is $\bs b(x) \cdot \nabla H(x)$, while
the first term is uniformly small in view of \eqref{propg} and because
$H$ belongs to $\mc D_S$. This completes the proof of the first
assertion. The second assertion follows from the first one and from
the fact that $\mf L H$ is continuous on the compact $\Sigma$.
\end{proof}

We start our route to the proof of Proposition \ref{proptight} below
by showing that tightness follows from an estimate, uniform over the
initial position, of the evolution of the process in small time
intervals.

\begin{lemma}
\label{prolt}
Fix a sequence $x_N\in\Sigma_N$, $N\ge 1$. The sequence of probability
measures $\bb P^N_{x_N}$ is tight if for any $\epsilon>0$ and for any
sequence $(N(k), t_{N(k)}, y_{N(k)})$ such that $N(k)\to\infty$,
$y_{N(k)} \to y$ for some $y\in \Sigma$, $t_{N(k)} \to 0$,
\begin{equation*}
\lim_{k\to\infty} \bb P_{y_{N(k)}}^{\scs N(k)} \big[ \, \Vert X_{t_{N(k)}} 
- y_{N(k)} \Vert \ge \epsilon \, \big] \;=\; 0\;.
\end{equation*}
\end{lemma}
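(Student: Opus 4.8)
The plan is to verify Aldous' tightness criterion (see \cite{kl1}) for the sequence $\bb P^N_{x_N}$ in $D(\bb R_+,\Sigma)$. Since $\Sigma$ is compact, the one-dimensional marginals $\bb P^N_{x_N}\circ X_t^{-1}$ are automatically tight, so it only remains to control increments over stopping times: it suffices to show that for every $\epsilon>0$, $\eta>0$ and $T>0$ there exist $\delta>0$ and $N_0\ge 1$ such that
\begin{equation*}
\sup_{N\ge N_0}\ \sup_{\tau\le T}\ \sup_{0\le\theta\le\delta}\ \bb P^N_{x_N}\big[\,\Vert X_{\tau+\theta}-X_\tau\Vert\ge\epsilon\,\big]\ \le\ \eta\;,
\end{equation*}
where the middle supremum is over all $(\ms F_t)$-stopping times $\tau$ bounded by $T$.

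First I would reduce this estimate, uniformly in $\tau$, to a bound on deterministic-time increments started from a worst-case point. Since for each fixed $N$ the process $X^N_t$ is a continuous-time Markov chain on the finite set $\Sigma_N$, it enjoys the strong Markov property. Writing $\psi_N(t,y):=\bb P^N_y[\Vert X_t-y\Vert\ge\epsilon]$ and conditioning on $\ms F_\tau$, the strong Markov property yields
\begin{equation*}
\bb P^N_{x_N}\big[\,\Vert X_{\tau+\theta}-X_\tau\Vert\ge\epsilon\,\big]
\;=\;\bb E^N_{x_N}\big[\,\psi_N(\theta,X_\tau)\,\big]
\;\le\;\sup_{y\in\Sigma_N}\ \sup_{0\le t\le\delta}\psi_N(t,y)\;.
\end{equation*}
Hence it is enough to prove that
\begin{equation*}
\lim_{\delta\downarrow 0}\ \limsup_{N\to\infty}\ \sup_{y\in\Sigma_N}\ \sup_{0\le t\le\delta}\ \psi_N(t,y)\;=\;0\;,
\end{equation*}
a statement which no longer involves $T$ nor stopping times (note that the inner expression is monotone in $\delta$, so the limit exists).

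To establish this last identity I would argue by contradiction, and this is the step where the hypothesis of the lemma, combined with the compactness of $\Sigma$, comes in. If the limit were a number $c>0$, then for each $k\ge 1$ one has $\limsup_{N}\sup_{y\in\Sigma_N}\sup_{t\le 1/k}\psi_N(t,y)\ge c$; choosing inductively $N(k)>\max\{k,N(k-1)\}$ with $\sup_{y\in\Sigma_{N(k)}}\sup_{t\le 1/k}\psi_{N(k)}(t,y)>c/2$, and then $y_{N(k)}\in\Sigma_{N(k)}$, $t_{N(k)}\le 1/k$ with $\psi_{N(k)}(t_{N(k)},y_{N(k)})>c/2$, we obtain $N(k)\to\infty$ and $t_{N(k)}\to 0$. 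By compactness of $\Sigma$ we may pass to a subsequence along which $y_{N(k)}\to y$ for some $y\in\Sigma$, and along it $\bb P^{N(k)}_{y_{N(k)}}[\,\Vert X_{t_{N(k)}}-y_{N(k)}\Vert\ge\epsilon\,]>c/2$ for all $k$. This contradicts the assumption of the lemma, so $c=0$ and tightness follows.

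The argument is essentially bookkeeping around a standard criterion; the only points requiring a little care are the passage, via the strong Markov property, from stopping-time increments to deterministic-time increments started from an arbitrary point, and the use of the compactness of $\Sigma$ to convert the ``for all convergent sequences of starting points'' hypothesis into the uniform bound demanded by Aldous' criterion. I do not anticipate a genuine obstacle in this lemma: the real content lies in the verification of its hypothesis, which is the subject of Proposition \ref{proptight}.
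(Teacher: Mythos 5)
Your proposal is correct and follows essentially the same route as the paper: Aldous' criterion, the strong Markov property to reduce stopping-time increments to deterministic-time increments from a worst-case starting point, and compactness of $\Sigma$ to extract a convergent sequence of starting points. The only cosmetic difference is that you phrase the final extraction as a proof by contradiction while the paper selects the maximizing sequence $(t_N,y_N)$ and a diagonal subsequence directly.
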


\begin{proof}
Fix a sequence $x_N\in\Sigma_N$, $N\ge 1$.
By Aldous criterion, since $\Sigma$ is a compact space, to prove that
the sequence $\bb P^{\scs N}_{x_N}$ is tight, it is enough to show
that for every $T>0$, $\epsilon>0$,
\begin{equation*}
\lim_{\delta\to 0} \limsup_{N\to \infty} \sup_{0\le t\le \delta} 
\sup_{\tau} {\bb P}^{\scs N}_{x_N}
\big[ \, \Vert X_{\tau + t} - X_{\tau}\Vert \ge
\epsilon \, \big] \;=\; 0\;,
\end{equation*}
where the supremum is carried over all stopping times $\tau$ bounded
by $T$. By the strong Markov property, to prove tightness it is
therefore enough to show that for any $\epsilon>0$,
\begin{equation}
\label{04}
\lim_{\delta\to 0} \limsup_{N\to\infty} \sup_{0\le t \le \delta}
\max_{x\in \Sigma_N} \bb P_{x}^{\scs N} \big[\, \Vert X_t - x\Vert
\,\ge\, \epsilon \, \big] \;=\; 0 \;.
\end{equation}
For each $\delta>0$, there exists $t_N = t_N(\delta)\in [0,\delta]$ and
$y_N = y_N(\delta)\in \Sigma_N$, $N\ge 1$, such that
\begin{equation*}
\limsup_{N\to\infty} \sup_{0\le t\le \delta} \max_{x\in \Sigma_N} 
\bb P_{x}^{\scs N} \big[ \, \Vert X_{t} - x \Vert \ge \epsilon \, \big]
\;=\; \lim_{N\to\infty} \bb P_{y_N}^{\scs N} \big[ \, \Vert X_{t_N} 
- y_N \Vert \ge \epsilon \, \big] \;.  
\end{equation*}
On the right hand side the sequences $t_N$ and $y_N$ depend on
$\delta$, $t_N = t_N(\delta)$, $y_N = y_N(\delta)$. We may choose a
further subsequence $\{N(k) : k\ge 1\}$ such that $\lim_k N(k) =
\infty$, $t_{N(k)} \in [0,1/k]$, and
\begin{equation*}
\lim_{\delta\to 0} \lim_{N\to\infty} \bb P_{y_N}^{\scs N} \big[ \, \Vert X_{t_N} 
- y_N \Vert \ge \epsilon \, \big] \;=\;
\lim_{k\to\infty} \bb P_{y_{N(k)}}^{\scs N(k)} \big[ \, \Vert X_{t_{N(k)}} 
- y_{N(k)} \Vert \ge \epsilon \, \big]\;.
\end{equation*}
Since $\Sigma$ is compact we may assume that $\lim_k y_{N(k)} = y \in
\Sigma$. Therefore, if we are able to prove that for any $\epsilon>0$,
and any sequence $(N(k), t_{N(k)}, y_{N(k)})$ such that
$N(k)\to\infty$, $y_{N(k)} \to y \in \Sigma$, $t_{N(k)} \to 0$,
\begin{equation*}
\lim_{k\to\infty} \bb P_{y_{N(k)}}^{\scs N(k)} \big[ \, \Vert X_{t_{N(k)}} 
- y_{N(k)} \Vert \ge \epsilon \, \big] \;=\; 0\;,
\end{equation*}
\eqref{04} holds, and hence the sequence $\bb P^N_{x_N}$ is tight. This
is the assertion of the lemma.
\end{proof}

Denote by $\tau_{\delta}$, $\delta>0$, the first time the process is
at distance $\delta$ from its original position: $\tau_{\delta}= \inf
\{ t\ge 0 : \Vert X_t - X_0 \Vert >\delta \}$. Let $X^\delta$ be the
process $X_t$ stopped at $\tau_{\delta}$:
\begin{equation*}
X^{\delta}_t \;:=\; X_{t\land \tau_\delta} \;. 
\end{equation*}

\begin{lemma}
\label{le3}
Let $x_N\in\Sigma_N$ and $t_N>0$, $N\ge 1$, be sequences such that
$x_N \to x\in \Sigma$ and $t_N\to 0$. Let $B:=\ms B(x)$, $A:=\ms A(x)$
and let $\delta>0$ be such that $\min_{j\in B} x(j) \ge 2
\delta$. Then, for every $\epsilon>0$ sufficiently small, and every
function $F$ in $\mc D_A$,
\begin{equation*}
\lim_{N\to \infty} \bb P^{\scs N}_{x_{N}}\big[\, 
|F(X^\delta_{t_N}) - F(x)| \;\ge\; \epsilon \big] \;=\; 0 \;.
\end{equation*}
\end{lemma}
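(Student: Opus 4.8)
The plan is to reduce the assertion to an $L^2$-estimate and then to combine the martingale characterization of $\bb P^N_{x_N}$ with a uniform-in-$N$ bound for $\mf L_N$ applied to a $C^2$ function built from $F$. Throughout, let $B=\ms B(x)$, $A=\ms A(x)$, fix $\delta>0$ with $\min_{j\in B}x_j\ge 2\delta$, and set $\phi(y):=(F(y)-F(x))^2$. Since $\bs v_j\cdot\nabla\phi(y)=2(F(y)-F(x))\,[\bs v_j\cdot\nabla F(y)]$, dividing by $y_j$ and inserting $\mb 1\{y_j>0\}$ exhibits $[\bs v_j\cdot\nabla\phi(y)]/y_j\,\mb 1\{y_j>0\}$ as the product of the continuous function $2(F-F(x))$ with the continuous function $y\mapsto [\bs v_j\cdot\nabla F(y)]/y_j\,\mb 1\{y_j>0\}$, $j\in A$; hence $\phi\in C^2(\Sigma)\cap\mc D_A$. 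By Chebyshev's inequality it suffices to prove that $\bb E^N_{x_N}[\phi(X^\delta_{t_N})]\to 0$, and this will in fact hold for every $\epsilon>0$, not merely small ones.

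The core step, which I expect to be the main obstacle, is the following uniform bound: there exist a closed neighbourhood $V$ of $x$ in $\Sigma$ and a finite constant $C$, independent of $N$, with $|(\mf L_N\phi)(y)|\le C$ for all $y\in V\cap\Sigma_N$ and all $N$. Take $V=\{y\in\Sigma:\|y-x\|\le 3\delta/2\}$, on which $y_j\ge\delta/2>0$ for every $j\in B$ because $x_j\ge 2\delta$. Taylor expanding as in \eqref{02}, $(\mf L_N\phi)(y)$ equals $N\sum_{j\in S}g_j(Ny_j)[\bs v_j\cdot\nabla\phi(y)]$ plus $\frac12\sum_{j\in S}g_j(Ny_j)(\Delta_j\phi)(y)$ plus a remainder that is $o(1)$ uniformly on $\Sigma_N$; the last two are bounded since $g_j$ is bounded and $\phi\in C^2(\Sigma)$. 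For the first-order term, since $g_j(0)=0$ one inserts $\mb 1\{y_j>0\}$ and writes $g_j=m_j+(g_j-m_j)$. The $m_j$-part contributes $N\big(\sum_{j\in S}m_j\bs v_j\big)\cdot\nabla\phi(y)=0$ on $V$: for $j\in A$ the indicator may be dropped because $\bs v_j\cdot\nabla\phi$ vanishes on $\{y_j=0\}$ (as $\phi\in\mc D_A$), for $j\in B$ one has $y_j>0$ on $V$, and $\sum_{j\in S}m_j\bs v_j=0$ because $\bs m$ is $\bs r$-invariant. For the $(g_j-m_j)$-part, set $C_j:=\sup_{n\ge 0}n|g_j(n)-m_j|$, which is finite by \eqref{propg}. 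If $j\in B$, then on $V$ we have $N|g_j(Ny_j)-m_j|=|g_j(Ny_j)-m_j|\,(Ny_j)/y_j\le 2C_j/\delta$, so this term is bounded; if $j\in A$, then $|\bs v_j\cdot\nabla\phi(y)|\le C\,y_j$ (since $\phi\in\mc D_j$ and $\Sigma$ is compact) and $N|g_j(Ny_j)-m_j|\,y_j=|g_j(Ny_j)-m_j|\,(Ny_j)\le C_j$, so this term is bounded as well. Thus the divergent factor $N g_j(Ny_j)$ is tamed precisely by the cancellation $\sum_j m_j\bs v_j=0$ together with the first-order vanishing of $\bs v_j\cdot\nabla\phi$ along the faces $\{y_j=0\}$, $j\in A$.

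To conclude, since $\bb P^N_{x_N}$ is the law of the Markov chain with generator $\mf L_N$, the process $\phi(X_{t\wedge\tau_\delta})-\phi(x_N)-\int_0^{t\wedge\tau_\delta}(\mf L_N\phi)(X_s)\,ds$ is a $\bb P^N_{x_N}$-martingale, whence $\bb E^N_{x_N}[\phi(X^\delta_{t_N})]=\phi(x_N)+\bb E^N_{x_N}\big[\int_0^{t_N\wedge\tau_\delta}(\mf L_N\phi)(X_s)\,ds\big]$. For $N$ large, $\|x_N-x\|<\delta/4$, so for $s<\tau_\delta$ we have $X_s\in\{y\in\Sigma:\|y-x_N\|\le\delta\}\subseteq V$; hence the integrand is bounded by $C$ for almost every $s\in[0,t_N\wedge\tau_\delta]$ and the integral is at most $C\,t_N\to 0$. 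Together with $\phi(x_N)=(F(x_N)-F(x))^2\to 0$ by continuity of $F$, this gives $\bb E^N_{x_N}[\phi(X^\delta_{t_N})]\to 0$, completing the proof.
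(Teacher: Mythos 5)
Your proof is correct, and it reaches the conclusion by a genuinely different (and somewhat more self-contained) route than the paper. The paper first invokes Lemma \ref{r01} to replace $F$ by a function $H\in\mc D_S$ that agrees with $F$ on $\Lambda_B(\delta)$, then splits the event into a drift part, controlled by the \emph{global} uniform bound $|\mf L_N H|\le C_0$ of Lemma \ref{texp} (which requires $H\in\mc D_S$), and a martingale-fluctuation part, controlled by Chebyshev together with the second-moment identity $\bb E[(M^N_t)^2]=\bb E[\int_0^t(\mf L_N H^2-2H\,\mf L_N H)(X_s)\,ds]$. You instead apply Dynkin's formula once, directly to $\phi=(F-F(x))^2\in C^2(\Sigma)\cap\mc D_A$, and prove a \emph{local} analogue of Lemma \ref{texp}: on the neighbourhood $V$ where $y_j\ge\delta/2$ for $j\in B$, the dangerous factor $N(g_j(Ny_j)-m_j)$ is tamed for $j\in B$ by the lower bound on $y_j$ (rather than by membership of the test function in $\mc D_j$), while for $j\in A$ you use the first-order vanishing coming from $\phi\in\mc D_A$, exactly as the paper does; the cancellation $\sum_j m_j\bs v_j=0$ plays the same role in both arguments. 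What your approach buys is that Lemma \ref{r01} — whose construction rests on the nontrivial functions $I_A$ of Lemma \ref{propi} — is not needed for this step, and the carré-du-champ computation is absorbed into $\mf L_N\phi$; what the paper's approach buys is the reuse of Lemmata \ref{r01} and \ref{texp}, which are needed elsewhere in any case, so that no new generator estimate has to be carried out inside the proof of the lemma. All the individual steps you give (membership of $\phi$ in $\mc D_A$, finiteness of $\sup_n n|g_j(n)-m_j|$ from \eqref{propg}, the inclusion $\{X_s: s<\tau_\delta\}\subseteq V$ for large $N$, and the validity of the Taylor expansion \eqref{02} for arbitrary $C^2$ functions) check out.
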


\begin{proof}
For $N$ sufficiently large and $s\le \tau_\delta$, $X^N_s\in
\Lambda_B(\delta)$. Therefore, by Lemma \ref{r01}, there exists a
function $H\in \mc D_S$ such that
\begin{equation}\label{fxh}
\bb P^{\scs N}_{x_{N}}\big[\, |F(X^\delta_{t_N}) - F(x)| \;\ge\;
\epsilon \big]
\;=\; \bb P^{\scs N}_{x_{N}}\big[\, |H(X^\delta_{t_N}) - H(x)| 
\;\ge\; \epsilon \big]  \;.
\end{equation}

Consider the $\bb P^N_{x_N}$-martingale
\begin{equation*}
M^{N}_t \;=\; H(X_{t}) \;-\; H(x) \;-\; \int_0^t (\mf L_{\scs N}
H)(X_s) \, ds\;, \quad t\ge 0 \;.
\end{equation*}
The probability appearing on the right hand side of \eqref{fxh} is
bounded above by
\begin{equation*}
\bb P^{\scs N}_{x_{N}}\Big[\, \Big| \int_0^{t_N \land \tau_\delta} (\mf L_N
H) (X_{s}) \, ds  \Big| \,\ge\, \epsilon/2 \,\Big] \;+\; 
\bb P^{\scs N}_{x_{N}} \big[\, |M^{N}_{t_N \land \tau_\delta}| \;\ge\;
\epsilon/2 \,\big]\;. 
\end{equation*}
Therefore, since $H$ belongs to $\mc D_S$, by the last assertion of
Lemma \ref{texp}, the time-integral appearing in the first term is
absolutely bounded by $C_0 t_N$ for some finite constant $C_0$ which
depends on $\delta$ and $H$. This proves that the first term in the
previous displayed equation vanishes as $N\uparrow\infty$ because
$t_N\downarrow 0$. By Tchebychef inequality, the second term is
bounded by
\begin{equation*}
\frac 4{\epsilon^2} \, \bb E^{\scs N}_{x_{N}} [
(M^{N}_{t_N \land \tau_\delta})^2] \;=\; 
\frac 4{\epsilon^2} \, \bb E^{\scs N}_{x_{N}} \Big[ 
\int_0^{t_N \land \tau_\delta} \big(\mf L_N{H}^2 - 2{H} \mf L_N
{H}\big)(X_{s}) \, ds \Big]\;.
\end{equation*}
An elementary computation shows that $( \mf L_N {H}^2 - 2 H \mf L_N
{H})(x)$ is absolutely bounded by a finite constant which depends on
$H$, uniformly on $\Sigma_N$. This completes the proof of the lemma.
\end{proof}

\begin{corollary}
\label{le1}
Under the assumptions of Lemma \ref{le3}, for every $\epsilon>0$,
\begin{equation*}
\lim_{N\to\infty} \bb P^{\scs N}_{x_{N}} \big[\, 
\Vert X^\delta_{t_N}\Vert_A \;\ge\; \epsilon \,\big] 
\;=\; 0\;.  
\end{equation*}
\end{corollary}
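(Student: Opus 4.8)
The plan is to deduce the corollary directly from Lemma \ref{le3} by applying it to a well-chosen function $F$ in $\mc D_A$ that dominates $\Vert\cdot\Vert_A$. The natural candidate is the function $I_A$ furnished by Lemma \ref{propi}: it is nonnegative, smooth, belongs to $\mc D_A$, and satisfies $c_1\Vert y\Vert_A \le \sqrt{I_A(y)} \le C_1\Vert y\Vert_A$ for every $y\in\Sigma$. The point is that $\Vert\cdot\Vert_A^2$ itself need not lie in $\mc D_A$, but $I_A$ is precisely its smooth perturbation that does, which is exactly what makes Lemma \ref{le3} applicable.

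First I would note that, since $A=\ms A(x)$, every coordinate $x_j$ with $j\in A$ vanishes, so $\Vert x\Vert_A=0$ and hence, by the upper bound in \eqref{35}, $I_A(x)=0$. Consequently, using the lower bound in \eqref{35}, for any $\epsilon>0$ the event $\{\Vert X^\delta_{t_N}\Vert_A\ge\epsilon\}$ is contained in $\{I_A(X^\delta_{t_N})\ge c_1^2\epsilon^2\}$, and since $I_A\ge 0$ with $I_A(x)=0$ this last set equals $\{|I_A(X^\delta_{t_N})-I_A(x)|\ge c_1^2\epsilon^2\}$. As $I_A\in\mc D_A$, Lemma \ref{le3} gives $\bb P^{\scs N}_{x_N}[|I_A(X^\delta_{t_N})-I_A(x)|\ge\epsilon']\to 0$ for every sufficiently small $\epsilon'>0$; for a general threshold one invokes monotonicity of the probability in the threshold, bounding the probability at level $c_1^2\epsilon^2$ by the same probability at a smaller level to which Lemma \ref{le3} applies. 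Combining these inclusions yields $\bb P^{\scs N}_{x_N}[\Vert X^\delta_{t_N}\Vert_A\ge\epsilon]\to 0$, which is the assertion.

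I do not expect any genuine obstacle: the corollary is an immediate consequence of Lemma \ref{le3} once one observes, via Lemma \ref{propi}, that $\Vert\cdot\Vert_A^2$ is controlled by an element of $\mc D_A$. The only mild subtlety is the qualifier ``for every $\epsilon>0$ sufficiently small'' in Lemma \ref{le3}, but since decreasing the threshold only makes the estimate stronger, this restriction is harmless and is absorbed by the monotonicity argument above.
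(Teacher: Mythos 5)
Your proof is correct and follows essentially the paper's route: both arguments reduce the claim to Lemma \ref{le3} applied to a function from $\mc D_A$, supplied by Lemma \ref{propi}, that is comparable to $\Vert\cdot\Vert_A$ and vanishes at $x$. The only difference is that you apply Lemma \ref{le3} directly to $I_A$ with the quadratic threshold $c_1^2\epsilon^2$, whereas the paper applies it to the truncation $c_1\,\Phi_\epsilon(J_A/c_1)$ of $J_A=\sqrt{I_A}$ (needed there because $J_A$ itself is not $C^2$ where $\Vert x\Vert_A=0$); your variant is marginally simpler and equally valid, since the restriction of Lemma \ref{le3} to small thresholds is harmless by the monotonicity observation you make.
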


\begin{proof}
To estimate the first term on the right hand side, let $\Phi :
[0,\infty) \to [0,\infty)$ be a smooth, non-decreasing function such
that $\Phi(r)=0$ for $0\le r\le 1/2$, $\Phi(r)=r$ for $r\ge 1$. For
$\epsilon>0$, let $\Phi_\epsilon (r) = \epsilon\, \Phi(r/\epsilon)$,
and recall the definition of the function $J_A$ given in \eqref{ja1}. Let
\begin{equation*}
F_\epsilon(x)\;:=\; c_1 \, \Phi_\epsilon(J_A(x)/c_1) \; , \quad x\in \Sigma \;.
\end{equation*}
It is easy to check that $F_\epsilon$
belongs to $\mc D_A$ and that $F_\epsilon(x) = J_A(x) \ge c_1 \Vert
x\Vert_A \ge c_1 \, \epsilon$ if $\Vert x\Vert_A \ge \epsilon$.
Therefore, for every $\epsilon>0$,
\begin{equation*}
\lim_{N\to\infty} \bb P^{\scs N}_{x_{N}} \big[\, 
\Vert X^\delta_{t_N}\Vert_A \;\ge\; \epsilon \,\big] 
\;\le\;
\limsup_{N\to\infty} \bb P^{\scs N}_{x_{N}} \big[\, 
F_\epsilon( X^\delta_{t_N}) \;\ge\; c_1 \, \epsilon \,\big] \;.  
\end{equation*}
The right hand side vanishes in view of Lemma \ref{le3} and because
$F_\epsilon(x)=0$.
\end{proof}

Recall the linear map $\Upsilon: \Sigma \to \Sigma_B$ defined in
\eqref{bsu}. Denote by $\phi_j:\Sigma_B\to \bb R$ the coordinate map
$\phi_j(x)=x_j$, $j\in B$. By Lemma \ref{s04}, the function
$\phi_j\circ\Upsilon$ belongs to $\mc D_A$ for all $j\in B$ and so we
may apply Lemma \ref{le3} for each $F=\phi_j\circ\Upsilon$.

\begin{corollary}
\label{le2}
Under the assumptions of Lemma \ref{le3}, for any small enough
$\epsilon>0$,
\begin{equation*}
\lim_{N\to\infty} \bb P^{\scs N}_{x_{N}} \big[\, 
\Vert X^{\delta}_{t_N} - x\Vert_{B} \,\ge\, \epsilon\big] 
\;=\; 0 \;.  
\end{equation*}
\end{corollary}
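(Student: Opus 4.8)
The plan is to deduce this from Lemma \ref{le3} and Corollary \ref{le1} by controlling the difference between a coordinate $X^\delta_{t_N}(j)$, $j\in B$, and the corresponding coordinate of the harmonic projection $\Upsilon^B(X^\delta_{t_N})$. Recall from \eqref{bsu} that, for $j\in B$ and $y\in\Sigma$,
\[
[\Upsilon(y)]_j \;=\; y_j \;+\; \sum_{k\in A} \bs u_j(k)\, y_k \;,
\]
and that $0\le \bs u_j(k)\le 1$. By Cauchy--Schwarz this gives $|[\Upsilon(y)]_j - y_j| \le \sum_{k\in A} y_k \le |A|^{1/2}\Vert y\Vert_A$ for every $y\in\Sigma$, $j\in B$. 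Since the starting point $x$ satisfies $\ms A(x)=A$, i.e. $x_k=0$ for $k\in A$, we have $[\Upsilon(x)]_j = x_j = (\phi_j\circ\Upsilon)(x)$ for all $j\in B$. Hence, for each $j\in B$,
\[
\big| X^\delta_{t_N}(j) - x_j \big| \;\le\; |A|^{1/2}\,\Vert X^\delta_{t_N}\Vert_A \;+\; \big| (\phi_j\circ\Upsilon)(X^\delta_{t_N}) - (\phi_j\circ\Upsilon)(x) \big| \;.
\]

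Next I would invoke the fact, recorded just before the statement (and a consequence of \eqref{jina} in Lemma \ref{s04}), that $\phi_j\circ\Upsilon\in\mc D_A$ for every $j\in B$, so that Lemma \ref{le3} applies with $F=\phi_j\circ\Upsilon$: for every sufficiently small $\epsilon'>0$ and every $j\in B$,
\[
\lim_{N\to\infty} \bb P^{\scs N}_{x_N}\big[\, |(\phi_j\circ\Upsilon)(X^\delta_{t_N}) - (\phi_j\circ\Upsilon)(x)| \;\ge\; \epsilon' \,\big] \;=\; 0 \;.
\]
Combining this with Corollary \ref{le1}, which gives $\Vert X^\delta_{t_N}\Vert_A\to 0$ in $\bb P^{\scs N}_{x_N}$-probability, the displayed inequality above shows that $|X^\delta_{t_N}(j)-x_j|\to 0$ in probability for each of the finitely many $j\in B$. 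Taking $\epsilon' = \epsilon/(2|B|^{1/2})$, requiring in addition $|A|^{1/2}\Vert X^\delta_{t_N}\Vert_A < \epsilon/(2|B|^{1/2})$, and using a union bound over $j\in B$, one obtains $\Vert X^\delta_{t_N}-x\Vert_B^2 = \sum_{j\in B}(X^\delta_{t_N}(j)-x_j)^2 < \epsilon^2$ with probability tending to $1$, which is the assertion. The restriction to small $\epsilon$ is inherited verbatim from Lemma \ref{le3}.

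I do not anticipate a genuine obstacle: the corollary is a soft consequence of the two preceding results, and the only point that needs checking is that the coordinate maps pulled back by the linear map $\Upsilon^B$ lie in $\mc D_A$, which is exactly the second assertion of Lemma \ref{s04}. The remaining bookkeeping is the elementary estimate on $|[\Upsilon(y)]_j - y_j|$ together with a finite union bound.
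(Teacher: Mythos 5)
Your proof is correct and follows essentially the same route as the paper: bound $\Vert X^\delta_{t_N}-x\Vert_B$ by $\Vert \Upsilon(X^\delta_{t_N})-\Upsilon(x)\Vert_B$ plus a constant times $\Vert X^\delta_{t_N}\Vert_A$, then apply Lemma \ref{le3} to each $\phi_j\circ\Upsilon\in\mc D_A$ and Corollary \ref{le1}. You simply spell out the elementary estimate $|[\Upsilon(y)]_j-y_j|\le |A|^{1/2}\Vert y\Vert_A$ and the union bound that the paper leaves implicit.
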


\begin{proof}
Since $\Upsilon(x)=x_B$, it is easy to verify that
\begin{equation*}
\Vert X^{\delta}_{t_N} - x\Vert_{B} \;\le\; 
\Vert \Upsilon(X^{\delta}_{t_N}) - \Upsilon(x) \Vert_B 
\;+\; C_0 \Vert X^{\delta}_{t_N} \Vert_{A}\;,
\end{equation*}
for some finite constant $C_0>0$. The assertion of the lemma follows
therefore by applying Lemma \ref{le3} to each function $\phi_j\circ
\Upsilon\in \mc D_A$, and by Corollary \ref{le1}.
\end{proof}

\begin{proposition}
\label{proptight}
For any sequence $x_N\in \Sigma_N$, $N\ge 1$, the sequence of laws
$\{\bb P^{\scs N}_{x_N} : N\ge 1\}$ is tight. Moreover, every limit
point of the sequence is concentrated on continuous trajectories.
\end{proposition}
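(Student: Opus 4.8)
The plan is to verify the sufficient condition for tightness isolated in Lemma \ref{prolt}, feeding into it Corollaries \ref{le1} and \ref{le2} together with a control of the first ``large displacement'' time $\tau_\delta$; the continuity of the limit points will then follow from the fact that the jumps of $X^{\scs N}_t$ are of order $1/N$. So fix $\epsilon>0$ and, as in Lemma \ref{prolt}, a sequence $(N(k),t_{N(k)},y_{N(k)})$ with $N(k)\to\infty$, $y_{N(k)}\to y\in\Sigma$ and $t_{N(k)}\to 0$, which we relabel $(N,t_N,y_N)$ to lighten the notation. Set $A=\ms A(y)$, $B=\ms B(y)$ and fix $\delta>0$ with $\min_{j\in B}y_j\ge 2\delta$, so that Lemma \ref{le3} and Corollaries \ref{le1}, \ref{le2} are available with $x=y$, $x_N=y_N$.

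Since $X_{t_N}=X^\delta_{t_N}$ on the event $\{\tau_\delta>t_N\}$, we have
\begin{equation*}
\bb P^{\scs N}_{y_N}\big[\, \Vert X_{t_N}-y_N\Vert\ge\epsilon \,\big]
\;\le\;
\bb P^{\scs N}_{y_N}\big[\, \Vert X^\delta_{t_N}-y_N\Vert\ge\epsilon \,\big]
\;+\;
\bb P^{\scs N}_{y_N}\big[\, \tau_\delta\le t_N \,\big]\;.
\end{equation*}
For the first term, since $y_j=0$ for $j\in A$ and $A\cup B=S$,
\begin{equation*}
\Vert X^\delta_{t_N}-y_N\Vert \;\le\; \Vert X^\delta_{t_N}\Vert_A \;+\; \Vert X^\delta_{t_N}-y\Vert_B \;+\; \Vert y-y_N\Vert\;,
\end{equation*}
and $\Vert y-y_N\Vert\to 0$; hence, for all $\epsilon>0$ small enough, Corollaries \ref{le1} and \ref{le2} imply $\bb P^{\scs N}_{y_N}[\Vert X^\delta_{t_N}-y_N\Vert\ge\epsilon]\to 0$. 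For the second term, note that on $\{\tau_\delta\le t_N\}$ one has $X^\delta_{t_N}=X_{\tau_\delta}$, and, since $X^{\scs N}$ is piecewise constant and right continuous, $\Vert X_{\tau_\delta}-y_N\Vert>\delta$ by the very definition of $\tau_\delta$; thus $\{\tau_\delta\le t_N\}\subseteq\{\Vert X^\delta_{t_N}-y_N\Vert>\delta\}$, and if in addition $\epsilon\le\delta$ the previous estimate gives $\bb P^{\scs N}_{y_N}[\tau_\delta\le t_N]\to 0$ as well. This establishes the hypothesis of Lemma \ref{prolt} for all sufficiently small $\epsilon>0$, hence, by monotonicity, for every $\epsilon>0$; consequently $\{\bb P^{\scs N}_{x_N}:N\ge 1\}$ is tight.

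To see that every limit point is concentrated on $C(\bb R_+,\Sigma)$, observe that a jump of $\eta^{\scs N}$ transfers a single particle, so that a jump of $X^{\scs N}_t=N^{-1}\eta^{\scs N}_{tN^2}$ has Euclidean size $N^{-1}\Vert\bs e_k-\bs e_j\Vert=\sqrt 2/N$. Therefore, $\bb P^{\scs N}_{x_N}$-almost surely, the largest jump of the trajectory on any bounded time interval is at most $\sqrt 2/N$, a quantity that vanishes as $N\to\infty$; a standard argument then yields that every limit point of the tight sequence $\{\bb P^{\scs N}_{x_N}\}$ is supported on continuous trajectories.

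All the analytic work has already been carried out in Lemmata \ref{texp}--\ref{le3} and Corollaries \ref{le1}--\ref{le2}, so I do not expect a genuine obstacle here; the points that require a little care are the order in which the parameters are chosen (one must fix $\delta$ from the limit point $y$ \emph{before} shrinking $\epsilon$ below $\delta$ and into the range where Corollary \ref{le2} applies), and the standard but slightly delicate fact that a sequence of laws whose maximal jumps tend to zero has all its limit points concentrated on continuous paths.
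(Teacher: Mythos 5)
Your proposal is correct and follows essentially the same route as the paper: reduce to the criterion of Lemma \ref{prolt}, fix $\delta$ from the limit point, control the stopped process $X^\delta_{t_N}$ by splitting the displacement into its $A$- and $B$-parts and invoking Corollaries \ref{le1} and \ref{le2}, and deduce continuity of limit points from the fact that the jumps are of order $1/N$. The only difference is cosmetic: you make explicit the inclusion $\{\tau_\delta\le t_N\}\subseteq\{\Vert X^\delta_{t_N}-X_0\Vert>\delta\}$ that the paper leaves implicit when it asserts that controlling the stopped process suffices for $\epsilon<\delta$.
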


\begin{proof}
It is enough to prove that the conditions of Lemma \ref{prolt} are in
force. To keep notation simple, we show that the conditions are
fulfilled for a sequence $t_N\to 0$, $x_N\to x\in \Sigma$, $x_N\in
\Sigma_N$. Let $A=\ms A(x)$, $B=\ms B(x)$, and let $\delta>0$ be
such that $\min_{j\in B} x_j\ge 2\delta$.

Recall the definition of the stopped process $X^\delta_t$ introduced
just before Lemma \ref{le3}. To prove that
\begin{equation*}
\lim_{N\to\infty} \bb P_{x_{N}}^{\scs N} \big[ \, \Vert X_{t_{N}} 
- x_{N} \Vert \ge \epsilon \, \big] \;=\; 0\;.
\end{equation*}
for $\epsilon<\delta$, it is enough to show that
\begin{equation*}
\lim_{N\to\infty} \bb P_{x_{N}}^{\scs N} \big[ \, \Vert X^\delta_{t_{N}} 
- x \Vert \ge \epsilon \, \big] \;=\; 0\;.
\end{equation*}

Fix $0<\epsilon<\delta$. Clearly, for $N$ large enough,
\begin{equation*}
\bb P_{x_{N}}^{\scs N} \big[ \, \Vert X^\delta_{t_{N}} 
- x \Vert \ge \epsilon \, \big] \;\le \; 
\bb P_{x_{N}}^{\scs N} \big[ \, \Vert X^\delta_{t_{N}} 
\Vert_A \ge  \epsilon/ 2 \, \big] \;+\; 
\bb P_{x_{N}}^{\scs N} \big[ \, \Vert X^\delta_{t_{N}} 
- x \Vert_{B} \ge \epsilon/2 \, \big]\;.
\end{equation*}
To complete the proof of the first assertion of the proposition, it
remains to apply Corollaries \ref{le1} and \ref{le2}. 

Any limit point of the sequence $\bb P^N_{x_N}$ is concentrated on
continuous trajectories because for any $T>0$, $\sup_{0\le t\le T}
\Vert X^N_t - X^N_{t-}\Vert\le 2/N$. Moreover, it follows from the
tightness of the sequence $\bb P^N_{x_N}$ that for every $\epsilon>0$
and every sequence $x_N$,
\begin{equation}
\label{06}
\lim_{\delta\to 0} \limsup_{N\to\infty} \bb P^N_{x_N} 
\big[ \sup_{0\le t\le \delta} \Vert X_t - X_0
\Vert \ge \epsilon \big] \;=\; 0\;.
\end{equation}
\end{proof}

\subsection{Characterization of limit points}

We prove in this subsection that any limit point of a sequence $\bb
P^N_{x_N}$ solves the martingale problem \eqref{f06}.

\begin{proposition}
\label{mp1}
Let $x_N\in \Sigma_N$, $N\ge 1$, be a sequence converging to some
$x\in \Sigma$, and denote by $\tilde{\bb P}$ a limit point of the
sequence $\bb P^{\scs N}_{x_N}$. Under $\tilde{\bb P}$, for any $H\in
\mc D_S$,
\begin{equation*}
H(X_{t}) - H(X_{0}) - \int_{0}^{t} (\mf L H)(X_s) \, ds
\end{equation*}
is a martingale.
\end{proposition}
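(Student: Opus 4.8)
The plan is to run the standard argument for passing a martingale characterization through a weak limit, isolating the one genuinely analytic point at the end.

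Fix $H\in\mc D_S$. Since $\mf L_N$ is the generator of the chain $X^N$, the process $M^N_t:=H(X_t)-H(X_0)-\int_0^t(\mf L_N H)(X_s)\,ds$ is a $\bb P^N_{x_N}$-martingale for $(\ms F_t)_{t\ge 0}$, and $\sup_N\max_{\Sigma_N}|\mf L_N H|\le C_0<\infty$ by the second assertion of Lemma \ref{texp}. Pass to a subsequence, still written $N$, along which $\bb P^N_{x_N}\Rightarrow\tilde{\bb P}$; by Proposition \ref{proptight}, $\tilde{\bb P}$ is carried by $C(\bb R_+,\Sigma)$. It suffices to show that for all $0\le s<t$, all $k\ge 1$, all $0\le s_1\le\cdots\le s_k\le s$, and all bounded continuous $\Phi:\Sigma^k\to\bb R$, writing $\Psi:=\Phi(X_{s_1},\dots,X_{s_k})$,
\begin{equation*}
\tilde{\bb E}\Big[\Big(H(X_t)-H(X_s)-\int_s^t(\mf L H)(X_u)\,du\Big)\Psi\Big]=0 \;.
\end{equation*}
Since $H$ and $\mf L H$ are continuous and bounded on the compact set $\Sigma$, and $\tilde{\bb P}$ gives mass only to continuous trajectories, the functional inside the expectation is bounded and $\tilde{\bb P}$-a.s. continuous; hence the left-hand side equals $\lim_N\bb E^N_{x_N}[(H(X_t)-H(X_s)-\int_s^t(\mf L H)(X_u)\,du)\Psi]$. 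Subtracting and adding $\int_s^t(\mf L_N H)(X_u)\,du$ and using the martingale property of $M^N$ together with the $\ms F_s$-measurability of $\Psi$, this limit equals $\lim_N\bb E^N_{x_N}\big[\int_s^t\big((\mf L_N H)-(\mf L H)\big)(X_u)\,du\,\Psi\big]$. As $\Phi$ is bounded, it remains to prove
\begin{equation*}
\lim_{N\to\infty}\bb E^N_{x_N}\Big[\int_s^t\big|(\mf L_N H)(X_u)-(\mf L H)(X_u)\big|\,du\Big]=0 \;.
\end{equation*}

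By the first assertion of Lemma \ref{texp}, $|(\mf L_N H-\mf L H)(x)|\le\epsilon_N+\tfrac12\sum_{j\in S}|g_j(Nx_j)-m_j|\,|(\Delta_j H)(x)|$ with $\epsilon_N\to 0$, and $|\Delta_j H|\le C<\infty$ on $\Sigma$ since $H\in C^2(\Sigma)$. On $\{x_j\ge\delta\}$ we have $|g_j(Nx_j)-m_j|\le\sup_{n\ge\delta N}|g_j(n)-m_j|\to 0$ by \eqref{propg}, so the previous display follows once one shows, for each $j\in S$,
\begin{equation*}
\lim_{\delta\downarrow 0}\ \limsup_{N\to\infty}\ \bb E^N_{x_N}\Big[\int_s^t|g_j(NX_u(j))-m_j|\,\mb 1\{X_u(j)<\delta\}\,du\Big]=0 \;.
\end{equation*}

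This last estimate is the crux, and I expect it to be the main obstacle: near the boundary the process genuinely spends a macroscopic amount of time with $X_u(j)$ small, there $|g_j(NX_u(j))-m_j|$ is not small, and $\Delta_j H$ need not vanish on $\{x_j=0\}$. I would handle it in two reductions. First, using the Markov property to cut $[s,t]$ into short subintervals and invoking Corollary \ref{le1} on each (as in the proof of tightness), the coordinate $j$ is, with probability tending to one, either $\ge\delta$ throughout the subinterval or macroscopically empty throughout it, and only the second case contributes. Second, on that event $g_j(NX_u(j))=g_j(\eta^{\scs N}_{uN^2}(j))$ with site $j$ macroscopically empty, and the problem becomes to show that the time average $\tfrac1{t-s}\int g_j(\eta^{\scs N}_{uN^2}(j))\,du$ is close to $m_j$ — a local-equilibrium/replacement estimate reflecting that the non-condensate sites of a condensing zero-range process sit at the critical fugacity $m_j$ (so that $\eta^{\scs N}_{uN^2}(j)=O(1)$, where the zero-range identity gives mean rate $m_j$, or $\eta^{\scs N}_{uN^2}(j)\to\infty$, where $g_j\to m_j$ directly). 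Granting this replacement lemma, combining it with Corollary \ref{le1} and the uniform bound $C_0$ gives the convergence above and completes the proof; establishing that replacement estimate is where the real work lies.
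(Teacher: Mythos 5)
The first half of your argument — the $\bb P^N_{x_N}$-martingale property of $H(X_t)-H(X_0)-\int_0^t(\mf L_NH)(X_s)\,ds$, the passage to the limit via boundedness and $\tilde{\bb P}$-a.s.\ continuity of the functional, and the reduction via Lemma \ref{texp} to controlling $\bb E^N_{x_N}\big[\int_s^t\{g_j(NX_u(j))-m_j\}(\Delta_jH)(X_u)\,du\big]$ — is correct and is exactly the paper's route. The gap is in the last step, which you yourself flag as "where the real work lies" and then defer: the replacement estimate is never proved, and it is the entire substance of the proposition (it occupies the paper's Lemma \ref{replace} and Corollary \ref{prore}).

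Moreover, the intermediate reduction you state is false as written: you put the absolute value \emph{inside} the time integral, asking that
$\bb E^N_{x_N}\big[\int_s^t|g_j(NX_u(j))-m_j|\,\mb 1\{X_u(j)<\delta\}\,du\big]$ be small. When site $j$ is macroscopically empty it typically carries $O(1)$ particles and is in fact empty a positive fraction of the (microscopic) time, where $|g_j(0)-m_j|=m_j>0$; so this quantity stays bounded away from $0$. Only the \emph{signed} time average of $g_j-m_j$ vanishes, by cancellation over microscopic time windows — which is why the paper's Corollary \ref{prore} keeps the absolute value outside the integral over blocks of length $1/N$, and why Lemma \ref{replace} is an $L^2$ bound on $N\int_0^{1/N}\{g_j(NX_s(j))-m_j\}\,ds$. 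The proof of that lemma is also not the local-equilibrium heuristic you sketch (the paper explicitly notes the classical one- and two-block estimates fail here): it treats the case $x(j)>0$ directly from $g_j(n)\to m_j$, and for $x(j)=0$ it picks an occupied site $k$, takes $u$ with $u(j)=1$, $u(k)=0$ and $\mc Lu=0$ elsewhere, and uses the Dynkin martingale of $F(x)=u\cdot x$ together with $\sum_i m_i(\mc Lu)(i)=0$ to transfer the estimate from $k$ to $j$. Without this (or an equivalent) argument, and with the incorrect absolute-value placement, the proof is incomplete at its essential point.
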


The following replacement lemma is the key point in the proof of
Proposition \ref{mp1}. It permits to replace the functions $g_\ell (N
X_s(\ell))$ by the constants $m_\ell$, $\ell\in S$.

\begin{lemma}
\label{replace}
For any $\ell\in S$,
\begin{equation}
\label{07}
\lim_{N\to\infty} \max_{x\in \Sigma_N} \bb E^N_{x} \Big[ \; \Big( N \int_0^{1/N} 
\{ g_\ell (N X_s(\ell)) - m_\ell \} \, ds \Big)^2 \;\Big] \;=\; 0\;. 
\end{equation}
\end{lemma}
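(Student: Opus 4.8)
The plan is to reduce the statement, via Dynkin's formula, to solving an approximate linear system for the quantities $Z^N_j := N\int_0^{1/N}\{g_j(NX_s(j)) - m_j\}\,ds$, $j\in S$, and then to pin down the resulting free constant using that one coordinate of $X$ stays macroscopically bounded away from $0$.

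First I would apply Dynkin's formula to the coordinate function $\phi_\ell(x):=x_\ell$ under $\bb P^N_x$. From the explicit form \eqref{tay1} of $\mf L_N$ one gets $(\mf L_N\phi_\ell)(x) = N\big[\sum_{j\in S}g_j(Nx_j)r(j,\ell) - \lambda(\ell)g_\ell(Nx_\ell)\big]$, so that
\[
N\int_0^{1/N}\Big[\sum_{j\in S}g_j(NX_s(j))r(j,\ell) - \lambda(\ell)g_\ell(NX_s(\ell))\Big]ds \;=\; X_{1/N}(\ell) - x_\ell - M^{N,\ell}_{1/N},
\]
where $M^{N,\ell}$ is the associated martingale. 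A direct computation gives $\mf L_N\phi_\ell^2 - 2\phi_\ell\,\mf L_N\phi_\ell = \sum_j g_j(Nx_j)r(j,\ell) + \lambda(\ell)g_\ell(Nx_\ell)$, which is bounded uniformly on $\Sigma_N$ since the $g_j$ are bounded; hence the quadratic variation of $M^{N,\ell}$ up to time $1/N$ is $O(1/N)$ and $\max_{x\in\Sigma_N}\bb E^N_x[(M^{N,\ell}_{1/N})^2]\to 0$. The term $X_{1/N}(\ell) - x_\ell$ is only $O(1)$ a priori; the crucial point is that, uniformly over $x\in\Sigma_N$, it tends to $0$ in $L^2$ — this is exactly the tightness estimate \eqref{06} / Proposition \ref{proptight} together with the boundedness of $X$, and since the proof of Proposition \ref{proptight} does not invoke the present lemma, there is no circularity.

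Second I would rewrite the left-hand side above in terms of the $Z^N_j$. Using $N\int_0^{1/N}g_j(NX_s(j))\,ds = Z^N_j + m_j$, it equals $\sum_j r(j,\ell)(Z^N_j + m_j) - \lambda(\ell)(Z^N_\ell + m_\ell)$, and the constant part $\sum_j m_j r(j,\ell) - m_\ell\lambda(\ell)$ vanishes because $\bs m$ is invariant for $\bs r$. Thus, denoting by $\mc L^{\rm t}$ the transpose of the generator matrix of $(x_t)$ acting on functions on $S$, the previous display reads $(\mc L^{\rm t}Z^N)(\ell) = X_{1/N}(\ell) - x_\ell - M^{N,\ell}_{1/N}$, so that $(\mc L^{\rm t}Z^N)(\ell)\to 0$ in $L^2$, uniformly in $x$, for every $\ell$. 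By irreducibility, $\ker\mc L^{\rm t} = \bb R\,\bs m$ and $\mc L^{\rm t}$ is boundedly invertible on the orthogonal complement; hence there is a random scalar $c_N$ with $\max_{x\in\Sigma_N}\bb E^N_x\big[\sum_\ell (Z^N_\ell - c_N m_\ell)^2\big]\to 0$.

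The main obstacle is the final step, showing $c_N\to 0$, which is the only place where the convergence $g_j(n)\to m_j$ (not merely boundedness) is needed. I would take $\ell_0 = \ell_0(x)$ a site with $x_{\ell_0}$ maximal, so $x_{\ell_0}\ge 1/|S|$; by the uniform form of \eqref{06}, with probability tending to $1$ (uniformly in $x$) one has $NX_s(\ell_0)\ge N/(2|S|)$ for all $s\le 1/N$, and on that event, since \eqref{propg} gives $\sup_{n\ge n_\delta}|g_j(n) - m_j|\le\delta$ for every $\delta>0$ uniformly over the finite set $S$, one gets $|Z^N_{\ell_0}|\le\delta$ for $N$ large, while $|Z^N_{\ell_0}|$ is always bounded; hence $\max_x\bb E^N_x[(Z^N_{\ell_0(x)})^2]\to 0$. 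Combining this with $Z^N_{\ell_0} = c_N m_{\ell_0} + (Z^N_{\ell_0} - c_N m_{\ell_0})$ and $m_{\ell_0}\ge\min_j m_j>0$ yields $\max_x\bb E^N_x[c_N^2]\to 0$, and then $\bb E^N_x[(Z^N_\ell)^2]\le 2(\max_j m_j)^2\bb E^N_x[c_N^2] + 2\bb E^N_x[\sum_k(Z^N_k - c_N m_k)^2]\to 0$ uniformly in $x$, for every $\ell$, which is the asserted estimate. Note that $b>1$ plays no role in this argument.
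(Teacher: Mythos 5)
Your proof is correct, and it rests on the same two pillars as the paper's: Dynkin's formula applied to linear functionals $x\mapsto u\cdot x$ (which, after the invariance of $\bs m$ kills the constant part, shows that the vector $Z^N=(Z^N_j)_{j\in S}$ is asymptotically annihilated by $u\mapsto \langle Z^N,\mc L u\rangle$), and the direct estimate at a site whose density is bounded away from zero, which supplies the one missing piece of information. The difference is organizational. The paper sidesteps the linear algebra around $\ker\mc L^{\rm t}=\bb R\,\bs m$ by choosing, for each site $j$ with $x(j)=0$, a single equilibrium-potential--type function $u$ with $u(j)=1$, $u(k)=0$ and $\mc Lu=0$ off $\{j,k\}$, where $k$ is a site with $x(k)>0$; then $\mc Lu$ is supported on $\{j,k\}$, so the relation $\langle Z^N,\mc Lu\rangle\to 0$ couples the unknown $Z^N_j$ only to the already-controlled $Z^N_k$. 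You instead take all coordinate functions at once, obtain $\mc L^{\rm t}Z^N\to 0$, and invert $\mc L^{\rm t}$ off its kernel; your normalization step (a coordinate $\ell_0$ with $x_{\ell_0}\ge 1/|S|$ forces $c_N\to 0$) plays exactly the role of the paper's good site $k$. Your packaging is arguably more transparent about where the single free constant comes from; the paper's avoids any discussion of kernels and ranges. Two minor points to make explicit: (i) the error $\epsilon_N(\ell)=X_{1/N}(\ell)-x_\ell-M^{N,\ell}_{1/N}$ sums to zero over $\ell$ (because $\sum_\ell\phi_\ell\equiv 1$ on $\Sigma_N$), so it does lie in the range of $\mc L^{\rm t}$ and the inversion is legitimate; (ii) the ``uniform form of \eqref{06}'' you invoke, with the supremum over $s\le 1/N$ inside the probability and the maximum over starting points outside, is not literally the displayed \eqref{06}, but it follows either by first reducing to a maximizing sequence exactly as the paper does, or from \eqref{04} by a standard stopping-time argument, so this is not a gap.
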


\begin{proof}
Let $x_N$ be a sequence such that
\begin{equation*}
\begin{split}
& \lim_{N\to\infty} \max_{x\in \Sigma_N} \bb E^N_{x} \Big[ \; \Big( N \int_0^{1/N} 
\{ g_\ell (N X_s(\ell)) - m_\ell \} \, ds \Big)^2 \;\Big] \\
&\quad \;=\;
\lim_{N\to\infty} \bb E^N_{x_N} \Big[ \; \Big( N \int_0^{1/N} 
\{ g_\ell (N X_s(\ell)) - m_\ell \} \, ds \Big)^2 \;\Big] \;.
\end{split}
\end{equation*}
Assume without loss of generality that $x_N$ converges to some $x\in
\Sigma$. Fix $j\in S$, and suppose first that $x(j)>0$. In this case,
since $\lim_n g_j(n)= m_j$, the assertion of the lemma for $\ell=j$
follows from \eqref{06}.  If $x(j)=0$, there exists $k\not =j$ such
that $x(k)>0$. By the previous observation, \eqref{07} holds with $k$
in place of $\ell$.

For $j,k\in S$, consider the function $u:S\to\bb R$ defined by
\begin{equation*}
u(j)\,=\, 1\;, \quad u(k)\;=\; 0\quad {\rm and} 
\quad (\mc L u)(i) \;=\; 0 \,\;{\rm for} 
\;\, i\;\not=\; j\;, k\;,
\end{equation*}
let $F(x) = u\cdot x$, and let $(M_t)_{t\ge 0}$ be the Dynkin's
martingale associated to $F$:
\begin{equation*}
M_t \;:=\; F(X_{t}) \;-\;  F(X_{0})
\;-\; N \int_0^{t} \sum_{i\in S} g_i (N X_s(i)) 
(\mc L f) (i) \, ds \;, \quad t\ge 0 \;.
\end{equation*}
On the one hand, an elementary computation shows that
\begin{equation*}
\bb E^N_{x_N} \big[ M^2_t \, \big] \;=\; \bb E^N_{x_N} \Big[ \int_0^{t} 
\sum_{i,\ell \in S} g_i(N X_{s}(i)) \, r(i,\ell) \,
[u(\ell)-u(i)]^2 \, ds \Big]\;,
\end{equation*}
so that $\lim_N \bb E^N_{x_N} [ M^2_{1/N} \,]=0$. On the other hand,
by definition of $F$ and by \eqref{06},
\begin{equation*}
\lim_{N\to\infty} \bb E^N_{x_N} \big [ \{ F(X_{1/N}) - F(X_{0}) \}^2 \big]
\;=\; 0\; .
\end{equation*}
Therefore,
\begin{equation*}
\lim_{N\to\infty} \bb E^N_{x_N} \Big[ \Big( N \int_0^{1/N} 
\sum_{i\in S} g_i (N X_s(i)) \, (\mc L u) (i) \, ds\Big)^2 
\Big] \;=\; 0\;.
\end{equation*}
As $\sum_{i\in S} m_i \, (\mc L u) (i)=0$, we may substitute in the
previous equation $g_i (N X_s(i))$ by $g_i (N X_s(i)) - m_i$.  Since
$(\mc L u)(i)=0$ for $i\not = j$, $k$, since $(\mc L u)(j) \not =0$,
and since \eqref{07} holds for $\ell=k$,
\begin{equation*}
\lim_{N\to\infty} \bb E^N_{x_N} \Big[ \Big( N \int_0^{1/N} 
\{ g_j (N X_s(j)) - m_j\}  \, ds\Big)^2 
\Big] \;=\; 0\;,
\end{equation*}
which completes the proof of the lemma.
\end{proof}

\begin{corollary}
\label{prore}
For any $t>0$, $j\in S$, and any continuous function $H:\Sigma\to \bb
R$,
\begin{equation*}
\lim_{N\to\infty} \max_{x\in \Sigma_N} \bb E^N_{x} \Big[ \, \Big| \int_0^{t} 
\{ g_j (N X_s(j)) - m_j\}  \, H(X_s) \, ds \,\Big|\, \Big] \;=\; 0\;.
\end{equation*}
\end{corollary}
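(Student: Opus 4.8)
The plan is to reduce the statement to the one-block replacement already proved in Lemma \ref{replace} by chopping $[0,t]$ into pieces of length $1/N$, invoking the Markov property at their left endpoints, and controlling the variation of $H$ along each short piece by its modulus of continuity together with the equicontinuity estimate \eqref{06}. First I would record two elementary facts used throughout: since $g_j(n)\to m_j$ and $g_j(0)=0$, the constant $c_j:=\sup_{n\ge 0}|g_j(n)-m_j|$ is finite; and since $H$ is continuous on the compact set $\Sigma$ it is bounded and uniformly continuous, so that $w_H(\rho):=\sup\{|H(y)-H(z)|:y,z\in\Sigma,\ \Vert y-z\Vert\le\rho\}\to 0$ as $\rho\downarrow 0$. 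Note also that one cannot simply bound $|H(X_s)|$ by $\Vert H\Vert_\infty$ and pull it out, since $\int_0^t|g_j(NX_s(j))-m_j|\,ds$ does \emph{not} go to $0$; it is the signed short-time average in Lemma \ref{replace} that is small, so the oscillation of $H$ must genuinely be dealt with.

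Writing $\int_0^t=\sum_{i=0}^{\lfloor tN\rfloor}\int_{i/N}^{(i+1)/N\wedge t}$ and using the triangle inequality, on each sub-interval I would bound
\[
\Big|\int_{i/N}^{(i+1)/N}\{g_j(NX_s(j))-m_j\}H(X_s)\,ds\Big|
\le \Vert H\Vert_\infty\,\Big|\int_{i/N}^{(i+1)/N}\{g_j(NX_s(j))-m_j\}\,ds\Big|
+ c_j\int_{i/N}^{(i+1)/N}|H(X_s)-H(X_{i/N})|\,ds\;.
\]
Taking $\bb E^N_x$ and applying the Markov property at time $i/N$, the contribution of the first term, summed over the at most $tN+1$ intervals, is bounded by $\Vert H\Vert_\infty(t+1)\varepsilon_N$, where $\varepsilon_N:=\max_{y\in\Sigma_N}\big(\bb E^N_y[(N\int_0^{1/N}\{g_j(NX_s(j))-m_j\}\,ds)^2]\big)^{1/2}\to 0$ by Cauchy--Schwarz and Lemma \ref{replace}. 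For the second term, splitting on the event $\{\sup_{0\le s\le 1/N}\Vert X_s-X_0\Vert<\rho\}$ and its complement, the Markov property bounds the total contribution by $c_j(t+1)\big(w_H(\rho)+2\Vert H\Vert_\infty\,\eta_N(\rho)\big)$, where $\eta_N(\rho):=\max_{y\in\Sigma_N}\bb P^N_y[\sup_{0\le s\le 1/N}\Vert X_s-X_0\Vert\ge\rho]$.

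The crux is to show $\eta_N(\rho)\to 0$ for each fixed $\rho>0$: if it failed there would be a sequence $y_N\in\Sigma_N$ along a subsequence of which that probability stays bounded away from $0$, but for any $\delta>0$ one has $1/N<\delta$ eventually, whence the probability is dominated by $\bb P^N_{y_N}[\sup_{0\le s\le\delta}\Vert X_s-X_0\Vert\ge\rho]$, contradicting \eqref{06} after letting $N\to\infty$ and then $\delta\downarrow 0$. Combining the three bounds, the quantity in the statement is at most $\Vert H\Vert_\infty(t+1)\varepsilon_N+c_j(t+1)\big(w_H(\rho)+2\Vert H\Vert_\infty\,\eta_N(\rho)\big)$ uniformly over $x\in\Sigma_N$; letting $N\to\infty$ and then $\rho\downarrow 0$ gives the claim. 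The only genuine obstacle is this uniform-in-starting-point control of the oscillation of $X$ over time intervals of length $1/N$, i.e.\ the passage from $H(X_s)$ to $H(X_{i/N})$; everything else is bookkeeping on top of Lemma \ref{replace} and the estimate \eqref{06}.
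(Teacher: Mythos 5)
Your proposal is correct and follows essentially the same route as the paper's proof: both chop $[0,t]$ into intervals of length $1/N$, apply the Markov property at the left endpoints, replace $H(X_s)$ by its value at the left endpoint using uniform continuity of $H$ together with the equicontinuity estimate \eqref{06}, and control the remaining signed short-time average by Lemma \ref{replace} (via Cauchy--Schwarz). The only difference is presentational: you make explicit the passage from \eqref{06} on intervals $[0,\delta]$ to the uniform bound on intervals $[0,1/N]$, which the paper leaves implicit.
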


\begin{proof}
Fix a sequence $x_N\in \Sigma_N$, $N\ge 1$ some $t>0$,
$j\in S$, and a continuous function $H:\Sigma\to \bb R$. Clearly, 
\begin{equation*}
\begin{split}
& \bb E^N_{x_N} \Big[ \, \Big|  \int_0^{t} \{ g_j (N X_s(j)) - m_j\}
\, H(X_s) \, ds \,\Big|\,\Big] \\
&\quad \;\le \; \sum_{k=0}^{[tN]} \bb E^N_{x_N} \Big[ \, \Big| 
\int_{k/N}^{(k+1)/N} \{ g_j (N X_s(j)) - m_j\}  \, H(X_s) \, ds
\,\Big|\, \Big] \;+\; O(\frac 1N) \;,
\end{split}
\end{equation*}
where $[a]$ stands for the integer part of $a\in \bb R$. By the Markov
property, the first term on the right hand side is bounded by
\begin{equation*}
[tN] \max_{x\in\Sigma_N} \bb E^N_{x} \Big[ \, \Big|
\int_{0}^{1/N} \{ g_j (N X_s(j)) - m_j\}  \, H(X_s) \, ds
\,\Big|\, \Big]\;.
\end{equation*}
Since $H$ is a continuous function, for every $\delta>0$, there exists
$\epsilon>0$ such that the previous expression is bounded by
\begin{equation*}
\begin{split}
& \delta \;+\; C_0 \max_{x\in\Sigma_N} \bb P^N_{x} 
\big[ \sup_{0\le s\le 1/N} \Vert X_s
- x \Vert \ge \epsilon \big] \\
&\quad \;+\; [tN] \max_{x\in\Sigma} |H(x) | \, 
\max_{x\in\Sigma_N} \bb E^N_{x} \Big[
\, \Big| \int_{0}^{1/N} \{ g_j (N X_s(j)) - m_j\}  \, ds
\,\Big|\, \Big]
\end{split}
\end{equation*}
for some finite constant $C_0$ which depends on $H$ and $t$. By
\eqref{06}, the second term vanishes as $N\uparrow\infty$. The third
one vanishes by Lemma \ref{replace}.
\end{proof}

\begin{proof}[Proof of Proposition \ref{mp1}]
Fix a function $H$ in $\mc D_S$, $n\ge 1$, a continuous function $G:
\Sigma^n \to \bb R$, and $0\le s_1 \le \cdots \le s_n \le t_1<t_2$. Define
\begin{equation*}
G(X) \;:=\; G(X_{s_i} , \dots, X_{s_n}) \quad {\rm and} \quad  
\Psi_{t_1,t_2} \;:=\; H(X_{t_2}) - H(X_{t_1}) - \int_{t_1}^{t_2} (\mf L
H)(X_r) \, dr \;. 
\end{equation*}
Fix a sequence $x_N\in\Sigma_N$, $N\ge 1$ and let $\tilde{\bb P}$
be a limit point of the sequence $\bb P^{\scs N}_{x_N}$. Assume,
without loss of generality, that $\bb P^{\scs N}_{x_N}$ converges to
$\tilde{\bb P}$ in the Skorohod topology. For each $N\ge 1$,
\begin{equation*}
\bb E^{\scs N}_{x_N} \Big[ G(X) \, \Big\{ H(X_{t_2}) - H(X_{t_1}) -
\int_{t_1}^{t_2} (\mf L_N H)(X_r) \, dr\Big\} \, \Big] \;=\; 0\;.
\end{equation*}
By Lemma \ref{texp}, this left hand side is equal to 
\begin{equation*}
\bb E^{\scs N}_{x_N} \big[ G(X) \, \Psi_{t_1,t_2} \, \big] \;+\;
\frac{1}{2} \sum_{j\in S} \bb E^{\scs N}_{x_N} \Big[ G(X)
\int_{t_1}^{t_2} \{ g_j (N X_s(j)) - m_j \} (\Delta_j H) (X_s) \, ds \Big] 
\end{equation*}
plus a remainder which vanishes as $N\uparrow\infty$.  By the Markov
property and by Corollary \ref{prore}, the second term vanishes as
$N\uparrow\infty$. Since $\bb P^{\scs N}_{x_N}$ converges in the
Skorohod topology to $\tilde{\bb P}$ and since the measure $\tilde{\bb
  P}$ is concentrated on continuous paths, the first term converges to
\begin{equation*}
\tilde{\bb E} \Big[ G(X) \, \Big\{ H(X_{t_2}) - H(X_{t_1}) -
\int_{t_1}^{t_2} (\mf L H)(X_r) \, dr\Big\} \, \Big] \;.
\end{equation*}
Putting together the previous estimates we conclude that this latter
expectation vanishes. This completes the proof of the proposition.
\end{proof}

For each $x\in \Sigma$, consider a sequence $x_N\in \Sigma_N$, $N\ge
1$ converging to $x$ and a limit point $\tilde{\bb P}_x$ for the
sequence $\bb P^N_{x_N}$, $N\ge 1$. Since $\tilde{\bb P}_x$ is
concentrated on $C(\bb R_+,\Sigma)$ the restriction of $\tilde{\bb
  P}_x$ to this space turns out to be a probability measure starting
at $x$. By Proposition \ref{mp1}, such restriction is a solution of
the $\mf L$-martingale problem starting at $x$. We have thus proved
the existence of solutions for the $\mf L$-martingale problem and the
proof of Theorem \ref{r03} is concluded. On the other hand, Theorem
\ref{mt2} is an immediate consequence of Proposition \ref{mp1} and of
the uniqueness of the $\mf L$-martingale problem established in the
last section.

\subsection{Additional Properties}
\label{adpro}
In this subsection we prove some additional properties of the
solution $\{\bb P_x : x\in \Sigma\}$. We start showing Feller
continuity.

\begin{proposition}
\label{fcont}
Let $(x_n)_{n\ge 1}$ be a sequence in $\Sigma$ converging to some
$x\in \Sigma$. Then $\bb P_{x_n} \to \bb P_{x}$ in the sense of weak
convergence of measures on $C(\bb R_+,\Sigma)$.
\end{proposition}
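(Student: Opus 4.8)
The plan is to deduce Feller continuity from the tightness established in Proposition \ref{proptight}, the characterization of limit points in Proposition \ref{mp1}, and the uniqueness established in Proposition \ref{uni2} (equivalently Theorem \ref{r03}). The key observation is that the discretization $\Sigma_N$ is a nested-free but dense family of finite subsets of $\Sigma$, so any $x\in \Sigma$ can be approximated by a sequence $x_N\in \Sigma_N$; the tightness proof of Proposition \ref{proptight} was carried out for \emph{arbitrary} sequences $x_N\in \Sigma_N$, and in particular the Aldous-type estimate \eqref{06} and the application of Lemma \ref{prolt} depend only on the limiting point of the sequence, not on any rate of convergence. This uniformity is exactly what is needed to handle a double array.

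First I would fix a sequence $x_n\to x$ in $\Sigma$ and, for each $n$, pick $x_{n,N}\in \Sigma_N$ with $x_{n,N}\to x_n$ as $N\to\infty$. By a diagonal argument one extracts a single sequence $N\mapsto x^*_N := x_{\phi(N),N}\in \Sigma_N$ with $\phi(N)\to\infty$ slowly enough that $x^*_N\to x$; more usefully, one works with the full double array and notes that the tightness estimates of Proposition \ref{proptight}, being uniform over the starting configuration (they reduce via Lemma \ref{prolt} to $\max_{y\in\Sigma_N}$-estimates), give tightness of $\{\bb P^N_{x_{n,N}} : n,N\ge 1\}$ jointly. Then, by Proposition \ref{mp1}, any limit point of $\bb P^N_{x_{n,N}}$ as $N\to\infty$ solves the $\mf L$-martingale problem starting at $x_n$, hence by uniqueness equals $\bb P_{x_n}$; so $\bb P^N_{x_{n,N}}\Rightarrow \bb P_{x_n}$ as $N\to\infty$ for each fixed $n$. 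Likewise $\bb P^N_{x^*_N}\Rightarrow \bb P_x$. It then remains to interchange the limits in $n$ and $N$: given a bounded continuous $\Phi$ on $C(\bb R_+,\Sigma)$ and $\varepsilon>0$, the uniform modulus-of-continuity estimate \eqref{06} lets one choose $\delta$ controlling $\sup_{0\le t\le \delta}\Vert X_t - X_0\Vert$ uniformly in the starting point, and combining this with the convergence $x_{n,N}\to x^*_N$ (in the appropriate regime) and the triangle inequality $|\tilde{\bb E}_{x_n}[\Phi] - \tilde{\bb E}_{x}[\Phi]| \le |\tilde{\bb E}_{x_n}[\Phi] - \bb E^N_{x_{n,N}}[\Phi]| + |\bb E^N_{x_{n,N}}[\Phi] - \bb E^N_{x^*_N}[\Phi]| + |\bb E^N_{x^*_N}[\Phi] - \tilde{\bb E}_{x}[\Phi]|$, one shows each term is small for $N$ large and $n$ large.

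A cleaner route, which I would prefer to write out, avoids the double array altogether: extract from $\{\bb P_{x_n}\}_{n\ge 1}$ a weakly convergent subsequence (possible because $\{\bb P_{x_n}\}$ is tight, being a subfamily of the limit points covered by Proposition \ref{proptight}, or more directly because each $\bb P_{x_n}$ is a limit point of some $\bb P^N_{x_{n,N}}$ and the tightness estimates are uniform in the starting point); call the limit $\bb Q$. Since $x_n\to x$, $\bb Q$ starts at $x$, and by passing \eqref{06} to the limit $\bb Q$ is concentrated on $C(\bb R_+,\Sigma)$. One then checks that $\bb Q$ solves the $\mf L$-martingale problem: for $H\in \mc D_S$, a continuous $G$ on $\Sigma^k$ and times $s_1\le\cdots\le s_k\le t_1<t_2$, the quantity $\bb E_{x_n}[G(X_{s_1},\dots,X_{s_k})\,\Psi_{t_1,t_2}]$ vanishes for every $n$ because $\bb P_{x_n}$ solves the martingale problem, and since $H$, $\mf L H$ and $G$ are continuous and bounded on the compact $\Sigma$ and $\bb Q$ charges only continuous paths, weak convergence $\bb P_{x_n}\Rightarrow \bb Q$ passes this identity to the limit, showing $\bb Q$ solves the $\mf L$-martingale problem starting at $x$. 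By the uniqueness in Theorem \ref{r03}, $\bb Q = \bb P_x$. Since every weakly convergent subsequence of $\{\bb P_{x_n}\}$ has the same limit $\bb P_x$ and the sequence is tight, the whole sequence converges: $\bb P_{x_n}\Rightarrow \bb P_x$.

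The main obstacle is justifying the tightness of $\{\bb P_{x_n}\}_{n\ge 1}$ and the continuity-of-paths property of its limit points; both hinge on the fact that the estimates of Proposition \ref{proptight}, in particular \eqref{06} and the hypotheses of Lemma \ref{prolt}, are uniform over the initial position. Concretely, \eqref{06} reads $\lim_{\delta\to 0}\limsup_{N\to\infty}\sup_{x_N}\bb P^N_{x_N}[\sup_{t\le\delta}\Vert X_t - X_0\Vert\ge\varepsilon]=0$, and since each $\bb P_{x_n}$ is a weak limit of $\bb P^N_{x_{n,N}}$ along continuous-path-charging limits, the analogous bound $\lim_{\delta\to 0}\sup_{n}\bb P_{x_n}[\sup_{t\le\delta}\Vert X_t-X_0\Vert\ge\varepsilon]=0$ follows by lower semicontinuity of the event under weak convergence (after a routine approximation of the open event by closed ones). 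With that in hand, tightness of $\{\bb P_{x_n}\}$ and the continuity of limit points are immediate, and the rest is the standard martingale-problem-uniqueness closing argument. No genuinely new estimate is required beyond what Section \ref{sec02} already provides.
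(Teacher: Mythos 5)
Your proposal is correct, and your first sketch is in substance the paper's own proof: the paper fixes a bounded continuous $\Gamma$ and uses Theorem \ref{mt2} to choose, for each $n$, an $N(n)$ and a point $z_n\in\Sigma_{N(n)}$ with $\Vert z_n-x_n\Vert<1/n$ and $|\bb E^{N(n)}_{z_n}[\Gamma]-\tilde{\bb E}_{x_n}[\Gamma]|<1/n$ (see \eqref{cc1}); since $z_n\to x$, a second application of Theorem \ref{mt2} to the diagonal sequence gives $\bb E^{N(n)}_{z_n}[\Gamma]\to\tilde{\bb E}_x[\Gamma]$, and the triangle inequality finishes. This is exactly your double-array idea, but executed more cleanly: by building the approximation of the \emph{expectation} (not just of the starting point) into the choice of $N(n)$, the paper avoids any interchange of limits or uniform modulus-of-continuity input at this stage. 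Your preferred second route is a genuinely different and also valid argument: it works entirely at the level of the continuum measures, establishing tightness of $\{\bb P_{x_n}\}$ from the uniformity in the starting point of the Aldous estimates behind Proposition \ref{proptight} (the bound \eqref{04} is a $\max_{x\in\Sigma_N}$ bound, so the set of all limit points of $\{\bb P^N_y\}$ is contained in a weakly compact set), passing the martingale identity for $H\in\mc D_S$ through the weak limit (legitimate, since $H$ and $\mf L H$ are bounded continuous on the compact $\Sigma$ and the measures charge only continuous paths), and invoking the uniqueness of Theorem \ref{r03}. What the paper's route buys is brevity and no need to re-examine tightness; what yours buys is a self-contained "compactness plus uniqueness" scheme that would survive even if one only knew convergence of $\bb P^N_{x_N}$ along subsequences. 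One small caution on your version: \eqref{06} alone controls the path only near $t=0$, so the tightness of $\{\bb P_{x_n}\}$ should be drawn from the full Aldous-type estimate \eqref{04} (equivalently from weak compactness of the closure of the discrete family), as you indeed indicate when you invoke the hypotheses of Lemma \ref{prolt}.
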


\begin{proof}
For each $z\in \Sigma$, let $\tilde{\bb P}_z$ stand for the
probability measure on $D(\bb R_+,\Sigma)$ induced by $\bb P_z$ and
the inclusion of $C(\bb R_+,\Sigma)$ into $D(\bb R_+,\Sigma)$, and
denote by $\tilde{\bb E}_{z}$ the respective expectation. 

Fix a bounded, continuous function $\Gamma:D(\bb R_+,\Sigma)\to \bb
R$. By Theorem \ref{mt2}, there exists a strictly increasing sequence
$N(n)\in \bb N$, $n\ge 1$, and a sequence $(z_n)_{n\ge 1}$ so that
$z_n\in \Sigma_{N(n)}$,
\begin{equation}
\label{cc1}
\Vert z_n - x_n\Vert \;<\; \frac{1}{n} \quad\textrm{and}\quad 
\Big| \,\bb E^{N(n)}_{z_n}[ \,\Gamma\, ] \,-\,  \tilde{\bb E}_{x_n}[
\,\Gamma \,] \,\Big| \; < \; \frac{1}{n}\;,
\end{equation}
for all $n\ge 1$. In particular, $z_n\to x$. By Theorem
\ref{mt2},
\begin{equation*}
\bb E^{N(n)}_{z_n}[ \,\Gamma\, ] \;\to\; \tilde{\bb E}_{x}
[ \,\Gamma\, ]\;,\quad \textrm{as $n\uparrow\infty$}\;.
\end{equation*}

From the previous convergence and from the second assertion in
\eqref{cc1}, it follows that $ \tilde{\bb E}_{x_n}[ \,\Gamma \,] \to
\tilde{\bb E}_{x}[ \,\Gamma\, ]$, as $n\uparrow \infty$. We have thus
shown that $\tilde{\bb P}_{x_n} \to \tilde{\bb P}_x$ in the sense of
weak convergence of measures in $D(\bb R_+,\Sigma)$. Since every
$\tilde{\bb P}_z$, $z\in \Sigma$, is concentrated on $C(\bb
R_+,\Sigma)$, this implies the desired result.
\end{proof}

Next result asserts that $\{\bb P_x : x\in \Sigma\}$ satisfies the
strong Markov property.

\begin{proposition}
\label{smark}
Fix $x\in \Sigma$. Let $\tau$ be a finite stopping time and $\{\bb
P^{\tau}_{\omega}\}$ be a r.c.p.d. of $\bb P_x$ given $\ms
F_{\tau}$. Then, there exists a $\bb P_x$-null set $\ms N\in \ms
F_{\tau}$, such that
\begin{equation*}
\bb P^{\tau}_{\omega} \circ \theta_{\tau(\omega)}^{-1} \;=\; 
\bb P_{X_{\tau}(\omega)}\;, \quad \omega \in \ms N^c \;,
\end{equation*}
where we recall $(\theta_t)_{t\ge 0}$ is the semigroup of time translations.
\end{proposition}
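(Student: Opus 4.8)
The plan is to deduce the strong Markov property from uniqueness of the $\mf L$-martingale problem, following the now-standard Stroock--Varadhan route (Theorem 6.2.2 in \cite{sv79}). The key point is that a regular conditional probability distribution of a solution of the martingale problem, pushed forward by the time shift, is almost surely again a solution of the martingale problem starting from the appropriate point; once this is established, uniqueness (Theorem \ref{r03}, via Proposition \ref{uni2}) forces it to equal $\bb P_{X_\tau(\omega)}$. Concretely, let $\tau$ be a finite stopping time and $\{\bb P^\tau_\omega\}$ an r.c.p.d.\ of $\bb P_x$ given $\ms F_\tau$. First I would fix a countable subset $\Theta$ of $\mc D_S$ which is, together with the image under $\mf L$, dense in an appropriate sense (e.g.\ uniformly approximating every $H\in\mc D_S$ along with $\mf L H$; this is possible because $\mc D_S$ is separable in the relevant topology on the compact space $\Sigma$). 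By Theorem 1.2.10 of \cite{sv79} applied to the martingale $H(X_t)-\int_0^t(\mf L H)(X_s)\,ds$, for each $H\in\Theta$ there is a $\bb P_x$-null set $\ms N_H\in\ms F_\tau$ outside of which
\begin{equation*}
H(X_t)-\int_0^t(\mf L H)(X_s)\,ds,\qquad t\ge 0,
\end{equation*}
is a $\bb P^\tau_\omega\circ\theta_{\tau(\omega)}^{-1}$-martingale with respect to $(\ms F_t)_{t\ge0}$. Setting $\ms N_1:=\bigcup_{H\in\Theta}\ms N_H$, which is still a $\bb P_x$-null set in $\ms F_\tau$, and approximating a general $H\in\mc D_S$ by a sequence in $\Theta$ (using boundedness of $H$ and of $\mf L H$ on $\Sigma$, plus dominated convergence for the time integrals), one concludes that for every $\omega\in\ms N_1^c$ the measure $\bb Q_\omega:=\bb P^\tau_\omega\circ\theta_{\tau(\omega)}^{-1}$ solves the $\mf L$-martingale problem.

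Next I would check that $\bb Q_\omega$ starts at $X_\tau(\omega)$ for $\bb P_x$-almost every $\omega$. Since the coordinate-evaluation maps are measurable, the map $\omega\mapsto\bb P^\tau_\omega[\,X_{\tau(\omega)}=X_\tau(\omega)\,]$ is measurable and, because $\{\bb P^\tau_\omega\}$ is a genuine r.c.p.d., the set $\ms N_2:=\{\omega:\bb Q_\omega[X_0=X_\tau(\omega)]<1\}$ is $\ms F_\tau$-measurable and $\bb P_x$-null; indeed integrating over $\omega$ recovers $\bb P_x[X_\tau=X_\tau]=1$. Put $\ms N:=\ms N_1\cup\ms N_2$. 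Then for every $\omega\in\ms N^c$ the measure $\bb Q_\omega$ is a solution of the $\mf L$-martingale problem starting at $X_\tau(\omega)$, so by the uniqueness established in Theorem \ref{r03} we obtain
\begin{equation*}
\bb P^\tau_\omega\circ\theta_{\tau(\omega)}^{-1}\;=\;\bb P_{X_\tau(\omega)},\qquad \omega\in\ms N^c,
\end{equation*}
which is the assertion.

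I expect two points to require a little care rather than being entirely routine. The first is the measurability of $\omega\mapsto\bb P^\tau_\omega$ and of the sets where the conditional martingale property or the starting-point property fails; these are handled by the standard r.c.p.d.\ machinery of \cite[Ch.~6]{sv79}, but one must make sure that the stopping time $\tau$ being general (rather than deterministic) causes no trouble, which it does not since $\tau<\infty$ almost surely and the shift $\theta_{\tau}$ composed with an r.c.p.d.\ is covered verbatim by Theorem 6.1.3 of \cite{sv79}. The second, slightly more delicate, point is the density argument: $\mc D_S$ is not a space of globally smooth functions with a simple norm, so one must verify that countably many functions in $\mc D_S$ suffice to characterise all $\mf L$-martingales. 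This can be done by first restricting to a countable collection of $H\in\mc D_S$ and a countable set of rational times and then using right-continuity of trajectories and continuity of $\mf L H$ on $\Sigma$ to upgrade to all $H\in\mc D_S$ and all $t\ge0$. This is the only step where the unbounded drift could in principle cause difficulty, but since $\mf L H$ is continuous and hence bounded on the compact $\Sigma$ for each fixed $H\in\mc D_S$, the approximation goes through exactly as in the classical bounded-coefficient case.
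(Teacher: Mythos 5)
Your proof is correct, and it follows the standard Stroock--Varadhan derivation of the strong Markov property from well-posedness; the only real difference from the paper is which martingale problem you condition on. The paper first observes (via Theorems \ref{amp0} and \ref{amp1}) that $\bb P_x$ is an absorbing solution of the $\ms L$-martingale problem, then repeats the argument of Lemma \ref{svl} for a general finite stopping time $\tau$ to conclude that $\bb P^{\tau}_{\omega}\circ\theta_{\tau(\omega)}^{-1}$ is almost surely an absorbing solution of the $\ms L$-martingale problem starting at $X_{\tau}(\omega)$, and finally invokes the uniqueness of absorbing solutions (Proposition \ref{uni2}). You instead work directly with the $\mf L$-martingale problem and appeal to the uniqueness in Theorem \ref{r03}. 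Both routes are legitimate at this stage: Theorem \ref{r03} is fully established (uniqueness in Section \ref{rec03}, existence in Section \ref{sec02}) before Proposition \ref{smark} is invoked, and neither its proof nor that of Proposition \ref{uni2} uses the strong Markov property, so there is no circularity. Your route is in fact slightly cleaner, since the martingales $H(X_t)-\int_0^t(\mf L H)(X_s)\,ds$ shift trivially under $\theta_{\tau(\omega)}$, whereas the $\ms L$-formulation also requires transferring the absorbing property and the jump term $\int_0^t F(X_s)\,dN^S_s$ to the shifted path, which the paper disposes of by referring back to Lemma \ref{svl}. The two points you flag as requiring care are exactly the right ones and are handled as in the paper's Lemmata \ref{mpcon} and \ref{svl}: the countable family $\Theta$ exists because $H\mapsto(H,\mf L H)$ embeds $\mc D_S$ into the separable metric space $C(\Sigma)\times C(\Sigma)$, and Theorem 1.2.10 of \cite{sv79} applies to arbitrary finite stopping times.
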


\begin{proof}
By Theorems \ref{amp0} and \ref{amp1}, $\bb P_x$ is an
absorbing solution of the $\ms L$-martingale problem. The same
argument used in Lemma \ref{svl} shows that there exists a $\bb
P_x$-null set $\ms N\in \ms F_{\tau}$ such that for all $\omega\in \ms
N^c$ the probability $\bb P^{\tau}_{\omega}\circ
\theta^{-1}_{\tau(\omega)}$ is an absorbing solution of the $\ms
L$-martingale problem starting at $X_{\tau}(\omega)$. By the
uniqueness result in Proposition \ref{uni2} and by Theorem \ref{amp1},
we conclude that $\bb P^{\tau}_{\omega}\circ
\theta^{-1}_{\tau(\omega)}= \bb P_{X_{\tau}(\omega)}$ for all $\omega
\in \ms N^c$, which is the content of the proposition.
\end{proof}

To conclude this section we give an estimate for the expected value of
the absorbing time $\sigma_1$ uniformly on the starting point $x\in
\Sigma$.

\begin{proposition}
\label{r06}
Let $z\in \Sigma$ be such that $z\not= \bs e_j$, $j\in S$. For any $q>b$,
\begin{equation*}
\bb E_z [ \sigma_1 ] \;\le\; 
\frac {|B|^{(q-1)\vee 1}}{ (q+1) (q-b) d(B)} \;,
\end{equation*}
where $B=\ms B(z)$ and $d(B) := \min_{j\in B} \<(-\mc S) \bs e_j , \bs
e_j\>_{\bs m}$. In particular, $\bb P_x [ \sigma_1<\infty] = 1$.
\end{proposition}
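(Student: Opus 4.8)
The plan is to produce a bounded Lyapunov function $g$ on $\Sigma_B$, with $B=\ms B(z)$, whose $\mf L_B$‑image is bounded below by a strictly positive constant on $\mathring\Sigma_B$, and then to conclude by Dynkin's formula. Recall that $\sigma_1=h_B=\inf\{t\ge0:\prod_{j\in B}X_t(j)=0\}$, and that on $[0,\sigma_1)$ the coordinates in $A:=B^c$ stay equal to $0$ (Proposition \ref{s01}) while the coordinates in $B$ evolve on $\Sigma_B$ as the diffusion with generator $\mf L_B$ — this being the defining property of $\bb P_z$ when $B=S$ and Proposition \ref{sf01} when $B\subsetneq S$. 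The function I would use is $g(x)=\sum_{j\in B}x_j^{q+1}$, which is smooth on $\Sigma_B$ (as $q>1$) and satisfies $0\le g\le 1$ because $x_j^{q+1}\le x_j$ and $\sum_{j\in B}x_j=1$.

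First I would compute $\mf L_B g$. Using $(\partial_{x_k}-\partial_{x_j})^2g=(q+1)q\,(x_j^{q-1}+x_k^{q-1})$, the invariance of $\bs m$ restricted to $B$ under $\bs r^B$, the relation $\bs v^B_j(k)=(\mc L^B\bs e_k)(j)$ coming from \eqref{10}, and the definition of $\bs b^B$, one gets after collecting the first‑ and second‑order parts, for $x\in\mathring\Sigma_B$,
\[
(\mf L_B g)(x)\;=\;(q+1)\sum_{\substack{j,k\in B\\ j\ne k}}\frac{m_j\,r^B(j,k)}{x_j}\,\bigl[(q-b)\,x_j^{q}+b\,x_k^{q}\bigr]\;.
\]
The hypothesis $q>b$ is precisely what makes every summand nonnegative, so $\mf L_B g>0$ on $\mathring\Sigma_B$. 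Discarding the nonnegative terms $b\,x_k^{q}/x_j$ and using $\sum_{k\in B,\,k\ne j}m_j\,r^B(j,k)=\<(-\mc S^B)\bs e_j,\bs e_j\>_{\bs m}\ge d(B)$ (the symmetric part entering $d(B)$ being that of $\mc L^B$, which is $\mc L$ when $B=S$), one finds $\mf L_B g\ge(q+1)(q-b)\,d(B)\sum_{j\in B}x_j^{q-1}$; and since $\max_{j\in B}x_j\ge 1/|B|$ yields $\sum_{j\in B}x_j^{q-1}\ge|B|^{1-q}$, this gives $\mf L_B g\ge c:=(q+1)(q-b)\,d(B)\,|B|^{1-q}>0$ on $\mathring\Sigma_B$.

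The only genuine difficulty is that $g$ does \emph{not} belong to $\mc D_{B,B}$ — the map $x\mapsto[\bs v^B_j\cdot\nabla g(x)]/x_j$ is unbounded near $\{x_j=0\}$ — so the martingale property cannot be applied to $g$ itself; this I would circumvent by localization. Fix $\epsilon>0$, let $\Lambda_{B,\epsilon}=\{x\in\Sigma_B:\min_{j\in B}x_j\ge\epsilon\}$, recall $h_B(\epsilon)=\inf\{t\ge0:\min_{j\in B}X_t(j)<\epsilon\}$ from \eqref{hbep}, and choose $\chi\in C^\infty(\Sigma_B)$ with $\chi\equiv1$ on $\Lambda_{B,\epsilon}$ and $\chi\equiv0$ on a neighbourhood of $\{x\in\Sigma_B:\pi_B(x)=0\}$; then $\phi:=g\chi$ lies in $C^2(\Sigma_B)\cap\mc D_{B,B}$ (it vanishes near every face) and coincides with $g$, and $\mf L_B\phi$ with $\mf L_B g$, on $\Lambda_{B,\epsilon}$. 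Applying the $\mf L_B$‑martingale property to $\phi$ and optional stopping at the bounded time $t\wedge h_B(\epsilon)$, and using that $X_s\in\Lambda_{B,\epsilon}$ for $s\le h_B(\epsilon)$ (by path continuity $\min_{j\in B}X_{h_B(\epsilon)}(j)=\epsilon$), gives
\[
\bb E_z\bigl[\phi(X_{t\wedge h_B(\epsilon)})\bigr]-\phi(z)\;=\;\bb E_z\Bigl[\int_0^{t\wedge h_B(\epsilon)}(\mf L_B g)(X_s)\,ds\Bigr]\;\ge\;c\,\bb E_z\bigl[t\wedge h_B(\epsilon)\bigr]\;.
\]
Since $0\le\phi\le1$ and $\phi(z)\ge0$, the left‑hand side is at most $1$, so $\bb E_z[t\wedge h_B(\epsilon)]\le 1/c$; letting $t\uparrow\infty$ yields $\bb E_z[h_B(\epsilon)]\le 1/c$, and since $h_B(\epsilon)\uparrow h_B=\sigma_1$ as $\epsilon\downarrow0$ by continuity of the paths, monotone convergence gives
\[
\bb E_z[\sigma_1]\;\le\;\frac1c\;=\;\frac{|B|^{\,q-1}}{(q+1)(q-b)\,d(B)}\;\le\;\frac{|B|^{(q-1)\vee1}}{(q+1)(q-b)\,d(B)}\;,
\]
and in particular $\bb P_z[\sigma_1<\infty]=1$ for every $z\ne\bs e_j$. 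The main obstacle is exactly this admissibility/localization step (the singular gradient of $g$ at $\partial\Sigma_B$ rules out a one‑line Dynkin argument), together with the bookkeeping turning the lower bound for $\mf L_B g$ into the explicit constant; by contrast the positivity of $\mf L_B g$, which is the conceptual core, is an immediate consequence of $q>b$ — and it is here that the standing assumption $b>1$ enters.
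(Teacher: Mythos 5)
Your proof is correct and follows essentially the same route as the paper's: the paper applies Theorem \ref{amp1} to a function in $D_0(\Sigma)$ agreeing with $x_j^{q+1}$ on $\Lambda_B(\epsilon/2)\cap\Sigma_{B,0}$ (the analogue of your cutoff $\phi=g\chi$), stops at $h_B(\epsilon)$, obtains the same lower bound $(q+1)(q-b)\,x_j^{q-1}\<(-\mc S^B)\bs e_j,\bs e_j\>_{\bs m}$ for the generator, and then averages over $j\in B$ — whereas you sum the coordinates into $g$ at the outset, which is only a cosmetic difference. The computation, the localization to circumvent the inadmissibility of $x\mapsto x_j^{q+1}$, and the limits $t\uparrow\infty$, $\epsilon\downarrow 0$ all match the paper's argument.
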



\begin{proof}
Fix $q > b$, $z\in \Sigma$, and let $B=\ms B(z)$. For $j\in B$ and
$\epsilon>0$, there exists a function $F_{\epsilon}\in D_0(\Sigma)$
such that
\begin{equation*}
F_{\epsilon}(x) \;=\; x_j^{q+1} \;,\quad x\in \Sigma_{B,0}\cap 
\Lambda_B(\epsilon/2)\;.
\end{equation*}
Recall the definition of the stopping time $h_{B}(\epsilon)$ given in
\eqref{hbep}. By Theorem \ref{amp1},
\begin{equation*}
F_\epsilon (X_{t\wedge h_B(\epsilon)})  \;-\; \int_0^{t\wedge
  h_B(\epsilon)} (\ms L F_\epsilon)(X_s)\, ds
\end{equation*}
is a $\bb P_z$-martingale, so that
\begin{equation}
\label{ezs}
\bb E_z \left[ \int_0^{t\wedge h_B(\epsilon)} 
(\ms L F_\epsilon)(X_s)\, ds \right] \;=\; 
\bb E_z[ F_\epsilon (X_{t\wedge h_B(\epsilon)}) - F(z)] \;\le \; 1 \;.
\end{equation}
By definition of $F_{\epsilon}$, for all $x\in \Sigma_{B,0}\cap
\Lambda_B(\epsilon)$ we have
\begin{equation}
\label{qqb}
\ms L F_{\epsilon}(x) \;=\; b(q+1)\sum_{k\in B}
\frac{x_j^q}{x_k} m_k \, \mc L^B \bs e_j(k) 
\;+\; q(q+1) x_j^{q-1} \< (-\mc S^B) \bs e_j, \bs e_j\>_{\bs m} \;,
\end{equation}
Since $\mc L^B \bs e_j(k)\ge 0$ for $k\not =j$ and $m_j\mc L^B \bs
e_j(j)=-D_B(\bs e_j,\bs e_j)$ then the expression in \eqref{qqb} is
bounded below by
\begin{equation*}
(q-b)(q+1) \,x_j^{q-1} D_B( \bs e_j , \bs e_j ) \;.
\end{equation*}
By using this bound in \eqref{ezs}, definition of $h_B(\epsilon)$ and
the absorbing property we get
\begin{equation*}
\bb E_z \left[ \int_0^{t\wedge h_B(\epsilon)} X_s(j)^{q-1} \, ds
\right] \;\le \; \frac{1}{(q-b)(q+1) d(B) }
\end{equation*}
Averaging over $j\in B$ in the above inequality and by using that
\begin{equation*}
\frac{1}{|B|} \sum_{j\in B} x_j^{q-1} \;\ge\; \frac{1}{|B|^{(q-1)\lor 1}}
\end{equation*}
we obtain
\begin{equation*}
\bb E_z[t\wedge h_B(\epsilon)] \;\le \; \frac{|B|^{(q-1)\lor 1}}
{(q-b)(q+1) d(B) }\cdot
\end{equation*}
It remains to let $t\uparrow\infty$ to complete the proof of the
lemma.
\end{proof}

\section{Proof of Lemma \ref{propi}}
\label{sec05}

In this section we prove Lemma \ref{propi}, which has been used in
Lemmas \ref{g1} and \ref{r01} for the construction of suitable
functions and in Corollary \ref{le1} for the proof of
tightness.\smallskip

Recall that $A$ is a proper nonempty subset of $S$. For each $j\in A$
let $\bs w_j$ represent the canonical projection of the vector $\bs
v_j$ on $\bb R^A$. Also let $\{\bs e_k : k\in A\}$ represent here the
canonical basis of $\bb R^A$. We start observing the following
relation between these two sets of vectors.

\begin{lemma}
\label{farfan}
Let $D$ and $D'$ be two different subsets of $A$ and suppose that
\begin{equation}
\label{zz1}
\sum_{j\in D} \alpha_j \bs w_j + \sum_{k\in A\setminus D} \alpha_k 
\bs e_k \;=\; \sum_{j\in D'} \beta_j \bs w_j + \sum_{k\in A\setminus D'} \beta_k \bs e_k\;,
\end{equation}
for some $\alpha_j,\beta_j\in \bb R$ such that $\beta_j\alpha_j\ge 0$
for all $j\in A$. Then $\alpha_j=\beta_j$ for all $j\in A$ and
$\alpha_k=\beta_k=0$, $\forall k\in D\Delta D'$.
\end{lemma}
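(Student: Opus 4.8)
The plan is to recast \eqref{zz1} as a single linear system for a vector $\gamma\in\bb R^A$ carrying a pointwise sign constraint, then to identify that system with the generator of the underlying chain killed on exiting $A$, and to finish with a maximum principle.

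\emph{Reduction.} For $i\in A$ set $\gamma_i := \alpha_i\,\mb 1\{i\in D\} - \beta_i\,\mb 1\{i\in D'\}$ and $\rho_i := \beta_i\,\mb 1\{i\notin D'\} - \alpha_i\,\mb 1\{i\notin D\}$, all indicators read within $A$. Collecting the $\bs w$–terms on the left and the $\bs e$–terms on the right in \eqref{zz1} gives $\sum_{j\in A}\gamma_j\bs w_j = \sum_{i\in A}\rho_i\bs e_i$; since $\bs w_j(i)=r(j,i)$ for $j\neq i$ and $\bs w_i(i)=-\lambda(i)$, reading off the $i$-th coordinate produces
\begin{equation}
\label{fkz1}
-\lambda(i)\,\gamma_i + \sum_{j\in A\setminus\{i\}} r(j,i)\,\gamma_j \;=\; \rho_i\;,\qquad i\in A\;.
\end{equation}
Going through the cases $i\in D\cap D'$, $i\in D\setminus D'$, $i\in D'\setminus D$ and $i\in A\setminus(D\cup D')$ one checks that $\gamma_i\rho_i\ge 0$ for every $i\in A$: in the first and last cases $\rho_i=0$ or $\gamma_i=0$, and in the two intermediate cases $\gamma_i\rho_i=\alpha_i\beta_i\ge 0$ by hypothesis. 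It therefore suffices to prove that \eqref{fkz1} together with $\gamma_i\rho_i\ge 0$ forces $\gamma=0$; indeed \eqref{fkz1} then yields $\rho=0$, and substituting $\gamma=\rho=0$ into the two definitions above gives $\alpha_i=\beta_i$ for all $i\in A$ and $\alpha_i=\beta_i=0$ for $i\in D\,\Delta\,D'$, which is the lemma.

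\emph{The maximum principle.} To obtain $\gamma=0$, divide \eqref{fkz1} by $m_i>0$ and write $\gamma_i=m_iu_i$. Using the invariance identity $\sum_j m_j r(j,i)=m_i\lambda(i)$ and the reversed rates $\hat r(i,j):=m_jr(j,i)/m_i$ — which satisfy $\sum_{k\in S}\hat r(i,k)=\lambda(i)$ and are the jump rates of an irreducible chain on $S$, since $\hat r(i,j)>0\iff r(j,i)>0$ — equation \eqref{fkz1} turns into
\begin{equation*}
\hat q_i\,u_i \;+\; \sum_{j\in A\setminus\{i\}}\hat r(i,j)\,(u_i-u_j) \;=\; -\,\rho_i/m_i\;,\qquad i\in A\;,
\end{equation*}
where $\hat q_i:=\sum_{k\in S\setminus A}\hat r(i,k)\ge 0$ and $u_i\,(-\rho_i/m_i)=-\gamma_i\rho_i/m_i^2\le 0$. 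Let $\mathcal M:=\{i\in A:u_i=\max_A u\}$ and suppose $\max_A u>0$. For each $i\in\mathcal M$ the left-hand side above is $\ge 0$ and the right-hand side is $\le 0$, so both vanish; since $u_i>0$ this forces $\hat q_i=0$ and $u_j=u_i$ — whence $j\in\mathcal M$ — for every $j\in A$ with $\hat r(i,j)>0$. Thus $\mathcal M$ is nonempty, contained in $A\subsetneq S$, and closed for the reversed chain, contradicting its irreducibility. Hence $\max_A u\le 0$; the same argument applied to $-u$ gives $\min_A u\ge 0$, so $u\equiv 0$ and $\gamma=0$.

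The crux is the second step: once the bookkeeping of the reduction has produced the uniform sign condition $\gamma_i\rho_i\ge 0$, one must recognise that \eqref{fkz1}, which a priori features the incoming rates $r(j,i)$, becomes — after conjugation by $\mathrm{diag}(\bs m)$, i.e. after passing to the time-reversed chain — exactly a killed generator, for which the classical maximum principle applies.
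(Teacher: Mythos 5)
Your proof is correct, and it takes a genuinely different route from the paper's. The paper's argument is a duality/testing one: it fixes $j_0\in D\setminus D'$, constructs the harmonic vector $u$ with $u(j_0)=1$, $u\equiv 0$ on $S\setminus D$ and $\mc L u\equiv 0$ on $D\setminus\{j_0\}$, pairs both sides of \eqref{zz1} with the projection of $u$ on $\bb R^A$, and extracts $\alpha_{j_0}=\beta_{j_0}=0$ from the strict negativity $\mc Lu(j_0)<0$ (irreducibility plus $A\subsetneq S$); a second, analogous test vector supported on $D\cap D'$ then gives $\alpha_j=\beta_j$ there. You instead collapse the identity into a single system $\sum_j\gamma_j\bs w_j=\sum_i\rho_i\bs e_i$ carrying the pointwise sign constraint $\gamma_i\rho_i\ge 0$, recognise the coordinatewise equations as the adjoint generator acting on $\gamma$, conjugate by $\mathrm{diag}(\bs m)$ to land on the time-reversed chain killed outside $A$, and conclude $\gamma=0$ by the maximum principle; all four conclusions of the lemma then fall out of the bookkeeping at once, with no case split between $D\,\Delta\, D'$ and $D\cap D'$. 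I checked the details — the sign verification in the four cases, the invariance identity $\sum_j m_j r(j,i)=m_i\lambda(i)$, and the closedness of the argmax set — and they are all sound; irreducibility and properness of $A$ enter for you exactly where they enter for the paper (a nonempty closed set strictly contained in $S$ is impossible). What the paper's route buys is explicit probabilistic test functions (hitting probabilities) of the kind it reuses throughout Section 7; what yours buys is uniformity and a self-contained maximum-principle mechanism, at the mild cost of introducing the reversed rates $\hat r(i,j)=m_jr(j,i)/m_i$, which the paper avoids since it never assumes reversibility.
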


\begin{proof}
Without loss of generality we may suppose that $\alpha_j\ge 0$ and
$\beta_j \ge 0$, for all $j\in A$. Otherwise, we may interchange the
respective terms in equation \eqref{zz1}. Fix an arbitrary $j_0\in
D\setminus D'$ and consider the vector $u\in \bb R^S$ defined by
$u(j_0)=1$ and
\begin{equation*}
\left\{
\begin{array}{ll}
u(k) = 0, & \textrm{for $k\in S\setminus D$}; \\
\mc L u(j)=0, & \textrm{for $j\in D\setminus \{j_0\}$}.
\end{array}
\right.
\end{equation*}
Since $u\equiv 0$ on $S\setminus D$ then the inner product of the
canonical projection of $u$ on $\bb R^A$ and the expression in the
left hand side of \eqref{zz1} equals
\begin{equation}
\label{zz3}
\Big(\sum_{j\in D} \alpha_j \bs v_j\Big) \cdot u \;=\; \sum_{j\in D} \alpha_j \mc Lu(j) \;. 
\end{equation}
Since $\mc Lu \equiv 0$ on $D\setminus \{j_0\}$, the last expression
reduces to $\alpha_{j_0}\mc Lu(j_0)$. On the other hand, the inner
product of the canonical projection of $u$ on $\bb R^A$ and the
expression in the right hand side of \eqref{zz1} is equal to
\begin{equation}
\label{zz2}
\sum_{j\in D'} \beta_j \mc Lu(j) \;+\; \sum_{k\in A\setminus D'} \beta_k u(k) \;.
\end{equation}
Since $u\in [0,1]^S$ and $u\equiv 0$ on $D'\setminus D$ then $\mc
Lu(j)\ge 0$ for all $j\in D'\setminus D$ and so, the first term in
\eqref{zz2} is positive. Therefore, \eqref{zz2} is bounded below by
\begin{equation*}
\sum_{k\in A\setminus D'} \beta_k u(k) \;=\; \sum_{k\in D\setminus D'} \beta_k u(k) \;. 
\end{equation*}
From \eqref{zz3} and the last estimate we conclude that
\begin{equation}
\label{zz4}
\alpha_{j_0}\mc Lu(j_0) \;\ge\; \sum_{k\in D\setminus D'} \beta_k u(k) \;\ge\; \beta_{j_0} \;.
\end{equation}
Since $\bs r$ is irreducible and $S\setminus D$ is not empty ($A$ is a
proper subset of $S$) then $\mc Lu(j_0)$ is strictly negative. By
using this observation in inequality \eqref{zz4} we get
$\alpha_{j_0}=\beta_{j_0}=0$. By interchanging $D$ and $D'$ in the
argument we finally conclude that
\begin{equation}\label{zz5}
\alpha_{k} \;=\; \beta_{k} \;=\; 0\;, \quad \forall k\in D\Delta D'\;.
\end{equation}
By inserting \eqref{zz5} in \eqref{zz1} we get
\begin{equation}
\label{zz6}
\sum_{j\in \tilde D} \alpha_j \bs w_j + \sum_{k\in \tilde C} 
\alpha_k \bs e_k \;=\; \sum_{j\in \tilde D} \beta_j \bs w_j 
+ \sum_{k\in \tilde C} \beta_k \bs e_k\;,
\end{equation}
where $\tilde D = D\cap D'$ and $\tilde C=A\setminus (D\cup D')$. Fix
an arbitrary $j_1\in \tilde D$ and consider now the vector $v\in \bb
R^S$ defined by $v(j_1)=1$ and
\begin{equation*}
\left\{
\begin{array}{ll}
v(j) = 0, & \textrm{for $j\in S\setminus \tilde D$}; \\
\mc L v(j)=0, & \textrm{for $j\in \tilde D\setminus \{j_1\}$}.
\end{array}
\right.
\end{equation*}
Similarly to the computation we performed for equation \eqref{zz1}, we
take the inner product of the canonical projection of $v$ on $\bb R^A$
and each term in equation \eqref{zz6} to get
\begin{equation}
\alpha_{j_1}\mc Lv(j_1) \;=\; \beta_{j_1}\mc Lv(j_1)\;.
\end{equation}
Since $S\setminus \tilde D$ is nonempty and $\bs r$ is irreducible
then $\mc Lv(j_1)<0$ implying that $\alpha_{j_1}= \beta_{j_1}$. We
have thus proved that $\alpha_{j}=\beta_{j}$ for all $j\in \tilde
D$. Therefore, from \eqref{zz6} we conclude that actually
$\alpha_{j}=\beta_{j}$ for all $j\in \tilde D \cup \tilde C$.
\end{proof}

\begin{corollary}
\label{cor71}
For any $D\subseteq A$, the set of vectors
\begin{equation*}
\{ \bs w_j : j\in D \} \cup \{\bs e_k : k\in A\setminus D\}
\end{equation*}
is a basis of $\bb R^A$.
\end{corollary}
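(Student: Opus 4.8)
The plan is to obtain Corollary \ref{cor71} as an immediate consequence of Lemma \ref{farfan}. The family $\{\bs w_j : j\in D\}\cup\{\bs e_k : k\in A\setminus D\}$ consists of exactly $|D|+|A\setminus D|=|A|$ vectors, and $\dim\bb R^A=|A|$, so it suffices to check that these vectors are linearly independent.

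First I would treat the degenerate case $D=\varnothing$ by hand: then the family reduces to the canonical basis $\{\bs e_k:k\in A\}$ of $\bb R^A$, and there is nothing to prove. So assume $\varnothing\subsetneq D\subseteq A$, and suppose that
\[
\sum_{j\in D}\alpha_j\,\bs w_j \;+\; \sum_{k\in A\setminus D}\alpha_k\,\bs e_k \;=\; 0
\]
for some real numbers $(\alpha_i)_{i\in A}$. I would then invoke Lemma \ref{farfan} with the second subset chosen to be $D'=\varnothing$ and all the accompanying coefficients $\beta_i$, $i\in A$, set equal to $0$. This is legitimate: $D\neq\varnothing=D'$, the right-hand side of \eqref{zz1} becomes the zero vector, and the sign condition $\beta_i\alpha_i\ge 0$ holds trivially since every $\beta_i$ vanishes. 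The conclusion of Lemma \ref{farfan} then gives $\alpha_i=\beta_i=0$ for all $i\in A$, which is precisely the linear independence sought; together with the cardinality count this shows the family is a basis of $\bb R^A$.

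Since all the real content is already packed into Lemma \ref{farfan}, there is no genuine obstacle here; the only points deserving a line of attention are the separate (but trivial) handling of $D=\varnothing$, forced by the hypothesis in Lemma \ref{farfan} that the two subsets be distinct, and the remark that the non-negativity hypothesis of that lemma is automatically met once one of the two linear combinations is taken to be the trivial one.
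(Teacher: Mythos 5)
Your proof is correct and follows essentially the same route as the paper: a cardinality count plus linear independence obtained by applying Lemma \ref{farfan} with all $\beta_i=0$. The only (cosmetic) difference is your choice $D'=\varnothing$, which forces the separate treatment of $D=\varnothing$, whereas the paper takes $D'=A\setminus D$ (always distinct from $D$ since $A\neq\varnothing$) and thus covers all cases uniformly.
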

\begin{proof}
Suppose that
\begin{equation*}
\sum_{j\in D} \alpha_{j} \bs w_j \;+\; 
\sum_{k\in A\setminus D}\alpha_k \bs e_k \;=\; 0\;.
\end{equation*}
By applying the previous lemma with $\beta_j=0$, $j\in A$ and
$D'=A\setminus D$ we get $\alpha_j=0$ for all $j\in A$ proving the
desired result.
\end{proof}

\begin{corollary}
\label{cor72}
For every $v\in \bb R^A$ there exists $D\subseteq A$ such that
\begin{equation*}
v \;=\; \sum_{j\in D} \alpha_{j} \bs w_j \;+\; \sum_{k\in A\setminus D}\alpha_k \bs e_k
\end{equation*}
with $\alpha_j\ge 0$, for all $j\in A$.
\end{corollary}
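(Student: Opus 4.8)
The plan is to prove the (formally stronger) statement that the $2^{|A|}$ closed simplicial cones
\[
\mc C_D \;:=\; \Big\{ \textstyle\sum_{j\in D}\alpha_j\,\bs w_j \;+\; \sum_{k\in A\setminus D}\alpha_k\,\bs e_k \;:\; \alpha_j,\alpha_k\ge 0 \Big\}\;,\qquad D\subseteq A\;,
\]
cover $\bb R^A$; by Corollary \ref{cor71} each $\mc C_D$ is a genuine, full-dimensional simplicial cone. Writing $U:=\bigcup_{D\subseteq A}\mc C_D$, this is a finite union of closed cones, hence closed, and it contains the full-dimensional cone $\mc C_\varnothing=\bb R^A_+$. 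The idea is to show that $U$ has no boundary point, so that, being closed with nonempty interior, $U=\bb R^A$.

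The heart of the argument is an \emph{orientation} statement, playing here the role that Lemma \ref{farfan} plays for linear independence. Fix $D\subseteq A$ and $k\in A\setminus D$, and let $\Phi$ be the facet of $\mc C_D$ obtained by deleting the generator $\bs e_k$. I would identify the (up to a positive scalar, unique) linear functional vanishing on $\Phi$ as follows: let $\tilde\ell\in\bb R^S$ be the $\mc L$-harmonic extension to $D$ of the boundary data $\tilde\ell(k)=1$, $\tilde\ell\equiv 0$ on $S\setminus(D\cup\{k\})$, that is $\tilde\ell(j)=\mb P_j[\,x_{T_{S\setminus D}}=k\,]$ (well defined, since $\bs r$ is irreducible and $S\setminus D\supseteq A^c\ne\varnothing$), and set $\ell(x):=\sum_{i\in A}\tilde\ell(i)\,x_i$. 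Using the identity $\bs w_j\cdot u=(\mc L\tilde u)(j)$ valid for every $u\in\bb R^A$ (with $\tilde u\in\bb R^S$ the extension of $u$ by $0$ on $A^c$), which is the same computation as in \eqref{ff02}, one gets $\ell(\bs w_j)=\mc L\tilde\ell(j)=0$ for $j\in D$ and $\ell(\bs e_{k'})=\tilde\ell(k')=0$ for $k'\in A\setminus(D\cup\{k\})$, so $\ell$ vanishes on all generators of $\Phi$. On the other hand $\ell(\bs e_k)=\tilde\ell(k)=1>0$, while $\ell(\bs w_k)=\mc L\tilde\ell(k)=\sum_m r(k,m)\,(\tilde\ell(m)-1)\le 0$ since $0\le\tilde\ell\le 1=\tilde\ell(k)$; and $\ell(\bs w_k)\ne 0$, for otherwise $\bs w_k$ would lie in the span of $\Phi$, contradicting the linear independence asserted by Corollary \ref{cor71}. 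Hence $\ell(\bs w_k)<0$: the generator $\bs e_k$ of $\mc C_D$ and the generator $\bs w_k$ of $\mc C_{D\cup\{k\}}$ lie strictly on opposite sides of the hyperplane spanned by $\Phi$, so $\Phi$ is a common facet of $\mc C_D$ and $\mc C_{D\cup\{k\}}$ and these two cones sit on opposite sides of it. By symmetry — apply the above with $D,k$ replaced by $D\setminus\{j\},j$ — deleting a generator $\bs w_j$, $j\in D$, yields a facet shared with $\mc C_{D\setminus\{j\}}$, again on the opposite side.

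With this in hand, every facet $\Phi$ of every $\mc C_D$ is a common facet of two cones lying on opposite sides of its spanning hyperplane; a routine polyhedral argument then shows that the relative interior of $\Phi$ is contained in the \emph{interior} of $U$ (a small enough neighbourhood of a relative interior point of $\Phi$ meets $U$ already in the union of the two opposite closed half-spaces cut out by these two cones, hence is contained in $U$). Therefore the boundary $\partial U$, which is contained in the union of all facets of all the $\mc C_D$, is in fact contained in the union of their faces of codimension $\ge 2$ — a finite union of relatively closed subsets of linear subspaces of $\bb R^A$ of dimension $\le|A|-2$. If $U\ne\bb R^A$, then $W:=\bb R^A\setminus U$ is a nonempty open set, and $\bb R^A\setminus\partial U$ is the disjoint union of the two nonempty open sets $\mathrm{int}\,U\supseteq\mathrm{int}\,\mc C_\varnothing$ and $W$, hence disconnected; but any two points of $\bb R^A\setminus\partial U$ can be joined by a path in $\bb R^A\setminus\partial U$, since $\partial U$ is contained in a finite union of linear subspaces of codimension $\ge 2$ — a contradiction. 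Hence $U=\bb R^A$, which is exactly the corollary.

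The step I expect to be the main obstacle is the orientation statement: verifying that the functional separating $\bs e_k$ from $\bs w_k$ is precisely the one produced by the harmonic extension $\tilde\ell$, together with the passage from the soft inequality $\mc L\tilde\ell(k)\le 0$ (the maximum principle) to the strict one. It is exactly here that Corollary \ref{cor71}, hence Lemma \ref{farfan}, enters as an essential input. Everything after that is standard.
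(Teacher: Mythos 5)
Your proof is correct, but it takes a genuinely different route from the paper's. The paper's argument is a pigeonhole count: for a vector $x$ in general position (all coordinates nonzero with respect to each of the $2^{|A|}$ bases furnished by Corollary \ref{cor71}), Lemma \ref{farfan} forces the sign patterns $\sigma_D\in\{-1,+1\}^A$ of the coordinates of $x$ in the basis indexed by $D$ to be pairwise distinct as $D$ ranges over the subsets of $A$; since there are exactly $2^{|A|}$ subsets and $2^{|A|}$ sign patterns, some $D_0$ realizes the all-positive pattern, i.e. $x\in \mf C_{D_0}$, and the general case follows from the density of such $x$ and the closedness of $\bigcup_D \mf C_D$. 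You instead prove that the cones $\mf C_D$ form a complete fan: you identify the linear functional cutting out each wall as a harmonic (hitting-probability) function, use the maximum principle together with Corollary \ref{cor71} to show that the two cones $\mf C_D$ and $\mf C_{D\cup\{k\}}$ sharing that wall lie strictly on opposite sides of it, conclude that the boundary of $\bigcup_D \mf C_D$ has codimension at least two, and finish by connectedness of the complement of a codimension-two set. Your orientation computation ($\ell(\bs e_k)=1>0$, $\ell(\bs w_k)=\mc L\tilde\ell(k)<0$) is sound, and it is worth noting that you only invoke the linear independence of Corollary \ref{cor71}, not the full sign rigidity of Lemma \ref{farfan}; the price is the extra polyhedral topology (half-space neighborhoods of relative interior points of facets, path-connectivity off a finite union of codimension-two subspaces), which, while routine, makes the argument longer than the paper's counting trick. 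One further contrast: the paper's bijection $D\mapsto\sigma_D$ simultaneously encodes the intersection structure of the cones (cf. \eqref{inco}), whereas your argument establishes only the covering — which is, however, all the corollary asserts.
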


\begin{proof}
Denote by $\ms W$ the set of vectors in $\bb R^A$ for which all the
coordinates with respect to the basis
\begin{equation}
\label{basis}
\{ \bs w_j : j\in D \} \cup \{\bs e_k : k\in A\setminus D\}
\end{equation}
are non zero, for every $D\subseteq A$. Fix some $x\in \ms W$. For
each $D\subseteq A$, denote by $\sigma_D\in \{-1,+1\}^S$ the vector
\begin{equation*}
\sigma_{D}(j)\;:=\;\left\{
\begin{array}{ll}
+1 \;, & \textrm{if $\alpha_j>0$}\;, \\
-1 \;, & \textrm{if $\alpha_j>0$}\;,
\end{array}
\right.
\end{equation*}
where $\alpha_j$, $j\in A$ are the coordinates of $x$ with respect to
the basis \eqref{basis}. By Lemma \ref{farfan}, $\sigma_D \not=
\sigma_{D'}$ for $D\not=D'$. Since $\{-1,+1\}^S$ and the powerset of
$A$ have the same cardinality then there must exist some $D_0\subseteq
A$ such that $\sigma_{D_0}\equiv +1$. This shows the assertion for
every vector in $\ms W$. Since $\ms W$ is dense in $\bb R^A$ and the
set of vectors satisfying the assertion is closed in $\bb R^A$ then
the proof is complete.
\end{proof}

We will also need the following observation 
\begin{lemma}
\label{r17}
Fix some $D\subseteq A$, $j_0\in D$ and write
\begin{equation}
\label{jk1}
\bs e_{j_0} \;=\; \sum_{j\in D}
\alpha_j \bs w_j \;+\; \sum_{k\in A\setminus D} \alpha_k \bs e_k\;.
\end{equation}
We have $\alpha_k\ge 0$, $k\in A\setminus D$.
\end{lemma}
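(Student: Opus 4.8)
The plan is to identify, for each $\ell_0\in A\setminus D$, the coefficient $\alpha_{\ell_0}$ in \eqref{jk1} with a hitting probability of the chain $(x_t)_{t\ge 0}$, exactly in the spirit of the argument used in the proof of Lemma \ref{farfan}. (If $A\setminus D=\varnothing$ there is nothing to prove, so assume $A\setminus D\ne\varnothing$.) The key point is that, since $A$ is a \emph{proper} subset of $S$, the complement $S\setminus D$ contains $B=S\setminus A$ and is therefore nonempty; hence, by irreducibility of $\bs r$, the chain started at any point of $D$ leaves $D$ almost surely in finite time, and the $\mc L$-harmonic extension problem on $D$ with prescribed boundary values on $S\setminus D$ has a unique solution, representable as the expectation of the boundary data at the exit point $x_{T_{S\setminus D}}$.

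Concretely, fix $\ell_0\in A\setminus D$ and let $u\colon S\to\bb R$ be the unique function with $u(\ell_0)=1$, $u\equiv 0$ on $(S\setminus D)\setminus\{\ell_0\}$, and $\mc L u(j)=0$ for every $j\in D$; then $u(j)=\mb P_j[\,x_{T_{S\setminus D}}=\ell_0\,]\in[0,1]$ for $j\in D$, so $u\ge 0$ on all of $S$. I would then take the Euclidean inner product in $\bb R^A$ of both sides of \eqref{jk1} with the vector $(u(k))_{k\in A}$. Because $\bs w_j$ is the canonical projection of $\bs v_j$ onto $\bb R^A$ and $u$ vanishes on $S\setminus A$ (recall $\ell_0\in A$), one gets $\bs w_j\cdot(u(k))_{k\in A}=\bs v_j\cdot u=\mc L u(j)$ for all $j\in S$, where $\bs v_j\cdot u=\sum_{k\in S} r(j,k)\{u(k)-u(j)\}$. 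Since $j_0\in D$, the left-hand side of \eqref{jk1} pairs to $u(j_0)$, the terms with $j\in D$ on the right contribute $\sum_{j\in D}\alpha_j\,\mc L u(j)=0$, and the terms with $k\in A\setminus D$ contribute $\sum_{k\in A\setminus D}\alpha_k u(k)=\alpha_{\ell_0}$. Hence $\alpha_{\ell_0}=u(j_0)=\mb P_{j_0}[\,x_{T_{S\setminus D}}=\ell_0\,]\ge 0$, which is exactly the assertion.

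The argument is short and I do not expect a genuine obstacle; the only point requiring care is the well-posedness of $u$, i.e. the existence and uniqueness of the $\mc L$-harmonic extension and its probabilistic representation, which rests precisely on the fact that $S\setminus D\supseteq B\ne\varnothing$ together with irreducibility of $\bs r$ — the same two ingredients already used in Lemma \ref{farfan}. Note also that uniqueness of the decomposition \eqref{jk1} (Corollary \ref{cor71}) is not needed here: the pairing computation above shows that \emph{any} coefficients satisfying \eqref{jk1} must satisfy $\alpha_k=\mb P_{j_0}[\,x_{T_{S\setminus D}}=k\,]\ge 0$ for every $k\in A\setminus D$.
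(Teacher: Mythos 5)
Your argument is exactly the paper's proof: the paper takes the same harmonic vector $v$ with $v(\ell_0)=1$, $v\equiv 0$ on $(D\cup\{\ell_0\})^c$, $\mc L v\equiv 0$ on $D$, pairs its projection onto $\bb R^A$ with both sides of \eqref{jk1}, and reads off $\alpha_{\ell_0}=v(j_0)\in[0,1]$. Your additional remarks (the probabilistic representation of $u$, well-posedness via $S\setminus D\supseteq B\ne\varnothing$ and irreducibility) are correct and merely make explicit what the paper leaves implicit.
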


\begin{proof}
Fix an arbitrary $k\in A\setminus D$ and consider the vector $v\in \bb
R^S$ defined by $v(k)=1$, $v\equiv 0$ on $(D\cap \{k\})^c$ and $\mc
Lv\equiv 0$ on $D$. Taking the inner product of the projection of $v$
on $\bb R^A$ and each term in equation \eqref{jk1} we get
$v(j_0)=\alpha_{k}$. Since $v\in [0,1]^S$, we are done.
\end{proof}

For each subset $D$ of $A$, let $\mf C_D$ be the closed cone generated
by the vectors in \eqref{basis}:
\begin{equation*}
\mf C_D \;:=\; \Big\{\, \sum_{j\in D} \alpha_{j} \bs w_j \;+\; 
\sum_{k\in A\setminus D}\alpha_k \bs e_k \;:\; \alpha_j\ge 0\,,\; j\in A \,\Big\}\;.
\end{equation*}
By Corollary \ref{cor71}, each cone $\mf C_D$ is $|A|$-dimensional and
by Corollary \ref{cor72} we have
\begin{equation}\label{unn}
\bigcup_{D:D\subseteq A} \mf C_D \;=\; \bb R^A \;.
\end{equation}
As an immediate consequence of Lemma \ref{farfan} we have, for any two
subsets $D$, $D'$ of $A$, that
\begin{equation}
\label{inco}
\mf C_D \cap \mf C_{D'} \;=\; \Big\{ \,\sum_{\tilde D} \alpha_j \bs
w_j \;+\; \sum_{k\in \tilde C} \alpha_j \bs e_k \;:\; \alpha_j\ge
0\,,\; 
j\in \tilde D \cup \tilde C \,\Big\}
\end{equation}
where $\tilde D = D\cap D'$ and $\tilde C= A \setminus (D\cup
D')$. Note that $\mf C_\varnothing$ corresponds to the positive
quadrant, $\mf C_\varnothing = \bb R_+^{A}$. We also note
that 
\begin{equation}
\label{f30}
\mf C_A \;\subseteq\; \Big\{x\in \bb R^A : \sum_{j\in A} x_j \le 0 \Big\}\;.
\end{equation}
Indeed, if $x=\sum_{j\in A} \alpha_j \bs w_j$ with $\alpha_j\ge 0$ for
all $j\in A$ and ${\mb 1}_A\in \bb R^S$ is the indicator of
$A\subsetneq S$ then
\begin{equation*}
\sum_{j\in A} x_j \;=\; \Big(\sum_{j\in A} \alpha_j \bs v_j \Big) 
\cdot {\mb 1}_A \;=\; \sum_{j\in A} \alpha_j \mc L{\mb 1}_A(j) \;\le \; 0
\end{equation*}
because $\mc L {\mb 1}_A(j)\le 0$ for all $j\in A$. 

We finally need the following observation about the position of the cones.
\begin{lemma}
\label{f29}
We have
\begin{equation*}
\{x\in \bb R^A : x_k\le 0 \} \;\subseteq\; \bigcup_{D: k\in D} \mf C_D\;.
\end{equation*}
for any $k\in A$.
\end{lemma}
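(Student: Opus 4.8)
The plan is to leverage the covering property \eqref{unn}, which already guarantees that any $x\in\bb R^A$ lies in some $\mf C_D$; the only work is to upgrade $D$ to a subset containing $k$ when $x_k\le 0$. So I would fix $x\in\bb R^A$ with $x_k\le 0$, choose $D\subseteq A$ with $x\in\mf C_D$, and assume $k\notin D$ (otherwise there is nothing to prove). Writing
\begin{equation*}
x \;=\; \sum_{j\in D}\alpha_j \bs w_j \;+\; \sum_{\ell\in A\setminus D}\alpha_\ell \bs e_\ell\;,\qquad \alpha_j\ge 0\ \ (j\in A)\;,
\end{equation*}
the next step is simply to read off the $k$-th coordinate of this expression.

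Since $k\in A\setminus D$, every $j\in D$ satisfies $j\ne k$, so $\bs w_j(k)=\bs v_j(k)=r(j,k)\ge 0$; also $\bs e_\ell(k)=\delta_{\ell,k}$. Hence $x_k=\sum_{j\in D}\alpha_j\, r(j,k)+\alpha_k$, a sum of non-negative terms. Combined with the hypothesis $x_k\le 0$, this forces every term to vanish, in particular $\alpha_k=0$.

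Finally, with $\alpha_k=0$ the vector $x=\sum_{j\in D}\alpha_j\bs w_j+\sum_{\ell\in A\setminus(D\cup\{k\})}\alpha_\ell\bs e_\ell$ is a non-negative combination of precisely the generators $\{\bs w_j:j\in D'\}\cup\{\bs e_\ell:\ell\in A\setminus D'\}$ of $\mf C_{D'}$ for $D':=D\cup\{k\}$, because $D\subseteq D'$ and $A\setminus D'=(A\setminus D)\setminus\{k\}$. Thus $x\in\mf C_{D'}$ with $k\in D'$, which is the assertion. I do not anticipate a real obstacle: the argument reduces to the one-line sign computation for $x_k$ together with \eqref{unn}; the only points requiring care are that $\alpha_k$ is genuinely one of the free coefficients (it is, since $k\in A\setminus D$) and that $\bs w_j(k)=r(j,k)\ge 0$ because it is an off-diagonal jump rate.
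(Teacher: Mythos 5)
Your proof is correct and rests on the same key observation as the paper's, namely that among the generators of $\mf C_D$ only $\bs w_k$ has a negative $k$-th coordinate (off-diagonal entries are jump rates $r(j,k)\ge 0$). The only difference is in the boundary case $x_k=0$: the paper handles it by noting that the finite union $\bigcup_{D\ni k}\mf C_D$ is closed and contains $\{x_k<0\}$, whereas you argue directly that $\alpha_k=0$ and then rewrite $x$ as a non-negative combination of the generators of $\mf C_{D\cup\{k\}}$; both are fine.
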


\begin{proof}
Let $x\in \bb R^A$ be such that $x_k < 0$. By \eqref{unn}, $x\in \mf
C_D$ for some $D\subseteq A$. Since among the vectors $\{\bs w_j :
j\in B\}$, $\{\bs e_j : j\in B\}$ only the vector $\bs w_k$ has its
$k$-th coordinate negative, $D$ must contain $k$. Since the cones are
closed, the assertion follows from this observation.
\end{proof}

We may now conclude the proof of Lemma \ref{propi}. For each
$\epsilon>0$ denote $Q_\epsilon = [-\epsilon , \infty)^A$. Let $G:\bb
R_+^A\to \bb R$ be the function defined by
\begin{equation*}
G(x) \;=\; \sum_{j\in A} x_j\;.
\end{equation*}
The idea of the proof is to extend $G$ to $Q_\epsilon$ in a linear way
for boundary conditions in Definition \ref{bc1} to be fulfilled in a
neighborhood of the boundary of $Q_\epsilon$. We first extend $G$ to
$\bb R^A$ as follows. Fix $x \in \bb R^A$. According to \eqref{unn},
$x\in \mf C_D$ for some $D\subseteq A$. We then define
\begin{equation*}
F (x) \;=\; G\Big( \sum_{j\in A\setminus D} \alpha_k \bs e_k \Big)\;.
\end{equation*}
where $\alpha_k\ge 0$, $k\in A$ are the coordinates of $x$ in the
basis \eqref{basis}. Observation \eqref{inco} assures that $F$ is well
defined. Since $\mf C_\varnothing = \bb R_+^A$ then $F$ and $G$
coincide on $\bb R_+^A$. Moreover, $F$ is constant along the vector
$\bs w_j$ in the cone $\mf C_D$ if $D$ contains $j$. In particular, by
Lemma \ref{f29},
\begin{equation}
\label{f24}
\partial_{\bs w_j} F (x) \;=\; 0
\quad \text{for $x$ such that }\; x_j < 0 \;,
\end{equation}
where $\partial_{\bs w_j}$ stands for the directional derivative along
$\bs w_j$. It is also clear from the definition of the function $F$
that in the interior of the cone $\mf C_D$, denoted hereafter by
$\mathring{\mf C}_D$, $F$ increases in the $\bs e_k$ direction for
$k\not\in D$. Actually, $(\partial_{x_k} F)(x) =1$ for $x\in
\mathring{\mf C}_D$, $k\not\in D$.  Moreover, in view of Lemma
\ref{r17}, in $\mathring{\mf C}_D$, $(\partial_{x_k} F)(x) \ge 0$ for
$k\in D$. Therefore, in the interior of the cone $\mf C_D$
\begin{equation}
\label{f25}
(\partial_{x_k} F)(x) \;=\; 1  \quad \text{ for } k\not\in D
\;, \quad \text{ and } \quad
(\partial_{x_j} F)(x) \;\ge\; 0 \quad \text{ for } j\in D \;.
\end{equation}

The function $F$ is clearly not $C^2$ because its partial derivatives
are not continuous. To remedy, we convolve it with a smooth
mollifier. Let $\varphi : \bb R^A \to \bb R_+$ be a mollifier:
$\varphi$ is a smooth function whose support is contained in
$[-|A|^{-1/2},|A|^{-1/2}]^A$, and $\int_{\bb R^A} \varphi(x)\,
dx=1$. For $\delta>0$, let $\varphi_\delta(x) = \delta^{-|A|}
\varphi(x/\delta)$. Fix $\delta>0$, and denote by $F_{\delta} : \bb
R^A \to \bb R$ the function obtained by taking the convolution of $F$
with $\varphi_\delta$. Clearly, the function $F_{\delta}$ is
smooth. Since the boundaries of the cones have null Lebesgue measure,
by \eqref{f24} and \eqref{f25},
\begin{equation}
\label{f31}
(\bs w_j \cdot \nabla F_{\delta}) (x) \;=\; 0\;\; \text{for }\; x_j\le
-\delta \;, \qquad \sum_{k\in A} (\partial_{x_k} F_{\delta})(x) \;\ge
\; 1 \;\; \text{if } \; d(x,\mf C_A) \ge \delta\;,
\end{equation}
where $d(x,\mf C_A)$ represents the distance from $x$ to $\mf C_A$. 

Fix $\epsilon\ge \max\{ 2, |A|^{1/2}\} \delta$, and let $\mf S^+_a =
\{x\in \bb R^A : \sum_{j\in A} x_j \ge a\}$, $\mf S^-_b = \{x\in \bb
R^A : \sum_{j\in A} x_j \le b\}$. Since $d(\mf S^+_\epsilon, \mf
S^-_0) = \epsilon/|A|^{1/2} \ge \delta$ and since, by \eqref{f30},
$\mf C_A \subseteq \mf S^-_0$, the second property in \eqref{f31} holds
in $\mf S^+_\epsilon$.

Recall that $Q_\epsilon = [-\epsilon , \infty)^A$, and let $A = \max\{
F_{\delta}(x) : x\in Q_\epsilon \cap \mf S^-_ \epsilon\}$. The
constant $A$ is finite because $F_{\delta}$ is a continuous function
and $Q_\epsilon \cap \mf S^-_ \epsilon$ is a compact set. Fix $a>A$,
and denote by $\mf M \subset Q_{\epsilon}$ the $a$-level set of
$F_{\delta}$ in $Q_\epsilon$: $\mf M = \{x\in Q_{\epsilon}: F_{\delta}
(x)=a\}$. By the choice of $a$, $\mf M$ is contained in the interior
of $\mf S^+_\epsilon$.  Let $\mf S_+$ be the intersection of the
radius-one sphere with $\bb R_+^A$: $\mf S_+ = \{x\in \bb R_+^A : \Vert x\Vert
=1\}$. Let $\bs \epsilon$ be the vector $(-\epsilon, \dots,
-\epsilon)$. For each $x\in \mf S_+$, there exists a unique $r>0$ such
that $\bs \epsilon + r x\in \mf M$, that is, such that $F_{\delta}
(\bs \epsilon + r x)=1$. Existence follows from the continuity of
$F_{\delta}$ and from the fact that $F_{\delta} (\bs \epsilon)=0$,
$\lim_{r\to\infty} F_{\delta} (\bs \epsilon + r x)=\infty$. The point
$r$ is unique because $F_{\delta} (\bs \epsilon + r x)$ is strictly
increasing in $r$ in the set $\mf S^+_\epsilon$ in view of the second
property in \eqref{f31}, and because the level set $\mf M$ is
contained in the interior of $\mf S^+_\epsilon$ by definition of $a$.

We are finally in a position to define the function $I_A$. Let $J_A:
\Sigma \to \bb R_+$ be given by $J_A(x)=0$ if $x_A=0$ and, otherwise,
\begin{equation*}
J_A(x) \;=\; s\;, 
\end{equation*}
where $s$ is the unique $r>0$ such that $\bs \epsilon + x_A/r\in \mf
M$.  Note that the canonical projection of the level set $\{x\in
\Sigma : J_A(x) = 1\}$ on $\bb R^A$ corresponds to the set $-\bs
\epsilon + \mf M$. It is clear from the definition of $J_A$ that there
exists finite constants $0<c_1 < C_1$ such that
\begin{equation*}
c_1 \Vert x \Vert_A \;\le\; J_A(x) \;\le\;
C_1 \Vert x \Vert_A \;.
\end{equation*}
We then set $I_A(x) = J_A(x)^2$, $x\in \Sigma$. It is not difficult to
check that $I_A\in C^2(\Sigma)$ because the manifold $\mf M$ is
smooth. By the first property in \eqref{f31}, for each $x\in \Sigma$,
$j\in A$ such that $x_j=0$, there exists a neighborhood of $x$ in
which the function $J_A(x)$ is constant along the $\bs
v_j$-direction. Therefore $I_A$ belongs to $\mc D_A$. This completes
the proof of Lemma \ref{propi}.  \medskip

\noindent \textbf{Acknowledgments:} The authors wish to thank
J. Farfan, T. Funaki and I. Karatzas for stimulating discussion.  The
first author acknowledges financial support from the Vicerrectorado de
investigaci\'on de la PUCP.

\end{document}